\newtheorem{theorem}{Theorem}[section]
\newtheorem{proposition}[theorem]{Proposition}
\newtheorem{corollary}[theorem]{Corollary}
\newtheorem{definition}[theorem]{Definition}
\newtheorem{lemma}[theorem]{Lemma}
\newtheorem{example}[theorem]{Example}
\newtheorem{remark}[theorem]{Remark}
\numberwithin{equation}{section}
\def\bR{\mathbb{R}}
\def\bC{\mathbb{C}}
\def\bN{\mathbb{N}}
\def\cH{\mathcal{H}}
\def\cC{\mathcal{C}}
\def\cK{\mathcal{K}}
\def\cP{\mathcal{P}}
\def\cD{\mathcal{D}}
\def\cB{\mathcal{B}}
\def\sa{\mathrm{sa}}
\def\OC{\mathrm{OC}}
\def\Tr{\mathrm{Tr}}
\def\eps{\varepsilon}
\def\<{\langle}
\def\>{\rangle}
\def\ffi{\varphi}
\def\lb{\mathrm{lb}}
\def\ran{\mathrm{ran}}
\def\half{\textstyle{1\over2}}
\begin{document}
%\baselineskip=15pt
%\allowdisplaybreaks

\centerline{\LARGE Pusz--Woronowicz functional calculus and}
\medskip
\centerline{\LARGE extended operator convex perspectives}

\bigskip
\bigskip
\centerline{\large
Fumio Hiai\footnote{{\it E-mail:} hiai.fumio@gmail.com},
Yoshimichi Ueda\footnote{{\it E-mail:} ueda@math.nagoya-u.ac.jp}
and Shuhei Wada\footnote{{\it E-mail:} wada@j.kisarazu.ac.jp}}

\medskip
\begin{center}
$^1$\,Graduate School of Information Sciences, Tohoku University, \\
Aoba-ku, Sendai 980-8579, Japan
\end{center}

\begin{center}
$^2$\,Graduate School of Mathematics, Nagoya University, \\
Furocho, Chikusaku, Nagoya 464-8602, Japan
\end{center}

\begin{center}
$^3$\,Department of Information and Computer Engineering, \\
National Institute of Technology (KOSEN), Kisarazu College, \\
Kisarazu, Chiba 292-0041, Japan
\end{center}

\medskip

\begin{abstract}

In this article, we first study, in the framework of operator theory, Pusz and Woronowicz's
functional calculus for pairs of bounded positive operators on Hilbert spaces associated with a
homogeneous two-variable function on $[0,\infty)^2$. Our construction has special features that
functions on $[0,\infty)^2$ are assumed only locally bounded from below and that the functional
calculus is allowed to take extended semibounded self-adjoint operators. To analyze convexity
properties of the functional calculus, we extend the notion of operator convexity for real functions
to that for functions with values in $(-\infty,\infty]$. Based on the first part, we generalize the concept
of operator convex perspectives to pairs of (not necessarily invertible) bounded positive operators
associated with any operator convex function on $(0,\infty)$. We then develop theory of such
operator convex perspectives, regarded as an operator convex counterpart of Kubo and Ando's theory
of operator means. Among other results, integral expressions and axiomatization are discussed for
our operator perspectives.

\bigskip\noindent
{\it 2010 Mathematics Subject Classification:}
47A60, 47A64, 47A63, 47B65, 47A07

\medskip\noindent
{\it Key words and phrases:}
Functional calculus,
operator perspective,
operator connection,
operator mean,
extended lower semibounded self-adjoint part,
operator convex,
operator monotone

\end{abstract}

{\baselineskip10pt
\tableofcontents
}

\section{Introduction}\label{Sec-1}

The concept of operator (convex) perspectives has not been studied so far in full generality.
Namely, when given positive operators are not invertible, the existing definition of operator
perspectives does not work for many interesting operator convex functions such as $t\log t$ and
$t^\alpha$ ($1 < \alpha \le 2$), because the corresponding (scalar-valued) perspective functions
are not locally bounded and moreover take $\infty$ on $[0,\infty)^2$. This difficulty appears even
in the finite-dimensional setting, but never does in theory of operator connections/means, viewed
as operator perspectives corresponding to operator monotone functions on $[0,\infty)$. The present
work attempts to overcome this drawback of the current operator perspective theory, by allowing
an operator perspective in question to be unbounded.

Theory of operator connections/means has grown into a significant subject of operator theory.
The theory has its origin in a study of parallel sum motivated by electrical networks \cite{AD}.
Besides parallel sum (a half of harmonic mean), the most interesting and the most studied operator
mean is geometric mean, which was introduced in 1975 by Pusz and Woronowicz \cite{PW1} and
further discussed by Ando \cite{An0,An1}. In \cite{PW1} the authors developed a certain type of
functional calculus for positive sesquilinear forms on a complex vector space, which we call the
Pusz--Woronowicz (PW for short) functional calculus. For two positive sesquilinear forms
$\alpha,\beta$ on a complex vector space, the PW-functional calculus determines a new
sesquilinear form $\phi(\alpha,\beta)$ on the vector space in a canonical way, associated with a
given Borel function $\phi:[0,\infty)^2\to\bR$ that is locally bounded and homogeneous
(i.e., $\phi(\lambda x,\lambda y)=\lambda\phi(x,y)$ for $x,y,\lambda\ge0$). In particular,
the geometric mean $\sqrt{\alpha\beta}$ of $\alpha,\beta$ was defined in \cite{PW1} as
$\phi(\alpha,\beta)$ for the function $\phi(x,y)=(xy)^{1/2}$, $x,y\in[0,\infty)$. Furthermore, in
\cite{PW2}, Pusz and Woronowicz considered the PW-functional calculus $\phi(\alpha,\beta)$ for
more general homogeneous functions $\phi$ on $[0,\infty)^2$ and characterized joint convexity of
$\phi(\alpha,\beta)$ in terms of operator convexity properties of $\phi(x,y)$.

In 1980, Kubo and Ando \cite{KA} proposed an axiomatic approach (see also the beginning of
\S\ref{Sec-10} of this paper) to a general theory of operator means (and connections) in a close
relation to L\"{o}wner's theory \cite{Lo} on operator monotone functions. In fact, Kubo and Ando's
operator connections $\sigma$ correspond one-to-one to non-negative operator monotone
functions $h$ on $[0,\infty)$ in such a way that
\begin{align}\label{F-1.1}
A\sigma B=A^{1/2}h(A^{-1/2}BA^{-1/2})A^{1/2}
\end{align}
for $A,B\in B(\cH)_+$, the bounded positive operators on a Hilbert space $\cH$, with $A$ invertible,
which is further extended to general $A,B\in B(\cH)_+$ as
\begin{align}\label{F-1.2}
A\sigma B=\lim_{\eps\searrow0}(A+\eps I)\sigma(B+\eps I)
\end{align}
in the strong operator topology.

More recently, in \cite{Ef,ENG}, the notion of operator perspectives associated with real continuous
functions $f$ on $(0,\infty)$ was introduced, similarly to \eqref{F-1.1}, as
\begin{align}\label{F-1.3}
P_f(A,B)=B^{1/2}f(B^{-1/2}AB^{-1/2})B^{1/2}
\end{align}
for $A,B\in B(\cH)_{++}$, the invertible operators in $B(\cH)_+$, though the roles of $A,B$ are
interchanged. It was proved that $P_f(A,B)$ is jointly operator convex on
$B(\cH)_{++}\times B(\cH)_{++}$ if and only if $f$ is operator convex on $(0,\infty)$. The extension
problem of operator perspectives $P_f(A,B)$ to general $A,B\in B(\cH)_+$ is rather complicated
and has not been well studied so far, while a few discussions are in \cite[\S2]{FS} and
\cite[\S6]{HSW}. In fact, when $f$ is operator convex on $(0,\infty)$, the limit of
$P_f(A+\eps I,B+\eps I)$ as $\eps\searrow0$ for $A,B\in B(\cH)_+$ does not always exist as a
bounded operator, unlike \eqref{F-1.2} for operator connections $A\sigma B$.

Recently in \cite{HU}, Hatano and the second-named author of this paper considered the
PW-functional calculus in the framework of operator theory. When positive
sesquilinear forms $\alpha,\beta$ on $\cH$ are defined, for given operators $A,B\in B(\cH)_+$, by
$\alpha(\xi,\eta):=\<A\xi,\eta\>$ and $\beta(\xi,\eta):=\<B\xi,\eta\>$ for $\xi,\eta\in\cH$, the
PW-functional calculus $\phi(\alpha,\beta)$ is described in terms of $A,B$ as follows. With a
bounded operator $T_{A,B}:\cH\to\cH_{A,B}:=\overline{\ran}(A+B)$ defined by
$T_{A,B}\xi:=(A+B)^{1/2}\xi$, $\xi\in\cH$, we have $R_{A,B},S_{A,B}\in B(\cH_{A,B})_+$
such that $R_{A,B}+S_{A,B}=I_{\cH_{A,B}}$, $A=T_{A,B}^*R_{A,B}T_{A,B}$ and
$B=T_{A,B}^*S_{A,B}T_{A,B}$. Then $(T_{A,B}:\cH\to\cH_{A,B},R_{A,B},S_{A,B})$ is a
compatible representation of $(\alpha,\beta)$ in the sense of \cite{PW1}, and $\phi(\alpha,\beta)$
clearly coincides with
\begin{align}\label{F-1.4}
\phi(A,B):=T_{A,B}^*\phi(R_{A,B},S_{A,B})T_{A,B}
\end{align}
if $\phi$ is a locally bounded and homogeneous Borel function on $[0,\infty)^2$, where
$\phi(R_{A,B},S_{A,B})$ denotes the usual Borel functional calculus of commuting
$R_{A,B},S_{A,B}$. As clarified in \cite{HU}, Kubo and Ando's operator connections $A\sigma B$
are captured by the PW-functional calculus, that is, $A\sigma B=\phi(B,A)$ if $\phi$ is the
two-variable extension (i.e., the perspective function) of the representing function $h$ in
\eqref{F-1.1}. Similarly, operator perspectives $P_f(A,B)$ for $A,B\in B(\cH)_{++}$ are realized as
$\phi(A,B)$ with the perspective function $\phi$ of $f$. The convexity criteria given in \cite{PW2}
were also examined in \cite[Theorem 9]{HU}, so that the joint convexity assertion in \cite{Ef,ENG}
may be considered as a specialized version of the result of \cite{PW2}. Moreover, it was observed
in \cite{HU} that operator homogeneity (see Definition \ref{D-4.1}(2) of this paper) generally holds
for the PW-functional calculus, while this property was formerly shown in \cite{Fu2} for operator
means.

Now we explain our aims of the present paper in the following two items:

(1)\enspace
Assume, for instance, that $\phi$ is the perspective function of a real function $f$ on $(0,\infty)$
and $\phi$ is extended to $[0,\infty)^2$ by continuity, that is, $\phi(x,y):=yf(x/y)$ for $x,y>0$,
$\phi(x,0):=\alpha x$ for $x\ge0$ and $\phi(0,y):=\beta y$ for $y\ge0$, where
$\alpha:=\lim_{t\to\infty}f(t)/t$ and $\beta:=\lim_{t\searrow0}f(t)$ (whose limits in $(-\infty,\infty]$
are here assumed to exist). In this situation, $\phi$ is not necessarily $\bR$-valued on the
boundary of $[0,\infty)^2$, i.e., on $\{0\}\times(0,\infty)$ and $(0,\infty)\times\{0\}$. We are then
motivated to extend the PW-functional calculus $\phi(A,B)$ discussed in \cite{HU} to locally
lower bounded and homogenous functions $\phi$ on $[0,\infty)^2$ having extended values in
$(-\infty,\infty]$. Here we are forced to allow $\phi(A,B)$ to be unbounded. Thus we will formulate 
the PW-functional calculus associated with $\phi$ as a two-variable mapping from
$B(\cH)_+\times B(\cH)_+$ to the extended lower semibounded self-adjoint part
$\widehat{B(\cH)}_\lb$ of $B(\cH)$, which is slightly bigger than the extended positive part
$\widehat{B(\cH)}_+$ of $B(\cH)$ in the sense of \cite{Ha}. Our primary aim is to carry out the
account of the PW-functional calculus in \cite{HU} in this extended setting. The main target here
is to obtain a complete set of convexity criteria of $\phi(A,B)$ generalizing those in \cite{HU}. This
will consequently justify the use of $\widehat{B(\cH)}_\lb$ in our formulation.

(2)\enspace
For an operator convex function $f$ on $(0,\infty)$, we can define the operator perspective
$\phi_f(A,B)$ (as an element of $\widehat{B(\cH)}_\lb$) for all $A,B\in B(\cH)_+$ based on the
PW-functional calculus constructed in (1). That is, $\phi_f(A,B)$ is defined to be the
PW-functional calculus $\phi(A,B)$ associated with the perspective function $\phi$ of $f$
(extended to $[0,\infty)^2$ by continuity as mentioned in (1)). Then $\phi_f(A,B)$ coincides with
$P_f(A,B)$ in \eqref{F-1.3} if $A,B$ are invertible, so the extension problem mentioned after
\eqref{F-1.3} is indirectly settled because $\phi_f(A,B)$ is already defined for all $A,B\in B(\cH)_+$,
though the values of $\phi_f(A,B)$ are not necessarily bounded operators. We, instead, have to
consider the problem of characterizing when $\phi_f(A,B)$ is bounded. Our second aim is to
develop theory of extended operator convex perspectives along the lines of Kubo and Ando's
theory of operator connections. We will consider, for example, their integral expressions and
axiomatization of Kubo and Ando's type.

We end the introduction with a brief summary of contents of the paper. Section \ref{Sec-2}
is a preliminary on the extended lower semibounded self-adjoint part $\widehat{B(\cH)}_\lb$,
and Section \ref{Sec-3} gives basics of extended $(-\infty,\infty]$-valued operator convex
functions for later use. In Section \ref{Sec-4}, extending discussions in \cite{HU}, we introduce 
and study the PW-functional calculus $\phi(A,B)$ of $A,B\in B(\cH)_+$ associated with 
such a function $\phi$ on $[0,\infty)^2$ as stated in (1) above. The definition of $\phi(A,B)$ is 
given in an axiomatic fashion with two postulates (see Definition \ref{D-4.1}), while an explicit 
definition like \eqref{F-1.4} is also possible. The main result (Theorem \ref{T-4.9}) gives 
characterizations for $\phi(A,B)$ to be jointly convex in $(A,B)$. In Section \ref{Sec-5} 
we consider the PW-functional calculus $\phi(A,B)$ associated with a function $\phi$ on
$[0,\infty)^2\setminus(\{0\}\times(0,\infty))$ with a restricted domain of
$(A,B)\in B(\cH)_+\times B(\cH)_+$ such that $A\ge\alpha B$ for some $\alpha>0$.
Section \ref{Sec-6} establishes the continuity of $\phi(A_n,B_n)\to\phi(A,B)$ in the
strong operator topology for decreasing $A_n\searrow A$, $B_n\searrow B$ in $B(\cH)_+$
when $\phi$ is $\bR$-valued and continuous on $[0,\infty)^2$. We believe that such a general
continuity property for the PW-functional calculus in its original form has not 
been examined so far.

In the second part, we study operator perspectives $\phi_f(A,B)$ associated with an operator
convex function $f$ on $(0,\infty)$, that is, the PW-functional calculus associated with the
perspective function of $f$. In Section \ref{Sec-7} we discuss (semi-)continuity properties of
$\phi_f(A,B)$. The main result (Theorem \ref{T-7.7}), in particular, says that the approach taking
limit as in \eqref{F-1.2} is also available for $\phi_f(A,B)$. It is also shown (Proposition \ref{P-7.13})
that when $A,B$ are positive trace-class operators, $\Tr\,\phi_f(A,B)$ is well defined and coincides
with the maximal $f$-divergence \cite{Hi2} of $A,B$. In Section \ref{Sec-8} we examine the cases
when $\phi_f(A,B)$ is bounded and when $\phi_f(A,B)$ has a dense domain, i.e., $\phi_f(A,B)$ is
a densely-defined self-adjoint operator on $\cH$. Interestingly, this problem in the case $f(t)=t^2$
is strongly related to absolute continuity between positive operators \cite{An2}. Furthermore,
Section \ref{Sec-9} treats integral expressions and variational expressions of $\phi_f(A,B)$ based
on integral expressions of $f$. Finally, Section \ref{Sec-10} gives some axiomatization results,
including a new axiomatization of operator connections different from the familiar one in \cite{KA}.

%%%%%%%%%%%%%%%%% Section 2 %%%%%%%%%%%%%%%%
\section{Preliminary on the extended lower semibounded self-adjoint part}\label{Sec-2}

Throughout this article, let $\cH$ be a Hilbert space and $B(\cH)$ be the set of all bounded
operators on $\cH$. We use the notations $B(\cH)_\sa$, $B(\cH)_+$, and $B(\cH)_{++}$ for the
sets of self-adjoint operators, positive operators, and positive invertible operators in $B(\cH)$,
respectively. In this preliminary section, we briefly describe unbounded objects extending
self-adjoint operators for later use.

Note that $B(\cH)$ is a von Neumann algebra with the predual $B(\cH)_*\cong\cC_1(\cH)$, the
space of trace-class operators on $\cH$ with trace-norm. Here we identify $\rho\in\cC_1(\cH)$
with a normal functional $\rho(X)=\Tr\,X\rho$ for $X\in B(\cH)$, where $\Tr$ is the usual trace
on $B(\cH)$. The positivity $\rho\ge0$ in the operator sense is equivalent to the positivity of
$\rho$ in the functional sense, so we can further identify $B(\cH)_*^+=\cC_1(\cH)_+$, where
$B(\cH)_*^+$ and $\cC_1(\cH)_+$ are the positive parts of $B(\cH)_*$ and $\cC_1(\cH)$,
respectively.

The \emph{extended positive part} $\widehat{B(\cH)}_+$ of $B(\cH)$ (in the sense of Haagerup
\cite{Ha}) is the set of mappings $m:B(\cH)_*^+\to[0,\infty]$ that satisfy the following:
\begin{itemize}
\item[(1)] $m(\alpha\rho)=\alpha m(\rho)$ for all $\alpha\ge0$ and $\rho\in B(\cH)_*^+$ (with
usual convention $0\cdot\infty=0$),
\item[(2)] $m(\rho_1+\rho_2)=m(\rho_1)+m(\rho_2)$ for all $\rho_1,\rho_2\in B(\cH)_*^+$,
\item[(3)] $m$ is lower semicontinuous on $B(\cH)_*^+$.
\end{itemize}

This notion was originally introduced in \cite{Ha} in a more general setting to study operator
valued weights in theory of von Neumann algebras. Recently in \cite{Ko4}, the extended
positive part $\widehat{B(\cH)}_+$ was effectively used in a study of parallel sum of unbounded
positive operators. For our purpose it is convenient to slightly generalize $\widehat{B(\cH)}_+$
as follows:

\begin{definition}\label{D-2.1}\rm
We define \emph{the extended lower semibounded self-adjoint part} $\widehat{B(\cH)}_\lb$
of $B(\cH)$ to be the set of mappings $m:B(\cH)_*^+\to(-\infty,\infty]$ that satisfies, in addition to
the above (1)--(3), the following:
\begin{itemize}
\item[(4)] there exists an $\ell\geq 0$ such that $m(\rho)+\ell\rho(I)\ge0$ for all
$\rho\in B(\cH)_*^+$, where $I$ is the identity operator on $\cH$.
\end{itemize}
\end{definition}

The conic and the order structures of $\widehat{B(\cH)}_\lb$ are simply defined as follows.
Let $m,m_1,m_2\in\widehat{B(\cH)}_\lb$. Define $\alpha m$ ($\alpha\ge0$),
$m_1+m_2\in\widehat{B(\cH)}_\lb$ by $(\alpha m)(\rho):=\alpha m(\rho)$,
$(m_1+m_2)(\rho):=m_1(\rho)+m_2(\rho)$, and define $m_1\le m_2$ if $m_1(\rho)\le m_2(\rho)$
for all $\rho\in B(\cH)_*^+$. Clearly, $\widehat{B(\cH)}_+$ is included in
$\widehat{B(\cH)}_\lb$ as a sub-cone (since $\ell=0$ is available for condition (4) if
$m\in\widehat{B(\cH)}_+$). Each $A\in B(\cH)_\sa$ (resp., $A\in B(\cH)_+$) is regarded as an
element of $\widehat{B(\cH)}_\lb$ (resp., $\widehat{B(\cH)}_+$) in a natural way that
$A(\rho):=\rho(A)$ for $\rho\in B(\cH)_*^+$.

The next proposition is a slight modification of \cite[Theorem 1.5]{Ha}.

\begin{proposition}\label{P-2.2}
For every $m\in\widehat{B(\cH)}_\lb$ there exists a spectral resolution $(E_t)_{t\in\bR}$ on a
closed subspace $\cH_0$ of $\cH$, i.e., a one-parameter family of non-decreasing and
right-continuous orthogonal projections $E_t$ ($t\in\bR$) with $E_t\nearrow P_{\cH_0}$
as $t\to\infty$, such that $E_t=0$ ($t<\ell$) for some $\ell\in\bR$ and
\begin{align}\label{F-2.1}
m(\rho)=\int_{-\infty}^\infty t\,d\rho(E_t)+\infty\cdot\rho(P_{\cH_0^\perp}),
\qquad\rho\in B(\cH)_*^+,
\end{align}
where $P_{\cH_0}$ and $P_{\cH_0^\perp}$ are the projections onto $\cH_0$ and $\cH_0^\perp$,
respectively. Furthermore, $\cH_0$ and $(E_t)_{t\in\bR}$ are uniquely determined by $m$.
\end{proposition}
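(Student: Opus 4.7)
My plan is to reduce Proposition \ref{P-2.2} to Haagerup's Theorem 1.5 in \cite{Ha} by a simple translation trick. Given $m\in\widehat{B(\cH)}_\lb$, take $\ell\ge0$ as in condition (4) and define
\begin{equation*}
m'(\rho):=m(\rho)+\ell\rho(I),\qquad \rho\in B(\cH)_*^+.
\end{equation*}
First I would verify that $m'\in\widehat{B(\cH)}_+$. Positivity is immediate from (4); homogeneity and additivity follow because $\rho\mapsto\ell\rho(I)$ is linear; and lower semicontinuity of $m'$ follows from that of $m$ together with the fact that $\rho\mapsto\rho(I)$ is norm-continuous (indeed even weak-$*$ continuous) on $B(\cH)_*^+$.

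Next, by Haagerup's Theorem 1.5 applied to $m'$, there exist a closed subspace $\cH_0\subseteq\cH$ and a unique spectral resolution $(F_t)_{t\ge0}$ (non-decreasing, right-continuous, $F_t\nearrow P_{\cH_0}$) with $F_{0^-}=0$ such that
\begin{equation*}
m'(\rho)=\int_0^\infty t\,d\rho(F_t)+\infty\cdot\rho(P_{\cH_0^\perp}).
\end{equation*}
Setting $E_t:=F_{t+\ell}$ for $t\in\bR$ yields a non-decreasing right-continuous family of projections on $\cH_0$ with $E_t=0$ for $t<-\ell$ and $E_t\nearrow P_{\cH_0}$ as $t\to\infty$. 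A change of variable $s=t-\ell$ in the integral gives
\begin{equation*}
\int_0^\infty t\,d\rho(F_t)=\int_{-\ell}^\infty(s+\ell)\,d\rho(E_s)=\int_{-\ell}^\infty s\,d\rho(E_s)+\ell\rho(P_{\cH_0}).
\end{equation*}
Subtracting $\ell\rho(I)=\ell\rho(P_{\cH_0})+\ell\rho(P_{\cH_0^\perp})$ and using the convention $0\cdot\infty=0$ (so that the finite term $-\ell\rho(P_{\cH_0^\perp})$ is absorbed whenever $\rho(P_{\cH_0^\perp})>0$), I obtain \eqref{F-2.1} with the specific lower bound $-\ell$.

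For uniqueness, suppose $(E_t^{(i)},\cH_0^{(i)})$, $i=1,2$, both represent $m$ with lower bounds $\ell_i\in\bR$. Pick any $\ell^*\ge\max(-\ell_1,-\ell_2,0)$ and note that, by the same calculation run in reverse, the shifted families $F_t^{(i)}:=E_{t-\ell^*}^{(i)}$ both give spectral resolutions (in Haagerup's sense) of $m+\ell^* I\in\widehat{B(\cH)}_+$. The uniqueness clause of Haagerup's theorem then forces $\cH_0^{(1)}=\cH_0^{(2)}$ and $F_t^{(1)}=F_t^{(2)}$, hence $E_t^{(1)}=E_t^{(2)}$ for all $t\in\bR$.

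The only genuinely delicate step is the bookkeeping around the $\infty\cdot\rho(P_{\cH_0^\perp})$ term after the translation: one must check that, with the convention $0\cdot\infty=0$, adding a bounded negative correction $-\ell\rho(P_{\cH_0^\perp})$ does not spoil the identity in either case $\rho(P_{\cH_0^\perp})=0$ or $\rho(P_{\cH_0^\perp})>0$. Once this is handled, everything else reduces to Haagerup's theorem applied to the positive shift $m+\ell I$.
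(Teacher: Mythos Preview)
Your proof is correct and follows essentially the same approach as the paper: both reduce existence to Haagerup's Theorem~1.5 by the translation $m\mapsto m+\ell I$ and then shift the resulting spectral resolution back, with the same bookkeeping around the $\infty\cdot\rho(P_{\cH_0^\perp})$ term. The only minor difference is in the uniqueness argument: the paper identifies $\cH_0$ intrinsically as the closure of $\{\xi\in\cH:m(\omega_\xi)<\infty\}$ and then appeals to the correspondence between closed lower-semibounded quadratic forms and self-adjoint operators to pin down $(E_t)$, whereas you shift two hypothetical representations by a common $\ell^*$ and invoke the uniqueness clause of Haagerup's theorem directly---both routes are short and valid.
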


\begin{proof}
Let $m\in\widehat{B(\cH)}_\lb$ with $\ell\in\bR$ as in (4); then it is obvious that
$m-\ell I\in\widehat{B(\cH)}_+$. Hence by \cite[Theorem 1.5]{Ha} there is a spectral
resolution $(F_t)_{t\ge0}$ on a closed subspace $\cH_0$ of $\cH$ with $F_t\nearrow P_{\cH_0}$
as $t\to\infty$ such that
\begin{align}\label{F-2.2}
m(\rho)-\ell\rho(I)=\int_0^\infty t\,d\rho(F_t)+\infty\cdot\rho(P_{\cH_0^\perp}),
\qquad\rho\in B(\cH)_*^+.
\end{align}
Define a spectral resolution $(E_t)_{t\in\bR}$ on $\cH_0$ by
\[
E_t:=\begin{cases}0 & \text{for $t<\ell$}, \\
F_{t-\ell} & \text{for $t\ge\ell$}.\end{cases}
\]
Then for every $\rho\in B(\cH)_*^+$ we have
\begin{align*}
\int_{-\infty}^\infty t\,d\rho(E_t)+\infty\cdot\rho(P_{\cH_0^\perp})
&=\int_{[0,\infty)}(t+\ell)\,d\rho(F_t)+\infty\cdot\rho(P_{\cH_0^\perp}) \\
&=\int_{[0,\infty)}t\,d\rho(F_t)+\ell\rho(P_{\cH_0})+\infty\cdot\rho(P_{\cH_0^\perp}) \\
&=\int_{[0,\infty)}t\,d\rho(F_t)+\ell\rho(I)+\infty\cdot\rho(P_{\cH_0^\perp})=m(\rho)
\end{align*}
thanks to \eqref{F-2.2}. Hence \eqref{F-2.1} holds.

Next, let us show the uniqueness of $\cH_0$ and $(E_t)_{t\in\bR}$. By the proof of
\cite[Lemma 1.4]{Ha}, $\cH_0$ is the closure of
\[
\{\xi\in\cH:m(\omega_\xi)-\ell\|\xi\|^2<\infty\}
=\{\xi\in\cH:m(\omega_\xi)<\infty\},
\]
where $\omega_\xi:=\<\cdot\,\xi,\xi\>\in B(\cH)_*^+$, a vector functional. Hence $\cH_0$ is
uniquely determined by $m$. Define a lower semibounded self-adjoint operator $T$ on
$\cH_0$ by
\begin{align}\label{F-2.3}
T:=\int_{-\infty}^\infty t\,dE_t=\int_{[\ell,\infty)}t\,dE_t.
\end{align}
For every $\xi\in\cH$, by \eqref{F-2.1} we have
\[
m(\omega_\xi)=\int_{-\infty}^\infty t\,d\omega_\xi(E_t)+\infty\cdot\|P_{\cH_0}^\perp\xi\|^2
=\int_{[\ell,\infty)}t\,d\<E_t\xi,\xi\>+\infty\cdot\|P_{\cH_0}^\perp\xi\|^2,
\]
that is,
\begin{align}\label{F-2.4}
m(\omega_\xi)=\begin{cases}
\|(T-\ell I_{\cH_0})^{1/2}\xi\|^2+\ell\|\xi\|^2 &
\text{if $\xi\in\cD((T-\ell I_{\cH_0})^{1/2})$}, \\
\infty & \text{otherwise}.\end{cases}
\end{align}
This is a lower semicontinuous and lower semibounded quadratic form on $\cH$.
Here, recall that the operator $T$ is uniquely determined by the quadratic form \eqref{F-2.4};
see, e.g., \cite[\S VI.2]{Ka} and \cite[Chap.~10]{Sch}. Hence \eqref{F-2.4} determines $T$ so that
$(E_t)_{t\in\bR}$ is unique as the spectral resolution of $T$.
\end{proof}

We call $(E_t)_{t\in\bR}$  in Proposition \ref{P-2.2} the \emph{spectral resolution} of $m$,
$\cH_0$ the \emph{essential part} of $m$ and $\cH_0^\perp$ the \emph{$\infty$-part} of $m$.

Proposition \ref{P-2.2} and its proof show that each $m\in\widehat{B(\cH)}_\lb$ is associated
with a lower semibounded self-adjoint operator $T$ on a closed subspace of $\cH$. Conversely,
let $T$ be a lower semibounded self-adjoint operator on a closed subspace $\cH_0$ of $\cH$,
and let $T=\int_{-\infty}^\infty t\,dE_t$ be the spectral decomposition of $T$. Then it is easy to
see that $m:B(\cH)_*^+\to(-\infty,\infty]$ defined by \eqref{F-2.1} is indeed an element of
$\widehat{B(\cH)}_\lb$. Furthermore, a lower semibounded quadratic form $q_T$ on $\cH_0$
corresponding to $T$ is defined as in \eqref{F-2.4} by
\begin{align}\label{F-2.5}
q_T(\xi):=\begin{cases}
\|(T-\ell I_{\cH_0})^{1/2}\xi\|^2+\ell\|\xi\|^2 &
\text{if $\xi\in\cD((T-\ell I_{\cH_0})^{1/2})$}, \\
\infty & \text{otherwise},\end{cases}
\end{align}
with any choice of $\ell\in(-\infty,\min\sigma(T)]$, where $\sigma(T)$ is the spectrum of $T$.
Note (see, e.g., \cite[Chap.~10]{Sch}) that $q_T|_{\cD(q_T)}$ is a closed quadratic form, where
$\cD(q_T):=\{\xi\in\cH:q_T(\xi)<\infty\}$, the domain of $q_T$, or equivalently, $q_T$ is lower
semicontinuous on $\cH$. Since any $\rho\in B(\cH)_*^+$ is written as
$\rho=\sum_n\omega_{\xi_n}$ for some $\{\xi_n\}$ in $\cH$ with $\sum_n\|\xi_n\|^2<\infty$,
an $m\in\widehat{B(\cH)}_\lb$ is uniquely determined by expression \eqref{F-2.4}, i.e.,
$m(\omega_\xi)=q_T(\xi)$, $\xi\in\cH$.

Let $T_1,T_2$ be lower semibounded self-adjoint operators on closed subspaces $\cH_1,\cH_2$
of $\cH$, respectively. The order $T_1\le T_2$ (in the form sense) is defined if
$\cD(q_{T_2})\subseteq\cD(q_{T_1})$ and $q_{T_1}(\xi)\le q_{T_2}(\xi)$ for all $\xi\in\cD(q_{T_2})$.
It is known that $T_1\le T_2$ holds if and only if $(T_2-\lambda I)^{-1}\le(T_1-\lambda I)^{-1}$
for some (equivalently, for any) $\lambda\in\bR$ with $\lambda<\min\sigma(T_1)$ and
$\lambda<\min\sigma(T_2)$, where $(T_i-\lambda I)^{-1}$ is understood to be zero on
$\cH_i^\perp=\cD(q_{T_i})^\perp$ ($i=1,2$); see \cite[Corollary 10.13]{Sch}. Furthermore, the
\emph{form sum} $T:=T_1\,\dot+\,T_2$ is defined in such a way that
$\cD(q_T)=\cD(q_{T_1})\cap\cD(q_{T_2})$ and $q_T(\xi):=q_{T_1}(\xi)+q_{T_2}(\xi)$ for every
$\xi\in\cD(q_T)$; see \cite[\S IV.1.6]{Ka} and \cite[Proposition 10.22]{Sch}.

Summing up the discussions so far, we conclude that there are bijective
correspondences $m\leftrightarrow T\leftrightarrow q$ between the following three objects:
\begin{itemize}
\item elements $m$ of the extended lower semibounded self-adjoint part $\widehat{B(\cH)}_\lb$
(Definition \ref{D-2.1}),
\item lower semibounded self-adjoint operators $T$ on closed subspaces of $\cH$,
\item lower semicontinuous, lower semibounded quadratic forms $q$ on $\cH$ (with not
necessarily dense domains).
\end{itemize}
The correspondence $m\leftrightarrow T$ is determined by \eqref{F-2.1} and \eqref{F-2.3}
(also \eqref{F-2.4}), and $T\leftrightarrow q=q_T$ is given by \eqref{F-2.5}. These correspondences
preserve order and sum (described before Proposition \ref{P-2.2} and in the last paragraph). Below
we use the symbol $T$ to denote elements of $\widehat{B(\cH)}_\lb$ with identification between
$m\leftrightarrow T$.

The following wording will be convenient in \S7.

\begin{definition}\label{D-2.3}\rm
Let $T\in\widehat{B(\cH)}_\lb$. We briefly say that $T$ \emph{has a dense domain} if the
$\infty$-part of $T$ is trivial, that is, $T$ is densely defined on $\cH$ as a lower semibounded
self-adjoint operator, or equivalently $\cD(q_T)$ is dense in $\cH$. Also, we say that $T$ is
\emph{bounded} if it is a bounded self-adjoint operator on $\cH$.
\end{definition}

In particular, when $\cH$ is finite-dimensional with $n=\dim\cH$, each $T\in\widehat{B(\cH)}_\lb$
is represented in the form of diagonalization $T=\sum_{i=1}^n\lambda_iP_{\xi_i}$ with an
orthonormal basis $(\xi_i)_{i=1}^n$ of $\cH$ and
$-\infty<\lambda_1\le\lambda_2\le\dots\le\lambda_n\le\infty$ (with value $\infty$ allowed),
where $P_{\xi_i}$ is the rank-one projection onto $\bC\xi_i$. Therefore, considering
$\widehat{B(\cH)}_\lb$ is already non-trivial even in the finite-dimensional setting.

We end the section with two basic lemmas, which will be used in subsequent sections.

\begin{lemma}\label{L-2.4}
Let $T\in\widehat{B(\cH)}_\lb$ and $C$ be a bounded operator from another Hilbert space $\cK$
to $\cH$. Then the mapping
\[
\rho\in B(\mathcal{K})_*^+\,\longmapsto\,T(C\rho\,C^*) \in (-\infty,\infty]
\]
defines an element $C^*TC \in \widehat{B(\cK)}_\lb$, where we use the standard notation
$(C\rho\,C^*)(X) := \rho(C^* X C)$ for $X \in B(\cH)$.
\end{lemma}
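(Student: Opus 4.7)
The plan is to define $m(\rho) := T(C\rho\,C^*)$ for $\rho\in B(\cK)_*^+$ and verify directly that $m$ satisfies the four defining conditions of $\widehat{B(\cK)}_\lb$ (namely (1)--(3) of $\widehat{B(\cH)}_+$ plus the lower-bound axiom (4) of Definition \ref{D-2.1}). The engine of the argument is the auxiliary map
\[
\Phi:\rho\in B(\cK)_*^+\,\longmapsto\,C\rho\,C^*\in B(\cH)_*^+,
\]
so first I would check that $\Phi$ is well defined: identifying $\rho$ with a positive trace-class operator, $C\rho\,C^*$ is again positive and trace-class on $\cH$ (with $\|C\rho\,C^*\|_1\le\|C\|^2\|\rho\|_1$), and the two descriptions of $C\rho\,C^*$ (as a functional via $(C\rho\,C^*)(X)=\rho(C^*XC)$, or as the trace-class operator $C\rho\,C^*$) agree by cyclicity of trace. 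In particular $\Phi$ is a bounded, positive, \emph{linear} map, hence continuous in trace norm.

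With $\Phi$ in hand, axioms (1) and (2) for $m$ are immediate from the linearity of $\Phi$ together with the homogeneity and additivity of $T$. Axiom (3) follows because $m=T\circ\Phi$ is the composition of a trace-norm continuous linear map with a lower semicontinuous function, and the composition remains lower semicontinuous.

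For axiom (4), take $\ell\ge 0$ as in Definition \ref{D-2.1}(4) for $T$, so that $T(\sigma)+\ell\,\sigma(I_\cH)\ge 0$ for every $\sigma\in B(\cH)_*^+$. Substituting $\sigma=C\rho\,C^*$ and using $(C\rho\,C^*)(I_\cH)=\rho(C^*C)$ together with the operator inequality $C^*C\le\|C\|^2 I_\cK$ and the positivity of $\rho$, one obtains
\[
m(\rho)+\ell\|C\|^2\,\rho(I_\cK)\,\ge\, T(C\rho\,C^*)+\ell\,\rho(C^*C)\,\ge\,0,
\]
so $\ell':=\ell\|C\|^2\ge 0$ witnesses (4) for $m$. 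This is a direct verification with no real obstacle; the only point that requires a brief remark is that $\Phi$ preserves \emph{normality} of the functional, which is handled once and for all by the trace-class identification $B(\cH)_*\cong\cC_1(\cH)$ recalled at the beginning of the section.
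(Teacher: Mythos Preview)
Your proposal is correct and follows essentially the same approach as the paper: both verify the four axioms of Definition \ref{D-2.1} directly, using that $\rho\mapsto C\rho\,C^*$ is linear and continuous in trace norm for (1)--(3), and the operator inequality $C^*C\le\|C\|^2 I_\cK$ for (4). Your framing via the auxiliary map $\Phi$ is slightly more explicit, but the argument is the same.
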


\begin{proof}
Remark that $C\rho\,C^*$ falls into $B(\cH)_*^+$. Thus, $T(C\rho\,C^*)\in(-\infty,\infty]$ is well
defined. For any $\rho,\rho_1,\rho_2 \in B(\mathcal{K})_*^+$ and $\alpha \geq 0$, we have
\begin{align*}
T(C(\alpha\rho)C^*) &= T(\alpha(C\rho\,C^*)) = \alpha T(C\rho\,C^*), \\ 
T(C(\rho_1+\rho_2)C^*) &= T((C\rho_1 C^*)+(C\rho_2 C^*)) = T(C\rho_1 C^*)+T(C\rho_2 C^*),
\end{align*}
since $C(\alpha\rho)C^*=\alpha(C\rho\,C^*)$ and
$C(\rho_1+\rho_2)C^*=(C\rho_1 C^*)+(C\rho_2 C^*)$ obviously hold. Moreover, if
$\|\rho_n-\rho\|\to0$ in $B(\mathcal{K})_*^+$, then $\|C\rho_nC^*-C\rho\,C^*\|\to0$ in $B(\cH)_*^+$
and hence $T(C\rho\,C^*) \leq \liminf_n T(C\rho_nC^*)$. Finally, choosing an $\ell > 0$ such that
$T(\rho') + \ell\,\rho'(I_\cH)\ge0$ for all $\rho' \in B(\cH)_*^+$, we have
\[
T(C\rho\,C^*) + \ell\|C\|^2\rho(I_\cK) \ge T(C\rho\,C^*)+\ell(C\rho\,C^*)(I_\cH)\ge0
\] 
for all $\rho \in B(\mathcal{K})_*^+$, where $\|C\|$ denotes the operator norm of $C$.
\end{proof}

\begin{lemma}\label{L-2.5}
Let $E$ be a spectral measure in $\cH$ on a measurable space $\Omega$. Let
$f : \Omega \to (-\infty,\infty]$ be a measurable function, and assume that $f$ is bounded from
below on $\Omega$. Then the mapping
\[
\rho \in B(\cH)_*^+\,\longmapsto\,\int_\Omega f(\omega)\,d\rho(E(\omega))
\in (-\infty,\infty]
\]
defines an element of $\widehat{B(\cH)}_\lb$.
\end{lemma}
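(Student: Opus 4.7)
The plan is to reduce to the non-negative case by a translation and then verify the four defining properties of $\widehat{B(\cH)}_\lb$ one by one. Since $f$ is bounded below on $\Omega$, I fix some $M\ge0$ with $f\ge -M$ pointwise and put $g:=f+M\ge0$, which is measurable. For any $\rho\in B(\cH)_*^+$ the set function $\rho\circ E$ is a finite positive Borel measure on $\Omega$ of total mass $\rho(I)<\infty$, so $\int_\Omega g\,d(\rho\circ E)\in[0,\infty]$ is well defined, and consequently
\[
m(\rho):=\int_\Omega f(\omega)\,d\rho(E(\omega))
=\int_\Omega g\,d(\rho\circ E)-M\rho(I)\in(-\infty,\infty].
\]
This also immediately gives property (4) with $\ell:=M$, since $m(\rho)+M\rho(I)=\int g\,d(\rho\circ E)\ge0$.

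Properties (1) and (2) are routine: for $\alpha\ge0$ and $\rho,\rho_1,\rho_2\in B(\cH)_*^+$ we have $(\alpha\rho)\circ E=\alpha(\rho\circ E)$ and $(\rho_1+\rho_2)\circ E=(\rho_1\circ E)+(\rho_2\circ E)$ as positive measures on $\Omega$; applying this to $g$, using positive-homogeneity and additivity of the integral (valid on $[0,\infty]$ with the convention $0\cdot\infty=0$), and then subtracting the corresponding linear correction $M\rho(I)$, yields $m(\alpha\rho)=\alpha m(\rho)$ and $m(\rho_1+\rho_2)=m(\rho_1)+m(\rho_2)$.

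The only step needing a slightly careful argument is lower semicontinuity (property (3)). I would approximate $g$ from below by the truncations $g_n(\omega):=\min\{g(\omega),n\}$, $n\in\bN$, which are non-negative, bounded, measurable, and satisfy $g_n\nearrow g$. For each $n$, the operator $X_n:=\int_\Omega g_n\,dE\in B(\cH)_+$ is bounded (with $\|X_n\|\le n$), and
\[
\int_\Omega g_n\,d(\rho\circ E)=\rho(X_n),\qquad\rho\in B(\cH)_*^+.
\]
Since $\rho\mapsto\rho(X_n)$ is (norm-)continuous on $B(\cH)_*$ for each bounded $X_n$, each map $\rho\mapsto\int g_n\,d(\rho\circ E)$ is continuous on $B(\cH)_*^+$. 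By monotone convergence,
\[
\int_\Omega g\,d(\rho\circ E)=\sup_n\int_\Omega g_n\,d(\rho\circ E),
\]
so $\rho\mapsto\int g\,d(\rho\circ E)$ is a supremum of continuous functions and therefore lower semicontinuous. Subtracting the continuous term $M\rho(I)$ preserves lower semicontinuity, giving (3).

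The main (and only real) obstacle is the truncation/monotone-convergence argument for lower semicontinuity, since $g$ may be unbounded and even take the value $+\infty$; all other properties follow directly from elementary measure-theoretic manipulations once the translation by $M$ has reduced to the non-negative case.
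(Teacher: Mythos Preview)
Your proof is correct and follows essentially the same strategy as the paper: express $m(\rho)$ as an increasing supremum $\sup_n \rho(T_n)$ for bounded self-adjoint operators $T_n$, which immediately yields lower semicontinuity and the other properties. The only cosmetic difference is that the paper truncates $f$ directly via the sublevel sets $f^{-1}((-\infty,n])$ (taking $T_n=\int_{f^{-1}((-\infty,n])} f\,dE$), whereas you first shift to $g=f+M\ge0$ and then truncate by $g_n=g\wedge n$; both produce an increasing sequence of bounded operators and invoke the monotone convergence theorem in the same way.
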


\begin{proof}
For each $n\in\bN$, $T_n := \int_{f^{-1}((-\infty,n])} f(\omega)\,dE(\omega)$ defines an element of
$B(\cH)_\sa$, since $f$ is bounded from below. Moreover, it is clear by definition that 
\[
T_{n+1} - T_n = \int_{f^{-1}((n,n+1])} f(\omega)\,dE(\omega)
\geq n E(f^{-1}((n,n+1])) \geq 0. 
\]
For each $\rho \in B(\cH)_*^+$ we have 
\[
\int_\Omega f(\omega)\,d\rho(E(\omega))
= \sup_{n\geq 1} \int_{f^{-1}((-\infty,n])} f(\omega)\,d\rho(E(\omega))
= \sup_{n\geq1} \rho(T_n) 
\]
by the monotone convergence theorem (which is applicable because $f$ is bounded from below).
The desired assertion immediately follows from the above expression. 
\end{proof}

We write $\int_\Omega f\,dE$ for the element of $\widehat{B(\cH)}_\lb$ given in Lemma {\ref{L-2.5}.
When $f$ is an $\bR$-valued measurable function on $\Omega$ bounded from below, it is
immediate to see that $\int_\Omega f\,dE$ is a lower semibounded self-adjoint operator defined
by the usual spectral integral. In particular, $\int_\Omega f\,dE\in B(\cH)_\sa$ if $f$ is bounded
on $\Omega$.

%%%%%%%%%%%%%%% Section 3 %%%%%%%%%%%%%%%
\section{Extended real-valued operator convex functions}\label{Sec-3}

A notion of operator convex functions with extended real values in $(-\infty,\infty]$ will be
essential in our later discussions. In this section we present a brief exposition of such extended
operator convex functions, since the subject has been nowhere discussed so far.

Let $J$ be an arbitrary interval in $\bR$, either finite or infinite and either closed or open.
In this section we consider a Borel function $f:J\to(-\infty,\infty]$, and assume throughout that
$f$ is locally bounded from below, i.e., bounded from below on any compact subset of $J$.
We write $B(\cH)_J$ for the set of $A\in B(\cH)_\sa$ whose spectrum $\sigma(A)$ is included
in $J$. It is clear that $B(\cH)_J$ is a convex subset of $B(\cH)_\sa$.

Let $A\in B(\cH)_J$ and $A=\int_{\sigma(A)}t\,dE_A(t)$ be the spectral decomposition of $A$
with the spectral measure $E_A$ of $A$ supported on $\sigma(A)$. By Lemma \ref{L-2.5} we
can define $f(A)\in\widehat{B(\cH)}_\lb$ by $f(A):=\int_{\sigma(A)}f(t)\,dE_A(t)$, i.e.,
\begin{align}\label{F-3.1}
f(A)(\rho):=\int_{\sigma(A)}f(t)\,d\rho(E_A(t))
=\int_Jf(t)\,d\rho(E_A(t))\in(-\infty,\infty],\quad\rho\in B(\cH)_*^+.
\end{align}
When $f$ is a continuous $\bR$-valued function on $J$, it is clear that $f(A)\in B(\cH)_\sa$ is
the usual continuous functional calculus of $A$.

Recall that the usual topology on $\bR$ is extended to $(-\infty,\infty]$ as is generated by the
intervals $(a,b)$, $(a,\infty]$. The continuity of a function $\psi$ from a metric space
$\mathcal{X}$ to $(-\infty,\infty]$ is considered against this topology. Namely, if $x_n\to x$ in
$\mathcal{X}$, then $\psi(x_n)\to\psi(x)$ holds even when $\psi(x)=\infty$.  

\begin{lemma}\label{L-3.1}
Assume that $f$ is continuous on $J$ as a function to $(-\infty,\infty]$. Then for every
$\rho\in B(\cH)_*^+$, the mapping $A\in B(\cH)_J\mapsto f(A)(\rho)\in(-\infty,\infty]$ is lower
semicontinuous in the operator norm. Furthermore, the same holds even in the strong
operator topology whenever $\inf_{t\in J}f(t)/(1+|t|)>-\infty$.
\end{lemma}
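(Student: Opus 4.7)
The plan is to approximate $f$ from below by bounded $\bR$-valued continuous functions. For $k\in\bN$ set $f_k(t):=\min(f(t),k)$; since $f:J\to(-\infty,\infty]$ is continuous, each $f_k$ is $\bR$-valued and continuous on $J$, and $f_k\uparrow f$ pointwise. Applying monotone convergence in \eqref{F-3.1} (after shifting $f$ by a constant, using that $f$ is locally bounded from below on the compact set $\sigma(A)\subset J$) gives
\[
f(A)(\rho)=\sup_k f_k(A)(\rho)\qquad(\rho\in B(\cH)_*^+,\ A\in B(\cH)_J).
\]
Since a pointwise supremum of lower semicontinuous functions is lower semicontinuous, it suffices to show, for each fixed $k$, that $A\mapsto f_k(A)(\rho)$ is (lower semi)continuous in the relevant topology. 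Note that for $A\in B(\cH)_J$ the spectrum $\sigma(A)$ is compact in $J$, so $f_k$ is bounded on $\sigma(A)$ and $f_k(A)\in B(\cH)_\sa$.

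For the norm topology, upper semicontinuity of the spectrum under norm convergence lets us fix, given $A_n\to A$ in norm within $B(\cH)_J$, a compact $K\subset J$ containing $\sigma(A)$ in its $J$-interior such that $\sigma(A_n)\subset K$ eventually. On $K$, $f_k$ is continuous and bounded, hence a uniform limit of polynomials (Weierstrass); norm continuity of the polynomial functional calculus then gives $\|f_k(A_n)-f_k(A)\|\to 0$ and therefore $\rho(f_k(A_n))\to\rho(f_k(A))$. Taking the supremum over $k$ yields the first assertion.

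For the SOT assertion, the growth hypothesis gives $f(t)\ge-C(1+|t|)$ on $J$ for some $C>0$. Given an SOT-convergent net $A_\alpha\to A$ in $B(\cH)_J$, the uniform boundedness principle yields $\|A_\alpha\|\le M$ eventually, so $\sigma(A_\alpha),\sigma(A)\subset[-M,M]$; moreover, the SOT convergence of powers $A_\alpha^j\to A^j$ on uniformly bounded nets produces, for every $\xi\in\cH$, the weak convergence of the scalar spectral measures $\mu_\alpha^\xi:=\<E_{A_\alpha}(\cdot)\xi,\xi\>$ to $\mu^\xi$ as measures on $[-M,M]$. Fix an open $U\subset\bR$ with $\sigma(A)\subset U\subset\overline{U}\subset J$ and $\overline{U}$ compact, and use Tietze's extension theorem to extend $f_k|_{\overline{U}\cap[-M,M]}$ to a bounded continuous $g_k:[-M,M]\to\bR$. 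Weak convergence then yields both $\int g_k\,d\mu_\alpha^\xi\to\int g_k\,d\mu^\xi=\int f_k\,d\mu^\xi$ (using $\mathrm{supp}\,\mu^\xi\subset\overline{U}$ and $g_k=f_k$ there) and $\mu_\alpha^\xi([-M,M]\setminus U)\to 0$ (Portmanteau, since $[-M,M]\setminus U$ is closed and $\mu^\xi$-null). Writing
\[
\int_J f_k\,d\mu_\alpha^\xi=\int g_k\,d\mu_\alpha^\xi-\int_{[-M,M]\setminus U} g_k\,d\mu_\alpha^\xi+\int_{J\setminus U}f_k\,d\mu_\alpha^\xi
\]
and using the growth bound $f_k(t)\ge-C(1+M)$ on $J\cap[-M,M]$ together with the boundedness of $g_k$ gives $\liminf_\alpha\int_J f_k\,d\mu_\alpha^\xi\ge\int_J f_k\,d\mu^\xi$, i.e., the SOT-lower semicontinuity of $A\mapsto f_k(A)(\omega_\xi)$. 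Extending from vector functionals to general $\rho=\sum_n\omega_{\xi_n}\in B(\cH)_*^+$ is routine via Fatou's lemma applied to the non-negative quantities $f_k(A_\alpha)(\omega_{\xi_n})+C(1+M)\|\xi_n\|^2$ (summable because $\sum\|\xi_n\|^2<\infty$), after which taking the supremum over $k$ completes the proof.

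The main obstacle is the SOT case when $J$ is not closed in $\bR$: even under SOT convergence with $\sigma(A)$ compactly contained in $J$, the spectra $\sigma(A_\alpha)$ may drift toward $\partial J$, so no single compact $K\subset J$ captures all of them. The cut-off by $U$, the Tietze extension $g_k$, and the use of the growth condition to control the tail integral $\int_{J\setminus U}f_k\,d\mu_\alpha^\xi$ are exactly what handle this drift.
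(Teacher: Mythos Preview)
Your proof is correct and follows the same overall strategy as the paper: truncate to $f_k:=f\wedge k$, write $f(A)(\rho)=\sup_k\rho(f_k(A))$ by monotone convergence, and then establish (lower semi)continuity of $A\mapsto\rho(f_k(A))$ for each $k$. For the norm-topology assertion the two arguments coincide.

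For the SOT assertion the paper is much terser: it simply notes that the growth hypothesis forces $\sup_{t\in J}|f_k(t)|/(1+|t|)<\infty$ and then invokes a standard fact (citing Str\u atil\u a, Appendix~A.2) to conclude that $A\mapsto f_k(A)$ is \emph{strongly continuous} on $B(\cH)_J$; lower semicontinuity of the supremum follows. Your route via weak convergence of the scalar spectral measures $\mu_\alpha^\xi$, a Tietze extension of $f_k$ off a compact neighbourhood $\bar U$ of $\sigma(A)$, and the Portmanteau estimate $\mu_\alpha^\xi([-M,M]\setminus U)\to0$ is a self-contained alternative that yields only lower semicontinuity of $A\mapsto f_k(A)(\omega_\xi)$ but needs no outside reference. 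It also makes explicit how to handle the case where $J$ is not closed in $\bR$ and $f_k$ admits no continuous extension across $\partial J$ --- exactly the ``spectral drift'' obstacle you flag at the end --- whereas the paper's one-line citation leaves this point to the reader.

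One caveat: the uniform boundedness principle guarantees eventual norm-boundedness for SOT-convergent \emph{sequences}, not for general nets; an SOT-convergent net of self-adjoint operators need not be eventually bounded in norm. The paper's argument carries the same implicit restriction (the quoted strong-continuity result holds on norm-bounded sets). Since every application of the lemma in the paper involves sequences or norm convergence, this is harmless, but your sentence invoking the UBP should be read for sequences (equivalently, for bounded nets).
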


\begin{proof}
For each $n\in\bN$ set $f_n:=f\wedge n$, which is a continuous $\bR$-valued function on $J$.
For every $A\in B(\cH)_J$ and $\rho\in B(\cH)_*^+$, it follows from the monotone convergence
theorem that
\[
f(A)(\rho)=\sup_n\int_Jf_n(t)\,d\rho(E_A(t))=\sup_n\rho(f_n(A)),
\]
where $f_n(A)$ is the usual continuous functional calculus of $A$. Since $A\mapsto f_n(A)$ is
continuous on $B(\cH)_J$ in the operator norm, the first assertion follows.
Next assume that $\inf_{t\in J}f(t)/(1+|t|)>-\infty$ and hence $\sup_{t\in J}|f_n(t)|/(1+|t|)<\infty$.
It follows (see e.g., \cite[Appendix A.2]{St}) that $A\mapsto f_n(A)$ is continuous on $B(\cH)_J$
in the strong operator topology for each $n\in\bN$. Hence the latter assertion holds as well.
\end{proof}

\begin{definition}\label{D-3.2}\rm
\begin{itemize}
\item[(1)] We say that $f$ is \emph{operator convex} if
\begin{align}\label{F-3.2}
f((1-\lambda)A+\lambda B)\le(1-\lambda)f(A)+\lambda f(B)\quad
\mbox{in $\widehat{B(\cH)}_\lb$}
\end{align}
holds for every $A,B\in B(\cH)_J$ with an arbitrary Hilbert space $\cH$ and for any
$\lambda\in(0,1)$.
\item[(2)] We say that $f$ is \emph{operator monotone} (resp., \emph{operator monotone
decreasing}) if $A\le B$ implies $f(A)\le f(B)$ (resp., $f(A)\ge f(B)$) in $\widehat{B(\cH)}_\lb$
for every $A,B\in B(\cH)_J$ with any $\cH$.
\end{itemize}
If $f$ is an $\bR$-valued function on $J$, then the above definitions are obviously the same as
the usual operator convexity and the operator monotonicity of $f$; see \cite{Bh,Hi}.
\end{definition}

Some basic equivalent conditions for $f$ being operator convex are given in the next
proposition, extending the $\bR$-valued case in \cite[Theorem 2.5.7]{Hi}. Condition (ii) will
particularly be useful in later discussions.

\begin{proposition}\label{P-3.3}
Let $f$ be as stated above. Then the following conditions are equivalent, where Hilbert spaces
$\cH,\cH_i,\cK$ are arbitrary and not fixed:
\begin{itemize}
\item[\rm(i)] $f$ is operator convex;
\item[\rm(ii)] for every $A\in B(\cH)_J$ and every isometry $V:\cK\to\cH$,
\[
f(V^*AV)\le V^*f(A)V\quad\mbox{in $\widehat{B(\cK)}_\lb$};
\]
\item[\rm(iii)] for every $A_i\in B(\cH_i)_J$ and every bounded operator $V_i:\cK\to\cH_i$
for $1\le i\le m$ with any $m\in\bN$ such that $\sum_{i=1}^mV_i^*V_i=I_\cK$,
\[
f\Biggl(\sum_{i=1}^mV_i^*A_iV_i\Biggr)\le\sum_{i=1}^mV_i^*f(A_i)V_i
\quad\mbox{in $\widehat{B(\cK)}_\lb$};
\]
\item[\rm(iv)] for every $A,B\in B(\cH)_J$ and every projection $P\in B(\cH)$,
\[
f(PAP+(I-P)B(I-P))\le Pf(A)P+(I-P)f(B)(I-P)\quad\mbox{in $\widehat{B(\cH)}_\lb$}.
\]
\end{itemize}
\end{proposition}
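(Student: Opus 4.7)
The plan is to close the cycle (i) $\Rightarrow$ (ii) $\Rightarrow$ (iii) $\Rightarrow$ (iv) $\Rightarrow$ (i). As basic machinery I would first record three structural facts about the extended calculus $f(\cdot)$ of \eqref{F-3.1}, each straightforward from the spectral/quadratic-form description of $\widehat{B(\cH)}_\lb$ developed in \S\ref{Sec-2} together with Lemmas \ref{L-2.4} and \ref{L-2.5}: (a) unitary invariance, $f(UAU^*) = U f(A) U^*$ for any unitary $U$ on $\cH$, since the spectral measure of $UAU^*$ is $UE_A(\cdot)U^*$; (b) the direct-sum rule $f(A_1 \oplus A_2) = f(A_1) \oplus f(A_2)$, since the spectral measure of $A_1 \oplus A_2$ is $E_{A_1}(\cdot) \oplus E_{A_2}(\cdot)$; and (c) compression monotonicity, i.e.\ $T_1 \le T_2$ in $\widehat{B(\cH)}_\lb$ implies $V^* T_1 V \le V^* T_2 V$ in $\widehat{B(\cK)}_\lb$ for every bounded $V:\cK \to \cH$, which is immediate from Lemma \ref{L-2.4} and the pointwise order on $\widehat{B(\cH)}_\lb$.

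The two easy implications come first. For (ii) $\Rightarrow$ (iii) I would set $\cH := \bigoplus_{i=1}^m \cH_i$, $A := \bigoplus_i A_i \in B(\cH)_J$, and define $V:\cK \to \cH$ by $V\xi := (V_1\xi, \dots, V_m\xi)$; the hypothesis $\sum_i V_i^* V_i = I_\cK$ makes $V$ an isometry with $V^* A V = \sum_i V_i^* A_i V_i$, while (b) identifies $V^* f(A) V$ with $\sum_i V_i^* f(A_i) V_i$, so (ii) applied to $V$ yields (iii). For (iii) $\Rightarrow$ (iv) I would take $m = 2$, $\cH_1 = \cH_2 = \cK = \cH$, $V_1 := P$, $V_2 := I - P$, $A_1 := A$, $A_2 := B$; since $P$ is a projection, $V_1^* V_1 + V_2^* V_2 = P + (I-P) = I_\cH$ and (iii) specialises to (iv). The step (iii) $\Rightarrow$ (i), not strictly needed for the cycle, is obtained by choosing $m = 2$, $V_1 := \sqrt{1-\lambda}\,I$, $V_2 := \sqrt{\lambda}\,I$.

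For (i) $\Rightarrow$ (ii) I would use the flip-unitary trick. Given an isometry $V:\cK \to \cH$, identify $\cH = V\cK \oplus (V\cK)^\perp$ and write $A$ as the block matrix $\left(\begin{smallmatrix} A_{00} & A_{01} \\ A_{10} & A_{11} \end{smallmatrix}\right)$ with $A_{00}$ unitarily equivalent to $V^* A V$; both $A_{00}$ and $A_{11}$ lie in $B(\cdot)_J$ because $J$ is an interval. Setting $U := I_{V\cK} \oplus (-I_{(V\cK)^\perp})$, a self-adjoint unitary on $\cH$, one checks $\half(A + UAU^*) = A_{00} \oplus A_{11}$, so (i) applied with $\lambda = \half$, combined with (a) and (b), gives
\[
f(A_{00}) \oplus f(A_{11}) = f(A_{00} \oplus A_{11}) \le \half\bigl(f(A) + U f(A) U^*\bigr) \quad \text{in } \widehat{B(\cH)}_\lb .
\]
Because $U$ restricts to the identity on $V\cK$, one has $V^* U T U^* V = V^* T V$ for every $T \in \widehat{B(\cH)}_\lb$; compressing the displayed inequality by $V$ and invoking (c) therefore yields $f(V^* A V) \le V^* f(A) V$, which is (ii).

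For (iv) $\Rightarrow$ (i) I would pass to $\tilde\cH := \cH \oplus \cH$ and use the self-adjoint unitary
\[
U := \begin{pmatrix} \sqrt{\lambda}\,I & \sqrt{1-\lambda}\,I \\ \sqrt{1-\lambda}\,I & -\sqrt{\lambda}\,I \end{pmatrix} .
\]
Setting $\tilde A := A \oplus B \in B(\tilde\cH)_J$, a direct block calculation shows that $U \tilde A U^*$ has $(1,1)$-block $\lambda A + (1-\lambda) B$ and $(2,2)$-block $(1-\lambda) A + \lambda B$, so for $\tilde P := I_\cH \oplus 0$ one has $\tilde P (U\tilde A U^*)\tilde P + (I-\tilde P)(U\tilde A U^*)(I-\tilde P) = (\lambda A + (1-\lambda) B) \oplus ((1-\lambda) A + \lambda B)$. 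Applying (iv) on $\tilde\cH$ with both inputs equal to $U \tilde A U^*$, then rewriting the right-hand side via (a)--(b) as $U(f(A) \oplus f(B)) U^*$ whose $(1,1)$-block equals $\lambda f(A) + (1-\lambda) f(B)$ by the same block computation, and finally compressing the resulting inequality through the isometry $\xi \mapsto (\xi, 0)$ via (c), delivers $f(\lambda A + (1-\lambda) B) \le \lambda f(A) + (1-\lambda) f(B)$, which is (i). The main obstacle I anticipate is not any single implication but the bookkeeping to justify (a) and (b) directly from \eqref{F-3.1} and to confirm that compressions and $\half$-averages of inequalities in $\widehat{B(\cH)}_\lb$ behave as expected when one side may take the value $+\infty$ on part of its domain; once (a)--(c) are recorded as preliminary lemmas, the cycle above is a direct transcription of the classical $\bR$-valued proof such as \cite[Theorem 2.5.7]{Hi}.
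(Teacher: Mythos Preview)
Your proposal is correct and matches the paper's approach exactly: the paper leaves the proof to the reader, stating only that it is ``essentially the same as that of \cite[Theorem 2.5.7]{Hi}'' once one records the unitary-invariance, direct-sum, and compression facts (its (a)--(c)), which are precisely the preliminary lemmas you set up before running the standard Hansen--Pedersen cycle. Your additional item (c) on compression monotonicity is not listed explicitly in the paper but is an immediate consequence of Lemma~\ref{L-2.4} and is implicitly used throughout.
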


Here note that $V^*AV$ in (ii) and $\sum_{i=1}^nV_i^*A_iV_i$ in (iii) are in $B(\cK)_J$
automatically. The proof of the proposition is left to the reader. Indeed, the proof is essentially
the same as that of \cite[Theorem 2.5.7]{Hi} by taking account of the following basic facts which
are immediate from definition \eqref{F-3.1}:
\begin{itemize}
\item[(a)] $f(U^*AU)=U^*f(A)U$ in $\widehat{B(\cH)}_\lb$ for every $A\in B(\cH)_J$ and any
unitary $U$ on $\cH$,
\item[(b)] for $T\in\widehat{B(\cK)}_\lb$ and $S\in\widehat{B(\cH)}_\lb$,
$T\oplus S\in\widehat{B(\cK\oplus\cH)}_\lb$ is defined by
$(T\oplus S)(\rho):=T(\rho_1)+S(\rho_2)$ for every
$\rho=\begin{bmatrix}\rho_1&\rho_{12}\\\rho_{12}^*&\rho_2\end{bmatrix}\in B(\cK\oplus\cH)_*^+$,
\item[(c)] $f(A\oplus B)=f(A)\oplus f(B)$ in $\widehat{B(\cK\oplus\cH)}_\lb$ for every
$A\in B(\cK)_J$ and $B\in B(\cH)_J$.
\end{itemize}

\begin{remark}\label{R-3.4}\rm
In Definition \ref{D-3.2} we can fix an infinite-dimensional separable Hilbert space $\cH$.
Indeed, let $\cH_0$ be a such Hilbert space. For every $A,B\in B(\cH)_\sa$ with any Hilbert
space $\cH$, one can decompose $A,B$ into direct sums $A=\bigoplus_iA_i$ and
$B=\bigoplus_iB_i$ under a direct sum decomposition $\cH=\bigoplus_i\cH_i$ into separable
Hilbert spaces $\cH_i$. (This is because the unital $C^*$-algebra generated by $A,B$ is
separable and any non-degenerate representation of a $C^*$-algebra is the direct sum of cyclic
representations.) Since each $\cH_i$ is isomorphic to a subspace of $\cH_0$, it is not
difficult to see (from properties like (a)--(c) above) that inequality \eqref{F-3.2} follows from
that when $\cH=\cH_0$, while the details are omitted here.
\end{remark}

\begin{remark}\label{R-3.5}\rm
It is clear, from the definition in Lemma \ref{L-2.5}, that $f(tI)=f(t)I$ holds for any $t\in J$, where
$(\infty\cdot I)(\rho)=\infty\cdot\rho(I)$ (for $f(t)=\infty$). This shows that if $f$ is operator convex
(resp., operator monotone), then it is convex (resp., monotone increasing) on $J$ as a numerical
function with values in $(-\infty,\infty]$.
\end{remark}

\begin{example}\label{E-3.6}\rm
Here we pick out two exceptional examples of operator convex functions $f:J\to(-\infty,\infty]$.
A trivial example is $f\equiv\infty$. Another particular one is the case when $f(t_0)<\infty$ for
some $t_0\in J$ and $f(t)=\infty$ for all $t\in J\setminus\{t_0\}$. This case is confirmed as follows.
Let $A,B\in B(\cH)_J$, $0<\lambda<1$ and $\xi\in\cH$. If
$((1-\lambda)f(A)+\lambda f(B))(\omega_\xi)<\infty$, then $f(A)(\omega_\xi)<\infty$ and
$f(B)(\omega_\xi)<\infty$. Since
\[
f(A)(\omega_\xi)=f(t_0)\<E_A(\{t_0\})\xi,\xi\>+\infty\<E_A(J\setminus\{t_0\})\xi,\xi\><\infty,
\]
one has $E_A(J\setminus\{t_0\})\xi=0$ so that $A\xi=t_0\xi$, and similarly $B\xi=t_0\xi$. Hence
it follows that $((1-\lambda)A+\lambda B)\xi=t_0\xi$ so that
\[
f((1-\lambda)A+\lambda B)(\omega_\xi)=f(t_0)\<\xi,\xi\>
=(1-\lambda)f(A)(\omega_\xi)+\lambda f(B)(\omega_\xi),
\]
showing that $f$ is operator convex.
\end{example}

The next theorem says that $f:J\to(-\infty,\infty]$ is operator convex, except for the particular
cases in Example \ref{E-3.6}, only when $f$ is $\bR$-valued and operator convex on the interior
$J^\circ$ (so that $f$ can take value $\infty$ only at the boundary of $J$). The situation is similar
for operator monotone functions. In the following, let $a:=\inf J$, $b:=\sup J$, and denote
$f(a^+):=\lim_{t\searrow a}f(t)$, $f(b^-):=\lim_{t\nearrow b}f(t)$ (if the limits exist in $(-\infty,\infty]$).

\begin{theorem}\label{T-3.7}
Assume that $f(t)<\infty$ at more than one point in $J$. 
\begin{itemize}
\item[\rm(1)] The following conditions are equivalent:
\begin{itemize}
\item[\rm(i)] $f$ is operator convex on $J$;
\item[\rm(ii)] $f$ is $\bR$-valued and operator convex on $J^\circ$, $f(a)\ge f(a^+)$ if $a\in J$,
and $f(b)\ge f(b^-)$ if $b\in J$.
\end{itemize}
\item[\rm(2)] The following conditions are equivalent:
\begin{itemize}
\item[\rm(i$'$)] $f$ is operator monotone on $J$;
\item[\rm(ii$'$)] $f$ is $\bR$-valued and operator monotone on $J^\circ$, $f(a)\le f(a^+)$ if
$a\in J$, and $f(b)\ge f(b^-)$ if $b\in J$.
\end{itemize}
\end{itemize}
\end{theorem}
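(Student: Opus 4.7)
The two parts of Theorem~\ref{T-3.7} have parallel structure: I would prove (1) in detail, treating the two implications separately, and at the end indicate the modifications needed for (2). Throughout, $f$ is Borel and locally bounded from below on $J$, and the hypothesis that $f$ is finite at more than one point is assumed.

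For (i)$\Rightarrow$(ii): By Remark~\ref{R-3.5}, $f$ is numerically convex on $J$, so $I:=\{t\in J:f(t)<\infty\}$ is a subinterval of $J$ of positive length. The crucial step is to show $I\supseteq J^\circ$. Assume for contradiction that $d:=\sup I<b:=\sup J$ (the case $\inf I>a$ is symmetric). I would pick $s\in I$ close to $d$ (possible since $d=\sup I$), $c\in J\cap(d,b]$ with $c>2d-s$ (so that $(s+c)/2>d$), and $\mu>0$ small enough that $s\pm\mu\in I$. In $\cH=\bC^2$ set $A:=\mathrm{diag}(s,c)$ and $B:=sI+\mu\sigma_x=\begin{pmatrix}s&\mu\\\mu&s\end{pmatrix}$. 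A direct $2\times2$ computation gives
\[
C:=\tfrac12(A+B)=\begin{pmatrix}s&\mu/2\\\mu/2&(s+c)/2\end{pmatrix},\qquad
\lambda_\pm(C)=\tfrac{3s+c}{4}\pm\tfrac14\sqrt{(s-c)^2+4\mu^2}.
\]
For small $\mu>0$ the larger eigenvalue $\lambda_+$ strictly exceeds $d$, and its eigenvector, proportional to $\bigl(1,\,2(\lambda_+-s)/\mu\bigr)$, has nonzero $e_1$-component. Thus $f(A)(\omega_{e_1})=f(s)<\infty$ and $f(B)(\omega_{e_1})=\tfrac12(f(s+\mu)+f(s-\mu))<\infty$, whereas $f(C)(\omega_{e_1})=\infty$ because $f(\lambda_+)=\infty$ and $e_1$ has nonzero $\lambda_+$-component. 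This contradicts $f(C)\le\tfrac12 f(A)+\tfrac12 f(B)$, so $\sup I\ge b$; symmetrically $\inf I\le a$, hence $I\supseteq J^\circ$ and $f$ is $\bR$-valued on $J^\circ$. Restriction to $B(\cH)_{J^\circ}$ then gives $f$ operator convex on $J^\circ$, and the standard theory for $\bR$-valued operator convex functions on open intervals yields continuity of $f$ on $J^\circ$, so $f(a^+)$ and $f(b^-)$ exist in $(-\infty,\infty]$. The boundary conditions $f(a)\ge f(a^+)$ and $f(b)\ge f(b^-)$ follow by letting $\eps\searrow0$ in the numerical-convex inequality $f(a+\tfrac\eps2)\le\tfrac12 f(a)+\tfrac12 f(a+\eps)$ and its analogue at $b$.

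For (ii)$\Rightarrow$(i): Extend $f$ to $\tilde f:J\to(-\infty,\infty]$ by $\tilde f(a):=f(a^+)$ and $\tilde f(b):=f(b^-)$ at boundary points of $J$; then $\tilde f$ is continuous into $(-\infty,\infty]$ and $\tilde f\le f$ on $J$. For $A,B\in B(\cH)_J$ and $\lambda\in(0,1)$, define spectral truncations $A_n:=\int\chi_n(t)\,dE_A(t)$ with $\chi_n(t):=\max(a+\tfrac1n,\min(t,b-\tfrac1n))$, similarly $B_n$; then $A_n,B_n$, and $C_n:=(1-\lambda)A_n+\lambda B_n$ all lie in $B(\cH)_{J^\circ}$ and converge in operator norm to $A,B,C:=(1-\lambda)A+\lambda B$. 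Operator convexity on $J^\circ$ yields $\tilde f(C_n)=f(C_n)\le(1-\lambda)f(A_n)+\lambda f(B_n)=(1-\lambda)\tilde f(A_n)+\lambda\tilde f(B_n)$. Lemma~\ref{L-3.1} applied to $\tilde f$ gives $\tilde f(C)(\rho)\le\liminf_n\tilde f(C_n)(\rho)$; on the RHS, an explicit spectral split into contributions from $\{a\}$, $(a,b)$, $\{b\}$ combined with monotone convergence on the interior and absolute continuity of the integral at each boundary point (handling the cases $\tilde f(a)<\infty$ and $\tilde f(a)=\infty$ separately, using that convexity forces $\tilde f$ to be monotone near each endpoint) shows $\tilde f(A_n)(\rho)\to\tilde f(A)(\rho)$ whenever the RHS is finite (the other case being trivial). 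Thus $\tilde f(C)(\rho)\le(1-\lambda)\tilde f(A)(\rho)+\lambda\tilde f(B)(\rho)$. To upgrade to $f$, the key observation is that if $C\xi=a\xi$ then $\langle A\xi,\xi\rangle=\langle B\xi,\xi\rangle=a$, which (since $A,B\ge aI$) forces $A\xi=B\xi=a\xi$; hence $E_C(\{a\})\le E_A(\{a\})$ and $E_C(\{a\})\le E_B(\{a\})$ as projections, and likewise at $b$. Since
\[
f(T)(\rho)-\tilde f(T)(\rho)=(f(a)-f(a^+))\rho(E_T(\{a\}))+(f(b)-f(b^-))\rho(E_T(\{b\}))
\]
with both coefficients $\ge0$ by (ii), the projection inequalities give $f(C)(\rho)-\tilde f(C)(\rho)\le(1-\lambda)[f(A)(\rho)-\tilde f(A)(\rho)]+\lambda[f(B)(\rho)-\tilde f(B)(\rho)]$; adding this to the $\tilde f$-inequality yields the desired $f(C)\le(1-\lambda)f(A)+\lambda f(B)$ in $\widehat{B(\cH)}_\lb$.

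The main obstacle is the matrix construction in (i)$\Rightarrow$(ii): one must verify the parameter choice works for every configuration of $I$ inside $J$, including when $\sup I$ is not attained and when $J$ is narrow, so that the constraint $c>2d-s$ is achievable within $J$. For part~(2), the same two-step pattern applies with the analogous modifications. In (i$'$)$\Rightarrow$(ii$'$), assuming $t^*:=\sup I<b$, one takes $A:=\mathrm{diag}(s,c)$ with $s\in I$ close to $t^*$ and $c\in J\cap(t^*,b]$, and $B:=A+\nu P$ with $P:=\tfrac12(e_1+e_2)(e_1+e_2)^*$ and small $\nu>0$; then $B\ge A$, and a perturbation computation in $M_2$ shows that $B$ has a ``good'' eigenvector with eigenvalue $\approx s+\nu/2\in I$ and nonzero $e_2$-component, forcing the relation $f(A)\le f(B)$ to read $\infty\le f(s+\nu/2)<\infty$, a contradiction. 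For (ii$'$)$\Rightarrow$(i$'$), the order-preserving linear approximation $A_n:=(1-\tfrac1n)A+\tfrac1n m I$ (for any fixed $m\in J^\circ$) satisfies $A\le B\Rightarrow A_n\le B_n$ and $A_n\in B(\cH)_{J^\circ}$, and the same boundary-correction bookkeeping applies with the signs $f(a^+)\ge f(a)$ and $f(b^-)\le f(b)$ provided by (ii$'$).
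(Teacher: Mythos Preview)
Your proposal is correct and follows essentially the same route as the paper: a $2\times2$ matrix construction to exclude interior $\infty$-values (the paper isolates this as Lemma~\ref{L-3.8}, with different specific matrices), then for the converse a spectral truncation into $J^\circ$, Lemma~\ref{L-3.1} for lower semicontinuity on the left, a monotone/bounded-convergence argument on the right, and finally the boundary correction via the projection inequality $E_C(\{a\})\le E_A(\{a\})\wedge E_B(\{a\})$ (which is exactly Remark~\ref{R-3.9}). The only cosmetic differences are that the paper verifies operator convexity of the continuous extension through the isometry criterion of Proposition~\ref{P-3.3}(ii) rather than the convexity inequality directly, and in part~(2) it rescales by $r\nearrow1$ after reducing to $[0,1]$ rather than using your affine map toward an interior point $m$; neither changes the substance.
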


To prove the theorem, we first give a lemma.

\begin{lemma}\label{L-3.8}
Let $-\infty<\alpha<\gamma<\beta<\infty$.
\begin{itemize}
\item[\rm(1)] If a function $f:(\alpha,\beta)\to(-\infty,\infty]$
satisfies either
\begin{align}\label{F-3.3}
f(x)<\infty\ \ (\alpha<x<\gamma),\quad f(x)=\infty\ \ (\gamma<x<\beta),
\end{align}
or
\begin{align}\label{F-3.4}
f(x)=\infty\ \ (\alpha<x<\gamma),\quad f(x)<\infty\ \ (\gamma<x<\beta),
\end{align}
then $f$ is not operator convex on $(\alpha,\beta)$.
\item[\rm(2)] If $f$ satisfies \eqref{F-3.3} (resp., \eqref{F-3.4}), then $f$ is not operator
monotone (resp., not operator monotone decreasing) on $(\alpha,\beta)$.
\end{itemize}
\end{lemma}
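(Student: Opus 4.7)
The plan is to produce explicit $2\times 2$ matrix counterexamples in $B(\bC^2)_J$ (with $J=(\alpha,\beta)$) falsifying the relevant operator inequalities; the cases \eqref{F-3.4} then reduce to \eqref{F-3.3} via an affine reflection across $\gamma$.

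For part (1) under \eqref{F-3.3}, I will exhibit $A,B\in B(\bC^2)_J$ with $\tfrac{1}{2}(f(A)+f(B))(\omega_{e_1})<\infty$ but $f(\tfrac{1}{2}(A+B))(\omega_{e_1})=\infty$, contradicting operator convexity at $\lambda=\tfrac{1}{2}$. For small $\eps>0$ set $A=\mathrm{diag}(\gamma-\eps,\gamma+\eps)$ and $B=\bigl(\begin{smallmatrix}\gamma-\eps&\eps/2\\ \eps/2&\gamma-\eps\end{smallmatrix}\bigr)$. Then $e_1$ is an eigenvector of $A$ for $\gamma-\eps\in(\alpha,\gamma)$, and $B$ has spectrum $\{\gamma-3\eps/2,\gamma-\eps/2\}\subset(\alpha,\gamma)$, so both $f(A)(\omega_{e_1})$ and $f(B)(\omega_{e_1})$ are finite by \eqref{F-3.3}. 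The $2\times 2$ quadratic formula applied to $M=\tfrac{1}{2}(A+B)=\bigl(\begin{smallmatrix}\gamma-\eps&\eps/4\\ \eps/4&\gamma\end{smallmatrix}\bigr)$ yields larger eigenvalue $\lambda_+(M)=\gamma+\eps(\sqrt{5}-2)/4>\gamma$ with corresponding eigenvector proportional to $(1,\sqrt{5}+2)^T$, whose $e_1$-component is nonzero; hence $f(M)(\omega_{e_1})=\infty$ by \eqref{F-3.3}. All interval memberships hold provided $\eps$ is small enough relative to $\min(\gamma-\alpha,\beta-\gamma)$.

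For part (2) under \eqref{F-3.3}, take $A=\bigl(\begin{smallmatrix}\gamma-\eps&c\\ c&\gamma+\delta\end{smallmatrix}\bigr)$ with small $\eps,\delta,c>0$. Its eigenvalues $\lambda_\pm(A)=\gamma+(\delta-\eps)/2\pm\sqrt{((\delta+\eps)/2)^2+c^2}$ always straddle $\gamma$ (indeed $\lambda_+-\gamma\geq\delta>0$ and $\lambda_--\gamma\leq-\eps<0$), and since $c>0$ the eigenvector for $\lambda_+(A)$ has nonzero $e_1$-component, forcing $f(A)(\omega_{e_1})=\infty$. Pair this with $B=\mathrm{diag}(b_{11},b_{22})$ where $b_{11}\in(\alpha,\gamma)$ (so $f(B)(\omega_{e_1})=f(b_{11})<\infty$) and $b_{22}\in J$. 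The positivity $B-A\geq 0$ is a direct $2\times 2$ check requiring $b_{11}\geq\gamma-\eps$, $b_{22}\geq\gamma+\delta$, and $(b_{11}-\gamma+\eps)(b_{22}-\gamma-\delta)\geq c^2$, arranged e.g.\ by $b_{11}=\gamma-\eps/2$, $b_{22}=\gamma+\delta+2c^2/\eps$, with $c$ small enough that $b_{22}<\beta$. Then $f(A)\not\le f(B)$ because $f(A)(\omega_{e_1})=\infty>f(B)(\omega_{e_1})$.

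The two cases under \eqref{F-3.4} follow by applying the above arguments to the reflected function $\tilde f(x):=f(2\gamma-x)$ on $\tilde J:=(2\gamma-\beta,2\gamma-\alpha)$. Directly from \eqref{F-3.1} one checks $\tilde f(A)=f(2\gamma I-A)$ in $\widehat{B(\cH)}_\lb$, and the affine involution $A\mapsto 2\gamma I-A$ is an order-reversing bijection between $B(\cH)_J$ and $B(\cH)_{\tilde J}$. Hence $\tilde f$ satisfies \eqref{F-3.3} on $\tilde J$, and $\tilde f$ is operator convex (resp.\ operator monotone) on $\tilde J$ if and only if $f$ is operator convex (resp.\ operator monotone \emph{decreasing}) on $J$; applying the results just proved to $\tilde f$ delivers the two missing statements for $f$. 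The main obstacle throughout is the simultaneous satisfaction of all interval memberships together with $B-A\geq 0$ in part (2), which as above reduces to a short list of strict inequalities solvable uniformly in $\alpha<\gamma<\beta$; no construction beyond $2\times 2$ matrices is needed.
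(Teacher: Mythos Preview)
Your proof is correct and follows essentially the same strategy as the paper: produce $2\times 2$ matrices whose spectra straddle $\gamma$ so that the test vector $e_1$ picks up an infinite contribution on one side of the desired inequality but not the other, and reduce \eqref{F-3.4} to \eqref{F-3.3} by an affine reflection. The only notable difference is organizational: the paper uses a \emph{single} pair $(A,B)$ (after normalizing to $\alpha=0$, $\gamma=2$) that happens to satisfy $A\ge B$, so the convexity counterexample for (1) is recycled for (2) via $(A+B)/2\le A$ with $f((A+B)/2)(\omega_\xi)=\infty>f(A)(\omega_\xi)$; you instead build a separate pair $A\le B$ for (2) with an explicit positive-semidefiniteness check, which is slightly more laborious but equally valid.
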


\begin{proof}
By transforming by a linear function, it suffices to show the first case with
$\alpha=0<\gamma=2<\beta$ in each of (1) and (2). For $0<\delta<\min\{1,\beta-2\}$
consider $A,B\in B(\bC^2)_{(\alpha,\beta)}$ and $\xi\in\bC^2$ defined to be
\[
A:=\begin{bmatrix}2-\delta^2&0\\0&2+\delta\end{bmatrix},\quad
B:=\begin{bmatrix}1&1-\delta^2\\1-\delta^2&1\end{bmatrix},\quad
\xi:=\begin{bmatrix}1\\0\end{bmatrix}.
\]
Then one has $f(A)(\omega_\xi)=\<f(A)\xi,\xi\>=f(2-\delta^2)$. Since the eigenvalues of
$B$ are $\delta^2$ and $2-\delta^2$, one has
$f(B)(\omega_\xi)=f(\delta^2)|\<v_1,\xi\>|^2+f(2-\delta^2)|\<v_2,\xi\>|^2$, where $v_1,v_2$
are the unit eigenvectors of $B$ for $\delta^2,2-\delta^2$ respectively. Therefore,
\[
\half(f(A)+f(B))(\omega_\xi)=\half(f(A)(\omega_\xi)+f(B)(\omega_\xi))<\infty.
\]
On the other hand, a direct calculation shows that two eigenvalues of $\half(A+B)$ are
$\lambda_1(\delta):=1+\delta/4+o(\delta)$ and $\lambda_2(\delta) := 2+\delta/4+o(\delta)$
as $\delta \searrow 0$. With the unit eigenvectors $u_1(\delta),u_2(\delta)$ corresponding to
$\lambda_1(\delta),\lambda_2(\delta)$ respectively, we have
\[
f\bigl(\half(A+B)\bigr)(\omega_\xi)
=f(\lambda_1(\delta))|\<u_1(\delta),\xi\>|^2
+f(\lambda_2(\delta))|\<u_2(\delta),\xi\>|^2=\infty,
\]
for all sufficiently small $\delta>0$, since $\<u_2(\delta),\xi\>\ne0$ obviously. Therefore,
$f\bigl(\half(A+B)\bigr)\le\half(f(A)+f(B))$ does not hold, and (1) has been shown.

Next we show (2). Since $A\ge B$ as immediately verified, one has $(A+B)/2\le A$ but
$f\bigl(\half(A+B)\bigr)\le f(A)$ does not hold. Hence $f$ is not operator monotone.
\end{proof}

\begin{proof}[Proof of Theorem \ref{T-3.7}]
(1)\enspace
(i)$\implies$(ii).\enspace
Assume item (i); then by Remark \ref{R-3.5}, $f$ is numerically convex on $J$. By assumption
on $f$ (having finite values at more than one point) there are $a_0,b_0\in[a,b]$ with $a_0<b_0$
such that $f(t)=\infty$ for all $t\in J\setminus[a_0,b_0]$, $f|_{(a_0,b_0)}$ is an $\bR$-valued
convex function, $f(a_0)\ge f(a_0^+)$ if $a_0\in J$, and $f(b_0)\ge f(b_0^-)$ if $b_0\in J$,
where the limits $f(a_0^+)$ and $f(b_0^-)$ exist in $(-\infty,\infty]$ thanks to the numerical
convexity of $f|_{(a_0,b_0)}$. If $a<a_0$ (resp., $b_0<b$), then we can apply Lemma
\ref{L-3.8}(1) with $a\le\alpha<\gamma=a_0<\beta\le b_0$ (resp.,
$a_0\le\alpha<\gamma=b_0<\beta\le b$) to find a contradiction to (i). Hence $a_0=a$ and
$b_0=b$. Moreover, by applying property (i) to $A,B\in B(\cH)_{(a,b)}$ we see that $f$ is
$\bR$-valued and operator convex on $J^\circ=(a,b)$.

(ii)$\implies$(i).\enspace
When $J=(a,b)$, this is obvious. In the following, we will prove (ii)$\implies$(i) when $J=[a,b]$
(so $-\infty<a<b<\infty$). The proof is similar when $J=[a,b)$ or $J=(a,b]$. Now assume item (ii).
First, assume further that $f$ is continuous at $a,b$ (hence on the whole $[a,b]$) as a function
to $(-\infty,\infty]$. We show property (ii) of Proposition \ref{P-3.3}. Let $A\in B(\cH)_{[a,b]}$,
$V:\cK\to\cH$ be an isometry, and $\rho\in B(\cK)_*^+$. Choose a sequence
$\delta_n\in(0,(b-a)/2)$ with $\delta_n\searrow0$, and define
$r_n(t):=(t\vee(a+\delta_n))\wedge(b-\delta_n)$ for $t\in[a,b]$. Since $r_n(A)\in B(\cH)_{(a,b)}$,
one has $f(V^*r_n(A)V)\le V^*f(r_n(A))V$ in $\widehat{B(\cK)}_\lb$ (thanks to Proposition
\ref{P-3.3} for $f|_{(a,b)}$). Since
$\|V^*r_n(A)V-V^*AV\|\to0$, by Lemma \ref{L-3.1} one has
\begin{align}\label{F-3.5}
f(V^*AV)(\rho)\le\liminf_{n\to\infty}f(V^*r_n(A)V)(\rho)
\le\liminf_{n\to\infty}(V^*f(r_n(A))V)(\rho).
\end{align}
Furthermore, note that
\begin{equation}\label{F-3.6}
\begin{aligned}
(V^*f(r_n(A))V)(\rho)&=\int_{[a,b]}f(t)\,d\rho(V^*E_{r_n(A)}(t)V) \\
&=\int_{[a,b]}f(r_n(t))\,d\rho(V^*E_A(t)V).
\end{aligned}
\end{equation}
When $f(a)<\infty$ and $f(b)<\infty$ (hence $f(t)<\infty$ for all $t\in[a,b]$), it is clear that
$f(r_n(t))\to f(t)$ uniformly on $[a,b]$. Taking the (numerical) convexity of $f$ into consideration,
we observe the following: When $f(a)<\infty$ and $f(b)=\infty$, there are an $n_0$
and some $c\in(a,b)$ such that $f(r_n(t))\to f(t)$ uniformly on $[a,c]$ and $f(r_n(t))\nearrow f(t)$
for all $t\in(c,b]$ as $n_0\le n\to\infty$. When $f(a)=\infty$ and $f(b)<\infty$, the situation is
similar. When $f(a)=f(b)=\infty$, there is an $n_0$ such that $f(r_n(t))\nearrow f(t)$ for all
$t\in[a,b]$ as $n_0\le n\to\infty$. Hence, using the bounded and the monotone convergence
theorems (after dividing the integration over $[a,b]$ into those over $[a,c]$ and $(c,b]$ if
necessary), we have
\begin{align}\label{F-3.7}
\lim_{n\to\infty}\int_{[a,b]}f(r_n(t))\,d\rho(V^*E_A(t)V)=\int_{[a,b]}f(t)\,d\rho(V^*E_A(t)V)
=(V^*f(A)V)(\rho).
\end{align}
Combining \eqref{F-3.5}--\eqref{F-3.7} gives $f(V^*AV)(\rho)\le(V^*f(A)V)(\rho)$, showing that
$f$ is operator convex when $f$ is continuous on the whole $[a,b]$.

To show (i) without the continuity assumption at $a,b$, set $f_0(t):=f(t)$ for $t\in(a,b)$,
$f_0(a):=f(a^+)$ and $f_0(b):=f(b^-)$. Then $f_0$ is operator convex as shown above. Furthermore,
set $\chi_a(a):=1$, $\chi_a(t):=0$ for $t\in(a,b]$, and similarly for $\chi_b$. One can choose
increasing $\alpha_n,\beta_n\ge0$ such that $f_0+\alpha_n\chi_a+\beta_n\chi_b\nearrow f$
and hence, by the monotone convergence theorem,
\begin{align*}
f(A)(\rho)&=\lim_n(f_0+\alpha_n\chi_a+\beta_n\chi_b)(A)(\rho) \\
&=\lim_n(f_0(A)(\rho)+\alpha_n\chi_a(A)(\rho)+\beta_n\chi_b(A)(\rho))
\end{align*}
for all $A\in B(\cH)_{[a,b]}$ and $\rho\in B(\cH)_*^+$ (and any $\cH$). Therefore, it remains to
prove that $\chi_a$ and $\chi_b$ are operator convex. For this, let $A\in B(\cH)_{[a,b]}$ and
$V:\cK\to\cH$ be as above. Note that $\chi_a(A)$ and $\chi_a(V^*AV)$ are the projections
onto $\ker(A-aI_\cH)$ and $\ker(V^*AV-aI_\cK)$, respectively. Assume that $\chi_a(V^*AV)\xi=\xi$,
i.e., $\xi\in\ker(V^*AV-aI_\cK)=\ker V^*(A-aI_\cH)V$. Hence we have $V\xi\in\ker(A-aI_\cH)$ so that
$\<V^*\chi_a(A)V\xi,\xi\>=\<V\xi,V\xi\>=\<\xi,\xi\>$, which implies that $V^*\chi_a(A)V\xi=\xi$.
Therefore, $\chi_a(V^*AV)\le V^*\chi_a(A)V$, as desired, and similarly for $\chi_b$.

(2)\enspace
(i$'$)$\implies$(ii$'$).\enspace
The proof is similar to that of (i)$\implies$(ii), so we omit the details.

(ii$'$)$\implies$(i$'$).\enspace
As in the proof of (ii)$\implies$(i), we prove the case $J=[a,b]$, and first assume that $f$ is
continuous at $a,b$. Since $f(a)$ must be finite, it is clear that $f$ is operator monotone on $[a,b)$.
For the remaining, by transforming $[a,b]$ to $[0,1]$ by a linear function, we may assume that
$[a,b]=[0,1]$. Let $A,B\in B(\cH)_{[0,1]}$ with $A\le B$. For any $r\in(0,1)$, since
$rA,rB\in B(\cH)_{[0,1)}$ and $rA\le rB$, one has $f(rA)\le f(rB)$. For every $\rho\in B(\cH)_*^+$
note that
\begin{align*}
f(rA)(\rho)=\int_{[0,1]}f(t)\,d\rho(E_{rA}(t))&=\int_{[0,1]}f(rt)\,d\rho(E_A(t)) \\
&\nearrow\int_{[0,1]}f(t)\,d\rho(E_A(t))=f(A)(\rho)
\end{align*}
as $r\nearrow1$ by the monotone convergence theorem. The same holds for $f(rB)(\rho)$,
so that $f(A)(\rho)\le f(B)(\rho)$.

As in the proof of (ii)$\implies$(i), it remains to show that $\chi_0$ is operator monotone
decreasing and $\chi_1$ is operator monotone on $B(\cH)_{[0,1]}$. Let $A,B\in B(\cH)_{[0,1]}$
with $A\le B$. Then $\ker A\supseteq\ker B$ and hence $\chi_0(A)\ge\chi_0(B)$. Since $I-A\ge I-B$,
one has $\ker(I-A)\subseteq\ker(I-B)$, implying $\chi_1(A)\le\chi_1(B)$.
\end{proof}

\begin{remark}\label{R-3.9}\rm
The operator convexity of $\chi_a$ (in the last part of the above proof of (i)$\implies$(ii)) is also
easy by showing \eqref{F-3.2} directly. Indeed, let $A,B\in B(\cH)_{[a,b]}$ and $0<\lambda<1$.
Since $A-aI\ge0$ and $B-aI\ge0$, one has
\begin{align*}
\ker((1-\lambda)A+\lambda B-aI)&=\ker((1-\lambda)(A-aI)+\lambda(B-aI)) \\
&=\ker(A-aI)\cap\ker(B-aI).
\end{align*}
Hence $\chi_a((1-\lambda)A+\lambda B)=\chi_a(A)\wedge\chi_a(B) \le(1-\lambda)\chi_a(A)+
\lambda\chi_a(B)$.
\end{remark}

%%%%%%%%%%%%% Section 4 %%%%%%%%%%%%%%%
\section{Pusz--Woronowicz functional calculus}\label{Sec-4}

Throughout this section, we fix an arbitrary Borel function $\phi : [0,\infty)^2 \to (-\infty,\infty]$
which is \emph{homogeneous} and \emph{locally bounded from below} (i.e., bounded from below
on any compact subset of $[0,\infty)^2$). Here, $\phi$ is homogeneous if
$\phi(\lambda x,\lambda y) = \lambda \phi(x,y)$ holds for every $\lambda, x, y \geq 0$. Hence
$\phi(0,0)=0$ necessarily holds. Also we remark that $\phi$ is locally bounded from below if and
only if $\phi(t,1-t)$ on $[0,1]$ is bounded from below. Typical and important examples of such
functions are $\psi(x,y):= x\log(x/y)$ and $\phi_\alpha(x,y):= y(x/y)^\alpha=x^\alpha y^{1-\alpha}$
(where $\alpha\ge0$), with conventions $\psi(0,0)=\phi_\alpha(0,0)=0$ and
$\psi(x,0) = \phi_\alpha(x,0)=\infty$ for $x>0$ and $\alpha>1$. Both of them play an important role,
for instance, in quantum information theory.

Although the original formalism of Pusz and Woronowicz in \cite{PW1,PW2} based on positive
sesquilinear forms is available even for unbounded functions as above, we will do reformulate
their functional calculus in terms of unbounded objects extending self-adjoint operators discussed
in \S\ref{Sec-2}. The content of this section is somewhat expository and also may be regarded as
an upgrade of the discussions by Hatano and the second-named author in \cite[\S4]{HU} to Borel
functions locally bounded from below. The approach here is axiomatic unlike \cite{HU}, though the
theory is about a functional calculus still depending on the distinguished function $\phi$
unlike Kubo and Ando's axiomatization of operator connections \cite{KA}. (Axiomatization of
Kubo and Ando's type will be discussed in \S\ref{Sec-10}.)

Associated with the function $\phi$ above, we introduce the Pusz--Woronowicz functional calculus
$\phi(A,B)$ of pairs $(A,B)$ in $B(\cH)_+$, whose values are elements of the extended lower
semibounded self-adjoint part $\widehat{B(\cH)}_\lb$ introduced in \S\ref{Sec-2}. The
definition is given in an axiomatic fashion with two postulates as follows:

\begin{definition}\label{D-4.1}\rm
The \emph{Pusz--Woronowicz functional calculus} (or \emph{PW-functional calculus}
for short) associated with $\phi$ is an operation giving a two-variable mapping   
\[
(A,B) \in B(\cH)_+\times B(\cH)_+ \,\longmapsto\, \phi(A,B) \in \widehat{B(\cH)}_\lb 
\]
for each Hilbert space $\cH$ such that the following properties hold{\rm:}
\begin{itemize} 
\item[(1)] (\emph{Extending the usual functional calculus})\enspace
Whenever $(A,B)$ is a commuting pair, $\phi(A,B)$ is given by the usual functional calculus, that is, 
\[
\phi(A,B)(\rho) = \int_{\sigma(A)\times\sigma(B)} \phi(x,y)\,d\rho(E_{(A,B)}(x,y)),
\qquad \rho \in B(\cH)_*^+, 
\]
where $E_{(A,B)}$ denotes the joint spectral measure of the pair $(A,B)$.
\item[(2)] (\emph{Operator homogeneity})\enspace
For any bounded operator $C$ from another Hilbert space $\mathcal{K}$ to $\cH$ with
$\overline\ran(A+B) \subseteq \overline\ran\,C$, the closures of the ranges of $A+B$, $C$
respectively, we have 
\[
\phi(C^* A C, C^* B C) = C^*\phi(A,B) C,  
\]
where the right-hand side is in the sense of Lemma \ref{L-2.4}.
\end{itemize}
\end{definition}

Both items (1) and (2) are quite natural as a functional calculus associated with $\phi$. The former
makes sense thanks to Lemma \ref{L-2.5}. The latter reflects the homogeneity of $\phi$ and also
plays a role of intertwining property built in the recent notion of non-commutative functions;
see, e.g., \cite{KV} and \cite[Part Two]{AMY}.

As item (2) indicates, we will discuss operators on different Hilbert spaces at the same time, and
hence we will sometimes denote by $I_\cH$ the identity operator on a Hilbert space $\cH$ to avoid
any confusion.

The next theorem justifies the above definition.

\begin{theorem}\label{T-4.2}
The PW-functional calculus associated with $\phi$ exists and is uniquely determined. 
\end{theorem}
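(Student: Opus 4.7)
The plan is to define $\phi(A,B)$ via the canonical compatible representation $(T_{A,B}, R_{A,B}, S_{A,B})$ described before \eqref{F-1.4}, and then verify the two axioms separately. For each $(A,B) \in B(\cH)_+ \times B(\cH)_+$, set $\cH_{A,B} := \overline{\ran}(A+B)$, view $T_{A,B}\xi := (A+B)^{1/2}\xi$ as a bounded map $\cH \to \cH_{A,B}$, and invoke the standard Douglas-type argument together with $A \le A+B$ and $B \le A+B$ to produce uniquely determined $R_{A,B}, S_{A,B} \in B(\cH_{A,B})_+$ with $R_{A,B} + S_{A,B} = I_{\cH_{A,B}}$, $A = T_{A,B}^* R_{A,B} T_{A,B}$, and $B = T_{A,B}^* S_{A,B} T_{A,B}$. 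Since $R_{A,B}$ and $S_{A,B}$ commute (summing to the identity), Lemma \ref{L-2.5} supplies $\phi(R_{A,B}, S_{A,B}) \in \widehat{B(\cH_{A,B})}_\lb$ via the joint Borel functional calculus, and I define $\phi(A,B) := T_{A,B}^*\,\phi(R_{A,B}, S_{A,B})\,T_{A,B}$ using Lemma \ref{L-2.4}. Uniqueness is then immediate: since $\ran T_{A,B} = \ran(A+B)^{1/2}$ is dense in $\cH_{A,B}$, axiom (2) applies with $C = T_{A,B}$ (the range condition reading $\overline{\ran}(R_{A,B}+S_{A,B})=\cH_{A,B}\subseteq\overline{\ran}\,T_{A,B}$) and forces $\phi(A,B) = T_{A,B}^*\phi(R_{A,B}, S_{A,B}) T_{A,B}$, while axiom (1) fixes the right-hand side through the joint spectral measure.

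Verification of axiom (1) for the definition reduces, once $A,B$ commute, to the identity $T_{A,B}^*\phi(R_{A,B},S_{A,B})T_{A,B} = \int \phi\,dE_{(A,B)}$. Using the joint spectral resolution $E := E_{(A,B)}$, the formulas $R_{A,B} = \int x/(x+y)\,dE$, $S_{A,B} = \int y/(x+y)\,dE$ on $\cH_{A,B}$, and the homogeneity $\phi(x,y) = (x+y)\phi(x/(x+y), y/(x+y))$, a direct spectral-integral computation (first for the truncations $\phi \wedge n$, then by monotone convergence, which is legitimate because $\phi$ is locally bounded from below) gives $\<T_{A,B}^*\phi(R_{A,B},S_{A,B})T_{A,B}\xi,\xi\> = \int \phi(x,y)\,d\<E(x,y)\xi,\xi\>$ for every $\xi \in \cH$.

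The step I expect to be the main obstacle is axiom (2). Given $C : \cK \to \cH$ with $\overline{\ran}(A+B) \subseteq \overline{\ran}\,C$, set $A' := C^*AC$, $B' := C^*BC$. The norm identity $\|T_{A',B'}\xi'\|^2 = \<(A+B)C\xi', C\xi'\> = \|T_{A,B}C\xi'\|^2$ produces an isometry $W : \cH_{A',B'} \to \cH_{A,B}$ satisfying $W T_{A',B'} = T_{A,B}C$, and short form computations using $T^*RT = A$, etc., give $W^*R_{A,B}W = R_{A',B'}$ and $W^*S_{A,B}W = S_{A',B'}$. The delicate point is that to transport the functional calculus through $W$ one needs $W$ to be unitary, not merely isometric: with $h(u) := \phi(u,1-u)$ on $[0,1]$ (so that $\phi(R_{A,B},S_{A,B}) = h(R_{A,B})$ by virtue of $R_{A,B}+S_{A,B}=I$), simple $2\times 2$ examples show $h(W^*R_{A,B}W) \neq W^*h(R_{A,B})W$ for isometric-but-not-surjective $W$ when $h$ is nonlinear.

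Surjectivity of $W$ is exactly what the range hypothesis delivers. Since $(A+B)^{1/2}|_{\cH_{A,B}}$ has dense range in $\cH_{A,B}$ and $\cH_{A,B} \subseteq \overline{\ran}\,C$, a two-step approximation argument shows $\ran(T_{A,B}C) = (A+B)^{1/2}(\ran C)$ is dense in $\cH_{A,B}$; being an isometry from a Hilbert space, $W$ has closed range equal to the closure of $\ran(W T_{A',B'}) = \ran(T_{A,B}C)$, hence $W\cH_{A',B'} = \cH_{A,B}$. With $W$ now unitary, $h(R_{A',B'}) = h(W^*R_{A,B}W) = W^*h(R_{A,B})W$ follows by the standard transformation of spectral measures (valid even for extended real-valued $h$, via Lemma \ref{L-2.5}), so $\phi(R_{A',B'},S_{A',B'}) = W^*\phi(R_{A,B},S_{A,B})W$; substitution into the definition yields $\phi(A',B') = (T_{A,B}C)^*\phi(R_{A,B},S_{A,B})(T_{A,B}C) = C^*\phi(A,B)C$, completing axiom (2).
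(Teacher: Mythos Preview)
Your proposal is correct and follows essentially the same route as the paper. The paper packages the construction of $(T_{A,B},R_{A,B},S_{A,B})$, the isometry $\widehat{C}$ (your $W$) with its unitarity under the range hypothesis, and the reduction $\phi(R,S)=\phi(R,1-R)$ into a preparatory Lemma~\ref{L-4.3} (largely quoted from \cite{HU}), whereas you redevelop these facts inline; but the definition $\phi(A,B):=T_{A,B}^*\phi(R_{A,B},S_{A,B})T_{A,B}$, the uniqueness argument via axiom~(2) with $C=T_{A,B}$, the spectral computation for commuting pairs, and the transport of the functional calculus through the unitary $W$ are all the same.
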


We will prove this based on the description of PW-functional calculus discussed in \cite{HU}.
In what follows, we will use the following notations: For a pair $(A,B)$ in $B(\cH)_+$, we write
\begin{align}\label{F-4.1}
\cH_{A,B} := \overline\ran(A+B) = \ker(A+B)^\perp,
\end{align}
and define a bounded operator $T_{A,B}:\cH\to\cH_{A,B}$ by
\begin{align}\label{F-4.2}
T_{A,B}\xi:=(A+B)^{1/2}\xi \in \cH_{A,B},\qquad \xi\in\cH.
\end{align}
By construction, $T_{A,B}$ has a dense range. Remark that $T_{A,B}$
was given in \cite[\S\S3.1]{HU} by the block matrix
\[
T_{A,B} = \begin{bmatrix} 0 & ([A]_{A,B}+[B]_{A,B})^{1/2}\end{bmatrix}
\quad \text{along} \quad \cH = \ker(A+B)\oplus\cH_{A,B},
\] 
where $[A]_{A,B}, [B]_{A,B}$ are the restrictions to $\cH_{A,B}$ of $A, B$ respectively.

The next lemma is a collection of technical facts, which will be used successively.
Items (1) and (2) are just extracted from \cite{HU}. 

\begin{lemma}\label{L-4.3}
With $\cH_{A,B}$ and $T_{A,B}$ defined above, we have the following facts:
\begin{itemize} 
\item[\rm(1)] For any pair $(A,B)\in B(\cH)_+\times B(\cH)_+$, there is a unique pair
$(R_{A,B}, S_{A,B})$ of positive contractions on $\cH_{A,B}$ such that 
\begin{gather*}
R_{A,B}+S_{A,B} = I_{\cH_{A,B}}, \\ 
(A,B) = (T_{A,B}^* R_{A,B}T_{A,B}, T_{A,B}^* S_{A,B}T_{A,B}).
\end{gather*} 
\item[\rm(2)] For any pair $(A,B)\in B(\cH)_+\times B(\cH)_+$ and any bounded operator
$C : \mathcal{K} \to \cH$ with another Hilbert space $\cK$, there is an isometry
$\widehat{C} : \cK_{C^*AC,C^*BC} \to \cH_{A,B}$ such that 
\begin{gather*}
\widehat{C}\,T_{C^*AC,C^*BC} = T_{A,B}\,C, \\
\big(R_{C^*AC,C^*BC},S_{C^*AC,C^*BC}\big)
= \big(\widehat{C}^* R_{A,B}\widehat{C},\widehat{C}^* S_{A,B}\widehat{C}\big).
\end{gather*} 
Moreover, if $\overline\ran(A+B)\subseteq\overline\ran\,C$, then $\widehat{C}$ must be a unitary
transform. 
\item[\rm(3)] Let $R,S$ be a pair of positive contractions on $\cH$ such that $R+S=I_\cH$. Then
we have
\[
\phi(R,S) = \phi(R,1-R) = \phi(1-S,S),
\]
where $\phi(R,1-R), \phi(1-S,S) \in \widehat{B(\cH)}_\lb$ are defined by 
\begin{align}
\phi(R,1-R)(\rho) &:= \int_{[0,1]}\phi(r,1-r)\,d\rho(E_R(r)), \label{F-4.3}\\
\phi(1-S,S)(\rho) &:= \int_{[0,1]}\phi(1-s,s)\,d\rho(E_S(s)) \label{F-4.4}
\end{align}
for every $\rho \in B(\cH)_*^+$ with the spectral measures $E_R, E_S$ of $R, S$ respectively.
\end{itemize} 
\end{lemma}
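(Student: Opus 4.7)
The plan is to derive items (1) and (2) directly from the construction in \cite{HU}, as the lemma's lead-in already admits, and to focus the proof on item (3). For (1), the pair $(R_{A,B}, S_{A,B})$ is obtained by restricting $A$ and $B$ to $\cH_{A,B}$ and then conjugating by the (densely-defined) inverse of $(A+B)^{1/2}$ on $\cH_{A,B}$, with uniqueness forced by the density of $\ran\,T_{A,B}$ in $\cH_{A,B}$. For (2), I would define $\widehat{C}$ on the dense subspace $T_{C^*AC,C^*BC}(\cK) \subseteq \cK_{C^*AC,C^*BC}$ by $\widehat{C}(T_{C^*AC,C^*BC}\xi) := T_{A,B}C\xi$ and verify the isometry via
\[
\|T_{A,B}C\xi\|^2 = \<C^*(A+B)C\xi,\xi\> = \|T_{C^*AC,C^*BC}\xi\|^2,
\]
then extend by continuity. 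The intertwining identity transfers the $R,S$-decompositions via part (1). Under the hypothesis $\overline\ran(A+B) \subseteq \overline\ran\,C$, the image $T_{A,B}C(\cK)$ is dense in $\cH_{A,B}$ (using that $(A+B)^{1/2}$ restricted to $\cH_{A,B}$ has dense range), forcing the isometry $\widehat{C}$ to be surjective, hence unitary.

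For item (3), the key observation is that $R$ and $S = I_\cH - R$ commute, so Definition \ref{D-4.1}(1) applies and yields
\[
\phi(R,S)(\rho) = \int_{\sigma(R)\times\sigma(S)}\phi(x,y)\,d\rho(E_{(R,S)}(x,y)).
\]
Since $S = I_\cH - R$, the joint spectral measure $E_{(R,S)}$ is the pushforward of $E_R$ along the continuous map $r \mapsto (r,1-r)$; explicitly, for each Borel $M \subseteq [0,1]^2$, $E_{(R,S)}(M) = E_R(\{r \in [0,1] : (r,1-r) \in M\})$. Substituting this into the joint integral reduces it to $\int_{[0,1]}\phi(r,1-r)\,d\rho(E_R(r))$, which by \eqref{F-4.3} is precisely $\phi(R,1-R)(\rho)$. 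The symmetric argument with the roles of $R$ and $S$ swapped produces $\phi(R,S) = \phi(1-S,S)$.

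To justify the change-of-variables formula when $\phi$ is allowed to take the value $\infty$, I would first verify it for the truncations $\phi_n := \phi \wedge n$, which are bounded Borel functions on $[0,1]^2$ and hence are handled by the standard spectral theory for the commuting pair $(R,1-R)$. Since $\phi$ is bounded below on $[0,1]^2$ by local lower boundedness, and $\phi_n \nearrow \phi$ pointwise, monotone convergence --- applied exactly as in Lemma \ref{L-2.5} --- extends the equality from $\phi_n$ to $\phi$. I do not anticipate any serious obstacle: item (3) is ultimately the standard spectral fact that a function of the commuting pair $(R,1-R)$ is a single-variable function of $R$, with the only subtlety being the passage from bounded to lower-semibounded, extended-real-valued functions.
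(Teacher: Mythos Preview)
Your proposal is correct and follows essentially the same route as the paper. For items (1) and (2) the paper simply cites \cite{HU}, and your sketch reconstructs exactly that argument; for item (3) both you and the paper identify the joint spectral measure of $(R,S)$ with the pushforward of $E_R$ under $r\mapsto(r,1-r)$, the only cosmetic difference being that the paper verifies this on measurable rectangles and invokes the monotone class theorem to pass directly to the integral identity, whereas you truncate to $\phi_n=\phi\wedge n$ and appeal to monotone convergence.
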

\begin{proof} 
Item (1) is in \cite[\S\S3.1]{HU}. The first part of (2) is found in the proof of \cite[Theorem 9]{HU},
and the latter part is obvious; see \cite[Remark 10]{HU}. 

\medskip
(3)\enspace
Note that $f(R,1-R), f(1-S,S)$ are well defined by Lemma \ref{L-2.5}. Let $E$ be the joint
spectral measure of $(R,S)$. Observe that for any Borel sets  $\Phi,\Psi\subseteq[0,1]$,
\begin{align*}
E(\Phi\times\Psi)
&= \chi_\Phi(R)\chi_\Psi(S) = \chi_\Phi(R)\chi_\Psi(I_\cH-R) \\
&= \int_{[0,1]} \chi_\Phi(r)\chi_\Psi(1-r)\,dE_R(r)
= \int_{[0,1]} \chi_{\Phi\times\Psi}(r,1-r)\,dE_R(r).
\end{align*}
For each $\rho \in B(\cH)_*^+$, the monotone class theorem yields
\[
\rho(E(\Lambda)) = \int_{[0,1]} \chi_\Lambda(r,1-r)\,d\rho(E_R(r))
\]
for any Borel subset $\Lambda \subseteq [0,1]^2$. This immediately implies, by Definition
\ref{D-4.1}(1) and \eqref{F-4.3}, that 
\begin{align*}
\phi(R,S)(\rho) &= \int_{[0,1]^2} \phi(r,s)\,d\rho(E(r,s)) \\
&= \int_{[0,1]} \phi(r,1-r)\,d\rho(E_R(r)) = \phi(R,1-R)(\rho). 
\end{align*}
The other expression is shown in exactly the same manner. 
\end{proof}  

Now we are ready to prove Theorem \ref{T-4.2}. 

\begin{proof}[Proof of Theorem \ref{T-4.2}]
Let $(A, B)\in B(\cH)_+\times B(\cH)_+$ be an arbitrary pair. Since
$R_{A,B}+S_{A,B}=I_{\cH_{A,B}}$ by Lemma \ref{L-4.3}(1), we have the joint spectral measure
$E$ of $(R_{A,B},S_{A,B})$. Then we consider  $T_{A,B}^* \phi(R_{A,B},S_{A,B}) T_{A,B}$ in the
sense of Lemma \ref{L-2.4} with
\[
\phi(R_{A,B},S_{A,B})(\rho) := \int_{[0,1]^2} \phi(r,s)\,d\rho(E(r,s)),
\qquad \rho \in B(\cH_{A,B})_*^+. 
\]
This procedure is the same as \cite[Eq.~(1)]{HU}. Whenever the desired $\phi(A,B)$ exists, 
we must have
\[
\phi(A,B)=\phi(T_{A,B}^* R_{A,B}T_{A,B},T_{A,B}^* S_{A,B}T_{A,B})=
T_{A,B}^*\phi(R_{A,B},S_{A,B}) T_{A,B}
\]
by Lemma \ref{L-4.3}(1) again and due to Definition \ref{D-4.1}(2). Since $\phi(R_{A,B},S_{A,B})$
is uniquely determined due to Definition \ref{D-4.1}(1), this implies the uniqueness of $\phi(A,B)$
(if it exists). Therefore, it suffices to show that
$(A,B)\mapsto T_{A,B}^*\phi(R_{A,B},S_{A,B})T_{A,B}$ indeed enjoys postulates (1) and (2) of
Definition \ref{D-4.1}.

\medskip
We first confirm item (1). Assume that $(A,B)$ is a commuting pair, and let $A_1, B_1$ be the
restrictions to $\cH_{A,B}$ of $A,B$ respectively. Consider the Borel functions 
\[
r(x,y) := \begin{cases} x/(x+y) & (x+y > 0), \\ 0 & (x+y=0), \end{cases} \qquad
s(x,y) := \begin{cases} y/(x+y) & (x+y > 0), \\ 0 & (x+y=0). \end{cases}
\]
Then $R_{A,B} = r(A_1,B_1)$ and $S_{A,B} = s(A_1,B_1)$ were established in the proof of
\cite[Proposition 2]{HU}. Namely, $R_{A,B}$ and $S_{A,B}$ are given by the restrictions
to $\cH_{A,B}$ of $r(A,B)$ and $s(A,B)$, respectively, i.e.,
\begin{align*}
R_{A,B}&=\int_{\sigma(A)\times\sigma(B)}r(x,y)\,dE_{(A,B)}(x,y)\,\Big|_{\cH_{A,B}}, \\
S_{A,B}&=\int_{\sigma(A)\times\sigma(B)}s(x,y)\,dE_{(A,B)}(x,y)\,\Big|_{\cH_{A,B}}.
\end{align*}
Here it should be noticed that the projection from $\cH$ onto $\cH_{A,B}$ is exactly
$I_\cH-E_{(A,B)}(\{(0,0)\})$, and hence the above restrictions are well defined. For any Borel sets
$\Phi,\Psi \subseteq [0,1]$, we observe that
\[
\chi_\Phi(R_{A,B})\chi_\Psi(S_{A,B}) 
= 
\int_{\sigma(A)\times\sigma(B)} \chi_\Phi(r(x,y)) \chi_\Psi(s(x,y))
\,dE_{(A,B)}(x,y)\,\Big|_{\cH_{A,B}}
\]
and hence 
\begin{align*}
&\< T_{A,B}^* E(\Phi\times\Psi)T_{A,B}\xi,\xi\> \\
&\quad= 
\< T_{A,B}^* \chi_\Phi(R_{A,B})\chi_\Psi(S_{A,B}) T_{A,B}\xi,\xi\> \\
&\quad= 
\< \chi_\Phi(R_{A,B})\chi_\Psi(S_{A,B})(A+B)^{1/2}\xi,(A+B)^{1/2}\xi\> \\
&\quad=
\int_{\sigma(A)\times\sigma(B)} \chi_\Phi(r(x,y)) \chi_\Psi(s(x,y))
\,d\< E_{(A,B)}(x,y)(A+B)^{1/2}\xi,(A+B)^{1/2}\xi\> \\
&\quad=
\int_{\sigma(A)\times\sigma(B)} \chi_\Phi(r(x,y)) \chi_\Psi(s(x,y))(x+y)\,d\< E_{(A,B)}(x,y)\xi,\xi\>
\end{align*}
for any $\xi \in \cH$. Thus, for each $\rho \in B(\cH)_*^+$, the monotone class theorem ensures that  
\[
\rho(T_{A,B}^* E(\Lambda)T_{A,B})
= \int_{\sigma(A)\times\sigma(B)}\chi_\Lambda(r(x,y),s(x,y))(x+y)\,d\rho(E_{(A,B)}(x,y))
\]
for any Borel set $\Lambda\subseteq [0,1]^2$. Therefore, for every $\rho\in B(\cH)_*^+$, we have
\begin{align*}
(T_{A,B}^*\phi(R_{A,B},S_{A,B})T_{A,B})(\rho) 
&= \phi(R_{A,B},S_{A,B})(T_{A,B} \rho\, T_{A,B}^* ) \\
&= \int_{[0,1]^2} \phi(r,s)\,d\rho(T_{A,B}^* E(r,s) T_{A,B}) \\
&= \int_{\sigma(A)\times\sigma(B)} \phi(r(x,y),s(x,y))(x+y)\,d\rho(E_{(A,B)}(x,y)) \\
&= \int_{\sigma(A)\times\sigma(B)} \phi(x,y)\,d\rho(E_{(A,B)}(x,y))
\end{align*}
thanks to the homogeneity of $\phi$. This is the identity in item (1).

Next we confirm item (2), i.e., operator homogeneity. Let $C$ be a bounded operator from
another Hilbert space $\mathcal{K}$ to $\cH$ as in item (2) of Definition \ref{D-4.1}. In what follows,
we will use Lemma \ref{L-4.3}(2) freely and write $A' = C^*AC$, $B'=C^*BC$ for short. Let $E$ is
the joint spectral measure of $(R_{A,B},S_{A,B})$; then that of $(R_{A',B'},S_{A',B'})$ is given by
$\widehat{C}^* E(\,\cdot\,)\widehat{C}$ thanks to the simultaneous unitary equivalence of
$(R_{A',B'},S_{A',B'})$ to $(R_{A,B},S_{A,B})$ with $\widehat{C}$. Hence we have
\[
\phi(R_{A',B'},S_{A',B'})(\rho')
= \int_{[0,1]^2} \phi(r,s)\,d\rho'(\widehat{C}^* E(r,s)\widehat{C})
= \phi(R_{A,B},S_{A,B})(\widehat{C}\rho' \widehat{C}^*)
\]
for every $\rho' \in B(\mathcal{K}_{A',B'})_*^+$. Therefore, it follows that 
\begin{align*}
(T_{A',B'}^*\phi(R_{A',B'},S_{A',B'}) T_{A',B'})(\rho) 
&= \phi(R_{A',B'},S_{A',B'})(T_{A',B'}\rho\,T_{A',B'}^*) \\
&= \phi(R_{A,B},S_{A,B})(\widehat{C}T_{A',B'}\rho\,T_{A',B'}^*\widehat{C}^*) \\
&= \phi(R_{A,B},S_{A,B})(T_{A,B}C\rho\,C^* T_{A,B}^*) \\
&= (C^* T_{A,B}^*\phi(R_{A,B},S_{A,B})T_{A,B}C)(\rho) 
\end{align*} 
for every $\rho \in B(\mathcal{K})_*^+$. This is the required operator homogeneity.
\end{proof}

The proof of Theorem \ref{T-4.2} (or Definition \ref{D-4.1} itself) says that an explicit
realization of the PW-functional calculus associated with $\phi$ is
\begin{align}\label{F-4.5}
\phi(A,B)=T_{A,B}^*\phi(R_{A,B},S_{A,B})T_{A,B}
\end{align}
for every $A,B\in B(\cH)_+$. We point out that it is indeed one of the explicit realizations of
$\phi(A,B)$ and there are many ways to construct $\phi(A,B)$ corresponding to `sections' in
Pusz and Woronowicz's terminology (see \cite{PW2}). Remark that formula \eqref{F-4.5} is clearly
rewritten as
\begin{equation}\label{F-4.6}
\begin{aligned}
\phi(A,B)&=(A+B)^{1/2}\phi(R,S)(A+B)^{1/2} \\
&=(A+B)^{1/2}\phi(R,1-R)(A+B)^{1/2} \\
&=(A+B)^{1/2}\phi(1-S,S)(A+B)^{1/2}
\end{aligned}
\end{equation}
with $R := R_{A,B}\oplus0$ and $S := S_{A,B}\oplus0$ on $\cH=\cH_{A,B}\oplus\cH_{A,B}^\perp$,
though equality $\phi(R,S) = \phi(R,I-R) = \phi(I-S,S)$ does not hold in general.

\begin{remark}\label{R-4.4}\rm
As explained in \cite{HU}, $(T_{A,B}:\cH\to\cH_{A,B},R_{A,B},S_{A,B})$ is a compatible
representation (in the sense of \cite{PW1}) of two positive forms 
\[
\alpha(\xi,\eta) := \langle A\xi,\eta\rangle, \quad \beta(\xi,\eta):=\langle B\xi,\eta\rangle,
\qquad \xi,\eta \in \cH.
\]
Then, Pusz and Woronowicz's original functional calculus $\phi(\alpha,\beta)$ becomes, by their
construction in \cite{PW1,PW2}, as follows:
\begin{align*}
\phi(\alpha,\beta)(\xi,\xi)
&=
\< \phi(R_{A,B},S_{A,B})T_{A,B}\xi,T_{A,B}\xi\> \\
&=
\int_{[0,1]^2} \phi(x,y)\,d\< T_{A,B}^* E(x,y)T_{A,B}\xi,\xi\> \\
&=
(T_{A,B}^* \phi(R_{A,B},S_{A,B})T_{A,B})(\omega_\xi) \\
&=
\phi(A,B)(\omega_\xi),
\end{align*}
where $\omega_\xi(X):=\langle X\xi,\xi\rangle$ for $\xi\in\cH$, $X \in B(\cH)$. Thus, the formulation
of the PW-functional calculus in this section agrees with their original one. Therefore, their
technique of obtaining variational integral expressions is available in our setting too; yet we will
discuss in \S\ref{Sec-9} integral expressions in a different manner.
\end{remark}

Here is a basic property of the PW-functional calculus, saying that the PW-functional calculus is well behaved with respect to direct sums. This will be used in the proof of Theorem \ref{T-4.9} below. 

\begin{proposition}\label{P-4.5}
Let $\phi$ be any Borel function on $[0,\infty)^2$ which is homogeneous and locally bounded from
below. If $A,B\in B(\cH)_+$ be given in direct sums as 
\[
\cH = \bigoplus_{i\in I} \cH_i, \quad A = \bigoplus_{i\in I}A_i, \quad B = \bigoplus_{i\in I}B_i
\]
with $A_i, B_i \in B(\cH_i)_+$, then we have
\[ 
\phi(A,B)(\rho)=\sum_{i\in I}\phi(A_i,B_i)(P_i \rho P_i)
\] 
for all $\rho\in B(\cH)_*^+$, where $P_i$ is the orthogonal projection $\mathcal{H}$ onto $\mathcal{H}_i$ for each $i\in I$. When $\phi$ is bounded,  
\[
\phi(A,B)=\bigoplus_{i\in I}\phi(A_i,B_i)
\]
holds on $\mathcal{H} = \bigoplus_{i\in I}\mathcal{H}_i$ in the usual sense.
\end{proposition}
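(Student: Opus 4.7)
The plan is to reduce everything to the explicit realization \eqref{F-4.5} and to the block structure that each ingredient inherits from the direct-sum decomposition of $A$ and $B$. Throughout, let $P_i:\cH\to\cH_i$ denote the orthogonal projection and let $\iota_i$ be its adjoint (the inclusion).

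The first step is to verify the compatibility of the construction of $\S\ref{Sec-4}$ with direct sums. Since $A+B=\bigoplus_i(A_i+B_i)$, we get $\cH_{A,B}=\bigoplus_i\cH_{A_i,B_i}$ and, viewing $T_{A_i,B_i}:\cH_i\to\cH_{A_i,B_i}$ block-wise, we have $T_{A,B}=\bigoplus_iT_{A_i,B_i}$. Then the pair $\bigl(\bigoplus_iR_{A_i,B_i},\,\bigoplus_iS_{A_i,B_i}\bigr)$ of positive contractions on $\bigoplus_i\cH_{A_i,B_i}$ satisfies the two defining properties of Lemma \ref{L-4.3}(1); by uniqueness it must equal $(R_{A,B},S_{A,B})$. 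Denote by $P_i'$ the projection of $\cH_{A,B}$ onto $\cH_{A_i,B_i}$.

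The second step is to identify the joint spectral measure $E$ of $(R_{A,B},S_{A,B})$ with $E=\bigoplus_iE_i$, where $E_i$ is the joint spectral measure of $(R_{A_i,B_i},S_{A_i,B_i})$; this is immediate from block-diagonal functional calculus. For any $\rho'\in B(\cH_{A,B})_*^+$, splitting $\rho'$ into blocks and using that $E(\Lambda)$ is block diagonal gives $\rho'(E(\Lambda))=\sum_i(P_i'\rho'P_i')(E_i(\Lambda))$ for every Borel $\Lambda\subseteq[0,1]^2$. A monotone approximation of $\phi$ by $\phi\wedge n$ (truncating from above only, since $\phi$ is bounded below on $[0,1]^2$), combined with Lemma \ref{L-2.5} and the monotone convergence theorem, upgrades this to
\[
\int_{[0,1]^2}\phi(r,s)\,d\rho'(E(r,s))
=\sum_i\int_{[0,1]^2}\phi(r,s)\,d(P_i'\rho'P_i')(E_i(r,s)).
\]
In our $\widehat{B(\cH_{A,B})}_\lb$ language this says $\phi(R_{A,B},S_{A,B})(\rho')=\sum_i\phi(R_{A_i,B_i},S_{A_i,B_i})(P_i'\rho'P_i')$.

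The third step combines these. Given $\rho\in B(\cH)_*^+$, put $\rho':=T_{A,B}\rho\,T_{A,B}^*\in B(\cH_{A,B})_*^+$. The direct-sum structure of $T_{A,B}$ gives $T_{A,B}^*P_i'=T_{A_i,B_i}^*P_i'$ as operators from $\cH_{A,B}$ into $\cH$ factoring through $\cH_i\hookrightarrow\cH$, whence for every $X_i\in B(\cH_{A_i,B_i})$,
\[
(P_i'\rho'P_i')(X_i)=\rho\bigl(\iota_iT_{A_i,B_i}^*X_iT_{A_i,B_i}P_i\bigr)
=(T_{A_i,B_i}\,(P_i\rho P_i)\,T_{A_i,B_i}^*)(X_i).
\]
Inserting this into the identity from Step~2 and applying \eqref{F-4.5} at each level produces
\[
\phi(A,B)(\rho)=\phi(R_{A,B},S_{A,B})(\rho')=\sum_i\phi(A_i,B_i)(P_i\rho P_i),
\]
which is the required formula. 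The final bounded case is an immediate corollary: when $\phi$ is bounded, each $\phi(A_i,B_i)$ lies in $B(\cH_i)_\sa$ with norm controlled by $\|\phi\|_\infty\|A_i+B_i\|\le\|\phi\|_\infty\|A+B\|$ via \eqref{F-4.6}, so $\bigoplus_i\phi(A_i,B_i)\in B(\cH)_\sa$, and one checks agreement on vector states $\omega_\xi$ using $P_i\omega_\xi P_i=\omega_{P_i\xi}$.

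The main obstacle is Step~2, namely pushing the block decomposition of the spectral measure through a spectral integral of the possibly $\infty$-valued, only-locally-bounded-below function $\phi$. The resolution is to truncate $\phi$ from above (its lower bound gives integrability control) and pass to the sup via Lemma \ref{L-2.5} and monotone convergence, which is exactly the device built into the definition of elements of $\widehat{B(\cH)}_\lb$.
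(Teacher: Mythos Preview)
Your proposal is correct and follows essentially the same route as the paper: both arguments first observe the block decomposition $\cH_{A,B}=\bigoplus_i\cH_{A_i,B_i}$, $T_{A,B}=\bigoplus_iT_{A_i,B_i}$, $R_{A,B}=\bigoplus_iR_{A_i,B_i}$, $S_{A,B}=\bigoplus_iS_{A_i,B_i}$, reduce to the bounded case via the truncation $\phi_n=\phi\wedge n$, and then pass to the limit by monotone convergence. The only cosmetic difference is that the paper writes the bounded step directly at the level of the operator $T_{A,B}^*\phi_n(R_{A,B},S_{A,B})T_{A,B}=\bigoplus_iT_{A_i,B_i}^*\phi_n(R_{A_i,B_i},S_{A_i,B_i})T_{A_i,B_i}$, whereas you route it through the block decomposition of the joint spectral measure; both are equivalent and the substance is the same.
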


\begin{proof}
We easily observe that 
\[
\mathcal{H}_{A,B} = \bigoplus_{i\in I} \mathcal{H}_{A_i,B_i}, \quad
T_{A,B} = \bigoplus_{i\in I} T_{A_i,B_i}, \quad
R_{A,B} = \bigoplus_{i\in I}R_{A_i,B_i}, \quad S_{A,B} = \bigoplus_{i\in I} S_{A_i,B_i} 
\]
on $\mathcal{H} = \bigoplus_{i\in I}\mathcal{H}_i$. Let $n \in \mathbb{N}$ be fixed for a while,
and set $\phi_n(x,y) := \phi(x,y)\wedge n$, a bounded Borel function on $[0,\infty)^2$. Since the
usual functional calculus is compatible with direct sum, we observe that 
\[
\phi_n(R_{A,B},S_{A,B}) = \bigoplus_{i\in I} \phi_n(R_{A_i,B_i},S_{A_i,B_i}),
\]
and hence
\[
T_{A,B}^*\phi_n(R_{A,B},S_{A,B})T_{A,B} = \bigoplus_{i\in I} T_{A_i,B_i}^*\phi_n(R_{A_i,B_i},S_{A_i,B_i})T_{A_i,B_i}.
\]
It follows that
\begin{align*}
(T_{A,B}\rho\,T_{A,B}^*)(\phi_n(R_{A,B},S_{A,B})) 
&= 
\rho(T_{A,B}^*\phi_n(R_{A,B},S_{A,B})T_{A,B}) \\
&= 
\sum_{i\in I} \rho(P_i T_{A_i,B_i}^*\phi_n(R_{A_i,B_i},S_{A_i,B_i})T_{A_i,B_i} P_i) \\
&= 
\sum_{i\in I} (T_{A_i,B_i} (P_i\rho P_i) T_{A_i,B_i}^*)(\phi_n(R_{A_i,B_i},S_{A_i,B_i})) 
\end{align*} 
for any  $\rho \in B(\mathcal{H})_*^+$. Since $\phi$ is lower bounded, we can take the limit as $n\to\infty$ (see the proof of Lemma \ref{L-3.1}) to obtain 
\[
(T_{A,B}^*\phi(R_{A,B},S_{A,B})T_{A,B})(\rho) = 
\sum_{i \in I} (T_{A_i,B_i}^*\phi(R_{A_i,B_i},S_{A_i,B_i})T_{A_i,B_i})(P_i\rho P_i). 
\]
The statement when $\phi$ is bounded is clear from the above discussion. 
\end{proof}

The following special values of $\phi(A,B)$ will play a role in our further study of the PW-functional
calculus.

\begin{lemma}\label{L-4.6}
Let $A\in B(\cH)_+$ with the spectral measure $E_A$, and $\alpha,\beta\in[0,\infty)$. Then the
following hold:
\begin{itemize}
\item[\rm(1)] $\phi(A,\alpha I)=\phi(A,\alpha)$ and $\phi(\alpha I,A)=\phi(\alpha,A)$, where
$\phi(A,\alpha),\phi(\alpha,A)\in\widehat{B(\cH)}_\lb$ are defined by
\[
\phi(A,\alpha)(\rho) := \int_{\sigma(A)}\phi(t,\alpha)\,d\rho(E_A(t)), \quad 
\phi(\alpha,A)(\rho) := \int_{\sigma(A)}\phi(\alpha,t)\,d\rho(E_A(t))
\]
for every $\rho \in B(\cH)_*^+$.
\item[\rm(2)] $\phi(\alpha A,\beta A)=\phi(\alpha,\beta)A$, where $\infty A$ means
$\infty\cdot E_A((0,\infty))$. In particular, $\phi(A,O)=\phi(A,0)=\phi(1,0)A$ and
$\phi(O,A)=\phi(0,A)=\phi(0,1)A$, where $O$ denotes the zero operator at this moment
to distinguish it from the scalar zero.
\end{itemize}
\end{lemma}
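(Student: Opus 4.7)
My plan is to observe that in both parts of the lemma the pair appearing inside $\phi(\cdot,\cdot)$ is a commuting pair, so that Definition \ref{D-4.1}(1) directly computes the PW-functional calculus as the usual joint spectral integral. The stated identities will then drop out after identifying the joint spectral measure and, in part (2), applying the homogeneity of $\phi$.

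For part (1) I would note that $A$ and $\alpha I$ commute, and their joint spectral measure is supported on $\sigma(A)\times\{\alpha\}$ with $E_{(A,\alpha I)}(\Phi\times\Psi)=E_A(\Phi)\chi_\Psi(\alpha)$. Substituting this into the integral in Definition \ref{D-4.1}(1) collapses the double integral to
\[
\phi(A,\alpha I)(\rho)=\int_{\sigma(A)}\phi(t,\alpha)\,d\rho(E_A(t)),
\]
which is the very definition of $\phi(A,\alpha)$ given in the statement. The identity $\phi(\alpha I,A)=\phi(\alpha,A)$ is then obtained by an entirely symmetric argument.

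For part (2) the same remark applies since $\alpha A$ and $\beta A$ commute; their joint spectral measure is supported on the ray $\{(\alpha t,\beta t):t\in\sigma(A)\}$, giving
\[
\phi(\alpha A,\beta A)(\rho)=\int_{\sigma(A)}\phi(\alpha t,\beta t)\,d\rho(E_A(t))
\]
for every $\rho\in B(\cH)_*^+$. The homogeneity of $\phi$ (with $0\cdot\infty=0$) rewrites the integrand as $t\,\phi(\alpha,\beta)$, so the integral evaluates to $\phi(\alpha,\beta)\,\rho(A)$ when $\phi(\alpha,\beta)<\infty$ and to $\infty\cdot\rho(E_A((0,\infty)))$ when $\phi(\alpha,\beta)=\infty$; both cases are packaged as $\phi(\alpha,\beta)A$ under the convention $\infty A:=\infty\cdot E_A((0,\infty))$. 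The particular cases $\phi(A,O)$ and $\phi(O,A)$ then follow by specializing $(\alpha,\beta)$ to $(1,0)$ and $(0,1)$ respectively, combined with part (1) at $\alpha=0$.

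The only mildly delicate point is the bookkeeping in the extended-value case $\phi(\alpha,\beta)=\infty$: one must check that the spectral-integral interpretation provided by Lemma \ref{L-2.5} of the lower semibounded integrand $t\mapsto t\,\phi(\alpha,\beta)$ correctly reproduces the convention $\infty\cdot E_A((0,\infty))$. This is immediate from the monotone approximation $f\wedge n\nearrow f$ used in the proof of that lemma, so I do not expect any substantial obstacle.
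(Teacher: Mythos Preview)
Your proof of part (1) is essentially identical to the paper's: both identify the joint spectral measure of the commuting pair $(A,\alpha I)$ as $E_{(A,\alpha I)}(\Phi\times\Psi)=\chi_\Psi(\alpha)E_A(\Phi)$ and invoke Definition~\ref{D-4.1}(1).

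For part (2) you take a different route. You again use Definition~\ref{D-4.1}(1) (the commuting-pair axiom), pushing $E_A$ forward along $t\mapsto(\alpha t,\beta t)$ and then applying scalar homogeneity of $\phi$ pointwise in the integrand; this requires a small case split on whether $\phi(\alpha,\beta)$ is finite. The paper instead uses Definition~\ref{D-4.1}(2) (operator homogeneity) with $C=A^{1/2}$ viewed as a map $\cH\to\cH_A:=\overline{\ran}\,A$, obtaining $\phi(\alpha A,\beta A)=A^{1/2}\phi(\alpha I_{\cH_A},\beta I_{\cH_A})A^{1/2}$ and then reading off $\phi(\alpha I_{\cH_A},\beta I_{\cH_A})=\phi(\alpha,\beta)I_{\cH_A}$ directly from the commuting-pair axiom. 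Your approach is equally valid and arguably more self-contained (it relies only on postulate~(1)); the paper's is slightly slicker in that the $\infty$-case is absorbed automatically into the meaning of $A^{1/2}(\infty\cdot I_{\cH_A})A^{1/2}$ via Lemma~\ref{L-2.4}, with no explicit case analysis needed.
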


\begin{proof}
(1)\enspace
By Lemma \ref{L-2.4} note that $\phi(A,\alpha),\phi(\alpha,A) \in \widehat{B(\cH)}_\lb$ are well
defined. Let $E$ be the joint spectral measure of $(A,\alpha I)$. For any Borel set
$\Phi,\Psi \subseteq [0,\infty)$, since $E(\Phi\times\Psi) = \chi_\Psi(\alpha)E_A(\Phi)$ holds
by definition, it follows that
\[
\rho(E(\Phi\times\Psi)) = \chi_\Psi(\alpha)\,\rho(E_A(\Phi)) = 
\int_{\sigma(T)} \chi_{\Phi\times\Psi}(t,\alpha)\,d\rho(E_A(t))
\]
for each $\rho \in B(\cH)_*^+$. Hence, by the monotone class theorem,  
\[
\rho(E(\Lambda)) = \int_{\sigma(T)} \chi_\Lambda(t,\alpha)\,d\rho(E_A(t))
\]
holds for any Borel set $\Lambda \subseteq [0,\infty)^2$. This means that
$\phi(A,\alpha I) = \phi(A,\alpha)$ holds. The equality $\phi(\alpha I,A) = \phi(\alpha,A)$ can be
shown in the same way.

(2)\enspace
From the operator homogeneity with $C=A^{1/2}$, it follows that
\[
\phi(\alpha A,\beta A)=A^{1/2}\phi(\alpha I_{\cH_A},\beta I_{\cH_A})A^{1/2},
\]
where $\cH_A:=\overline{\ran}\,A$. By definition,
$\phi(\alpha I_{\cH_A},\beta I_{\cH_A})=\phi(\alpha,\beta)I_{\cH_A}$ is clear. Hence
$\phi(\alpha A,\beta A)=\phi(\alpha,\beta)A$ holds. The last two equalities are immediate from
this and item (1).
\end{proof}

The next two theorems are our main observations in this section. 

\begin{theorem}\label{T-4.7}{\rm(cf.~\cite[Theorem 4(1)]{HU})}\enspace
For any pair $(A,B)$ in $B(\cH)_+$ we have
\[
\phi(A,B) = 
\begin{cases} 
A^{1/2} \phi(1,A^{-1/2}BA^{-1/2})A^{1/2} & (\text{if $A\in B(\cH)_{++}$}), \\
B^{1/2} \phi(B^{-1/2}AB^{-1/2},1)B^{1/2} & (\text{if $B\in B(\cH)_{++}$}),
\end{cases}
\]
where $\phi(1,A^{-1/2}BA^{-1/2})$ and $\phi(B^{-1/2}AB^{-1/2},1)$ are given as in Lemma
\ref{L-4.6}(1).
\end{theorem}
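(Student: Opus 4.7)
My plan is to deduce both identities as direct applications of operator homogeneity (Definition \ref{D-4.1}(2)), combined with the special-value identification in Lemma \ref{L-4.6}(1). The key observation is that when $A$ is invertible, $A^{1/2}$ is a legitimate choice of the operator $C$ intertwining the pair $(I, A^{-1/2}BA^{-1/2})$ with $(A,B)$.

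First I would treat the case $A \in B(\cH)_{++}$. Set $X := I_\cH$, $Y := A^{-1/2}BA^{-1/2} \in B(\cH)_+$, and $C := A^{1/2}$. A direct computation gives $C^*XC = A$ and $C^*YC = B$. Because $A$ is invertible, $C = A^{1/2}$ is a bijection, so $\overline{\ran}\,C = \cH$, which trivially contains $\overline{\ran}(X+Y)$. Therefore Definition \ref{D-4.1}(2) applies and yields
\[
\phi(A,B) = \phi(C^*XC,\,C^*YC) = C^*\phi(X,Y)C = A^{1/2}\phi(I_\cH,\,A^{-1/2}BA^{-1/2})A^{1/2}
\]
in the sense of Lemma \ref{L-2.4}. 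Invoking Lemma \ref{L-4.6}(1) with $\alpha=1$ then identifies $\phi(I_\cH,\,A^{-1/2}BA^{-1/2})$ with the spectral integral $\phi(1,\,A^{-1/2}BA^{-1/2}) \in \widehat{B(\cH)}_\lb$, giving the first formula.

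The case $B \in B(\cH)_{++}$ is entirely symmetric: take $X := B^{-1/2}AB^{-1/2}$, $Y := I_\cH$, and $C := B^{1/2}$, verify $C^*XC = A$, $C^*YC = B$, and $\overline{\ran}\,C = \cH$, then apply operator homogeneity and Lemma \ref{L-4.6}(1) once more.

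There is essentially no obstacle here beyond careful bookkeeping of the hypothesis in Definition \ref{D-4.1}(2); the argument is a one-line consequence of operator homogeneity once one notices that the invertibility of $A$ (resp.~$B$) automatically guarantees the range condition $\overline{\ran}(X+Y)\subseteq\overline{\ran}\,C$ needed to invoke it. The only subtlety worth flagging is that the right-hand sides must be interpreted in $\widehat{B(\cH)}_\lb$ via Lemma \ref{L-2.4}, since $\phi(1,\,A^{-1/2}BA^{-1/2})$ can in general be unbounded.
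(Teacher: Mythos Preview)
Your proof is correct and follows essentially the same approach as the paper: both arguments apply operator homogeneity with $C=A^{1/2}$ (resp.\ $C=B^{1/2}$) to reduce to the commuting pair $(I,A^{-1/2}BA^{-1/2})$ (resp.\ $(B^{-1/2}AB^{-1/2},I)$), and then invoke Lemma~\ref{L-4.6}(1). Your version is slightly more explicit in verifying the range hypothesis of Definition~\ref{D-4.1}(2), but otherwise the two proofs are identical.
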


\begin{proof}
Assume that $A\in B(\cH)_{++}$. The operator homogeneity with $C=A^{1/2}$ implies that 
\begin{align*} 
\phi(A,B) 
&= \phi(A^{1/2}I A^{1/2},A^{1/2}(A^{-1/2}BA^{-1/2})A^{1/2}) \\
&= A^{1/2}\phi(I,A^{-1/2}BA^{-1/2})A^{1/2} \\
&= A^{1/2}\phi(1,A^{-1/2}BA^{-1/2})A^{1/2}
\end{align*} 
by Lemma \ref{L-4.6}(1). The other case can be confirmed in the same way.
\end{proof}

The discussion above uses operator homogeneity (arising from the homogeneity of $\phi$)
explicitly and hence is more conceptual than that of \cite[Theorem 4(1)]{HU}.

\begin{remark}\label{R-4.8}\rm
As shown in \cite{HU}, any \emph{operator connection} (in the Kubo--Ando sense \cite{KA})
$A\sigma B$ for $A,B\in B(\cH)_+$ corresponding to an ($\bR$-valued) positive operator
monotone function $h$ on $[0,\infty)$ is captured as the PW-functional calculus $\phi(A,B)$
associated with the function $\phi$ defined by
\begin{align}\label{F-4.7}
\phi(x,y):=xh(y/x)\quad(x>0,\ y\ge0),\qquad
\phi(0,y):=\beta y\quad (y\ge0),
\end{align}
where $\beta:=\lim_{t\to\infty}h(t)/t$. In this setting, operator homogeneity in Definition \ref{D-4.1}
was formerly shown in \cite[Theorem 3]{Fu2} by appealing to \cite[Theorem 3.4]{KA}, and recently
extended to operator connections of positive $\tau$-measurable operators (in the von Neumann
algebra setting) in \cite[Theorem 3.31]{HK}. In our general formalism, this property is incorporated
into the definition of the PW-functional calculus. When $A$ is invertible, the first expression of
Theorem \ref{T-4.7} becomes
\[
\phi(A,B)=A^{1/2}h(A^{-1/2}BA^{-1/2})A^{1/2},
\]
which is a familiar expression of $A\sigma B$. 
\end{remark}

The next theorem is concerned with the joint convexity problem for the PW-functional calculus,
whose weaker form was given in \cite[Theorem 9]{HU}. The present statement can be understood
as a Hilbert space operator reformulation of Pusz and Woronowicz's original results in \cite{PW2}
with a thorough proof.

\begin{theorem}\label{T-4.9}{\rm(cf.~\cite[Theorem 9]{HU})}\enspace
The following conditions are equivalent, where Hilbert spaces $\cH,\cK$ are arbitrary:
\begin{itemize}
\item[\rm(i)] for every $A_i,B_i\in B(\cH)_+$ ($i=1,2$) and any $\lambda\in(0,1)$,
\[
\phi((1-\lambda)A_1+\lambda A_2,(1-\lambda)B_1+\lambda B_2)
\le (1-\lambda)\phi(A_1,B_1)+\lambda\phi(A_2,B_2),
\]
or equivalently,
\[
\phi(A_1+A_2,B_1+B_2)\le\phi(A_1,B_1)+\phi(A_2,B_2)\quad
(\mbox{jointly subadditive});
\]
\item[\rm(ii)] for every $A,B\in B(\cH)_+$ and any bounded operator $C:\cK\to\cH$,
\[
\phi(C^*AC,C^*BC) \leq C^*\phi(A,B)C;
\]
\item[\rm(iii)] for every $A,B\in B(\cH)_+$ and any isometry $V:\cK\to\cH$,
\[
\phi(V^*AV,V^*BV) \leq V^*\phi(A,B)V;
\]
\item[\rm(iv)] $t \in [0,1] \mapsto \phi(t,1-t) \in (-\infty,\infty]$ is operator convex (see
Theorem \ref{T-3.7}(1));
\item[\rm(iv$'$)] $t \in [0,1] \mapsto \phi(1-t,t) \in (-\infty,\infty]$ is operator convex;
\item[\rm(v)] $t \in [0,\infty) \mapsto \phi(t,1) \in (-\infty,\infty]$ is operator convex and
\begin{align}\label{F-4.8}
\phi(1,0)\ge\lim_{t\to\infty}{\phi(t,1)\over t},
\end{align}
or equivalently, $\phi(1,0)\ge\lim_{t\nearrow1}\phi(t,1-t)$;
\item[\rm(v$'$)] $t \in [0,\infty) \mapsto \phi(1,t) \in (-\infty,\infty]$ is operator convex and
\begin{align}\label{F-4.9}
\phi(0,1)\ge\lim_{t\to\infty}{\phi(1,t)\over t},
\end{align}
or equivalently, $\phi(0,1)\ge\lim_{t\nearrow1}\phi(1-t,t)$.
\end{itemize} 
\end{theorem}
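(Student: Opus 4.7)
The plan is to reduce all multivariable conditions to the single-variable operator convexity of $g(t) := \phi(t,1-t)$ on $[0,1]$, exploiting the explicit realization \eqref{F-4.5}--\eqref{F-4.6}, Lemma \ref{L-4.3}(3) and the homogeneity of $\phi$. The target circuit is
\[
\text{(i)} \Rightarrow \text{(iii)} \Rightarrow \text{(ii)} \Rightarrow \text{(iii)} \Rightarrow \text{(iv)} \Leftrightarrow \text{(iv$'$)} \Leftrightarrow \text{(v)} \Leftrightarrow \text{(v$'$)},\qquad \text{(iii)} \Rightarrow \text{(i)}.
\]
The equivalence of the two forms of (i) is immediate from $\phi(\lambda A, \lambda B) = \lambda \phi(A,B)$ (operator homogeneity of Definition \ref{D-4.1}(2) with $C = \sqrt{\lambda}\,I$). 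For (i) $\Rightarrow$ (iii), given an isometry $V : \cK \to \cH$, decompose $\cH = V(\cK) \oplus V(\cK)^\perp$, set $W := \mathrm{diag}(I, -I)$ on this splitting, and apply (i) to $(A,B)$ and $(WAW^*, WBW^*)$; operator homogeneity with the unitary $W$ gives $\phi(WAW^*, WBW^*) = W\phi(A,B)W^*$, and Proposition \ref{P-4.5} identifies $\phi\bigl(\tfrac12(A+WAW^*), \tfrac12(B+WBW^*)\bigr)$ with $\phi(V^*AV, V^*BV) \oplus \phi(A_{22}, B_{22})$ along the same splitting. Pinching to the $V(\cK)$-corner delivers (iii). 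The implication (iii) $\Rightarrow$ (ii) follows from the contraction-to-isometry dilation $\widetilde V\xi := (C\xi, (I-C^*C)^{1/2}\xi)$ applied to $\widetilde A := A \oplus 0$, $\widetilde B := B \oplus 0$, combined with $\phi(0,0)=0$ (scaling first if $\|C\|>1$); (ii) $\Rightarrow$ (iii) is trivial. The implication (iii) $\Rightarrow$ (i) is a direct-sum averaging: the isometry $V\xi := (\xi/\sqrt 2, \xi/\sqrt 2)$ from $\cH$ into $\cH \oplus \cH$ applied to $(A_1 \oplus A_2, B_1 \oplus B_2)$, together with Proposition \ref{P-4.5}, produces the midpoint convexity of (i), which extends to general convex combinations by homogeneity.

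For (iii) $\Leftrightarrow$ (iv), the commuting specialization is pivotal. Given $T \in B(\cK')_\sa$ with $\sigma(T) \subseteq [0,1]$ and an isometry $V : \cK \to \cK'$, set $A := T$, $B := I_{\cK'} - T$; since $A,B$ commute and $A+B = I_{\cK'}$, Definition \ref{D-4.1}(1) yields $\phi(A,B) = g(T)$, and likewise $\phi(V^*AV, V^*BV) = g(V^*TV)$ (both in the extended functional calculus of \S\ref{Sec-3}). Thus (iii) restricted to such pairs reads $g(V^*TV) \le V^* g(T) V$, which by Proposition \ref{P-3.3} is precisely the operator convexity of $g$. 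Conversely, assuming (iv), Lemma \ref{L-4.3}(2) produces an isometry $\widehat V : \cK_{V^*AV,V^*BV} \to \cH_{A,B}$ with $\widehat V\,T_{V^*AV,V^*BV} = T_{A,B}V$ and $R_{V^*AV,V^*BV} = \widehat V^* R_{A,B} \widehat V$; applying the operator convex inequality for $g$ to these contractions and substituting into \eqref{F-4.6} yields (iii) directly. The equivalence (iv) $\Leftrightarrow$ (iv$'$) is the substitution $t \leftrightarrow 1-t$, and (v) $\Leftrightarrow$ (v$'$) is the coordinate swap $\phi(x,y) \leftrightarrow \phi(y,x)$. For (iv) $\Leftrightarrow$ (v), homogeneity yields
\[
\phi(t,1) = (1+t)\,g\!\left(\tfrac{t}{1+t}\right)\qquad (t \ge 0),
\]
and the bijection $s = t/(1+t)$ of $[0,1)$ onto $[0,\infty)$ implements the standard L\"owner-theoretic perspective correspondence, converting operator convexity of $g|_{[0,1)}$ into that of $f := \phi(\,\cdot\,,1)$ on $[0,\infty)$. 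Since $g(1) = \phi(1,0)$ and $\lim_{t\to\infty} f(t)/t = g(1^-)$, inequality \eqref{F-4.8} encodes exactly the boundary condition at $t=1$ required by Theorem \ref{T-3.7}(1)(ii) to promote operator convexity of $g$ from $[0,1)$ to $[0,1]$; the two formulations of \eqref{F-4.8} coincide because $f(t)/t = (1+1/t)g(t/(1+t)) \to g(1^-)$ as $t \to \infty$.

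The main obstacle I anticipate is not any single implication but the consistent bookkeeping of sums, direct sums and compressions in the extended target $\widehat{B(\cH)}_\lb$, where individual values may be $+\infty$. These operations must be interpreted through Lemma \ref{L-2.4}, Proposition \ref{P-4.5} and the bijective correspondence established at the end of \S\ref{Sec-2} among elements of $\widehat{B(\cH)}_\lb$, lower semibounded self-adjoint operators and lower semicontinuous lower semibounded quadratic forms; in particular, the identity $V^*(X_1 \oplus X_2)V = \tfrac12(X_1+X_2)$ used in the averaging step (iii) $\Rightarrow$ (i) must be verified in the quadratic-form picture when $X_i$ are unbounded. With this framework in place, every step reduces either to a standard operator-convexity manipulation via Proposition \ref{P-3.3} or to a direct application of the explicit formula \eqref{F-4.6}.
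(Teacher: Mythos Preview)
Your cycle among (i)--(iv) is correct and close in spirit to the paper's, differing only in the route: the paper proves (i)$\Rightarrow$(ii) directly via a pair of $2\times 2$ unitaries on $\cK\oplus\cH$, whereas you split this into the pinching step (i)$\Rightarrow$(iii) with $W=\mathrm{diag}(I,-I)$ followed by an isometric dilation (iii)$\Rightarrow$(ii). Both are standard Hansen--Pedersen manoeuvres and work equally well once the $\widehat{B(\cH)}_\lb$ bookkeeping (which you correctly flag and handle through Lemma~\ref{L-2.4} and Proposition~\ref{P-4.5}) is in place. Your treatment of (iii)$\Leftrightarrow$(iv) via Lemma~\ref{L-4.3}(2),(3) matches the paper's.

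The genuine gap is the direction (v)$\Rightarrow$(iv). What you call ``the standard L\"owner-theoretic perspective correspondence''---that operator convexity of $f(t)=\phi(t,1)$ on $(0,\infty)$ is equivalent to that of $g(s)=\phi(s,1-s)$ on $(0,1)$---is \emph{exactly} the nontrivial content of this implication; indeed Remark~\ref{R-4.12}(2) of the paper records this equivalence as a \emph{consequence} of Theorem~\ref{T-4.9}, not an input to it. Independent proofs do exist (the paper cites Fujii \cite{Fu1} and \cite[Theorem~C.1]{Hi3}), but the substitution $s=t/(1+t)$ alone does not transport operator convexity, since the map $T\mapsto T(I+T)^{-1}$ is nonlinear and there is no direct Jensen-type argument carrying $f(V^*TV)\le V^*f(T)V$ to the corresponding inequality for $g$. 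The paper closes this gap with a self-contained argument: for each $\delta\in(0,1)$ it introduces an auxiliary homogeneous function $\psi_\delta(x,y):=\frac{x+y}{c_\delta}\,\phi\!\left(\frac{c_\delta x}{x+y},1\right)$ with $c_\delta=(1-\delta)/\delta$, observes that $\psi_\delta(t,1-t)=\phi(c_\delta t,1)/c_\delta$ is operator convex on $[0,1]$ by (v), applies the already-established (iv)$\Rightarrow$(iii) to $\psi_\delta$, and deduces operator convexity of $\phi(t,1-t)$ on each $[0,1-\delta]$; the boundary condition in (v) together with Theorem~\ref{T-3.7}(1) then extends this to $[0,1]$. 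You should either reproduce an argument of this kind or cite one of the external sources explicitly---the change of variable by itself is not a proof.
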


\begin{proof}
The proof of the equivalence of (i)--(iii) is essentially the same as that of \cite[Theorem 2.1]{HP}
(also \cite[Theorem 2.5.2]{Hi}), so a point here is how the proof of this part goes in the
framework of $\widehat{B(\cH)}_\lb$. In the proof we repeatedly use Proposition 4.5 (for $I = {1, 2}$).

(i)$\implies$(ii).\enspace
We may assume that $C:\cK\to\cH$ is a contraction, since $\phi(\alpha A,\alpha B)=\alpha\phi(A,B)$
for any $\alpha>0$ (the special case of operator homogeneity). On the direct sum $\cK\oplus\cH$
define $\widetilde A:=0\oplus A$, $\widetilde B:=0\oplus B$ and unitaries
\[
U:=\begin{bmatrix}Y&-C^*\\C&Z\end{bmatrix},\qquad
V:=\begin{bmatrix}Y&C^*\\-C&Z\end{bmatrix},
\]
where $Y:=(I_\cK-C^*C)^{1/2}$ and $Z:=(I_\cH-CC^*)^{1/2}$. Then one can easily verify that
\begin{align*}
&\phi(C^* AC\oplus ZAZ, C^* BC\oplus ZBZ) \\
&\quad=\phi\bigl(\half(U^*\widetilde AU+V^*\widetilde AV),
\half(U^*\widetilde BU+V^*\widetilde BV)\bigr) \\
&\quad\le\half\bigl(\phi(U^*\widetilde AU,U^*\widetilde BU)
+\phi(V^*\widetilde AV,V^*\widetilde BV)\bigr)
\qquad\mbox{(by assumption (i))} \\
&\quad=\half\bigl(U^*\phi(\widetilde A, \widetilde B)U+V^*\phi(\widetilde A, \widetilde B)V\bigr)
\qquad\mbox{(by operator homogeneity)}.
\end{align*}
Therefore, for any $\rho\in B(\cK)_*^+$ we find that
\begin{align*}
&\phi(C^* AC,C^*BC)(\rho) \\
&\quad= \phi(C^* AC\oplus ZAZ, C^* BC\oplus ZBZ)(\rho\oplus 0)
\qquad\mbox{(by Proposition \ref{P-4.5})}\\
&\quad\le\half\bigl(\phi(\widetilde A, \widetilde B)(U(\rho\oplus 0)U^*)
+\phi(\widetilde A, \widetilde B)(V(\rho\oplus0)V^*)\bigr) \\
&\quad=\half\bigl(\phi(A,B)(P_2 U(\rho\oplus 0)U^*P_2)
+\phi(A,B)(P_2V(\rho\oplus0)V^*P_2)\bigr) \quad\mbox{(by Proposition \ref{P-4.5})}\\
&\quad= \phi(A,B)(C\rho\,C^*) = (C^*\phi(A,B)C)(\rho),
\end{align*}
where $P_2$ denotes the orthogonal projection from $\cK\oplus\cH$ onto the second direct
summand.

(ii)$\implies$(iii) is trivial.

(iii)$\implies$(i).\enspace
For any $\lambda\in(0,1)$ consider an isometry $V\xi:=\sqrt{1-\lambda}\xi\oplus\sqrt\lambda\xi$
from $\cH$ to $\cH\oplus\cH$. For every $A_i,B_i\in B(\cH)_+$ ($i=1,2$) one has
\begin{align*}
\phi((1-\lambda)A_1+\lambda A_2,(1-\lambda)B_1+\lambda B_2)
&=\phi(V^*(A_1\oplus A_2)V,V^*(B_1\oplus B_2)V)) \\
&\le V^*(\phi(A_1\oplus A_2,B_1 \oplus B_2))V
\end{align*}
by assumption (iii). Therefore, for any $\rho\in B(\cH)_+^*$ one has
\begin{align*}
&\phi((1-\lambda)A_1+\lambda A_2,(1-\lambda)B_1+\lambda B_2)(\rho) \\
&\quad\le\phi(A_1\oplus A_2,B_1 \oplus B_2)(V\rho\,V^*) \\
&\quad=\phi(A_1,B_1)(P_1 V\rho\,V^* P_1) + \phi(A_2,B_2)(P_2 V\rho\,V^* P_2)
\quad \mbox{(by Proposition \ref{P-4.5})} \\
&\quad=(1-\lambda)\phi(A_1,B_1)(\rho)+\lambda\phi(A_2,B_2)(\rho), 
\end{align*}
where $P_1, P_2$ denote the orthogonal projections from $\cH\oplus\cH$ onto the first and the
second direct summands, respectively. Note also that the equivalence of joint convexity and joint
subadditivity in (i) is clear from the scalar homogeneity of $\phi(A,B)$ mentioned at the beginning
of the proof of (i)$\implies$(ii).

(iii)$\implies$(iv) and (iii)$\implies$(iv$'$) are obvious by Lemma \ref{L-4.3}(3) (and Proposition
\ref{P-3.3}).

(iv)$\implies$(ii).\enspace
Write $A' = C^*AC$ and $B'=C^*BC$ for short. By Lemma \ref{L-4.3}(2) note that
$R_{A',B'} = \widehat{C}^*R_{A,B}\widehat{C}$ and $\widehat{C}T_{A',B'} = T_{A,B}C$ for an
isometry $\widehat{C}$. We then have 
\begin{align*} 
\phi(A',B') 
&= T_{A',B'}^*\phi(R_{A',B'},S_{A',B'})T_{A',B'} \\
&= 
T_{A',B'}^*\phi(R_{A',B'},1-R_{A',B'})T_{A',B'}\quad\quad\ \text{(by Lemma \ref{L-4.3}(3))} \\
&= 
T_{A',B'}^*\phi(\widehat{C}^* R_{A,B}\widehat{C},1-\widehat{C}^* R_{A,B}\widehat{C}^*) T_{A',B'} \\
&\leq 
T_{A',B'}^* \widehat{C}^* \phi(R_{A,B},1-R_{A,B})\widehat{C}\,T_{A',B'} \\
&\hskip4cm \text{(by assumption (iv) and Proposition \ref{P-3.3})} \\
&= 
T_{A',B'}^* \widehat{C}^*\phi(R_{A,B},S_{A,B})\widehat{C}\,T_{A',B'}
\quad\quad\quad \text{(by Lemma \ref{L-4.3}(3))} \\
&= 
C^* T_{A,B}^*\phi(R_{A,B},S_{A,B})T_{A,B} C \\
&= 
C^*\phi(A,B)C,
\end{align*}
where the first and the last equalities are due to operator homogeneity (Definition \ref{D-4.1}(2)).
The proof of  (iv$'$)$\implies$(ii) is similar. (Also, (iv)$\iff$(iv$'$) is obvious.)

(iii)$\implies$(v).\enspace
Assume item (iii). Then $\phi(t,1)$ is operator convex on $[0,\infty)$ by Lemma \ref{L-4.6}(1)
(and Proposition \ref{P-3.3}). Since (iii)$\implies$(iv) (already seen), it follows from Theorem
\ref{T-3.7} that $\phi(1,0)\ge\lim_{t\nearrow1}\phi(t,1-t)$. Furthermore, since
\[
{\phi(t,1)\over t}
= {t+1\over t}\,\phi\Bigl({t\over t+1},1-{t\over t+1}\Bigr),\qquad t>0,
\]
it is clear that the above condition is equivalent to \eqref{F-4.8}. The proof of (iii)$\implies$(v$'$)
is similar.

(v)$\implies$(iv).\enspace
Assume item (v). Then, as in the proof of \cite[Theorem 9, (iii)$\implies$(ii)]{HU}, we observe that
$t \in [0,1-\delta] \mapsto \phi(t,1-t) \in (-\infty,\infty]$ is operator convex for any $\delta\in(0,1)$ and
hence so is $t \in [0,1) \mapsto \phi(t,1-t) \in (-\infty,\infty]$. Here let us briefly explain this proof.
For each $\delta \in(0,1)$ we set $c_\delta := (1-\delta)/\delta$ and define
\[
\psi_\delta(x,y) :={x+y\over c_\delta}\,\phi\Bigl({c_\delta x\over x+y},1\Bigr)
\qquad\mbox{with\ \ $\psi_\delta(0,0) := 0$}.
\]
Then $\psi_\delta(t,c_\delta -t) = \phi(t,1)$ holds for $0 \leq t \leq c_\delta$ and
$\psi_\delta(t,1-t) = \phi(c_\delta t,1)/c_\delta$ is operator convex on $[0,1]$ by assumption (v).
Applying (iv)$\implies$(iii) (already established) to $\psi_\delta$, we see  that
$(A,B) \mapsto \psi_\delta(A,B)$ satisfies the inequality in (iii). Observe that
$\phi(t,1-t) = \psi_\delta(t,c_\delta(1-t)-t)$ for every $t\in[0 ,1-\delta]$. Then, for every
$A\in B(\cH)_{[0,1-\delta]}$ and every isometry $V:\cK\to\cH$ (so $V^*AV\in B(\cK)_{[0,1-\delta]}$),
we have
\begin{align*}
\phi(V^* AV,1-V^*AV) 
&= 
\psi_\delta(V^* AV,c_\delta(I_\cK-V^* AV)-V^* AV) \\
&=
\psi_\delta(V^* AV,V^*(c_\delta(I_\cH-A)-A)V) \\
&\leq
V^*\psi_\delta(A,c_\delta(I_\cH-A)-A)V \\
&= 
V^*\phi(A,1-A)V,
\end{align*}
where the first and the last equalities can be confirmed as in the proof of Lemma \ref{L-4.3}(3).
Hence we have shown that $\phi(t,1-t)$ is operator convex on $[0,1)$. To prove item (iv), in view
of Example \ref{E-3.6} we may assume that $\phi(t,1-t)<\infty$ at more than one point in $[0,1]$.
Then, since $\phi(1,0)\ge\lim_{t\nearrow1}\phi(t,1-t)$, we obtain (iv) by applying Theorem
\ref{T-3.7}(1) to $\phi(t,1-t)$ on $[0,1]$.

(v$'$)$\implies$(iv$'$) can be shown in exactly the same way as above.
\end{proof}

Under an additional assumption on the behavior of $\phi(t,1-t)$ as $t\nearrow1$, we have alternative
equivalent conditions to those of Theorem \ref{T-4.9}.

\begin{theorem}\label{T-4.10}
Assume that $\lim_{t\nearrow1}\phi(t,1-t)\le\phi(1,0)\le0$. Then the conditions of Theorem
\ref{T-4.9} are also equivalent to the following:
\begin{itemize}
\item[\rm(vi)] for every $A_1,A_2,B\in B(\cH)_+$,
\[
A_1\le A_2\,\implies\,\phi(A_1,B)\ge\phi(A_2,B);
\]
\item[\rm(vii)] $t\in[0,\infty)\mapsto\phi(t,1)\in(-\infty,\infty]$ is operator monotone decreasing
(see Theorem \ref{T-3.7}(2)).
\end{itemize}
\end{theorem}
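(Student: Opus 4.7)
My plan is to close the cycle (i) $\Rightarrow$ (vi) $\Rightarrow$ (vii) $\Rightarrow$ (v), where (v) is one of the conditions already shown equivalent in Theorem~\ref{T-4.9}; together with that theorem, this yields the desired enlargement of the equivalence list.

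The implication (i) $\Rightarrow$ (vi) is where the hypothesis $\phi(1,0)\le 0$ enters. Given $A_1\le A_2$ in $B(\cH)_+$, I would set $A_2':=A_2-A_1\in B(\cH)_+$ and apply the jointly subadditive reformulation of (i) from Theorem~\ref{T-4.9} to the pairs $(A_1,B)$ and $(A_2',0)$:
\[
\phi(A_2,B)=\phi(A_1+A_2',B+0)\le\phi(A_1,B)+\phi(A_2',0).
\]
By Lemma~\ref{L-4.6}(2), $\phi(A_2',0)=\phi(1,0)A_2'$, which is $\le 0$ in $\widehat{B(\cH)}_\lb$ since $\phi(1,0)\le 0$ and $A_2'\ge 0$; hence $\phi(A_2,B)\le\phi(A_1,B)$. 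The step (vi) $\Rightarrow$ (vii) is then immediate upon specialising $B=I_\cH$: by Lemma~\ref{L-4.6}(1), $\phi(A,I_\cH)$ coincides with the usual functional calculus $\phi(A,1)$ of the scalar function $t\mapsto\phi(t,1)$, so antitonicity in $A$ translates directly into operator monotone decreasingness of $\phi(\cdot,1)$ on $[0,\infty)$ in the sense of Definition~\ref{D-3.2}(2).

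For (vii) $\Rightarrow$ (v), I would assume that $t\mapsto\phi(t,1)$ is operator monotone decreasing. Applying Theorem~\ref{T-3.7}(2) to $-\phi(\cdot,1)$ (an operator monotone function on $[0,\infty)$) yields that $\phi(\cdot,1)$ is $\bR$-valued on $[0,\infty)$, operator monotone decreasing on $(0,\infty)$, and satisfies the boundary inequality $\phi(0,1)\ge\lim_{t\searrow 0}\phi(t,1)$. By classical L\"{o}wner theory (\cite{Bh,Hi}), every $\bR$-valued operator monotone function on $(0,\infty)$ is operator concave there, so $\phi(\cdot,1)$ is operator convex on $(0,\infty)$; combining this with the boundary inequality at $0$ and invoking Theorem~\ref{T-3.7}(1) upgrades operator convexity to all of $[0,\infty)$. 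This is the first half of (v); the second half, $\phi(1,0)\ge\lim_{t\to\infty}\phi(t,1)/t$, is by the equivalent reformulation already stated in (v) the same as $\phi(1,0)\ge\lim_{t\nearrow 1}\phi(t,1-t)$, which is supplied verbatim by the standing hypothesis of the theorem.

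The main obstacle is the step (vii) $\Rightarrow$ (v): one has to marry the classical L\"{o}wner correspondence between operator monotonicity and operator concavity with the extended-real-valued analysis from Section~\ref{Sec-3}, in particular to transport operator convexity across the boundary point $t=0$ via Theorem~\ref{T-3.7}. It is also in this step that the added hypothesis is essential in its full form, since $\phi(1,0)\le 0$ is what makes (i) $\Rightarrow$ (vi) work, while the bound $\lim_{t\nearrow 1}\phi(t,1-t)\le\phi(1,0)$ delivers exactly the asymptotic condition \eqref{F-4.8} of (v)---something that operator monotone decreasingness of $\phi(\cdot,1)$ does not furnish by itself.
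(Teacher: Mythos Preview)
Your cycle (i) $\Rightarrow$ (vi) $\Rightarrow$ (vii) $\Rightarrow$ (v) is exactly the paper's, and your arguments for each implication match theirs. One small point to add in (vii) $\Rightarrow$ (v): before invoking Theorem~\ref{T-3.7}(2) for $-\phi(\cdot,1)$, you must check that $\phi(t,1)<\infty$ at more than one point---otherwise $-\phi(\cdot,1)$ need not fit the $(-\infty,\infty]$-valued, locally-lower-bounded framework of \S\ref{Sec-3}, and Theorem~\ref{T-3.7} is not directly applicable. The paper secures this from the standing hypothesis: $\lim_{t\nearrow1}\phi(t,1-t)\le 0$ is equivalent to $\lim_{t\to\infty}\phi(t,1)/t\le 0$, which forces $\phi(t,1)<\infty$ for all sufficiently large $t$. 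With that verified, your route through L\"owner theory (operator monotone $\Rightarrow$ operator concave on $(0,\infty)$) and the upgrade via Theorem~\ref{T-3.7}(1) coincides with the paper's.
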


\begin{proof}
(vi)$\implies$(vii).\enspace
This is immediately seen by letting $B=I$ in (vi) and using Lemma \ref{L-4.6}(1).

(vii)$\implies$(v).\enspace
Since $\lim_{t\nearrow1}\phi(t,1-t)\le0$ or equivalently $\lim_{t\nearrow\infty}\phi(t,1)/t \leq 0$,
it follows that  $\phi(t,1)<\infty$ for all sufficiently large $t>0$.  Hence by applying Theorem
\ref{T-3.7}(2) to $-\phi(t,1)$, assumption (vii) implies that $\phi(t,1)$ is $\bR$-valued and operator
monotone decreasing on $(0,\infty)$ and $\phi(0,1)\ge\lim_{t\searrow0}\phi(t,1)$. Then it is
well known that $\phi(t,1)$ is operator convex on $(0,\infty)$. (Indeed, let $f(t):=\phi(t,1)$.
For every $\eps>0$, $f(\eps)-f(t+\eps)$ is non-negative and operator monotone on $[0,\infty)$,
which is operator concave by \cite[Theorem 2.5]{HP}. This means that $f$ is operator convex
on $(0,\infty)$.) Hence, item (v) holds by Theorem \ref{T-3.7}(1).

(i)$\implies$(vi).\enspace
Let $A_1,A_2,B\in B(\cH)_+$ with $A_1\le A_2$. By assumption (i) one has
\[
\phi(A_2,B)=\phi((A_1+(A_2-A_1),B+0)\le\phi(A_1,B)+\phi(A_2-A_1,0).
\]
Since $\phi(A_2-A_1,0) = \phi(1,0)(A_2-A_1)$ by Lemma \ref{L-4.6}(2),  item (vi) holds
thanks to the assumption $\phi(1,0) \le 0$.
\end{proof}

\begin{example}\label{E-4.11}\rm
Let $h$ be a positive operator monotone function on $[0,\infty)$ with $h(1)=1$. Define the
function $\phi(x,y)$ as in Remark \ref{R-4.8}. Then for every $A,B\in B(\cH)_+$, $\phi(B,A)$
becomes the \emph{operator mean} $A\sigma B$ \cite{KA} corresponding to the function $h$.
Since $\phi(t,1)=h(t)$ is operator concave (i.e., $-\phi(t,1)$ is operator convex) and
$\lim_{t\to\infty}\phi(t,1)/t=\phi(1,0)$, Theorem \ref{T-4.9}(i) (applied to $-\phi$) reduces to the
familiar joint concavity of an operator mean. In particular, for $\phi_\alpha(x,y)$ corresponding to
$t^\alpha$ with $0\le\alpha\le1$, we have $\phi_\alpha(B,A)=A\#_\alpha B$, the
\emph{$\alpha$-weighted geometric mean}, which has been the most studied operator mean
since the beginning of the subject \cite{PW1,PW2,An0,An1}. 
\end{example}

\begin{remark}\label{R-4.12}\rm
Some remarks related to Theorems \ref{T-4.9} and \ref{T-4.10} are in order.

(1)\enspace
Assume that $\phi(t,1-t)<\infty$ at more than one point in $[0,1]$. If $\phi$ satisfies the
convexity properties of Theorem \ref{T-4.9}, then $\phi(x,y)$ becomes a real analytic function
on $(0,\infty)^2$. Indeed, in this case, $\phi(t,1)$ is an $\bR$-valued operator convex
(hence real analytic) function on $(0,\infty)$ and $\phi(x,y)=y\phi(x/y,1)$.

(2)\enspace
Theorem \ref{T-4.9} in particular shows that an $\bR$-valued function $f$ on $(0,\infty)$ is
operator convex if and only if so is $g(t):=(1-t)f\bigl({t\over1-t}\bigr)$ on $(0,1)$. Indeed, let
$\phi$ be the perspective function of $f$, i.e., $\phi(x,y):=yf(x/y)$ for $(x,y)\in(0,\infty)^2$
(extended to $[0,\infty)^2$ with $\phi(0,0)=0$ and $\phi(1,0)=\phi(0,1)=\infty$). 
Since $f(t)=\phi(t,1)$ for $t>0$ and $g(t)=\phi(t,1-t)$ for $t\in(0,1)$, the
assertion follows from Theorem \ref{T-4.9}. For a different approach to this result, see \cite{Fu1}
and \cite[Theorem C.1]{Hi3}.

(3)\enspace
Assume that $\phi(t,1-t)<\infty$ for all $t\in(0,1)$. Then Theorem \ref{T-4.7} says that $\phi(A,B)$
for every $A,B\in B(\cH)_{++}$ becomes the \emph{operator perspective}
$B^{1/2}f(B^{-1/2}AB^{-1/2})B^{1/2}$ of $A,B$ associated with the function $f(t):=\phi(t,1)$ on
$(0,\infty)$; see Definition \ref{D-7.1} in \S\ref{Sec-7}. Thus Theorem \ref{T-4.9} in particular
contains the result in \cite{Ef,ENG}, saying that an $\bR$-valued function $f$ on $(0,\infty)$ is
operator convex if and only if the operator perspective associated with $f$ is jointly operator
convex on $B(\cH)_{++}\times B(\cH)_{++}$.
\end{remark}

The PW-functional calculus $\phi(A,B)$ enjoys several continuity properties under certain
constraints on the given function $\phi$, as we will discuss in \S\ref{Sec-6} and \S\ref{Sec-7}.

%%%%%%%%%%%%%%% Section 5 %%%%%%%%%%%%%
\section{PW-functional calculus with restricted domain}\label{Sec-5}

For the convenience of notations, we set
\begin{align*}
(B(\cH)_+\times B(\cH)_+)_\le
&:=\{(A,B)\in B(\cH)_+\times B(\cH)_+:A\le\alpha B\ \mbox{for some $\alpha>0$}\}, \\
(B(\cH)_+\times B(\cH)_+)_\ge
&:=\{(A,B)\in B(\cH)_+\times B(\cH)_+:A\ge\alpha B\ \mbox{for some $\alpha>0$}\},
\end{align*}
which are sub-cones of $B(\cH)_+\times B(\cH)_+$ including $B(\cH)_{++}\times B(\cH)_{++}$.

In the following we continue to use the notations $\cH_{A,B}$, $T_{A,B}$ and $(R_{A,B},S_{A,B})$
given in \eqref{F-4.1}, \eqref{F-4.2} and Lemma \ref{L-4.3}(1) for $A,B\in B(\cH)_+$.

\begin{lemma}\label{L-5.1}
For every $A,B\in B(\cH)_+$ the following conditions are equivalent:
\begin{itemize}
\item[\rm(1)] $(A,B)\in(B(\cH)_+\times B(\cH)_+)_\ge$;
\item[\rm(2)] $R_{A,B}\ge\alpha S_{A,B}$ for some $\alpha>0$;
\item[\rm(3)] $R_{A,B}\in B(\cH_{A,B})_{++}$.
\end{itemize}
\end{lemma}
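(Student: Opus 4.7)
The plan is to exploit Lemma \ref{L-4.3}(1), namely the identities $A=T_{A,B}^*R_{A,B}T_{A,B}$, $B=T_{A,B}^*S_{A,B}T_{A,B}$, and $R_{A,B}+S_{A,B}=I_{\cH_{A,B}}$, together with the fact that $T_{A,B}$ has dense range in $\cH_{A,B}$ (by construction of $T_{A,B}\xi=(A+B)^{1/2}\xi$ with codomain $\cH_{A,B}=\overline{\ran}(A+B)$). The three conditions can be rephrased as lower-bounding $R_{A,B}$ by $S_{A,B}$ in three successively refined ways, and the equivalences follow essentially by pushing inequalities through $T_{A,B}$ and using $R_{A,B}+S_{A,B}=I$.

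First I would establish (2)$\Rightarrow$(1): if $R_{A,B}\ge\alpha S_{A,B}$ on $\cH_{A,B}$, then applying $T_{A,B}^*(\cdot)T_{A,B}$ gives $A-\alpha B=T_{A,B}^*(R_{A,B}-\alpha S_{A,B})T_{A,B}\ge 0$. For the converse (1)$\Rightarrow$(2), suppose $A\ge\alpha B$; then $T_{A,B}^*(R_{A,B}-\alpha S_{A,B})T_{A,B}\ge 0$, i.e., $\langle (R_{A,B}-\alpha S_{A,B})T_{A,B}\xi,T_{A,B}\xi\rangle\ge 0$ for all $\xi\in\cH$. Since $\ran\,T_{A,B}$ is dense in $\cH_{A,B}$ and $R_{A,B}-\alpha S_{A,B}$ is a bounded self-adjoint operator on $\cH_{A,B}$, the quadratic form inequality extends by continuity to all $\eta\in\cH_{A,B}$, giving $R_{A,B}\ge\alpha S_{A,B}$.

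Next I would treat (2)$\iff$(3) via the partition of unity $R_{A,B}+S_{A,B}=I_{\cH_{A,B}}$. Assuming (2), $R_{A,B}\ge\alpha(I_{\cH_{A,B}}-R_{A,B})$ rearranges to $R_{A,B}\ge\tfrac{\alpha}{1+\alpha}I_{\cH_{A,B}}$, hence $R_{A,B}$ is invertible in $B(\cH_{A,B})_+$, giving (3). Conversely, if $R_{A,B}\ge\beta I_{\cH_{A,B}}$ with $0<\beta\le 1$ (one may always take $\beta\le 1$ since $R_{A,B}\le I_{\cH_{A,B}}$), then $(1-\beta)R_{A,B}\ge\beta S_{A,B}$; when $\beta<1$ this yields (2) with $\alpha=\beta/(1-\beta)$, and when $\beta=1$ one has $S_{A,B}=0$ so (2) holds trivially.

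There is really no hard step here; the only mild subtlety is the density argument in (1)$\Rightarrow$(2), where one must remember that $T_{A,B}$ is defined as an operator \emph{into} $\cH_{A,B}$ (not into $\cH$), so its range is dense in the target space and the quadratic form $\langle(R_{A,B}-\alpha S_{A,B})\eta,\eta\rangle$ is continuous in $\eta\in\cH_{A,B}$, which is exactly what makes the extension of the inequality from $\ran\,T_{A,B}$ to all of $\cH_{A,B}$ legitimate.
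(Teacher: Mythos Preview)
Your proof is correct and follows essentially the same approach as the paper's own proof. The only differences are organizational: the paper argues the cycle (1)$\Rightarrow$(2)$\Rightarrow$(3)$\Rightarrow$(1) while you prove (1)$\Leftrightarrow$(2) and (2)$\Leftrightarrow$(3) separately, and you make explicit the density argument for (1)$\Rightarrow$(2) that the paper leaves implicit.
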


\begin{proof}
(1)$\implies$(2).\enspace
Assume that $A\ge\alpha B$ with $\alpha>0$. For every $\xi\in\cH$ one has
\[
\<R_{A,B}T_{A,B}\xi,T_{A,B}\xi\>=\<A\xi,\xi\>\ge\alpha\<B\xi,\xi\> \\
=\alpha\<S_{A,B}T_{A,B}\xi,T_{A,B}\xi\>
\]
so that $R_{A,B}\ge\alpha S_{A,B}$.

(2)$\implies$(3).\enspace
Assume (2); then $(\alpha+1)R_{A,B}\ge\alpha(R_{A,B}+S_{A,B})=\alpha I_{\cH_{A,B}}$.

(3)$\implies$(1).\enspace
Assume (3); then one can choose an $\alpha>0$ such that $R_{A,B}\ge\alpha S_{A,B}$. Hence
$A=T_{A,B}^*R_{A,B}T_{A,B}\ge\alpha T_{A,B}^*S_{A,B}T_{A,B}=\alpha B$.
\end{proof}

In this section we consider a Borel function
$\phi:[0,\infty)^2\setminus(\{0\}\times(0,\infty))\to(-\infty,\infty]$ which is homogeneous
(i.e., $\phi(\lambda x,\lambda y)=\lambda\phi(x,y)$ for all $x>0$, $y\ge0$ and $\lambda\ge0$).
We assume that $\phi(t,1-t)$ on $(0,1]$ is locally bounded from below (i.e., bounded from
below on any compact subset of $(0,1]$). We can define the PW-functional calculus
\[
(A,B)\in(B(\cH)_+\times B(\cH)_+)_\ge\,\longmapsto\,\phi(A,B)\in\widehat{B(\cH)}_\lb
\]
by restricting $(A,B)$ to pairs in $(B(\cH)_+\times B(\cH)_+)_\ge$ in postulates (1) and (2) of
Definition \ref{D-4.1}. Alternatively, by Lemma \ref{L-5.1} we may define more explicitly as
follows:
\begin{align}\label{F-7.1}
\phi(A,B)=T_{A,B}^*\phi(R_{A,B},S_{A,B})T_{A,B},\qquad
(A,B)\in(B(\cH)_+\times B(\cH)_+)_\ge.
\end{align}
The class of functions $\phi$ on $[0,\infty)^2\setminus(\{0\}\times(0,\infty))$ for which we can
define the PW-functional calculus $\phi(A,B)$ is somewhat more flexible than those on
$[0,\infty)^2$, although the domain is restricted to $(B(\cH)_+\times B(\cH)_+)_\ge$.

For this version of PW-functional calculus $\phi(A,B)$ with restricted domain, we can
rephrase all the results in \S\ref{Sec-4} by restricting $(A,B)$ to $(B(\cH)_+\times B(\cH)_+)_\ge$
with slight necessary modifications. For instance, Theorem \ref{T-4.7} in this setting is
\[
\phi(A,B) = 
\begin{cases} 
A^{1/2} \phi(1,A^{-1/2}BA^{-1/2})A^{1/2} & \text{for $A\in B(\cH)_{++}$, $B\in B(\cH)_+$}, \\
B^{1/2} \phi(B^{-1/2}AB^{-1/2},1)B^{1/2} & \text{for $A,B\in B(\cH)_{++}$}.
\end{cases}
\]

In the next theorem, instead of the restricted version of Theorems \ref{T-4.9} and \ref{T-4.10},
we present its complementary counterpart, where the inequality sign is reversed and `operator
convex' (resp., `operator monotone decreasing') is replaced with `operator concave' (resp.,
`operator monotone').

\begin{theorem}\label{T-5.2}
Let $\phi:[0,\infty)^2\setminus(\{0\}\times(0,\infty))\to(-\infty,\infty]$ be as stated above. Then
the following conditions are equivalent, where Hilbert spaces $\cH,\cK$ are arbitrary:
\begin{itemize}
\item[\rm(i)] for every $(A_i,B_i)\in(B(\cH)_+\times B(\cH)_+)_\ge$ ($i=1,2$),
\[
\phi(A_1+A_2,B_1+B_2)\ge\phi(A_1,B_1)+\phi(A_2,B_2);
\]
\item[\rm(ii)] for every $(A,B)\in(B(\cH)_+\times B(\cH)_+)_\ge$ and any bounded operator
$C:\cK\to\cH$,
\[
\phi(C^*AC,C^*BC) \ge C^*\phi(A,B)C;
\]
\item[\rm(iii)] for every $(A,B)\in(B(\cH)_+\times B(\cH)_+)_\ge$ and any isometry
$V:\cK\to\cH$,
\[
\phi(V^*AV,V^*BV) \ge V^*\phi(A,B)V;
\]
\item[\rm(iv)] $t \in (0,1] \mapsto \phi(t,1-t)$ is operator concave;
\item[\rm(iv$'$)] $t \in [0,1) \mapsto \phi(1-t,t)$ is operator concave;
\item[\rm(v)] $t \in (0,\infty) \mapsto \phi(t,1)$ is operator concave and
$\phi(1,0)\le\lim_{t\nearrow1}\phi(t,1-t)$;
\item[\rm(v$'$)] $t \in [0,\infty) \mapsto \phi(1,t)$ is operator concave.
\end{itemize}

Furthermore, assume that $\lim_{t\nearrow1}\phi(t,1-t)\ge\phi(1,0)\ge0$. Then the above conditions
are also equivalent to the following:
\begin{itemize}
\item[\rm(vi)] for every $(A_1,B),(A_2,B)\in(B(\cH)_+\times B(\cH)_+)_\ge$,
\[
A_1\le A_2\,\implies\,\phi(A_1,B)\le\phi(A_2,B);
\]
\item[\rm(vii)] $\phi(t,1)$ is operator monotone on $(0,\infty)$.
\end{itemize}
\end{theorem}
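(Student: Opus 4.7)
The strategy is to adapt the proofs of Theorems \ref{T-4.9} and \ref{T-4.10} by reversing inequalities and replacing ``operator convex''/``operator monotone decreasing'' with ``operator concave''/``operator monotone''. The decisive observation that legitimises this adaptation in the restricted-domain setting is that when $(A,B)\in(B(\cH)_+\times B(\cH)_+)_\ge$ with $A\ge\alpha B$, the relation $R_{A,B}+S_{A,B}=I_{\cH_{A,B}}$ together with Lemma \ref{L-5.1} forces $R_{A,B}\ge\tfrac{\alpha}{\alpha+1}I_{\cH_{A,B}}$, so that the spectrum of $R_{A,B}$ lies in a compact subinterval of $(0,1]$. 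Hence, in the explicit formula \eqref{F-7.1}, $\phi$ is integrated only over a region bounded away from the excluded boundary $\{0\}\times(0,\infty)$, and in particular the identity $\phi(R_{A,B},S_{A,B})=\phi(R_{A,B},I_{\cH_{A,B}}-R_{A,B})$ of Lemma \ref{L-4.3}(3) remains available.

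Given this, the equivalence (i)$\iff$(ii)$\iff$(iii) follows from the unitary-dilation and isometry constructions in the proof of Theorem \ref{T-4.9} essentially verbatim, since $A\ge\alpha B$ is transported to $C^*AC\ge\alpha C^*BC$, $V^*AV\ge\alpha V^*BV$, and to direct sums, keeping us inside the cone $(B(\cH)_+\times B(\cH)_+)_\ge$. For (iv)$\implies$(ii) one combines \eqref{F-7.1} with the intertwining $R_{C^*AC,C^*BC}=\widehat{C}^*R_{A,B}\widehat{C}$ of Lemma \ref{L-4.3}(2) and applies the concave counterpart of Proposition \ref{P-3.3}(ii) to $-\phi(t,1-t)$ on $(0,1]$, using that the spectrum of $R_{A,B}$ sits in this interval. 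Conversely, (iii)$\implies$(iv) is obtained by choosing $(A,B)=(R,I-R)$ for any positive contraction $R$ with spectrum in $(0,1]$, which automatically lies in $(B(\cH)_+\times B(\cH)_+)_\ge$. The equivalences (iv)$\iff$(v) and (iv$'$)$\iff$(v$'$) then follow through the homogeneity-driven change of variable $s=t/(t+1)$, together with the half-open-interval version of Theorem \ref{T-3.7} applied to $-\phi$; no boundary condition appears in (v$'$) because $\phi(1,\cdot)$ is already defined on all of $[0,\infty)$, whereas (v) retains the single half-open endpoint comparison $\phi(1,0)\le\lim_{t\nearrow1}\phi(t,1-t)$, which under the change of variable is exactly $\phi(1,0)\le\lim_{t\to\infty}\phi(t,1)/t$. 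The symmetry (iv)$\iff$(iv$'$) is by the substitution $t\leftrightarrow 1-t$.

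For the ``Furthermore'' part, (vi)$\implies$(vii) is immediate upon setting $B=I_{\cH}$ and invoking Lemma \ref{L-4.6}(1). For (i)$\implies$(vi) with $A_1\le A_2$, one writes $A_2=A_1+(A_2-A_1)$, applies joint superadditivity, and observes that the residual $\phi(A_2-A_1,0)=\phi(1,0)(A_2-A_1)\ge0$ by Lemma \ref{L-4.6}(2) and the hypothesis $\phi(1,0)\ge0$. The step I expect to be the main obstacle is (vii)$\implies$(v): here I would invoke the classical fact that every $\bR$-valued operator monotone function on $(0,\infty)$ bounded from below is automatically operator concave (proved as in \cite[Theorem 2.5]{HP} applied to $f(t_0)-f(t)$ for a reference point $t_0$). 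Ruling out the degenerate cases of Example \ref{E-3.6} so that $\phi(t,1)$ is indeed $\bR$-valued on $(0,\infty)$ uses the local lower boundedness of $\phi(t,1-t)$ together with operator monotonicity, and the remaining boundary inequality in (v) is exactly the hypothesis $\lim_{t\nearrow1}\phi(t,1-t)\ge\phi(1,0)$ rewritten through $s=t/(t+1)$.
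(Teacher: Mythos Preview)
Your direct-adaptation strategy is one of the two routes the paper explicitly allows; the paper itself remarks that the remaining implications ``can be carried out in a similar way to that of Theorem \ref{T-4.9} with necessary modifications,'' and your key structural observation (that Lemma \ref{L-5.1} pins $\sigma(R_{A,B})$ inside a compact subset of $(0,1]$, so the boundary $\{0\}\times(0,\infty)$ never enters) is exactly what makes that adaptation legitimate. Where your write-up and the paper's proof diverge is in how the passage to $-\phi$ is justified. The paper does not run the concave arguments directly; instead it first records the \emph{restricted convex} version of Theorems \ref{T-4.9}--\ref{T-4.10} (same statements, inequalities unreversed), then observes that any one of (i)--(vii) forces numerical concavity of $t\mapsto\phi(t,1-t)$ on $(0,1]$, and hence---once the trivial case $\phi\equiv\infty$ is set aside---forces $\phi(t,1-t)$ to be \emph{locally bounded from above} on $(0,1]$. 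Only after this does the paper apply the convex version to $-\phi$. This step is not cosmetic: the entire machinery of \S\ref{Sec-3} (Definition \ref{D-3.2}, Proposition \ref{P-3.3}, Theorem \ref{T-3.7}) is formulated for functions into $(-\infty,\infty]$ that are locally bounded from below, so invoking ``the concave counterpart of Proposition \ref{P-3.3}(ii)'' for $-\phi(t,1-t)$, or appealing to Theorem \ref{T-3.7} for $-\phi$, is only licensed once you know $\phi(t,1-t)$ is locally bounded from above on $(0,1]$. Your sketch never supplies this, so in particular your (iii)$\implies$(iv) is incomplete: from (iii) you obtain the concavity \emph{inequality} for $\phi(R,I-R)$, but you still owe the argument that this inequality (plus $\phi\not\equiv\infty$) rules out $\phi(t,1-t)=\infty$ on a set that would keep $-\phi$ outside the \S\ref{Sec-3} framework.

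One smaller point: in your (vii)$\implies$(v), the auxiliary function should be $t\mapsto\phi(t+\eps,1)-\phi(\eps,1)$ (non-negative and operator monotone on $[0,\infty)$, hence operator concave by \cite[Theorem 2.5]{HP}), not $f(t_0)-f(t)$; the latter has the wrong sign for an increasing $f$. With that correction and with the local-upper-boundedness observation inserted, your route goes through and matches the paper's first-mentioned alternative.
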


\begin{proof}
The proof of the equivalence of (i)--(iii) is the same as that for (i)--(iii) of Theorem \ref{T-4.9} just
by restricting $(A,B)$ to $(B(\cH)_+\times B(\cH)_+)_\ge$ and by reversing the inequality sign.
The proof for the remaining items can be carried out in a similar way to that of Theorem \ref{T-4.9}
with necessary modifications. Alternatively (and more conveniently), we may first confirm the
restricted (but not complementary) version of Theorems \ref{T-4.9} and \ref{T-4.10}
for $\phi$ in the present setting, which is stated as above with the reverse inequality sign and with
`operator convex' and `operator monotone decreasing' (instead of `operator concave' and
`operator monotone'). The proof of this version is carried out just by restricting $(A,B)$ etc.\ to
$(B(\cH)_+\times B(\cH)_+)_\ge$ in the proofs of Theorems \ref{T-4.9} and \ref{T-4.10} (with
Lemma \ref{L-5.1}). Next, let us prove the present complementary version. Note that all the
conditions hold trivially in the case $\phi\equiv\infty$. So we may assume that $\phi\not\equiv\infty$.
In this case, it is easy to observe the following:
\begin{itemize}
\item $\phi(t,1-t)$ is numerically concave on $(0,1]$ if one of (i) (hence (ii), (iii)), (iv) and (iv$'$) is
satisfied,
\item $\phi(t,1)$ is numerically concave on $(0,\infty)$ with $\phi(1,0)<\infty$ if one of (v), (vi) and
(vii) is satisfied,
\item $\phi(1,t)$ is numerically concave on $[0,\infty)$ if (v$'$) is satisfied.
\end{itemize}
From this observation we see that if any of the conditions of the theorem is satisfied, then
$\phi(t,1-t)$ is locally bounded from above (as well as from below) on $(0,1]$. Hence, to prove
the theorem, it suffices to assume that $\phi(t,1-t)$ is locally bounded from above on $(0,1]$.
Thus, we can apply the above-mentioned version of Theorems \ref{T-4.9} and \ref{T-4.10} to $-\phi$,
which immediately shows the present complementary version.
\end{proof}

A typical example of $\phi$ to apply Theorem \ref{T-5.2} is
\[
\phi(x,y):=\begin{cases}y\log(x/y) & (x,y>0), \\
0 & (x\ge0=y).\end{cases}
\]
Then $\phi(t,1)=\log t$ is operator monotone (and operator concave) on $(0,\infty)$ and
$\lim_{t\nearrow1}\phi(t,1-t)=0=\phi(1,0)$. The function $\phi(t,1-t)$ is locally bounded from below
on $(0,1]$, while it cannot be extended to a function on $[0,1]$ that is locally bounded from below,
since $\lim_{t\searrow0}\phi(t,1-t)=-\infty$.

In the next proposition we characterize the case where $\phi(A,B)$ is bounded for any
$(A,B)\in(B(\cH)_+\times B(\cH)_+)_\ge$.

\begin{proposition}\label{P-5.3}
Let $\phi$ be as above, and assume that $\cH$ is infinite-dimensional. Then the following
conditions are equivalent:
\begin{itemize}
\item[\rm(1)] $\phi(A,B)\in B(\cH)_\sa$ for all $(A,B)\in(B(\cH)_+\times B(\cH)_+)_\ge$;
\item[\rm(2)] $\phi(t,1-t)$ is locally bounded on $(0,1]$ (i.e., bounded on $[\delta,1]$ for any
$\delta>0$);
\item[\rm(3)] $\phi(1,t)$ is locally bounded on $[0,\infty)$.
\end{itemize}
\end{proposition}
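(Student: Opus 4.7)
My plan is to handle $(2)\Leftrightarrow(3)$ by a direct change of variables coming from the homogeneity of $\phi$, to prove $(2)\Longrightarrow(1)$ via formula \eqref{F-7.1} combined with Lemma \ref{L-5.1}, and to prove $(1)\Longrightarrow(2)$ by contraposition, exhibiting a simple diagonal pair that makes $\phi(A,B)$ unbounded.

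The equivalence $(2)\Leftrightarrow(3)$ follows from the identity $\phi(t,1-t)=t\,\phi(1,(1-t)/t)$ valid for $t\in(0,1]$ by homogeneity: the substitution $s=(1-t)/t$ is a homeomorphism of $(0,1]$ onto $[0,\infty)$ carrying compact sets to compact sets, and since $t$ is bounded and bounded away from $0$ on each compact subset of $(0,1]$, local boundedness of $\phi(t,1-t)$ on $(0,1]$ is the same as local boundedness of $\phi(1,s)$ on $[0,\infty)$. For $(2)\Longrightarrow(1)$, let $(A,B)\in(B(\cH)_+\times B(\cH)_+)_\ge$; Lemma \ref{L-5.1} provides $\alpha>0$ with $\sigma(R_{A,B})\subseteq[\alpha,1]$, on which $\phi(\cdot,1-\cdot)$ is bounded by hypothesis. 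The ordinary Borel functional calculus together with Lemma \ref{L-4.3}(3) then places $\phi(R_{A,B},S_{A,B})$ in $B(\cH_{A,B})_\sa$, and formula \eqref{F-7.1} yields $\phi(A,B)\in B(\cH)_\sa$.

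The remaining implication $(1)\Longrightarrow(2)$ is the heart of the matter. Suppose $(2)$ fails, so there exist $\delta\in(0,1)$ and $t_n\in[\delta,1]$ with $\phi(t_n,1-t_n)\to\infty$ (one may take $\delta<1$, since otherwise the sequence would be constant $=\phi(1,0)$). Using infinite-dimensionality of $\cH$, fix an orthonormal sequence $\{e_n\}$, let $P_n$ be the projection onto $\mathbb{C}e_n$ and $\mathcal{M}:=\overline{\mathrm{span}}\{e_n\}$, and set $A:=\sum_n t_n P_n$, $B:=\sum_n(1-t_n)P_n$, both extended by zero on $\mathcal{M}^\perp$. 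Then $A+B=P_\mathcal{M}$, $A\ge\delta P_\mathcal{M}$ and $B\le(1-\delta)P_\mathcal{M}$, so $A\ge\tfrac{\delta}{1-\delta}B$ and $(A,B)\in(B(\cH)_+\times B(\cH)_+)_\ge$. Since $(A,B)$ is a commuting pair with $e_n$ a joint eigenvector for the eigenvalue $(t_n,1-t_n)$, Definition \ref{D-4.1}(1) gives $\phi(A,B)(\omega_{e_n})=\phi(t_n,1-t_n)\to\infty$ while $\|e_n\|=1$, so $\phi(A,B)\notin B(\cH)_\sa$, contradicting $(1)$. The one subtlety in this step is arranging that the constructed pair actually lies in the restricted domain $(B(\cH)_+\times B(\cH)_+)_\ge$, which is why the uniform lower bound $t_n\ge\delta$ with $\delta<1$ is essential.
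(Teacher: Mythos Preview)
Your proof is correct and follows essentially the same approach as the paper: the equivalence $(2)\Leftrightarrow(3)$ via the homogeneity substitution, $(2)\Rightarrow(1)$ via Lemma \ref{L-5.1} and the explicit formula, and the contrapositive $(1)\Rightarrow(2)$ by a diagonal counterexample. The only cosmetic difference is that the paper proves $(1)\Rightarrow(3)$ directly, taking $A=I$ and $B=\sum_n t_nP_n$ with $t_n\in[0,c]$ and $\phi(1,t_n)\to\infty$, whereas you parametrize along the segment $t\mapsto(t,1-t)$; both constructions are equivalent reparametrizations of the same idea.
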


\begin{proof}
(2)$\iff$(3) is immediately seen from
\begin{gather*}
\phi(1,t)=(1+t)\phi\Bigl({1\over 1+t},1-{1 \over 1+t}\Bigr)\quad(t\ge0),\\
\phi(t,1-t)=t\phi\Bigl(1,{1-t\over t}\Bigr)\quad(0<t\le1).
\end{gather*}

(2)$\implies$(1) follows from Lemma \ref{L-5.1} and \eqref{F-7.1}.

(1)$\implies$(3).\enspace
We prove by contraposition. Assume that there is a $c>0$ such that $\phi(1,t)$ is unbounded on
$[0,c]$. Choose a sequence $t_n\in[0,c]$ such that $\phi(1,t_n)\to\infty$. With a sequence
$\{P_n\}$ of orthogonal projections with $\sum_nP_n=I$, we define
$B:=\sum_{n=1}^\infty t_nP_n$ in $B(\cH)_+$. Then, modifying Lemma \ref{L-4.6}(1) in the
present setting, one has $\phi(I,B) = \phi(1,B) = \sum_{n=1}^\infty \phi(1,t_n)P_n$, which is
unbounded.
\end{proof}

For example, when $\phi$ ($\not\equiv\infty$) satisfies (v$'$) of Theorem \ref{T-5.2}, it is clear that
condition (3) of Proposition \ref{P-5.3} holds. Therefore, we have $\phi(A,B)\in B(\cH)_\sa$ for all
$(A,B)\in(B(\cH)_+\times B(\cH)_+)_\ge$ whenever one of the conditions of Theorem \ref{T-5.2}
is satisfied and $\phi\not\equiv\infty$.

\begin{remark}\label{R-5.4}\rm
As is immediately seen, Theorem \ref{T-5.2} and Proposition \ref{P-5.3} hold also in the situation
where $[0,\infty)^2\setminus(\{0\}\times(0,\infty))$ and $(B(\cH)_+\times B(\cH)_+)_\ge$ are
replaced with $[0,\infty)^2\setminus((0,\infty)\times\{0\})$ and $(B(\cH)_+\times B(\cH)_+)_\le$,
respectively, and the roles of two variables in $\phi(x,y)$ and $\phi(A,B)$ are interchanged.
\end{remark}

%%%%%%%%%%%%%%% Section 6 %%%%%%%%%%%%%%%
\section{Upper continuity for PW-functional calculus}\label{Sec-6}

Recall \cite{KA} that an operator connection $A\sigma B$ enjoys the upper continuity for
decreasing sequences in $B(\cH)_+$ in such a way that for $A,B,A_n,B_n\in B(\cH)_+$,
\begin{align}\label{F-6.1}
A_n\searrow A,\ B_n\searrow B\,\implies\,A_n\sigma B_n\searrow A\sigma B,
\end{align}
where $A_n\searrow A$ means that $A_1\ge A_2\ge\cdots$ and $A_n\to A$ in the strong operator
topology (SOT for short).

In this section we show the convergence of the PW-functional calculus in SOT for decreasing
sequences as above in certain situations. The next theorem vastly improves \cite[Theorem 6]{HU}.
In fact, it is an apparently best possible counterpart of \eqref{F-6.1} for operator connections in the
case of the PW-functional calculus, though $\phi$ is assumed $\bR$-valued continuous.

\begin{theorem}\label{T-6.1}
Let $\phi$ be a homogeneous and $\bR$-valued continuous function on $[0,\infty)^2$. Let
$A,B,A_n,B_n\in B(\cH)_+$ ($n\in\bN$) be such that $A_n\searrow A$ and $B_n\searrow B$. Then
$\phi(A_n,B_n)$, $\phi(A,B)$ are all bounded and
\begin{align*}
\phi(A_n,B_n)\,\longrightarrow\,\phi(A,B)\quad\mbox{in SOT}.
\end{align*}
\end{theorem}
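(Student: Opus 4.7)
The plan is to show that $\phi$ can be uniformly approximated on the simplex by functions whose associated PW-functional calculi are genuine Kubo--Ando operator connections, and then invoke the classical upper-continuity \eqref{F-6.1}. Throughout write $h(t):=\phi(t,1-t)$ on $[0,1]$, a continuous bounded $\bR$-valued function. From \eqref{F-4.6}, $\|\phi(C,D)\|\le\|h\|_\infty\|C+D\|$ for every $C,D\in B(\cH)_+$, so $\phi(A,B)$ and all $\phi(A_n,B_n)$ are bounded self-adjoint with the uniform bound $\|\phi(A_n,B_n)\|\le\|h\|_\infty\|A_1+B_1\|$.

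For the key family, fix $\lambda>0$, set $h_\lambda(t):=\lambda t/(\lambda+t)$, and let $\psi_\lambda(x,y):=(x+y)h_\lambda(x/(x+y))$ be its homogeneous extension to $[0,\infty)^2$. A short calculation gives
\[
\psi_\lambda(1,s)=\frac{\lambda(1+s)}{\lambda+1+\lambda s}=1-\frac{1/\lambda}{1+\lambda^{-1}+s},
\]
a positive operator monotone function on $[0,\infty)$ (an affine shift of a positive scalar multiple of the operator-monotone-decreasing function $(c+s)^{-1}$ with $c>0$; L\"owner's theorem). By Remark~\ref{R-4.8}, $\psi_\lambda(A,B)$ coincides with the Kubo--Ando operator connection of $(A,B)$ whose representing function is $\psi_\lambda(1,\cdot)$, so the classical upper-continuity \eqref{F-6.1} gives $\psi_\lambda(A_n,B_n)\searrow\psi_\lambda(A,B)$ in SOT. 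The PW-functional calculi associated to $h\equiv1$ and $h(t)=t$ evaluate to $C+D$ and $C$ respectively, trivially SOT-continuous.

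The linear span of $\{1,t\}\cup\{h_\lambda:\lambda>0\}$ is dense in $C[0,1]$: let $\mu$ be a finite signed Borel measure on $[0,1]$ annihilating this family. For $\lambda>1$ the series $h_\lambda(t)=\sum_{k\ge0}(-1)^kt^{k+1}/\lambda^k$ converges uniformly on $[0,1]$, so
\[
0=\int h_\lambda\,d\mu=\int t\,d\mu-\lambda^{-1}\int t^2\,d\mu+\lambda^{-2}\int t^3\,d\mu-\cdots;
\]
starting from $\int t\,d\mu=0$ and letting $\lambda\to\infty$ successively forces $\int t^k\,d\mu=0$ for all $k\ge2$, so $\mu$ annihilates every polynomial and Weierstrass gives $\mu=0$.

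Combining these ingredients: approximate the given $h$ uniformly by $h^{(k)}\to h$ in the span, and denote by $\phi^{(k)}(C,D)$ the PW-functional calculus associated to $h^{(k)}$---by linearity, a finite linear combination of $C+D$, $C$, and $\psi_{\lambda_i}(C,D)$'s. The norm bound $\|\phi(C,D)-\phi^{(k)}(C,D)\|\le\|h-h^{(k)}\|_\infty\|C+D\|$ is uniform on $\{(A_n,B_n)\}\cup\{(A,B)\}$ thanks to $\|A_n+B_n\|\le\|A_1+B_1\|$, and a standard $\varepsilon/3$-argument combines the SOT convergence $\phi^{(k)}(A_n,B_n)\to\phi^{(k)}(A,B)$ (which holds for each $k$ by the second paragraph) to yield $\phi(A_n,B_n)\to\phi(A,B)$ in SOT. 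The main obstacle is the choice of the dense family: naive polynomial approximation reduces to PW calculi associated to $t^k$ ($k\ge2$), which involve iterated parallel-sum-type operators outside the Kubo--Ando class, whereas the rational family $\{h_\lambda\}$ is designed so that each $\psi_\lambda$ reduces cleanly to a Kubo--Ando operator connection and \eqref{F-6.1} applies directly.
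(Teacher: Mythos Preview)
Your proof is correct and takes a genuinely different route from the paper's.

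The paper works directly with the internal representation $\phi(A_n,B_n)=T_nf(R_n)T_n$ where $T_n=(A_n+B_n)^{1/2}$ and $f(t)=\phi(t,1-t)$. It invokes Kubo--Ando upper continuity \eqref{F-6.1} only once, for the parallel sum, via the identity $T_nR_n^2T_n=A_n-(A_n:B_n)$; this yields $\|R_nT\xi\|\to\|RT\xi\|$, and together with weak convergence $\langle R_nT\xi,T\xi'\rangle\to\langle RT\xi,T\xi'\rangle$ one gets $R_n\eta\to R\eta$ strongly for $\eta\in\cH_{A,B}$. An induction then upgrades this to $R_n^k\eta\to R^k\eta$, Weierstrass approximation of $f$ by polynomials handles $f(R_n)\eta\to f(R)\eta$, and the SOT convergence of $\phi(A_n,B_n)$ follows. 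Your approach instead approximates $f$ uniformly on $[0,1]$ by linear combinations from $\{1,t\}\cup\{h_\lambda:\lambda>0\}$, observes that each $\psi_\lambda$ is a bona fide Kubo--Ando connection (so \eqref{F-6.1} applies to every building block, not just parallel sum), and closes with the uniform bound $\|(\phi-\phi^{(k)})(C,D)\|\le\|h-h^{(k)}\|_\infty\|C+D\|$ and an $\varepsilon/3$ argument. The paper's method yields the auxiliary strong convergence of $R_n$ on $\cH_{A,B}$, which may be of independent interest, and is essentially self-contained modulo the parallel-sum case; your method is shorter and more conceptual, trading the delicate analysis of $R_n$ for a density lemma in $C[0,1]$ and systematic use of the Kubo--Ando theory. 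Both proofs ultimately rest on \eqref{F-6.1}, but you exploit it for a dense family rather than a single connection.
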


\begin{proof}
Let $T_n:=(A_n+B_n)^{1/2}$ and $T:=(A+B)^{1/2}$ in $B(\cH)_+$; so $T_n\searrow T$ by
assumption. We have (unique) $R_n,R\in B(\cH)_+$ such that $0\le R_n,R\le I$,
$\overline\ran\,R_n\subseteq\overline\ran\,T_n$, $\overline\ran R\subseteq\overline\ran\,T$,
$A_n=T_nR_nT_n$ and $A=TRT$. Set $f(t):=\phi(t,1-t)$ for $t\in[0,1]$, which is a continuous
function on $[0,1]$ by assumption. As in \eqref{F-4.6} we can write
\begin{align}\label{F-6.2}
\phi(A_n,B_n)=T_nf(R_n)T_n,\qquad\phi(A,B)=Tf(R)T,
\end{align}
where $f(R_n)$ and $f(R)$ are the continuous functional calculus in the present situation. Hence
$\phi(A_n,B_n)$ and $\phi(A,B)$ are clearly bounded (as described in \cite{HU}). Let
$\cH_{A,B}:=\overline\ran\,T$ as before. One finds that, for every $\xi,\xi'\in\cH$,
\begin{equation}\label{F-6.3}
\begin{aligned}
|\<(R_n-R)T\xi,T\xi'\>|
&\le|\<R_n(T-T_n)\xi,T\xi'\>|+|\<R_nT_n\xi,(T-T_n)\xi'\>| \\
&\qquad+|\<R_nT_n\xi,T_n\xi'\>-\<RT\xi,T\xi'\>| \\
&\le\|T\xi'\|\,\|(T-T_n)\xi\|+\|T_1\|\,\|\xi\|\,\|(T-T_n)\xi'\| \\
&\qquad+|\<(A_n-A)\xi,\xi'\>| \\
&\longrightarrow\,0\quad\mbox{as $n\to\infty$}.
\end{aligned}
\end{equation}
Consider the function 
\[
\psi(x,y):=\begin{cases}{x^2\over x+y}=x-{xy\over x+y} & \text{if $x+y>0$}, \\
\ 0 & \text{if $x=y=0$}.\end{cases}
\]
Since $\psi(t,1-t)=t^2$ for $t\in[0,1]$, it follows as \eqref{F-6.2} that
\begin{align}\label{F-6.4}
T_nR_n^2T_n=\psi(A_n,B_n)=A_n-(A_n:B_n),\qquad
TR^2T=A-(A:B),
\end{align}
where $A:B$ is the \emph{parallel sum} of $A,B$, an operator connection having the
representing function  $t/(t+1)$; see Remark \ref{R-4.8}. Therefore, one
has (thanks to \eqref{F-6.1})
\[
T_nR_n^2T_n\,\longrightarrow\,TR^2T\quad\mbox{in SOT}.
\]
For every $\xi\in\cH$,
\begin{equation}\label{F-6.5}
\begin{aligned}
|\,\|R_nT\xi\|^2-\|RT\xi\|^2|
&\le|\<R_n^2(T-T_n)\xi,T\xi\>|+|\<R_n^2T_n\xi,(T-T_n)\xi\>| \\
&\qquad+|\<T_nR_n^2T_n\xi,\xi\>-\<TR^2T\xi,\xi\>| \\
&\le(\|T\|+\|T_1\|)\|\xi\|\,\|(T-T_n)\xi\| \\
&\qquad+\|\xi\|\,\|T_nR_n^2T_n\xi-TR^2T\xi\| \\
&\longrightarrow\,0\quad\mbox{as $n\to\infty$}.
\end{aligned}
\end{equation}
For every $\eta,\eta'\in\cH_{A,B}$, approximating $\eta,\eta'$ with $T\xi,T\xi'$ ($\xi,\xi'\in\cH$)
and applying the above estimates given in \eqref{F-6.3} and \eqref{F-6.5}, one can immediately
find that $\<R_n\eta,\eta'\>\to\<R\eta,\eta'\>$ and $\|R\eta\|\to\|R\eta\|$. By these with
$R\eta\in\cH_{A,B}$, s standard argument shows that $\|R_n\eta-R\eta\|\to0$ as $n\to\infty$ for all
$\eta\in\cH_{A,B}$ (though we cannot say that $R_n\to R$ in SOT on $\cH_{A,B}$, because
$\cH_{A,B}$ is not necessarily invariant for $R_n$'s).

Now, for any $k\in\bN$, let us prove by induction that $\|R_n^k\eta-R^k\eta\|\to0$ as
$n\to\infty$ for all $\eta\in\cH_{A,B}$. Assume that this holds for some $k\in\bN$. For every
$\eta\in\cH_{A,B}$, since $R^k\eta\in\cH_{A,B}$, we have
\begin{align*}
\|(R_n^{k+1}-R^{k+1})\eta\|
&\le\|R_n(R_n^k-R^k)\eta\|+\|(R_n-R)R^k\eta\| \\
&\le\|(R_n^k-R^k)\eta\|+\|(R_n-R)R^k\eta\| \\
&\longrightarrow\,0\quad\mbox{as $n\to\infty$},
\end{align*}
so that the convergence in question holds for $k+1$ too. Applying the Weierstrass
approxaimation theorem to $f$, we therefore see that $\|f(R_n)\eta-f(R)\eta\|\to0$ for all
$\eta\in\cH_{A,B}$. For every $\xi\in\cH$ it then follows from \eqref{F-6.2} that
\begin{align*}
\|\phi(A_n,B_n)\xi-\phi(A,B)\xi\|
&\le\|T_nf(R_n)(T_n-T)\xi\|+\|T_n(f(R_n)-f(R))T\xi\| \\
&\qquad+\|(T_n-T)f(R)T\xi\| \\
&\le\|T_1\|\,\|f\|_\infty\|(T_n-T)\xi\|+\|T_1\|\,\|(f(R_n)-f(R))T\xi\| \\
&\qquad+\|(T_n-T)f(R)T\xi\| \\
&\longrightarrow\,0\quad\mbox{as $n\to\infty$},
\end{align*}
where $\|f\|_\infty:=\max_{0\le t\le1}|f(t)|$. Hence the stated assertion follows.
\end{proof}

Concerning the above proof, we remark that the expression in \eqref{F-6.4} above was already
observed in the proof of \cite[Theorem 1.2]{PW1} and the strong convergence
$T_n R_n^2 T_n \to TRT$ was essentially derived there. However, the argument after obtaining
this convergence has been missing so that no general continuity result has probably been
observed so far for the PW-functional calculus.

\begin{corollary}\label{C-6.2}
Let $\phi:[0,\infty)^2\to(-\infty,\infty]$ be a function dealt with in \S\ref{Sec-4}, and assume that
$\phi(t,1-t)$ is continuous on $[0,1]$ as a function to $(-\infty,\infty]$. Then for every decreasing
sequences $A_n\searrow A$ and $B_n\searrow B$ in $B(\cH)_+$, we have
\[
\phi(A,B)(\rho)\le\liminf_{\eps\searrow0}\phi(A_n,B_n)(\rho),
\qquad\rho\in B(\cH)_*^+.
\]
\end{corollary}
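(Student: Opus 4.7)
My plan is to approximate $\phi(t,1-t)$ from below by bounded continuous $\bR$-valued functions, apply Theorem \ref{T-6.1} to each approximant, and then pass to a supremum.

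Set $f(t):=\phi(t,1-t)$, which is continuous on $[0,1]$ into $(-\infty,\infty]$ and bounded below (by the local lower boundedness of $\phi$). For each $k\in\bN$ let $f_k:=f\wedge k$, a bounded $\bR$-valued continuous function on $[0,1]$, and define its homogeneous extension
\[
\phi_k(x,y):=(x+y)\,f_k\bigl(x/(x+y)\bigr)\ \ \bigl((x,y)\ne(0,0)\bigr),\qquad \phi_k(0,0):=0.
\]
Boundedness of $f_k$ ensures continuity of $\phi_k$ at the origin, so $\phi_k$ is $\bR$-valued, continuous and homogeneous on $[0,\infty)^2$, hence admissible for Theorem \ref{T-6.1}. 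By formula \eqref{F-4.6} together with Lemma \ref{L-4.3}(3) I also note that $\phi_k(A,B)=T_{A,B}^*f_k(R_{A,B})T_{A,B}$ and $\phi(A,B)=T_{A,B}^*\phi(R_{A,B},1-R_{A,B})T_{A,B}$, now in the extended sense of $\widehat{B(\cH)}_\lb$.

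For each fixed $k$, Theorem \ref{T-6.1} yields $\phi_k(A_n,B_n)\to\phi_k(A,B)$ in SOT. This sequence is uniformly norm-bounded (using $\|T_{A_n,B_n}\|\le\|T_{A_1,B_1}\|$ together with $\|f_k\|_\infty<\infty$), so the convergence upgrades from WOT to $\sigma$-weak on the bounded set, and therefore $\phi_k(A_n,B_n)(\rho)\to\phi_k(A,B)(\rho)$ for every $\rho\in B(\cH)_*^+$. Since $f_k\le f$ pointwise, spectral calculus together with the monotonicity of $T^*(\cdot)T$ gives $\phi_k(A_n,B_n)\le\phi(A_n,B_n)$ in $\widehat{B(\cH)}_\lb$, whence
\[
\phi_k(A,B)(\rho)=\lim_{n\to\infty}\phi_k(A_n,B_n)(\rho)\le\liminf_{n\to\infty}\phi(A_n,B_n)(\rho).
\]

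To finish, I will let $k\to\infty$. Since $f_k\nearrow f$ pointwise on $[0,1]$ and $f$ is bounded below, the monotone convergence theorem applied to the spectral integral $\int_{[0,1]}f\,d(T_{A,B}\rho\,T_{A,B}^*)(E_{R_{A,B}})$ gives $\phi_k(A,B)(\rho)\nearrow\phi(A,B)(\rho)$, so taking $\sup_k$ in the displayed inequality yields the claim. The one point that I expect needs explicit care in the writeup is the transfer of SOT convergence of $\phi_k(A_n,B_n)$ to convergence against a general normal positive functional $\rho$ (not merely a vector state); this is the standard fact that WOT and the $\sigma$-weak topology agree on norm-bounded sets, but since the corollary is phrased in terms of arbitrary $\rho\in B(\cH)_*^+$ it deserves a sentence of justification.
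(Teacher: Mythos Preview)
Your proof is correct and follows essentially the same approach as the paper: truncate $\phi(t,1-t)$ to $f_k=f\wedge k$, apply Theorem \ref{T-6.1} to the bounded continuous perspective $\phi_k$, use $\phi_k\le\phi$ to pass to a $\liminf$, and then let $k\to\infty$ by monotone convergence. Your explicit remark that SOT convergence of the uniformly bounded sequence $\phi_k(A_n,B_n)$ implies convergence against every $\rho\in B(\cH)_*^+$ (via the coincidence of WOT and $\sigma$-weak on bounded sets) is a point the paper leaves implicit, so including it is a small improvement.
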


\begin{proof}
Let $\ffi_k(t):=\phi(t,1-t)\wedge k$ for $t\in[0,1]$ and $k\in\bN$, and $\phi_k$ be the perspective
function of $\ffi_k$, i.e., $\phi_k(x,y)=(x+y)\ffi_k(x/(x+y))$ for $x+y>0$ and $\phi_k(0,0)=0$. For
each fixed $k$, it follows from Theorem \ref{T-6.1} that $\phi_k(A_n,B_n)\to\phi_k(A,B)$ in SOT.
Since $\phi_k(A_n,B_n)\le\phi(A_n,B_n)$ as immediately seen from \eqref{F-4.5}, one has
\[
\rho(\phi_k(A,B))=\lim_{n\to\infty}\rho(\phi_k(A_n,B_n))
\le\liminf_{n\to\infty}\phi(A_n,B_n)(\rho),\qquad\rho\in B(\cH)_+.
\]
Hence the assertion follows since $\phi_k(A,B)(\rho)\nearrow\phi(A,B)(\rho)$ as $k\to\infty$ by
the monotone convergence theorem.
\end{proof}

The next theorem is a modification of Theorem \ref{T-6.1} to the setting of \S\ref{Sec-5}. 

\begin{theorem}\label{T-6.3}
Let $\phi$ be a homogeneous and $\bR$-valued function on
$[0,\infty)^2\setminus(\{0\}\times(0,\infty))$ (resp., on
$[0,\infty)^2\setminus((0,\infty)\times\{0\})$) such that $\phi(t,1-t)$ is continuous on $(0,1]$
(resp., on $[0,1)$). Let $A,B,A_n,B_n\in B(\cH)_+$ ($n\in\bN$) be such that $A_n\ge\alpha B_n$
(resp., $A_n\le\alpha B_n$) for all $n$ with some $\alpha>0$ (independent of $n$),
$A_n\searrow A$ and $B_n\searrow B$. Then $\phi(A_n,B_n)$, $\phi(A,B)$ are all bounded and
\[
\phi(A_n,B_n)\,\longrightarrow\,\phi(A,B)\quad\mbox{in SOT}.
\]
\end{theorem}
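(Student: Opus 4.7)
The idea is to reduce Theorem \ref{T-6.3} to Theorem \ref{T-6.1} by extending $\phi$ continuously to all of $[0,\infty)^2$ in a way that is invisible to the restricted PW-calculus. I will handle the first case; the second follows by applying it to $\psi(x,y):=\phi(y,x)$ and the pairs $(B_n,A_n)$, which satisfy $B_n\ge\alpha^{-1}A_n$ and for which $\psi(t,1-t)=\phi(1-t,t)$ is continuous on $(0,1]$.

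First, since $A_n\ge\alpha B_n$ and SOT limits preserve order, $A\ge\alpha B$, so that $(A,B)\in(B(\cH)_+\times B(\cH)_+)_\ge$ as well. Putting $\alpha':=\alpha/(1+\alpha)\in(0,1)$, Lemma \ref{L-5.1} gives
\[
R_{A_n,B_n}\ge\alpha'\,I_{\cH_{A_n,B_n}}\quad\text{and}\quad R_{A,B}\ge\alpha'\,I_{\cH_{A,B}},
\]
uniformly in $n$, so the respective spectra lie in the compact interval $[\alpha',1]$. Hence $f(t):=\phi(t,1-t)$ is continuous and bounded on $[\alpha',1]$, which together with \eqref{F-7.1} already shows that all $\phi(A_n,B_n)$ and $\phi(A,B)$ are bounded self-adjoint operators.

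Next, I will define a continuous extension $\tilde f:[0,1]\to\bR$ by $\tilde f(t):=f(\max\{t,\alpha'\})$, and let
\[
\tilde\phi(x,y):=(x+y)\,\tilde f\bigl(x/(x+y)\bigr)\ \ (x+y>0),\qquad \tilde\phi(0,0):=0,
\]
a homogeneous, $\bR$-valued, continuous function on $[0,\infty)^2$. Theorem \ref{T-6.1} applied to $\tilde\phi$ yields $\tilde\phi(A_n,B_n)\to\tilde\phi(A,B)$ in SOT, and it remains to identify $\tilde\phi(A_n,B_n)=\phi(A_n,B_n)$ and $\tilde\phi(A,B)=\phi(A,B)$. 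By \eqref{F-7.1} and the restricted-domain analogue of Lemma \ref{L-4.3}(3) (whose proof goes through verbatim once one checks that the joint spectral support of $(R_{A,B},S_{A,B})$ lies in $\{(r,1-r):r\in[\alpha',1]\}$, a region where $\phi$ is defined), we have
\[
\phi(A,B)=T_{A,B}^*\,f(R_{A,B})\,T_{A,B},\qquad \tilde\phi(A,B)=T_{A,B}^*\,\tilde f(R_{A,B})\,T_{A,B},
\]
with continuous functional calculi on $\cH_{A,B}$. Since $\tilde f=f$ on $[\alpha',1]\supseteq\sigma(R_{A,B})$, the two right-hand sides coincide, and the identical identification holds for each $(A_n,B_n)$, finishing the plan.

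The main obstacle is this last identification: one must verify that the extension $\tilde\phi$ has no operator-theoretic effect because the joint spectral supports of $(R_{A_n,B_n},S_{A_n,B_n})$ and $(R_{A,B},S_{A,B})$ all stay in the region where $\tilde\phi=\phi$. This uniform bookkeeping is precisely what the hypothesis $A_n\ge\alpha B_n$ with $\alpha$ independent of $n$ provides; once it is in place, Theorem \ref{T-6.1} delivers the SOT convergence for free.
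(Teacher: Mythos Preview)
Your proof is correct and rests on the same key observation as the paper's: the hypothesis $A_n\ge\alpha B_n$ (with $\alpha$ independent of $n$) forces, via Lemma \ref{L-5.1}, a uniform lower bound $R_{A_n,B_n}\ge\alpha' I_{\cH_{A_n,B_n}}$ with $\alpha'=\alpha/(1+\alpha)$, so that the functional calculus only sees $f=\phi(\cdot,1-\cdot)$ on the compact interval $[\alpha',1]$ where it is continuous.

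The packaging differs slightly. The paper simply notes that, once this uniform spectral confinement is in hand, the entire proof of Theorem \ref{T-6.1} goes through verbatim (with $f$ restricted to $[\alpha',1]$), and leaves it at that. You instead extend $f$ continuously to $\tilde f$ on all of $[0,1]$, build the associated homogeneous continuous $\tilde\phi$ on $[0,\infty)^2$, apply Theorem \ref{T-6.1} to $\tilde\phi$ as a black box, and then identify $\tilde\phi(A_n,B_n)=\phi(A_n,B_n)$ using $\sigma(R_{A_n,B_n})\subseteq[\alpha',1]$ and the formula \eqref{F-4.5}/Lemma \ref{L-4.3}(3). Your route is a bit more modular---it genuinely reuses Theorem \ref{T-6.1} rather than reproving it---at the cost of the extra identification step, which is routine once the spectral bound is established. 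Either way the substance is the same.
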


\begin{proof}
The two cases are switched by interchanging the roles of two variables in $\phi(x,y)$ and
$\phi(A,B)$ as noted in Remark \ref{R-5.4}; so we will show only the second case. Let $T_n,T$ and
$R_n,R$ be taken in the same way as in the proof of Theorem \ref{T-6.1}. Also, we have
$S_n\in B(\cH)_+$ such that $0\le S_n\le I$, $\overline\ran\,S_n\subseteq\overline\ran\,T_n$ and
$B_n=T_nS_nT_n$. Since $A_n\le\alpha B_n$, it follows that $R_n\le\alpha S_n$ similarly to
Lemma \ref{L-5.1}, so that $R_n\le(\alpha/(\alpha+1))I$ holds thanks to $R_n+S_n\le I$. Since
$A\le\alpha B$ as well, $R\le(\alpha/(\alpha+1))I$ holds. Therefore, we have the same
expressions as in \eqref{F-6.2} in the present case too, where $f(R_n)$ and $f(R)$ are the
continuous functional calculus for $f(t):=\phi(t,1-t)$, $t\in[0,1)$. In particular, $\phi(A_n,B_n)$ and
$\phi(A,B)$ are bounded. Now the proof can be carried out in the same way as that of Theorem
\ref{T-6.1}.
\end{proof}

%%%%%%%%%%%%%%%% Section 7 %%%%%%%%%%%%%
\section{Extended operator convex perspectives}\label{Sec-7}

In \S\ref{Sec-4} we have considered a Borel function $\phi:[0,\infty)^2\to(-\infty,\infty]$ which is
homogeneous and locally bounded below. Let $f(t):=\phi(t,1)$ for $t\in(0,\infty)$, $\alpha:=\phi(1,0)$
and $\beta:=\phi(0,1)$. As immediately seen, the local boundedness of $\phi$ from below is
rephrased as $\alpha,\beta>-\infty$ and} $f(t)\ge at+b$, $t\in(0,\infty)$, for some $a,b\in\bR$.
In this section we begin with a Borel function $f:(0,\infty)\to(-\infty,\infty]$ such that $f(t)\ge at+b$
for all $t\in(0,\infty)$ with some $a,b\in\bR$. With $\alpha,\beta\in(-\infty,\infty]$ given as
\begin{align}\label{F-7.1}
\alpha:=\limsup_{t\to\infty}{f(t)\over t},\qquad\beta:=\limsup_{t\searrow0}f(t),
\end{align}
we define the perspective function $\phi_f:[0,\infty)^2\to(-\infty,\infty]$ by
\begin{align}\label{F-7.2}
\phi_f(x,y):=\begin{cases}
yf(x/y) & (x,y>0), \\
\alpha x & (x\ge0,\ y=0), \\
\beta y & (x=0,\ y\ge0).
\end{cases}
\end{align}
Note that the positions of $x,y$ in the above definition of $\phi_f(x,y)$ are different from those
in \eqref{F-4.7} of Remark \ref{R-4.8}. This is because the roles of $A,B$ are reversed between
operator connections and operator perspectives of $A,B$ in the literature.

Then $\phi_f$ is homogeneous and locally bounded from below on $[0,\infty)^2$, so that the
PW-functional calculus $\phi_f(A,B)$ is defined for any $A,B\in B(\cH)_+$; see Definition \ref{D-4.1}.
Of course, $\alpha,\beta$ can be arbitrary numbers in $(-\infty,\infty]$, but \eqref{F-7.1} is suitable
for our discussions below. Also, note that if $f$ is numerically convex (or concave) on $(0,\infty)$,
then $\limsup$'s in \eqref{F-7.1} become $\lim$'s. In this case we write $\alpha=f'(\infty)$ (as
justified since $\lim_{t\to\infty}f(t)/t=\lim_{t\to\infty}f_+'(t)$) and $\beta=f(0^+)$.

\begin{definition}\label{D-7.1}\rm
Let $f$ and $\phi_f$ be mentioned as above. For every $A,B\in B(\cH)_+$ we have the
PW-functional calculus $\phi_f(A,B)\in\widehat{B(\cH)}_\lb$ (see Definition \ref{D-4.1} and
Theorem \ref{T-4.2}). We call $\phi_f(A,B)$ the \emph{(extended) operator perspective} of $A,B$
associated with $f$ as well.
\end{definition}

The \emph{transpose} of $\widetilde f$ of $f$ is defined by
\[
\widetilde f(t):=tf(t^{-1}),\qquad t\in(0,\infty),
\]
and set $\widetilde\alpha:=\limsup_{t\to\infty}\widetilde f(t)/t$,
$\widetilde\beta:=\lim_{t\searrow0}\widetilde f(t)$ as in \eqref{F-7.1}. Then $\widetilde f(t)\ge bt+a$
for $t\in(0,\infty)$, and $\widetilde\alpha=\beta$, $\widetilde\beta=\alpha$. Hence
$\phi_{\widetilde f}(x,y)=\phi_f(y,x)$ on $[0,\infty)^2$, so that we have
\begin{align}\label{F-7.3}
\phi_{\widetilde f}(A,B)=\phi_f(B,A),\qquad A,B\in B(\cH)_+.
\end{align}
Theorem \ref{T-4.7} is rewritten in the present situation as follows: For any $A,B\in B(\cH)_+$,
\begin{align}\label{F-7.4}
\phi_f(A,B)=\begin{cases}B^{1/2}f(B^{-1/2}AB^{-1/2})B^{1/2}
&\mbox{(if $B\in B(\cH)_{++}$)}, \\
A^{1/2}\widetilde f(A^{-1/2}BA^{-1/2})A^{1/2}
&\mbox{(if $A\in B(\cH)_{++}$)}.\end{cases}
\end{align}
When $A,B\in B(\cH)_{++}$, the first expression in \eqref{F-7.4} is the definition of the operator
perspective of $A,B$ in \cite{Ef,ENG,EH}, where $f$ is an $\bR$-valued continuous function on
$(0,\infty)$ and $f(B^{-1/2}AB^{-1/2})$ is the continuous functional calculus so that
$\phi_f(A,B)\in B(\cH)_\sa$. In Definition \ref{D-7.1} the definition of operator perspectives is made
from the beginning for general pairs $(A,B)\in B(\cH)_+\times B(\cH)_+$, extending that in
\cite{Ef,ENG,EH}, while the values are in $\widehat{B(\cH)}_\lb$ rather than $B(\cH)_\sa$.

For convenience we restate main results in \S\ref{Sec-4} in the next theorem.

\begin{theorem}\label{T-7.2}
Let $f$, $\widetilde f$ and $\phi_f$ be as stated above.
\begin{itemize}
\item[\rm(1)] Assume that $f(t)<\infty$ for at least two points in $(0,\infty)$. Then the following
conditions are equivalent:
\begin{itemize}
\item[\rm(i)] for all $A_i,B_i\in B(\cH)_+$ ($i=1,2$) with any $\cH$,
\[
\phi_f(A_1+A_2,B_1+B_2)\le\phi_f(A_1,B_1)+\phi_f(A_2,B_2);
\]
\item[\rm(ii)] $f$ is $\bR$-valued and operator convex on $(0,\infty)$;
\item[\rm(iii)] $\widetilde f$ is $\bR$-valued and operator convex on $(0,\infty)$.
\end{itemize}
\item[\rm(2)] Assume that $f\not\equiv\infty$. Then the following conditions are equivalent:
\begin{itemize}
\item[\rm(i$'$)] for every $A_1,A_2,B\in B(\cH)_+$ with any $\cH$,
\[
A_1\le A_2\,\implies\,\phi_f(A_1,B)\ge\phi_f(A_2,B);
\]
\item[\rm(ii$'$)] $f$ is $\bR$-valued and operator monotone decreasing on $(0,\infty)$;
\item[\rm(iii$'$)] $f$ is $\bR$-valued and operator convex, and $\lim_{t\to\infty}f(t)/t\le0$.
\end{itemize}
\end{itemize}
\end{theorem}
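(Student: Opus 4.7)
The plan is to deduce Theorem \ref{T-7.2} as a direct specialization of Theorems \ref{T-4.9} and \ref{T-4.10} applied to $\phi=\phi_f$, using Theorem \ref{T-3.7} to reconcile the two boundary values $\phi_f(1,0)=\alpha$ and $\phi_f(0,1)=\beta$ introduced via the $\limsup$ in \eqref{F-7.1} with operator convexity/monotonicity stated only on the open interval $(0,\infty)$.

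For Part (1), I would write $f(t)=\phi_f(t,1)$ and observe that condition (i) of Theorem \ref{T-7.2} coincides literally with condition (i) of Theorem \ref{T-4.9}. Hence the task is to identify condition (ii) of Theorem \ref{T-7.2} with condition (v) of Theorem \ref{T-4.9}, namely that $f$ be operator convex on $[0,\infty)$ as a $(-\infty,\infty]$-valued function and that $\phi_f(1,0)\ge\lim_{t\to\infty}f(t)/t$. For numerically convex $f$ the $\limsup$'s in \eqref{F-7.1} are limits, so $\phi_f(1,0)=\alpha=\lim_{t\to\infty}f(t)/t$ and $\phi_f(0,1)=\beta=\lim_{t\searrow0}f(t)$, making the right-hand inequality an automatic equality. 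The upgrade from operator convexity on $(0,\infty)$ to operator convexity on $[0,\infty)$ is exactly what Theorem \ref{T-3.7}(1) provides: the only additional requirement is $f(0)\ge f(0^+)$, which holds by the definition of $\beta$. The two-point finiteness hypothesis on $f$ is precisely the hypothesis of Theorem \ref{T-3.7} ruling out the degenerate cases of Example \ref{E-3.6}. The equivalence (ii)$\Leftrightarrow$(iii) is then obtained by running the same argument through condition (v$'$) of Theorem \ref{T-4.9}, since $\phi_f(1,t)=tf(1/t)=\widetilde f(t)$; alternatively, by the symmetry relation \eqref{F-7.3}, joint subadditivity of $\phi_f$ coincides with that of $\phi_{\widetilde f}$ under swapping $(A,B)\leftrightarrow(B,A)$, so the argument for (ii) applied to $\widetilde f$ yields (iii).

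For Part (2), I would apply Theorem \ref{T-4.10} with $\phi=\phi_f$. Substituting $s=t/(1-t)$ one checks
\[
\lim_{t\nearrow1}\phi_f(t,1-t)=\lim_{s\to\infty}\frac{f(s)}{1+s}=\alpha=\phi_f(1,0),
\]
so the standing hypothesis of Theorem \ref{T-4.10} reduces to the single condition $\alpha\le0$. Now condition (i$'$) is (vi) of Theorem \ref{T-4.10} applied to $\phi_f$, and condition (ii$'$) is (vii) thereof applied to $\phi_f$ (combined with Theorem \ref{T-3.7}(2) to pass from $[0,\infty)$ to $(0,\infty)$ and ensure $\bR$-valuedness, using $f\not\equiv\infty$). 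For the implication (ii$'$)$\Rightarrow$(iii$'$), one argues that an $\bR$-valued operator monotone decreasing $f$ is automatically operator convex and satisfies $\alpha\le0$: the convexity is precisely the classical fact reproduced inside the proof of Theorem \ref{T-4.10} (for $\eps>0$ the function $f(\eps)-f(t+\eps)$ is nonnegative and operator monotone, hence operator concave by \cite{HP}), while the bound $\alpha\le0$ is immediate from $f(t)\le f(1)$ for $t\ge1$. For (iii$'$)$\Rightarrow$(i$'$)\&(ii$'$), condition (iii$'$) supplies both the hypothesis $\alpha\le0$ of Theorem \ref{T-4.10} and condition (v) of Theorem \ref{T-4.9} for $\phi_f$, so the entire equivalence cascade of Theorem \ref{T-4.10} activates and yields (vi) and (vii). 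Finally, (i$'$)$\Rightarrow$(ii$'$) can be obtained directly by setting $B=I$ and applying Lemma \ref{L-4.6}(1), which gives $\phi_f(A,I)=f(A)$, so that (i$'$) reads $A_1\le A_2\Rightarrow f(A_1)\ge f(A_2)$.

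The main obstacle I anticipate is simply careful bookkeeping of the boundary values: matching $\phi_f(1,0)=\alpha$ and $\phi_f(0,1)=\beta$ to the data $f(0^+)$ and $\lim_{t\to\infty}f(t)/t$ so that Theorem \ref{T-3.7}(1) converts ``operator convex on $[0,\infty)$ valued in $(-\infty,\infty]$'' to the $\bR$-valued operator convexity on $(0,\infty)$ stated in (ii)/(iii$'$), and checking that the two-point finiteness hypothesis of Part (1), respectively the sign hypothesis $\alpha\le0$ of Part (2), are the correct substitutes for the hypotheses of Theorems \ref{T-3.7}, \ref{T-4.9} and \ref{T-4.10}. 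Once that translation is in place, no substantive new analysis beyond the quoted theorems is needed.
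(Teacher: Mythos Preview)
Your proposal is correct and follows essentially the same approach as the paper: both deduce the theorem by specializing Theorems \ref{T-4.9} and \ref{T-4.10} to $\phi=\phi_f$, with Theorem \ref{T-3.7} handling the passage between operator convexity/monotonicity on the closed half-line $[0,\infty)$ and the $\bR$-valued statements on $(0,\infty)$. The paper's proof of Part (2) is slightly more compressed---it verifies in one stroke that each of (i$'$), (ii$'$), (iii$'$) implies the standing hypothesis $\lim_{t\nearrow1}\phi_f(t,1-t)=\alpha=\phi_f(1,0)\le0$ of Theorem \ref{T-4.10} and then invokes that theorem---whereas you unfold an explicit cycle (ii$'$)$\Rightarrow$(iii$'$)$\Rightarrow$(i$'$)$\Rightarrow$(ii$'$); both come to the same thing.
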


\begin{proof}
(1)\enspace
In the present setting, note that $\phi_f(t,1)=f(t)$ and $\phi_f(1,t)=\widetilde f(t)$. If either
(ii) or (iii) holds, then we have
\[
\lim_{t\to\infty}{\phi_f(t,1)\over t}=\alpha=\phi_f(1,0),\qquad
\lim_{t\to\infty}{\phi_f(1,t)\over t}=\beta=\phi_f(0,1)
\]
(see the discussion just above Definition \ref{D-7.1}). Hence (1) is a consequence of Theorem
\ref{T-4.9} (and Theorem \ref{T-3.7}(1)).

(2)\enspace
Assume item (i$'$). By Theorem \ref{T-4.7} (or \eqref{F-7.4}), $f(t)=\phi_f(t,1)$ is operator
monotone decreasing on $(0,\infty)$ and hence operator convex on $(0,\infty)$. Moreover,
$\phi_f(1,0)\le\phi_f(0,0)=0$. Therefore, we have
\[
\lim_{t\nearrow1}\phi_f(t,1-t)=\alpha=\phi_f(1,0)\le0,\qquad
\lim_{t\searrow0}\phi_f(t,1-t)=\beta=\phi_f(0,1)
\]
(see the proof of (iii)$\implies$(v) of Theorem \ref{T-4.9}). These are also satisfied if either
(ii$'$) or (iii$'$) holds, so that the assumption imposed in Theorem \ref{T-4.10} is automatically
satisfied in each case of (i$'$)--(iii$'$). Hence (2) follows from Theorem \ref{T-4.10} (and
Theorem \ref{T-3.7}(2)).
\end{proof}

\begin{remark}\label{R-7.3}\rm
(1)\enspace
Let $f$ be an operator convex function on $(0,\infty)$. Then it is known (see
\cite[Proposition 8.4]{HMPB}) that both $\alpha,\beta$ in \eqref{F-7.1} are finite if and only if
there are $a,b\in\bR$ and an operator monotone function $h\ge0$ on $(0,\infty)$ such that
$f(t)=at+b-h(t)$ for all $t\in(0,\infty)$. In this case, $\phi_f(A,B)$ essentially reduces to the minus
sign of the Kubo--Ando operator connection $A\sigma_hB$; more precisely,
$\phi_f(A,B)=aA+bB-A\sigma_hB$ for all $A,B\in B(\cH)_+$. Therefore, the joint convexity in (i)
of Theorem \ref{T-7.2}(1) reduces to the joint concavity of the operator connection $\sigma_h$.

(2)\enspace
Let $f$ be an operator monotone decreasing function on $(0,\infty)$. If $f$ is negative on
$(0,\infty)$, then $-\phi_f(A,B)$ is the Kubo--Ando operator connection corresponding to $-f$,
so that the inequality in (i$'$) of Theorem \ref{T-7.2}(2) is strengthened to being jointly monotone
decreasing. We note that this occurs only when $f$ is negative on $(0,\infty)$. Indeed, if this is
the case, then by \eqref{F-7.3} and Theorem \ref{T-7.2}(2) both $f$ and $\widetilde f$
are operator monotone decreasing on $(0,\infty)$. Therefore, for all $t\in(0,\infty)$ it follows
that $0\ge\widetilde f'(t)=f(t^{-1})-t^{-1}f'(t^{-1})$ and so $f(t^{-1})\le t^{-1}f'(t^{-1})\le0$. Thus we
have seen that \emph{any jointly operator monotone operations arising as PW-functional calculus
are exactly Kubo and Ando's operator connections}.
\end{remark}

In a conventional approach to Kubo and Ando's theory, the operator
connection $A\sigma B$ defined first for $A,B\in B(\cH)_{++}$ is extended to general
$A,B\in B(\cH)_+$ as
\begin{align}\label{F-7.5}
A\sigma B=\lim_{\eps\searrow0}A_\eps\sigma B_\eps\quad\mbox{in SOT}
\end{align}
based on the upper continuity in \eqref{F-6.1}, where $A_\eps:=A+\eps I$ and similarly for
$B_\eps$. A main aim of this section is to show (Theorem \ref{T-7.7}) that a similar approach is
available for the operator perspective $\phi_f(A,B)$ when $f$ is an operator convex function on
$(0,\infty)$. Before showing this, we will prove (Theorem \ref{T-7.5}) that $\phi_f(A,B)$ enjoys
a joint lower semicontinuity property.

From now on, for the convenience of presentation, we will write $\OC(0,\infty)$ for the set of all
$\bR$-valued operator convex functions on $(0,\infty)$. Furthermore, we set
\[
\OC_0(0,\infty):=\{f\in\OC(0,\infty):f(1)=0\}.
\]

\begin{lemma}\label{L-7.4}
Let $f\in\OC_0(0,\infty)$ and $A,B\in B(\cH)_+$.
\begin{itemize}
\item[\rm(1)] The mapping
\[
X\in B(\cH)_+\,\longmapsto\,\phi_f(A+X,B+X)\in\widehat{B(\cH)}_\lb
\]
is decreasing, that is, for $X,X'\in B(\cH)_+$,
\[
X\ge X'\,\implies\,\phi_f(A+X,B+X)\le\phi_f(A+X',B+X').
\]
\item[\rm(2)] We have
\[
\phi_f(A,B)(\rho)=\sup_{\eps>0}\rho(\phi_f(A_\eps,B_\eps)),\qquad\rho\in B(\cH)_*^+.
\]
\end{itemize}
\end{lemma}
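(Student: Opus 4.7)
The plan for part (1) is to exploit joint subadditivity of the perspective $\phi_f$ together with the vanishing of $\phi_f$ on the diagonal. Specifically, given $X \ge X' \ge 0$, I would write $Y := X - X' \ge 0$ and apply Theorem \ref{T-7.2}(1) (whose joint-subadditivity form holds because $f \in \OC(0,\infty)$) to the decompositions $A + X = (A + X') + Y$ and $B + X = (B + X') + Y$. This yields
\[
\phi_f(A+X, B+X) \le \phi_f(A+X', B+X') + \phi_f(Y,Y).
\]
The remaining task is to identify $\phi_f(Y,Y)$. By Lemma \ref{L-4.6}(2), $\phi_f(Y,Y) = \phi_f(1,1) Y = f(1) Y$, and since $f \in \OC_0(0,\infty)$ normalizes $f(1) = 0$, we get $\phi_f(Y,Y) = 0$, completing (1). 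This is the short and essentially mechanical step.

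For part (2), I first observe that $\eps \mapsto \phi_f(A_\eps, B_\eps)$ is a monotonically \emph{increasing} net as $\eps \searrow 0$ by applying (1) to $X = \eps I$, $X' = \eps' I$ with $\eps > \eps' > 0$. Moreover, taking $X = \eps I$, $X' = 0$ in (1) already yields the easy bound $\phi_f(A_\eps, B_\eps) \le \phi_f(A,B)$ in $\widehat{B(\cH)}_\lb$, hence $\sup_{\eps > 0} \rho(\phi_f(A_\eps, B_\eps)) \le \phi_f(A,B)(\rho)$ for every $\rho \in B(\cH)_*^+$.

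The reverse inequality will be the main substance, and the plan is to invoke Corollary \ref{C-6.2}. To apply it I need to check that $t \mapsto \phi_f(t, 1-t)$ is continuous on $[0,1]$ as a map into $(-\infty,\infty]$. For $t \in (0,1)$ this is clear from $\phi_f(t,1-t) = (1-t) f(t/(1-t))$ and the continuity of $f$ on $(0,\infty)$ (guaranteed by operator convexity). At the two endpoints, operator convexity of $f$ ensures that $\lim_{s \searrow 0} f(s) = \beta = \phi_f(0,1)$ and $\lim_{s \to \infty} f(s)/s = \alpha = \phi_f(1,0)$ exist in $(-\infty,\infty]$; substituting $s = t/(1-t)$ gives the matching one-sided limits $\lim_{t \searrow 0}\phi_f(t,1-t) = \beta$ and $\lim_{t \nearrow 1}\phi_f(t,1-t) = \alpha$. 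With this continuity verified, Corollary \ref{C-6.2} applied to any sequence $\eps_n \searrow 0$ (so that $A_{\eps_n} \searrow A$ and $B_{\eps_n} \searrow B$) gives
\[
\phi_f(A,B)(\rho) \le \liminf_{n \to \infty} \phi_f(A_{\eps_n}, B_{\eps_n})(\rho) \le \sup_{\eps > 0} \rho(\phi_f(A_\eps, B_\eps)),
\]
which closes the loop. The only mildly nontrivial obstacle here is the continuity check at the boundary of $[0,1]$ in the extended sense, but this is built into the definition \eqref{F-7.2} of $\phi_f$ via the $\limsup$ prescription in \eqref{F-7.1} and the convexity of $f$.
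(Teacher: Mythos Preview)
Your proof is correct and follows essentially the same route as the paper's: part (1) via joint subadditivity (Theorem \ref{T-7.2}(1)) together with $\phi_f(Y,Y)=f(1)Y=0$ from Lemma \ref{L-4.6}(2), and part (2) by combining the easy inequality from (1) with Corollary \ref{C-6.2} for the reverse direction. Your explicit verification of the continuity of $t\mapsto\phi_f(t,1-t)$ on $[0,1]$ (needed to invoke Corollary \ref{C-6.2}) is a detail the paper leaves implicit, but otherwise the arguments coincide.
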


\begin{proof}
(1)\enspace
Let $X,X'\in B(\cH)_+$ and $X\ge X'$. By Theorem \ref{T-7.2}(1) and Lemma \ref{L-4.6}(2)
one has
\begin{align*}
\phi_f(A+X,B+X)
&=\phi_f(A+X'+(X-X'),B+X'+(X-X')) \\
&\le\phi_f(A+X',B+X')+\phi_f(X-X',X-X') \\
&=\phi_f(A+X',B+X')+\phi_f(1,1)(X-X') \\
&=\phi_f(A+X',B+X')
\end{align*}
thanks to $\phi_f(1,1)=f(1)=0$.

(2)\enspace
Item (1) gives
\[
\phi_f(A,B)(\rho)\ge\sup_{\eps>0}\rho(\phi_f(A_\eps,B_\eps)).
\]
The reverse inequality follows from Corollary \ref{C-6.2}.
\end{proof}

The next theorem shows a joint lower semicontinuity property of $\phi_f(A,B)$ for
SOT-converging sequences, a stronger version of Corollary \ref{C-6.2} though in the case of
operator convex perspectives.

\begin{theorem}\label{T-7.5}
Let $f\in\OC(0,\infty)$. If $A,B,A_n,B_n\in B(\cH)_+$ ($n\in\bN$), $A_n\to A$ and $B_n\to B$
in SOT, then
\[
\phi_f(A,B)(\rho)\le\liminf_{n\to\infty}\phi_f(A_n,B_n)(\rho),\qquad\rho\in B(\cH)_*^+.
\]
\end{theorem}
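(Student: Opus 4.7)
The strategy is to combine Lemma~\ref{L-7.4}(2), which characterizes $\phi_f(A,B)(\rho)$ as the supremum over the invertible regularizations, with a direct strong-operator continuity analysis of $\phi_f$ on $B(\cH)_{++}\times B(\cH)_{++}$, where the explicit formula \eqref{F-7.4} is available. I first reduce to the case $f\in\OC_0(0,\infty)$ by replacing $f$ with $g:=f-f(1)$. Using the realization \eqref{F-4.6} together with the identity $(A+B)^{1/2}S_{A,B}(A+B)^{1/2}=B$ of Lemma~\ref{L-4.3}(1), one finds $\phi_g(A,B)=\phi_f(A,B)-f(1)B$; since $\rho(B_n)\to\rho(B)$ (by $B_n\to B$ in SOT, uniform boundedness of $\{B_n\}$, and normality of $\rho$), the asserted inequality for $f$ is equivalent to that for $g$. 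I therefore assume throughout that $f(1)=0$.

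Fix $\eps>0$ and $\rho\in B(\cH)_*^+$. By Lemma~\ref{L-7.4}(1), $\phi_f(A_n+\eps I,B_n+\eps I)\le\phi_f(A_n,B_n)$ in $\widehat{B(\cH)}_\lb$, whence
\[
\rho(\phi_f(A_n+\eps I,B_n+\eps I))\le\phi_f(A_n,B_n)(\rho).
\]
The central claim is that, for each fixed $\eps>0$,
\begin{equation}\label{F-plan-7}
\phi_f(A_n+\eps I,B_n+\eps I)\,\longrightarrow\,\phi_f(A+\eps I,B+\eps I)\quad\text{in SOT as } n\to\infty.
\end{equation}
Granted \eqref{F-plan-7}, the uniform operator-norm bound on this sequence (verified below) upgrades SOT convergence to $\sigma$-weak convergence against the normal functional $\rho$, yielding
\[
\rho(\phi_f(A+\eps I,B+\eps I))\le\liminf_n\phi_f(A_n,B_n)(\rho).
\]
Taking the supremum over $\eps>0$ and invoking Lemma~\ref{L-7.4}(2) then completes the proof of the theorem.

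The main obstacle is the SOT-continuity assertion \eqref{F-plan-7}. By \eqref{F-7.4},
\[
\phi_f(A_n+\eps I,B_n+\eps I)=(B_n+\eps I)^{1/2}\,f(C_n)\,(B_n+\eps I)^{1/2},
\]
where $C_n:=(B_n+\eps I)^{-1/2}(A_n+\eps I)(B_n+\eps I)^{-1/2}$. The uniform boundedness principle yields $M:=\sup_n(\|A_n\|\vee\|B_n\|)<\infty$, and combined with $B_n+\eps I\ge\eps I$ this confines $\sigma(C_n)$ to the fixed compact subinterval $K:=[\eps/(M+\eps),\,(M+\eps)/\eps]$ of $(0,\infty)$. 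A standard Weierstrass-approximation argument, using that multiplication is jointly SOT-continuous on norm-bounded sets, propagates SOT convergence successively through $(B_n+\eps I)^{\pm1/2}\to(B+\eps I)^{\pm1/2}$, through $C_n\to C:=(B+\eps I)^{-1/2}(A+\eps I)(B+\eps I)^{-1/2}$, through $f(C_n)\to f(C)$ (using continuity of $f$ on the common compact spectral window $K$), and finally to \eqref{F-plan-7}; along the way the uniform norm bound $\|\phi_f(A_n+\eps I,B_n+\eps I)\|\le(M+\eps)\,\max_{t\in K}|f(t)|$ becomes manifest.
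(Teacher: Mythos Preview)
Your proof is correct and follows essentially the same route as the paper: reduce to $f(1)=0$, use Lemma~\ref{L-7.4} to bound $\phi_f(A_n,B_n)(\rho)$ below by the $\eps$-regularized value, establish SOT-continuity of $(A,B)\mapsto\phi_f(A_\eps,B_\eps)$ via the explicit formula \eqref{F-7.4}, and take the supremum over $\eps$ via Lemma~\ref{L-7.4}(2). The paper is terser about the SOT-continuity step (it simply asserts the convergence of $C_n$ and then of $\phi_f$), whereas you spell out the uniform spectral window $K$ and the Weierstrass argument, but the substance is the same.
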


\begin{proof}
Set $f_0(t):=f(t)-f(1)$ for $t\in(0,\infty)$. Then it is obvious that $\phi_f(A,B)=\phi_{f_0}(A,B)+f(1)B$
for all $A,B\in B(\cH)_+$. Hence we may assume that $f\in\OC_0(0,\infty)$. For any fixed
$\eps>0$, since $(A_n)_\eps\to A_\eps$ and $(B_n)_\eps\to B_\eps$ in SOT, it follows that
$(B_n)_\eps^{-1/2}(A_n)_\eps(B_n)_\eps^{-1/2}\to B_\eps^{-1/2}A_\eps B_\eps^{-1/2}$ in SOT.
Moreover, $(B_n)_\eps^{-1/2}(A_n)_\eps(B_n)_\eps^{-1/2}\ge\delta I$ ($n\in\bN$) for some
$\delta>0$, so that we can use \eqref{F-7.4} to confirm that
$\phi_f((A_n)_\eps,(B_n)_\eps)\to\phi_f(A_\eps,B_\eps)$ in SOT as $n\to\infty$. Therefore, the
mapping $(A,B)\in B(\cH)_+\times B(\cH)_+\mapsto\phi_f(A_\eps,B_\eps)\in B(\cH)_\sa$ is
(sequentially) continuous in SOT. From this and Lemma \ref{L-7.4}(2) the result follows.
\end{proof}

\begin{remark}\label{R-7.6}\rm
Let $\sigma$ be any operator connection corresponding to an operator monotone function $h\ge0$
on $(0,\infty)$. When applied to $f=-h$, Theorem \ref{T-7.5} says that if $A,B,A_n,B_n\in B(\cH)_+$
($n\in\bN$), $A_n\to A$ and $B_n\to B$ in SOT, then
\[
\<(A\sigma B)\xi,\xi\>\ge\limsup_{n\to\infty}\<(A_n\sigma B_n)\xi,\xi\>,\qquad\xi\in\cH.
\]
Here note (see \cite[Remark A.2]{HL}) that there are $A_n,B_n\in B(\cH)_{++}$ such that $A_n\to I$,
$B_n\to I$ in SOT and $\<(A_n\sigma B_n)\xi,\xi\>\not\to\<\xi,\xi\>$ for some operator mean
$\sigma$ and some $\xi\in\cH$. This fact also says that Theorem \ref{T-6.1} does not hold for
general SOT-converging sequences $A_n\to A$ and $B_n\to B$.
\end{remark}

We are now in a position to prove the following:

\begin{theorem}\label{T-7.7}
Let $f\in\OC(0,\infty)$ and $A,B\in B(\cH)_+$. If $X_n\in B(\cH)_+$ ($n\in\bN$) and $X_n\to0$ in
SOT, then
\begin{align}\label{F-7.6}
\phi_f(A,B)(\rho)=\lim_{n\to\infty}\phi_f(A+X_n,B+X_n)(\rho),
\qquad\rho\in B(\cH)_*^+.
\end{align}
Furthermore, when $f\in\OC_0(0,\infty)$ and $X_n\searrow0$, the convergence in \eqref{F-7.6}
is increasing.
\end{theorem}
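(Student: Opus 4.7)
The plan is to combine the SOT-lower semicontinuity already established in Theorem~\ref{T-7.5} with an upper bound coming from joint subadditivity, and then to read off the increasing monotonicity directly from Lemma~\ref{L-7.4}(1).

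First I would apply Theorem~\ref{T-7.5} with $A_n := A+X_n$ and $B_n := B+X_n$, which converge to $A$ and $B$ in SOT since $X_n \to 0$ in SOT, to obtain
\[
\phi_f(A,B)(\rho) \le \liminf_{n\to\infty}\phi_f(A+X_n,B+X_n)(\rho),\qquad \rho \in B(\cH)_*^+.
\]
For the reverse inequality, I would invoke the joint subadditivity condition (i) of Theorem~\ref{T-7.2}(1) (applicable since the $\bR$-valued $f\in\OC(0,\infty)$ is finite everywhere, and in particular at more than one point) applied to the decomposition $(A+X_n,B+X_n)=(A,B)+(X_n,X_n)$, yielding
\[
\phi_f(A+X_n,B+X_n)\le \phi_f(A,B)+\phi_f(X_n,X_n)\quad\text{in}\ \widehat{B(\cH)}_\lb.
\]
Lemma~\ref{L-4.6}(2) identifies $\phi_f(X_n,X_n)=\phi_f(1,1)X_n=f(1)X_n\in B(\cH)_\sa$, so evaluating at $\rho$ produces
\[
\phi_f(A+X_n,B+X_n)(\rho)\le\phi_f(A,B)(\rho)+f(1)\rho(X_n).
\]

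The remaining step is to verify $\rho(X_n)\to 0$. The principle of uniform boundedness applied to the pointwise convergence $X_n\xi\to 0$ furnishes $M:=\sup_n\|X_n\|<\infty$. Writing the normal positive functional $\rho$ in the form $\rho=\sum_k\omega_{\eta_k}$ with $\sum_k\|\eta_k\|^2<\infty$ (as recalled just before Definition~\ref{D-2.3}), one has $\rho(X_n)=\sum_k\<X_n\eta_k,\eta_k\>$ with each summand tending to $0$ and dominated by $M\|\eta_k\|^2$, so dominated convergence yields $\rho(X_n)\to 0$. Consequently $\limsup_n\phi_f(A+X_n,B+X_n)(\rho)\le\phi_f(A,B)(\rho)$, which together with the lower bound establishes \eqref{F-7.6}.

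Finally, for the increasing assertion when $f\in\OC_0(0,\infty)$ and $X_n\searrow 0$, I would simply apply Lemma~\ref{L-7.4}(1) to the inequalities $X_n\ge X_{n+1}\ge 0$ in $B(\cH)_+$ to obtain
\[
\phi_f(A+X_n,B+X_n) \le \phi_f(A+X_{n+1},B+X_{n+1}),
\]
so that $\phi_f(A+X_n,B+X_n)(\rho)$ is monotone increasing in $n$ with limit $\phi_f(A,B)(\rho)$ by the first part. The only delicate point in the whole argument is the verification that $\rho(X_n)\to 0$ outlined above; the rest is a direct application of results already proved.
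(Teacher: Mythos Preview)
Your proof is correct and follows essentially the same approach as the paper: both use Theorem~\ref{T-7.5} for the lower bound, joint subadditivity together with Lemma~\ref{L-4.6}(2) for the upper bound, and Lemma~\ref{L-7.4}(1) for the monotonicity. The only cosmetic difference is that the paper first normalizes to $f\in\OC_0(0,\infty)$ (so that $f(1)=0$ and the upper bound is immediate from Lemma~\ref{L-7.4}(1)), whereas you keep the term $f(1)\rho(X_n)$ and verify $\rho(X_n)\to 0$ directly via uniform boundedness and dominated convergence---a perfectly valid alternative.
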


\begin{proof}
As in the proof of Theorem \ref{T-7.5}, we may assume that $f\in\OC_0(0,\infty)$. For every
$\rho\in B(\cH)_*^+$ it follows from Lemma \ref{L-7.4}(1) that
\[
\phi_f(A,B)(\rho)\ge\sup_n\phi_f(A+X_n,B+X_n)(\rho).
\]
On the other hand, Theorem \ref{T-7.5} gives
\[
\phi_f(A,B)(\rho)\le\liminf_{n\to\infty}\phi_f(A+X_n,B+X_n)(\rho).
\]
Hence \eqref{F-7.6} follows, and the latter assertion also follows from Lemma \ref{L-7.4}(1).
\end{proof}

The assertion of Theorem \ref{T-7.7} was dealt with by Fujii and Seo \cite[Theorem 2.8]{FS}
in the case $f(t)=t\log t\in\OC(0,\infty)$; see Example \ref{E-8.12} in \S\ref{Sec-8} for more about
this case. 

In particular, when $X=\eps_nI$ with $\eps_n\searrow0$ in Theorem \ref{T-7.7}, expression
\eqref{F-7.6} becomes
\[
\phi_f(A,B)(\rho)=\lim_{\eps\searrow0}\rho(\phi_f(A_\eps,B_\eps))
=\lim_{\eps\searrow0}\rho(B_\eps^{1/2}f(B_\eps^{-1/2}A_\eps B_\eps^{-1/2})B_\eps^{-1/2})
\]
thanks to Theorem \ref{T-4.7} (or \eqref{F-7.4}). This means that the conventional approach
based on the limit from $A_\eps,B_\eps$ is also available to define the operator perspective
$\phi_f(A,B)$, like \eqref{F-7.5} for operator connections.

\begin{remark}\label{R-7.8}\rm
Unlike the convergence property \eqref{F-6.1} for operator connections, it is not possible to
strengthen \eqref{F-7.6} to the convergence of $\rho(\phi_f(A_n,B_n))$ when $A_n\searrow A$
and $B_n\searrow B$. Consider $A_n=\alpha_n X$, $B_n=\beta_n X$ with $X\in B(\cH)_+$ and
$\alpha_n,\beta_n>0$. Then $\phi_f(A_n,B_n)=\phi_f(\alpha_n,\beta_n)X$ thanks to Lemma
\ref{L-4.6}(2). For instance, if $f$ is an operator convex function $t^p$ for any $p\in[-1,0)\cup(1,2]$,
then $\phi_f(\alpha_n,\beta_n)$ can diverge for some $\alpha_n,\beta_n\searrow0$.
\end{remark}

In the rest of the section we apply an approximation procedure given in
\cite[Lemmas 3.1--3.3]{Hi1} for operator convex functions on $(0,\infty)$. Since the procedure
will be useful in \S\ref{Sec-9} too, we give a brief account on that here. It is known \cite{LR,FHR}
that any $f\in\OC(0,\infty)$ admits an integral expression
\begin{align}\label{F-7.7}
f(t)=a+b(t-1)+c(t-1)^2+d\,{(t-1)^2\over t}
+\int_{(0,\infty)}{(t-1)^2\over t+\lambda}\,d\mu(\lambda)
\end{align}
for all $t\in(0,\infty)$, where $a,b\in\bR$, $c,d\ge0$ and $\mu$ is a positive measure on $(0,\infty)$
with $\int_{(0,\infty)}(1+\lambda)^{-1}\,d\mu(\lambda)<\infty$ (moreover, $a,b,c,d$ and $\mu$ are
uniquely determined from $f$). As easily verified, the values $\alpha=f'(\infty)$ and $\beta=f(0^+)$
are given in terms of the above expression as follows:
\begin{align}\label{F-7.8}
\alpha=b+c\cdot\infty+d+\int_{(0,\infty)}d\mu(\lambda),\quad
\beta=a-b+c+d\cdot\infty+\int_{(0,\infty)}{1\over\lambda}\,d\mu(\lambda).
\end{align}

For each $n\in\bN$ we define
\begin{align}\label{F-7.9}
f_n(t):=a+b(t-1)+nc\,{(t-1)^2\over t+n}+d\,{(t-1)^2\over t+(1/n)}
+\int_{[1/n,n]}{(t-1)^2\over t+\lambda}\,d\mu(\lambda)
\end{align}
for all $t\in(0,\infty)$. Then it is clear that $f_n\in\OC(0,\infty)$ (where $f_n\in\OC_0(0,\infty)$ if
$f\in\OC_0(0,\infty)$) and $f_n(t)\nearrow f(t)$ for all $t\in(0,\infty)$. Furthermore, in view of
\[
{(t-1)^2\over t+\lambda}
=t+{1\over\lambda}-{(1+\lambda)^2\over\lambda}\cdot{t\over t+\lambda},
\qquad t,\lambda\in(0,\infty),
\]
we can rewrite $f_n$ as
\begin{align}\label{F-7.10}
f_n(t)=\alpha_nt+\beta_n-\int_{[1/n,n]}{t(1+\lambda)\over t+\lambda}\,d\nu_n(\lambda),
\qquad t\in(0,\infty),
\end{align}
where
\begin{align}
\alpha_n&:=b+nc+d+\int_{[1/n,n]}d\mu(\lambda)<\infty, \label{F-7.11}\\
\beta_n&:=a-b+c+nd+\int_{[1/n,n]}{1\over\lambda}\,d\mu_n(\lambda)<\infty, \label{F-7.12}\\
d\nu_n(\lambda)&:=(1+n)c\delta_n+(1+n)d\delta_{1/n}
+\chi_{[1/n,n]}{1+\lambda\over\lambda}\,d\mu(\lambda)\label{F-7.13}
\end{align}
with the point masses $\delta_n$ at $n$ and $\delta_{1/n}$ at $1/n$. Define an operator
monotone function $h_n$ on $[0,\infty)$ by
\begin{align}\label{F-7.14}
h_n(t):=\int_{[1/n,n]}{t(1+\lambda)\over t+\lambda}\,d\nu_n(\lambda),
\qquad t\in(0,\infty),
\end{align}
and consider the corresponding operator connection $\sigma_{h_n}$. Then we have the following:

\begin{lemma}\label{L-7.9}
Let $f\in\OC(0,\infty)$ and define $f_n$, $\alpha_n$, $\beta_n$ and $h_n$ ($n\in\bN$) by
\eqref{F-7.9}--\eqref{F-7.14}. Then for every $A,B\in B(\cH)_+$ we have
\[
\phi_{f_n}(A,B)=\alpha_nA+\beta_nB-B\sigma_{h_n}A\ \ (\in B(\cH)_\sa)
\]
and
\[
\phi_f(A,B)(\rho)=\lim_{n\to\infty}\rho(\phi_{f_n}(A,B))\ \ \mbox{increasingly},
\quad\rho\in B(\cH)_*^+.
\]
Moreover, if $f\in\OC_0(0,\infty)$, then $f_n$'s are in $\OC_0(0,\infty)$.
\end{lemma}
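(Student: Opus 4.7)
The plan is to exploit the rewritten form \eqref{F-7.10}, $f_n(t) = \alpha_n t + \beta_n - h_n(t)$, so that $\phi_{f_n}$ is an affine combination of two trivial perspectives and the perspective of a bounded operator monotone function $h_n$. The latter is, by Remark \ref{R-4.8}, essentially a Kubo--Ando connection. First I would record the boundedness of $h_n$: from \eqref{F-7.14} it is clear that $h_n(t) \leq \int_{[1/n,n]}(1+\lambda)\,d\nu_n(\lambda) < \infty$ for all $t \geq 0$, and $h_n(0^+)=0$ by monotone convergence inside the integral. Consequently $B\,\sigma_{h_n}\! A \in B(\cH)_+$ is bounded.

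Next I would establish the formula in two moves. At the function level, verify $\phi_{f_n}(x,y) = \alpha_n x + \beta_n y - \phi_{h_n}(x,y)$ on the interior $(0,\infty)^2$ directly from \eqref{F-7.10}, and then at the boundary: at $(x,0)$ one has $\phi_{f_n}(x,0)=\alpha_n x$ since $f_n'(\infty)=\alpha_n - \lim_{t\to\infty} h_n(t)/t = \alpha_n$ (dominated convergence), and $\phi_{h_n}(x,0)=0$; similarly $\phi_{f_n}(0,y)=\beta_n y$ and $\phi_{h_n}(0,y)=0$ using $h_n(0^+)=0$. At the operator level, apply \eqref{F-4.5}: with $T=T_{A,B}$, $R=R_{A,B}$, $S=S_{A,B}$, the equality $\phi_{f_n}(R,S) = \alpha_n R + \beta_n S - \phi_{h_n}(R,S)$ is just additivity of bounded Borel functional calculus of the commuting pair $(R,S)$; conjugating by $T$ and using $T^*RT=A$, $T^*ST=B$ yields $\phi_{f_n}(A,B) = \alpha_n A + \beta_n B - \phi_{h_n}(A,B)$, and $\phi_{h_n}(A,B) = B\,\sigma_{h_n}\! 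A$ by Remark \ref{R-4.8}.

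For the convergence, I first check pointwise monotone convergence of the scalar data: each of the terms in \eqref{F-7.9} is increasing in $n$ (for $nc(t-1)^2/(t+n) = c(t-1)^2\bigl(1 - t/(t+n)\bigr)$, for $d(t-1)^2/(t+1/n)$ as $1/n\searrow 0$, and for $\int_{[1/n,n]}$ as the domain grows), and comparing with \eqref{F-7.7} gives $f_n(t)\nearrow f(t)$ for every $t>0$. Comparing \eqref{F-7.8} with \eqref{F-7.11}--\eqref{F-7.12} yields $\alpha_n\nearrow\alpha$ and $\beta_n\nearrow\beta$ in $(-\infty,\infty]$. Hence the perspective traces $\phi_{f_n}(t,1-t)$ increase to $\phi_f(t,1-t)$ pointwise on $[0,1]$, with a uniform bound from below by $\phi_{f_1}(t,1-t)$ (itself bounded on $[0,1]$). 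Using \eqref{F-4.6} to write
\[
\phi_f(A,B)(\rho) = \int_{[0,1]} \phi_f(r,1-r)\,d\widetilde\rho(E_R(r)),
\qquad
\widetilde\rho(\,\cdot\,) := \rho\bigl((A+B)^{1/2}\cdot(A+B)^{1/2}\bigr),
\]
and the analogous expression for $\phi_{f_n}(A,B)$, the monotone convergence theorem gives the increasing limit asserted. Finally, the $\OC_0$ statement is immediate: directly from \eqref{F-7.9}, $f_n(1)=a=f(1)$, so $f(1)=0$ forces $f_n(1)=0$.

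I do not expect a genuine obstacle; the only delicate point is to handle the boundary values $(x,0)$ and $(0,y)$ carefully so that the identity $\phi_{f_n}=\alpha_n\cdot(\text{id})+\beta_n\cdot(\text{const})-\phi_{h_n}$ actually holds on all of $[0,\infty)^2$ and not merely on the interior, since otherwise the clean reduction to bounded Borel calculus of $(R,S)$ would fail at points of $\sigma(R,S)$ on the axes.
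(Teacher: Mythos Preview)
Your proposal is correct and follows precisely the route the paper intends: the paper states Lemma \ref{L-7.9} without a written proof, leaving it as an immediate consequence of the preceding paragraph (the rewriting \eqref{F-7.10}, the identification of $h_n$ with a Kubo--Ando connection via Remark \ref{R-4.8}, and the pointwise increase $f_n\nearrow f$). Your write-up is exactly how one fills in those details, including the careful verification of the boundary values $\phi_{f_n}(x,0)=\alpha_n x$, $\phi_{f_n}(0,y)=\beta_n y$ and $\phi_{h_n}(x,0)=\phi_{h_n}(0,y)=0$, which is the only point requiring any attention.
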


The following is an application of Lemma \ref{L-7.9}. Let $\Phi:B(\cH)\to B(\cK)$ be a positive
linear map (hence bounded automatically), where $\cK$ is another Hilbert space. Assume that
$\Phi$ is normal in the sense that if $\{A_i\}$ is a net in $B(\cH)_+$ and
$A_i\nearrow A\in B(\cH)_+$, then $\Phi(A_i)\nearrow\Phi(A)$; in other words, $\Phi$ is
continuous with respect to the $\sigma$-weak topologies on $B(\cH)$ and $B(\cK)$. Then we have
the predual map $\Phi_*:B(\cK)_*^+\to B(\cH)_*^+$ of $\Phi$, which is a positive linear map such
that $(\Phi_*(\rho))(X)=\rho(\Phi(X))$ for all $\rho\in B(\cK)_*^+$ and $X\in B(\cH)$. For every
$T\in\widehat{B(\cH)}_\lb$ define
\[
\Phi(T)(\rho):=T(\Phi_*(\rho))=T(\rho\circ\Phi),\qquad\rho\in B(\cK)_*^+.
\]
Then it is easy to confirm that $\Phi(T)\in\widehat{B(\cK)}_\lb$, $\Phi(\alpha T)=\alpha\Phi(T)$
and $\Phi(T_1+T_2)=\Phi(T_1)+\Phi(T_2)$ for all $\alpha\ge0$ and
$T,T_1,T_2\in\widehat{B(\cH)}_\lb$. Moreover, the map
$\Phi:\widehat{B(\cH)}_\lb\to\widehat{B(\cK)}_\lb$ is an extension of $\Phi:B(\cH)_+\to B(\cK)_+$.

The next proposition is an extended version of (ii) and (iii) of Theorem \ref{T-4.9} and also extends
\cite[Theorem 6.7]{HSW}.

\begin{proposition}\label{P-7.10}
Let $f\in\OC(0,\infty)$ and $\Phi$ be as stated above. Then for every $A,B\in B(\cH)_+$ we have
\[
\phi_f(\Phi(A),\Phi(B))\le\Phi(\phi_f(A,B))\quad\mbox{in $\widehat{B(\cK)}_\lb$}.
\]
\end{proposition}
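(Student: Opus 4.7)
The plan is to reduce Proposition \ref{P-7.10} to the analogous inequality for the bounded approximants $\phi_{f_n}$ supplied by Lemma \ref{L-7.9}, where the claim becomes a direct consequence of the Kubo--Ando transformer inequality for operator connections. The unbounded nature of $\phi_f(A,B)$ is then handled by monotone approximation through the predual map $\Phi_*$.

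Concretely, I would first invoke Lemma \ref{L-7.9} to choose approximating $f_n\in\OC(0,\infty)$ via \eqref{F-7.9}--\eqref{F-7.14}, so that
\[
\phi_{f_n}(X,Y) = \alpha_n X + \beta_n Y - Y\sigma_{h_n} X \in B(\cH)_\sa,\qquad X,Y\in B(\cH)_+,
\]
and $\rho(\phi_{f_n}(A,B))\nearrow \phi_f(A,B)(\rho)$ increasingly for every $\rho\in B(\cH)_*^+$. Since $\sigma_{h_n}$ is a Kubo--Ando operator connection, it satisfies the standard transformer inequality $\Phi(B\sigma_{h_n}A)\le \Phi(B)\sigma_{h_n}\Phi(A)$ for any positive linear map $\Phi$ (this requires no normality). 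Combining this with linearity of $\Phi$ gives
\[
\phi_{f_n}(\Phi(A),\Phi(B))
= \alpha_n\Phi(A) + \beta_n\Phi(B) - \Phi(B)\sigma_{h_n}\Phi(A)
\le \Phi(\phi_{f_n}(A,B))
\]
as bounded self-adjoint operators on $\cK$.

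It remains to pass to the limit in $n$. For every $\rho\in B(\cK)_*^+$, I would apply Lemma \ref{L-7.9} once on $\cK$ (to $\phi_{f_n}(\Phi(A),\Phi(B))$) and once on $\cH$ (to $\phi_{f_n}(A,B)$ paired with $\Phi_*(\rho)$), together with the definition of $\Phi$ on $\widehat{B(\cH)}_\lb$ via the predual map $\Phi_*$:
\begin{align*}
\phi_f(\Phi(A),\Phi(B))(\rho)
&= \lim_{n\to\infty}\rho(\phi_{f_n}(\Phi(A),\Phi(B))) \\
&\le \lim_{n\to\infty}\rho(\Phi(\phi_{f_n}(A,B))) \\
&= \lim_{n\to\infty}(\Phi_*(\rho))(\phi_{f_n}(A,B)) \\
&= \phi_f(A,B)(\Phi_*(\rho)) = \Phi(\phi_f(A,B))(\rho),
\end{align*}
which is the desired inequality.

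The argument is structurally straightforward; the only conceptual obstacle is ensuring that the monotone approximation is compatible with both $\Phi$ and the unbounded $\widehat{B(\cK)}_\lb$-valued perspective. This is exactly what the normality hypothesis on $\Phi$ buys: it yields a well-defined extension $\Phi:\widehat{B(\cH)}_\lb\to\widehat{B(\cK)}_\lb$ realized through $\Phi_*:B(\cK)_*^+\to B(\cH)_*^+$, and makes the monotone convergence pass through $\Phi$ as in the last display.
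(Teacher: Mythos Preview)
Your proof is correct and follows essentially the same approach as the paper's: both reduce to the bounded approximants $\phi_{f_n}$ of Lemma \ref{L-7.9}, apply the Kubo--Ando transformer inequality $\Phi(B\sigma_{h_n}A)\le\Phi(B)\sigma_{h_n}\Phi(A)$ for positive maps, and then pass to the limit via the predual map $\Phi_*$. The only cosmetic difference is that the paper phrases the monotone limit as a $\sup_n$ and begins with a (harmless, and in fact unnecessary) reduction to $f\in\OC_0(0,\infty)$, which you correctly omit.
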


\begin{proof}
As before it suffices to assume that $f\in\OC_0(0,\infty)$. For every $A,B\in B(\cH)_+$ and
$\rho\in B(\cH)_*^+$, by Lemma \ref{L-7.9} we have
\[
\phi_f(\Phi(A),\Phi(B))(\rho)
=\sup_n\rho\bigl(\alpha_n\Phi(A)+\beta_n\Phi(B)-\Phi(B)\sigma_{h_n}\Phi(A)\bigr).
\]
It is well-known that
\begin{align}\label{F-7.15}
\Phi(B\sigma_{h_n}A)\le\Phi(B)\sigma_{h_n}\Phi(A),
\end{align}
which is due to Ando \cite{An1} (though stated in \cite{An1} only for geometric mean and parallel
sum). Therefore, we have
\begin{align*}
&\rho\bigl(\alpha_n\Phi(A)+\beta_n\Phi(B)-\Phi(B)\sigma_{h_n}\Phi(A)\bigr) \\
&\qquad\le\rho\bigl(\Phi(\alpha_nA+\beta_nB-B\sigma_{h_n}A)\bigr)
=(\Phi_*(\rho))(\phi_{f_n}(A,B)) \\
&\qquad\le(\phi_f(A,B))(\Phi_*(\rho))=\Phi(\phi_f(A,B))(\rho),
\end{align*}
showing the result.
\end{proof}

Inequality \eqref{F-7.15} holds for a general (not necessarily normal) positive linear map $\Phi$;
however the normality of $\Phi$ is necessary to define $\Phi(T)$ for $T\in\widehat{B(\cH)}_\lb$.

\begin{corollary}\label{C-7.11}\rm
Let $f\in\OC(0,\infty)$ and $A,B\in B(\cH)_+$. Then we have
\begin{align}\label{F-7.16}
\phi_f(\rho(A),\rho(B))\le\phi_f(A,B)(\rho),\qquad\rho\in B(\cH)_*^+,
\end{align}
and
\begin{equation}\label{F-7.17}
\begin{aligned}
&\sup_{\xi\in\cH,\|\xi\|=1}\phi_f(\<A\xi,\xi\>,\<B\xi,\xi\>)
=\sup_{\rho\in B(\cH)_*^+,\,\rho(I)=1}\phi_f(\rho(A),\rho(B)) \\
&\qquad\le\sup_{\xi\in\cH,\|\xi\|=1}\phi_f(A,B)(\omega_\xi)
=\sup_{\rho\in B(\cH)_*^+,\,\rho(I)=1}\phi_f(A,B)(\rho).
\end{aligned}
\end{equation}
Furthermore, if $A,B$ are commuting, then the inequality in \eqref{F-7.17} becomes equality.
\end{corollary}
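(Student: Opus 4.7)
My plan for the corollary divides naturally into three stages. For \eqref{F-7.16} I will apply Proposition \ref{P-7.10} to $\Phi := \rho$ regarded as a normal positive linear map $B(\cH) \to \bC \cong B(\bC)$. Its predual map sends $t \in B(\bC)_*^+ = [0,\infty)$ to $t\rho$, so the induced action on $\widehat{B(\cH)}_\lb$ satisfies $\Phi(T)(1) = T(\rho)$, while $\phi_f(\Phi(A),\Phi(B))$ is just the scalar $\phi_f(\rho(A),\rho(B))$. Evaluating the conclusion of Proposition \ref{P-7.10} at $1 \in B(\bC)_*^+$ then yields \eqref{F-7.16}, and taking the supremum over $\rho$ with $\rho(I)=1$ immediately gives the middle inequality in \eqref{F-7.17}.

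The sup-equalities in \eqref{F-7.17} will be handled by the standard decomposition of a normal state. The inequalities $\le$ are trivial since each $\omega_\xi$ with $\|\xi\|=1$ is a normalized positive functional; for the reverse direction I write $\rho = \sum_n \|\eta_n\|^2\,\omega_{\xi_n}$ with unit vectors $\xi_n$ and $\sum_n \|\eta_n\|^2 = \rho(I) = 1$. The scalar joint convexity of $\phi_f$ supplied by Theorem \ref{T-7.2}(1) (applied with $\cH = \bC$) then gives
\[
\phi_f(\rho(A),\rho(B)) \le \sum_n \|\eta_n\|^2\,\phi_f(\<A\xi_n,\xi_n\>,\<B\xi_n,\xi_n\>) \le \sup_{\|\xi\|=1}\phi_f(\<A\xi,\xi\>,\<B\xi,\xi\>),
\]
producing the first equality in \eqref{F-7.17}. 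For the second equality I plan to use the additivity and positive homogeneity of $T := \phi_f(A,B) \in \widehat{B(\cH)}_\lb$ on $B(\cH)_*^+$, together with the lower-bound condition (4) of Definition \ref{D-2.1}, to conclude $T(\rho) = \sum_n \|\eta_n\|^2\,T(\omega_{\xi_n}) \le \sup_{\|\xi\|=1} T(\omega_\xi)$.

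For the commuting case I intend to establish the chain
\[
\sup_{\|\xi\|=1}\phi_f(A,B)(\omega_\xi) \;=\; \sup_{(x,y)\in\sigma(A,B)}\phi_f(x,y) \;=\;\sup_{\|\xi\|=1}\phi_f(\<A\xi,\xi\>,\<B\xi,\xi\>).
\]
Definition \ref{D-4.1}(1) rewrites $T(\omega_\xi) = \int_{\sigma(A,B)} \phi_f\,d\mu_\xi$ with $\mu_\xi := \<E_{(A,B)}(\cdot)\xi,\xi\>$ a probability measure on the joint spectrum. The first equality then comes from $\int \phi_f\,d\mu_\xi \le \sup_{\sigma(A,B)}\phi_f$ together with the concentration of $\mu_\xi$ obtained by taking $\xi$ in the range of spectral projections $E_{(A,B)}(U_n)$ of shrinking neighborhoods of any $(x_0,y_0) \in \sigma(A,B)$, combined with lower semicontinuity of $\phi_f$. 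For the second equality I rely on the classical fact that the joint numerical range of a commuting pair of bounded self-adjoint operators equals the convex hull of the joint spectrum, together with the observation that a jointly convex function attains its maximum on a compact convex set at an extreme point---points which, for $\overline{\mathrm{conv}}(\sigma(A,B))$, necessarily lie in $\sigma(A,B)$.

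The main obstacle I expect is dealing with the extended-real values of $\phi_f$: lower semicontinuity on the full closed quadrant $[0,\infty)^2$, particularly near the coordinate axes where $\phi_f$ is defined via the $\limsup$s in \eqref{F-7.1}, must be established carefully, and the concentration arguments in the commuting case must handle the possibility $\phi_f(x_0,y_0) = \infty$ by monotone divergence of the relevant spectral integrals rather than ordinary convergence.
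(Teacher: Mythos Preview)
Your treatment of \eqref{F-7.16} and of the two sup-equalities in \eqref{F-7.17} is essentially the paper's; note only that passing from the finite subadditivity of Theorem~\ref{T-7.2}(1) to the countable convex combination $\phi_f(\rho(A),\rho(B))\le\sum_n\|\eta_n\|^2\phi_f(\<A\xi_n,\xi_n\>,\<B\xi_n,\xi_n\>)$ already requires the scalar lower semicontinuity of $\phi_f$, which the paper supplies by an explicit $\liminf$ via Theorem~\ref{T-7.5}.

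For the commuting case your route through the joint spectrum is different from the paper's (which first treats simultaneously diagonal pairs with finite spectrum and then reaches general commuting pairs by norm-approximation combined with Theorem~\ref{T-7.5}), and as written it has a genuine gap. The ``classical fact'' you invoke---that the joint numerical range $W(A,B)$ of a commuting self-adjoint pair equals $\mathrm{conv}(\sigma(A,B))$---fails in infinite dimensions: only $\overline{W(A,B)}=\mathrm{conv}(\sigma(A,B))$ holds (for instance, with $A$ multiplication by $x$ on $L^2[0,1]$ and $B=I$ one has $(0,1)\in\sigma(A,B)\setminus W(A,B)$). Your extreme-point argument then yields only $\ell\le\sup_{\sigma(A,B)}\phi_f$, the direction already contained in \eqref{F-7.17}, while the direction actually needed, $\sup_{\sigma(A,B)}\phi_f\le\ell$, is left unproved. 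The repair, however, is immediate from your own first-equality argument: for $(x_0,y_0)\in\sigma(A,B)$ the unit vectors $\xi_n$ in $\mathrm{ran}\,E_{(A,B)}(U_n)$ that you construct satisfy $(\<A\xi_n,\xi_n\>,\<B\xi_n,\xi_n\>)\to(x_0,y_0)$, and the lower semicontinuity of $\phi_f$ that you already plan to establish gives $\phi_f(x_0,y_0)\le\liminf_n\phi_f(\<A\xi_n,\xi_n\>,\<B\xi_n,\xi_n\>)\le\ell$. So the numerical-range detour is both flawed and unnecessary; the concentration argument alone closes the chain.
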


\begin{proof}
Inequality \eqref{F-7.16} is immediate by Proposition \ref{P-7.10} applied to $\Phi:B(\cH)\to\bC$
sending $X\in B(\cH)$ to $\rho(X)$. Note that each $\rho\in B(\cH)_*^+$ with $\rho(I)=1$ is given
as $\rho=\sum_n\omega_{\xi_n}$ for some $\{\xi_n\}$ in $\cH$ with $\sum_n\|\xi_n\|^2=1$. By
Theorems \ref{T-7.5} and \ref{T-7.2}(1) we have
\[
\phi_f(\rho(A),\rho(B))
\le\liminf_{k\to\infty}\sum_{n=1}^k\phi_f(\<A\xi_n,\xi_n\>,\<B\xi_n,\xi_n\>)
\le\sup_{\xi\in\cH,\|\xi\|=1}\phi_f(\<A\xi,\xi\>,\<B\xi,\xi\>),
\]
which yields the first equality in \eqref{F-7.17}. The latter equality in \eqref{F-7.17} is
immediately seen, and the inequality is obvious from \eqref{F-7.16}.

To prove the last assertion, let $\ell$ and $r$ be the left-hand and the right-hand sides,
respectively, of \eqref{F-7.17}. Assume first that $(A,B)$ is of the form $A=\sum_{i=1}^ma_iP_i$
and $B=\sum_{i=1}^mb_iQ_i$, where $P_i,Q_i$ ($1\le i\le m$) are projections with
$\sum_iP_i=\sum_iQ_i=I$ and $P_iQ_j=Q_jP_i$ for all $i,j$. 
Then it follows from Definition \ref{D-4.1}(1) that, for every $\rho\in B(\cH)_*^+$ with $\rho(I)=1$,
\[
\phi_f(A,B)(\rho)=\sum_{i,j}\phi_f(a_i,b_j)\rho(P_iQ_j)
\le\max_{P_iQ_j\ne0}\phi_f(a_i,b_j)\le\ell.
\]
Hence $\ell=r$ holds when $(A,B)$ is of this specified form. For a general commuting pair
$(A,B)$ and any $\rho\in B(\cH)_*^+$, by approximation of the spectral decompositions of
$A,B$ like Riemann sums, one can choose a sequence of pairs $(A_n,B_n)$ as specified above
such that $\rho(A_n)=\rho(A)$, $\rho(B_n)=\rho(B)$ and $A_n\to A$, $B_n\to B$ in the operator
norm. From Theorem \ref{T-7.5} and the above case it follows that
\[
\phi_f(A,B)(\rho)\le\liminf_{n\to\infty}\phi_f(A_n,B_n)(\rho)\le\ell,
\]
which implies that $\ell=r$ holds for all commuting pairs $(A,B)$.  
\end{proof}

It is worth noting that inequalities \eqref{F-7.16} and \eqref{F-7.17} are conceptually similar
to the \emph{Peierls--Bogolieubov inequality} for quantum divergences (in particular, relative
entropy); see, e.g., \cite{Uh,Pe1,Pe2} (also \cite{Hi3}) for more details on this and related matters.

\begin{example}\label{E-7.12}\rm
We remark that a strict inequality occurs in \eqref{F-7.17} for non-commuting $A,B$. For example,
let $A=\begin{bmatrix}1&1\\1&1\end{bmatrix}$ and $B=\begin{bmatrix}1&0\\0&2\end{bmatrix}$ in
$B(\bC^2)_+$. Let $1<\alpha\le2$. Then a simple computation gives
$\phi_{t^\alpha}(A,B)=B^{1/2}(B^{-1/2}AB^{-1/2})^\alpha B^{1/2}=(3/2)^{\alpha-1}A$, so that the
right-hand side of \eqref{F-7.17} is $\|\phi_{t^\alpha}(A,B)\|=2(3/2)^{\alpha-1}$. On the other hand,
the left-hand side of \eqref{F-7.17} is equal to
\[
\max\biggl\{{(x_1+x_2)^{2\alpha}\over(x_1^2+2x_2^2)^{\alpha-1}}
=(x_1+x_2)^2\biggl({(x_1+x_2)^2\over x_1^2+2x_2^2}\biggr)^{\alpha-1}:
x_1,x_2\ge0,\,x_1^2+x_2^2=1\biggr\}.
\]
Note that for every $x_1,x_2\ge0$ with $x_1^2+x_2^2=1$, $(x_1+x_2)^2\le2$ with equality only
when $(x_1,x_2)=(1/\sqrt2,1/\sqrt2)$, and $(x_1+x_2)^2\le(3/2)(x_1^2+2x_2^2)$ with equality
only when $(x_1,x_2)=(2/\sqrt5,1/\sqrt5)$. Therefore, the above maximum is strictly less than
$2(3/2)^{\alpha-1}$.
\end{example}

We end the section with clarifying the relation between the extended operator perspective
$\phi_f(A,B)$ and the maximal $f$-divergence $\widehat S_f(A\|B)$ for $A,B\in\cC_1(\cH)_+$
(identified with $B(\cH)_*^+$), i.e., positive trace-class operators on $\cH$.    
\emph{Maximal $f$-divergences} are a type of quantum $f$-divergences, whose details are
found in, e.g., \cite{HM,Hi2}.

We set
\[
\widehat{B(\cH)}_++\cC_1(\cH)_\sa:=\{T+X:T\in\widehat{B(\cH)}_+,\,X\in\cC_1(\cH)_\sa\},
\]
which is a sub-cone of $\widehat{B(\cH)}_\lb=\widehat{B(\cH)}_++B(\cH)_\sa$. The trace $\Tr$ on
$B(\cH)_+$ naturally extends to $\widehat{B(\cH)}_+$ due to \cite[Proposition 1.10]{Ha}. We can
further extend $\Tr$ to $\widehat{B(\cH)}_++\cC_1(\cH)_\sa$ by $\Tr(T+X):=\Tr\,T+\Tr\,X$
($\in(-\infty,\infty]$), and it is easy to confirm that $\Tr$ is positive homogeneous, additive and
normal (i.e., $T_i\nearrow T$$\implies$$\Tr\,T_i\nearrow\Tr\,T$) on
$\widehat{B(\cH)}_++\cC_1(\cH)_\sa$.

\begin{proposition}\label{P-7.13}
Let $f\in\OC(0,\infty)$ and $A,B\in\cC_1(\cH)_+$. Then
$\phi_f(A,B)\in\widehat{B(\cH)}_++\cC_1(\cH)_\sa$ and
\begin{align}\label{F-7.18}
\Tr\,\phi_f(A,B)=\widehat S_f(A\|B),
\end{align}
where $\widehat S_f(A\|B)$ is the maximal $f$-divergence of $A,B$ defined in \cite{Hi2}.
\end{proposition}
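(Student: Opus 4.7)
The plan is to use the approximation of Lemma \ref{L-7.9} to reduce to functions $f_n \in \OC(0,\infty)$ for which both $f_n'(\infty)$ and $f_n(0^+)$ are finite and the identity $\Tr\,\phi_{f_n}(A,B) = \widehat S_{f_n}(A\|B)$ follows from the standard Kubo--Ando trace computation, then transport to $f$ by normality of the extended $\Tr$ on $\widehat{B(\cH)}_+$ and the matching integral representation of $\widehat S_f$ from \cite{Hi2}.

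First I would establish the containment $\phi_f(A,B)\in\widehat{B(\cH)}_++\cC_1(\cH)_\sa$. Using \eqref{F-7.7}, decompose $f=\ell+g$ where $\ell(t):=a+b(t-1)$ and
\[
g(t):=c(t-1)^2+d\,\frac{(t-1)^2}{t}+\int_{(0,\infty)}\frac{(t-1)^2}{t+\lambda}\,d\mu(\lambda)\ge0,
\]
so that $\phi_f(A,B)=aB+b(A-B)+\phi_g(A,B)$ with $aB+b(A-B)\in\cC_1(\cH)_\sa$. Lemma \ref{L-7.9} applied to $g$ produces $g_n\in\OC(0,\infty)$ with $g_n\ge0$, and each $\phi_{g_n}(A,B)=\alpha_n A+\beta_n B-B\sigma_{h_n}A\ \in B(\cH)_+$. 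Since $B\sigma_{h_n}A\le h_n'(0)\,B\in\cC_1(\cH)_+$ and $A,B\in\cC_1(\cH)_+$, each $\phi_{g_n}(A,B)$ actually lies in $\cC_1(\cH)_+$. Lemma \ref{L-7.9} gives $\phi_g(A,B)=\sup_n\phi_{g_n}(A,B)$ as an increasing supremum of nonnegative bounded operators in $\widehat{B(\cH)}_\lb$, hence $\phi_g(A,B)\in\widehat{B(\cH)}_+$, and the containment follows.

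For the trace identity, normality of $\Tr$ on $\widehat{B(\cH)}_+$ together with the additivity on the $\cC_1(\cH)_\sa$ part yields
\[
\Tr\,\phi_f(A,B)=a\Tr B+b\Tr(A-B)+\sup_n\bigl[\alpha_n\Tr A+\beta_n\Tr B-\Tr(B\sigma_{h_n}A)\bigr].
\]
On the other hand, by the integral representation of $\widehat S_f(A\|B)$ in \cite{Hi2} aligned with \eqref{F-7.7}, a direct calculation using \eqref{F-7.13}--\eqref{F-7.14} together with the identity $\Tr(B\sigma_{h_n}A)=\int_{[1/n,n]}(1+\lambda)\,\Tr[B-B(B+\lambda^{-1}A)^{-1}B]\,d\nu_n(\lambda)$-type computations yields $\Tr\,\phi_{g_n}(A,B)=\widehat S_{g_n}(A\|B)$ at the approximant level (both sides being nothing but the standard Kubo--Ando trace expression associated with the bounded function $g_n$). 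Together with the monotone convergence property $\widehat S_{g_n}(A\|B)\nearrow\widehat S_g(A\|B)$ recorded in \cite{Hi2} under $g_n\nearrow g$, this matches the two suprema and gives \eqref{F-7.18}.

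The main obstacle is the bookkeeping needed to identify $\Tr\,\phi_{g_n}(A,B)$ with $\widehat S_{g_n}(A\|B)$: one must align the boundary atoms $(1+n)c\,\delta_n$ and $(1+n)d\,\delta_{1/n}$ in \eqref{F-7.13} with the singular contributions to $\widehat S_f(A\|B)$ coming from $f'(\infty)=\infty$ (when $c>0$) and $f(0^+)=\infty$ (when $d>0$). Once this identification is confirmed for each approximant $g_n$, the rest of the proof is the double monotone convergence argument sketched above.
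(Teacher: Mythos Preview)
Your containment argument is sound and parallels the paper's: the paper uses the tangent line $f_0(t)=f(t)-(f'(1)t+f(1)-f'(1))\ge0$ rather than the full integral decomposition, but the idea is the same.

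For the trace identity, your route is genuinely different from the paper's, and there is a gap. You reduce to showing $\Tr\,\phi_{g_n}(A,B)=\widehat S_{g_n}(A\|B)$ for the approximants $g_n$, but you do not actually establish this: you describe it as ``the standard Kubo--Ando trace expression'' and then acknowledge that the bookkeeping matching the two sides is the ``main obstacle.'' That identification \emph{is} the proposition for $g_n$; the fact that $g_n'(\infty)$ and $g_n(0^+)$ are finite makes $\phi_{g_n}(A,B)$ bounded and trace-class, but you still need to show its trace equals $\widehat S_{g_n}(A\|B)$ as defined in \cite{Hi2}. Your approach can be completed, but only by invoking a precise integral formula for $\widehat S_f$ from \cite{Hi2} and carefully checking it agrees term-by-term with $\alpha_n\Tr A+\beta_n\Tr B-\Tr(B\sigma_{h_n}A)$; you have sketched this match but not carried it out, and the boundary-atom bookkeeping you flag is not the only issue --- the bulk integral terms also need to be aligned.

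The paper avoids this entirely. It first proves \eqref{F-7.18} directly in the case $\lambda^{-1}B\le A\le\lambda B$: writing $A=B^{1/2}WB^{1/2}$ with $W$ invertible on $P\cH$ (where $P$ is the support projection of $B$), operator homogeneity gives $\phi_f(A,B)=B^{1/2}f(W)B^{1/2}$, whose trace is $\Tr\,Bf(W)$ --- exactly the defining formula for $\widehat S_f(A\|B)$ in \cite{Hi2}. For general $A,B$, it sets $C:=A+B$, applies the comparable case to $A+\eps C$, $B+\eps C$, and passes to the limit using Theorem \ref{T-7.7} on one side and \cite[Lemma 2.6]{Hi2} on the other. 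This is shorter and never requires matching integral representations. The paper also notes an alternative via \cite[Theorem 4.2]{Hi2}, which may be closer in spirit to what you had in mind.
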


\begin{proof}
Set $f_0(t):=f(t)-(at+b)$ where $a:=f'(1)$ and $b:=f(1)-f'(1)$; then
$\phi_f(A,B)=\phi_{f_0}(A,B)+(aA+bB)$. Since $\phi_{f_0}(A,B)\in\widehat{B(\cH)}_+$ thanks to
$f_0\ge0$, we have $\phi_f(A,B)\in\widehat{B(\cH)}_++\cC_1(\cH)_\sa$, so that $\Tr\,\phi_f(A,B)$
is well defined as seen before the proposition.

Let us show equality \eqref{F-7.18}. Assume first that $\lambda^{-1}B\le A\le\lambda B$ for some
$\lambda>0$, and let $P$ be the support projection of $B$. Then $A=B^{1/2}WB^{1/2}$ for a
(unique) invertible $W\in B(P\cH)_+$. By operator homogeneity (Definition \ref{D-2.1}(2)) and
Proposition \ref{P-4.5} we have
\[
\phi_f(A,B)=B^{1/2}\phi_f(W,P)B^{1/2}=B^{1/2}f(W)B^{1/2},
\]
where $f(W)\in B(P\cH)_\sa$ is the continuous functional calculus of $W$. Hence
$\phi_f(A,B)\in\cC_1(\cH)_\sa$ and $\Tr\,\phi_f(A,B)=\Tr\,B^{1/2}f(W)B^{1/2}=\Tr\,Bf(W)$; see
\cite[Lemma 3.4.11]{Ped} for the last equality. The last term of this is exactly $\widehat S_f(A\|B)$
in \cite[Definition 2.3]{Hi2}. For general $A,B\in\cC_1(\cH)_+$ with $C:=A+B$ the case shown just
above gives $\Tr\,\phi_f(A+\eps C,B+\eps C)=\widehat S_f(A+\eps C\|B+\eps C)$ for any $\eps>0$.
Furthermore, with $f_0$ as above we have
\[
\Tr\,\phi_f(A+\eps C,B+\eps C)
=\Tr\,\phi_{f_0}(A+\eps C,B+\eps C)+\Tr(aA+bB)+\eps(a+b)\Tr\,C,
\]
which converges to $\Tr\,\phi_f(A,B)$ as $\eps\searrow0$ by Theorem \ref{T-7.7} and the normality
of $\Tr$ (mentioned above). Hence equality \eqref{F-7.18} holds thanks to \cite[Lemma 2.6]{Hi2}.
(Alternatively, \eqref{F-7.18} can be shown by using \cite[Theorem 4.2]{Hi2} which is more directly
related to the PW-functional calculus.)
\end{proof}

Proposition \ref{P-7.13} gives a justification for our formulation of extended operator perspectives.
Basic properties of maximal $f$-divergences given in \cite{Hi2} are also derived via \eqref{F-7.18}
from those of $\phi_f(A,B)$ shown in this section (though in the $B(\cH)$ setting). For example,
when a normal positive map $\Phi:B(\cH)\to B(\cK)$ is trace-preserving, the inequality of
Proposition \ref{P-7.10} yields the monotonicity inequality (called the
\emph{data-processing inequality}) for the maximal $f$-divergence shown in
\cite[Theorem 2.9]{Hi2}.

%%%%%%%%%%%%%%% Section 8 %%%%%%%%%%%%%%%%
\section{Dense domain case and boundedness}\label{Sec-8}

For $f\in\OC(0,\infty)$ we will further discuss the operator perspective $\phi_f(A,B)$ introduced in
Definition \ref{D-7.1} via \eqref{F-7.2} with $\alpha=f'(\infty)$ and $\beta=f(0^+)$. Throughout this
section, for any $A,B\in B(\cH)_+$ we use the notations $\cH_{A,B}$, $T_{A,B}$, $R_{A,B}$ and
$S_{A,B}$ given in \S\ref{Sec-4}, and let $R_{A,B}=\int_0^1t\,dE_{R_{A,B}}(t)$ be the spectral
decomposition.

Our questions in this section are when $\phi_f(A,B)$ is bounded and when $\phi_f(A,B)$ has
a dense domain (i.e., the $\infty$-part is trivial); see Definition \ref{D-2.3}. First we consider the
latter question. When both $\alpha$ and $\beta$ are finite, $\phi_f(A,B)$ is bounded for all
$A,B\in B(\cH)_+$; see Remark \ref{R-7.3}(1). Now assume that $\alpha=\infty>\beta$. For every
$\xi\in\cH$, by \eqref{F-4.5} and Lemma \ref{L-4.3}(3), we find that
\begin{equation}\label{F-8.1}
\begin{aligned}
\phi_f(A,B)(\omega_\xi)&=\phi_f(R,S)(T_{A,B}\omega_\xi T_{A,B}^*) \\
&=\biggl(\int_{[0,1]}\phi_f(t,1-t)\,dE_{R_{A,B}}(t)\biggr)(\omega_{T_{A,B}\xi}) \\
&=\int_{[0,1)}\phi_f(t,1-t)\,d\|E_{R_{A,B}}(t)T_{A,B}\xi\|^2
+\infty\cdot\|E_{R_{A,B}}(\{1\})T_{A,B}\xi\|^2.
\end{aligned}
\end{equation}
Write $Q_{A,B}:=\int_{[0,1)}(\phi_f(t,1-t)\vee0)\,dE_{R_{A,B}}(t)$, which is a positive self-adjoint
operator on $\cH_{A,B}$. Then it follows from \eqref{F-8.1} that
\[
\{\xi\in\cH:\phi_f(A,B)(\omega_\xi)<\infty\}
=\ker(E_{R_{A,B}}(\{1\})T_{A,B})\cap\cD(Q_{A,B}^{1/2}T_{A,B}).
\]
Therefore, the essential part  $\cH_0$ (see Proposition \ref{P-2.2}) of $\phi_f(A,B)$ is
\begin{align}\label{F-8.2}
\cH_0=\overline{\ker(E_{R_{A,B}}(\{1\})T_{A,B})\cap\cD(Q_{A,B}^{1/2}T_{A,B})}
\subseteq\ker(E_{R_{A,B}}(\{1\})T_{A,B}),
\end{align}
and $\cH_0=\cH$ (i.e., $\phi_f(A,B)$ has a dense domain) if and only if $E_{R_{A,B}}(\{1\})T_{A,B}=0$
and $\cD(Q_{A,B}^{1/2}T_{A,B})$ is dense in $\cH$. Since $\overline\ran\,T_{A,B}=\cH_{A,B}$,
$E_{R_{A,B}}(\{1\})T_{A,B}=0$ is equivalent to $E_{R_{A,B}}(\{1\})=0$ or  $\ker S_{A,B}=\{0\}$.
This is also equivalent to $\ker R_{A,B}\supseteq\ker S_{A,B}$ thanks to
$R_{A,B} + S_{A,B} = I_{\mathcal{H}_{A,B}}$.
Note that $\ker R_{A,B}\supseteq\ker S_{A,B}$ implies $\ker A\supseteq\ker B$ but the converse
is not true; see Lemma \ref{P-8.4} below. From the argument so far we have item (1) in the following
proposition. Items (2) and (3) are similarly shown ((2) is also clear by applying (1) to $\widetilde f$).

\begin{proposition}\label{P-8.1}
Let $f\in\OC(0,\infty)$ and $A,B\in B(\cH)_+$.
\begin{itemize}
\item[\rm(1)] Assume that $f'(\infty)=\infty>f(0^+)$, and set
$Q_{A,B}:=\int_{[0,1)}(\phi_f(t,1-t)\vee0)\,dE_{R_{A,B}}(t)$ on $\cH_{A,B}$. Then $\phi_f(A,B)$
has a dense domain if and only if $\ker S_{A,B}=\{0\}$ and $\cD(Q_{A,B}^{1/2}T_{A,B})$ is
dense in $\cH$.
\item[\rm(2)] Assume that $f'(\infty)<\infty=f(0^+)$, and set
$Q_{A,B}:=\int_{(0,1]}(\phi_f(t,1-t)\vee0)\,dE_{R_{A,B}}(t)$ on $\cH_{A,B}$. Then $\phi_f(A,B)$
has a dense domain if and only if $\ker R_{A,B}=\{0\}$ and $\cD(Q_{A,B}^{1/2}T_{A,B})$ is
dense in $\cH$.
\item[\rm(3)] Assume that $f'(\infty)=f(0^+)=\infty$, and set
$Q_{A,B}:=\int_{(0,1)}(\phi_f(t,1-t)\vee0)\,dE_{R_{A,B}}(t)$ on $\cH_{A,B}$. Then $\phi_f(A,B)$
has a dense domain if and only if $\ker R_{A,B}=\ker S_{A,B}=\{0\}$ and
$\cD(Q_{A,B}^{1/2}T_{A,B})$ is dense in $\cH$.
\end{itemize}
\end{proposition}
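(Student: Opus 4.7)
The plan is to compute $\phi_f(A,B)(\omega_\xi)$ via the realization \eqref{F-4.5} and Lemma \ref{L-4.3}(3) --- this has already been carried out in \eqref{F-8.1} for case (1) --- and then read off the essential part $\cH_0$ of $\phi_f(A,B)$ using Proposition \ref{P-2.2}. Item (1) is essentially obtained in the discussion leading to \eqref{F-8.2}; items (2) and (3) will follow either by the same pattern or by a reduction through the transpose identity \eqref{F-7.3}.

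For (1), I would formalize the preceding discussion as follows. Since $\cH_0$ is contained in the closed subspace $K := \ker(E_{R_{A,B}}(\{1\})T_{A,B})$, density of $\cH_0$ splits into (i) $K = \cH$ and (ii) $\cD(Q_{A,B}^{1/2}T_{A,B})$ being dense. For (i), observe that $E_{R_{A,B}}(\{1\})$ is the orthogonal projection onto $\ker S_{A,B}$ (via $R_{A,B}+S_{A,B}=I_{\cH_{A,B}}$); combined with the injectivity of $T_{A,B}^* \eta = (A+B)^{1/2} \eta$ on $\cH_{A,B} = \ker(A+B)^\perp$, this yields $K = \cH$ iff $\ker S_{A,B} = \{0\}$. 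Under $K = \cH$, the finiteness of $\phi_f(A,B)(\omega_\xi)$ reduces --- after subtracting the bounded negative part of $\phi_f(t,1-t)$ on $[0,1)$, which is bounded below by the operator convexity of $f$ --- to $\xi \in \cD(Q_{A,B}^{1/2}T_{A,B})$, whence (ii).

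For (2), I would invoke \eqref{F-7.3}: $\phi_f(A,B) = \phi_{\widetilde f}(B,A)$ with $\widetilde f(t) = tf(1/t) \in \OC(0,\infty)$. The hypothesis $f'(\infty) < \infty = f(0^+)$ translates into $\widetilde f'(\infty) = \infty > \widetilde f(0^+)$, so (1) applies to $(B,A)$ and $\widetilde f$. By uniqueness in Lemma \ref{L-4.3}(1), $T_{B,A} = T_{A,B}$, $R_{B,A} = S_{A,B}$ and $S_{B,A} = R_{A,B}$; and the substitution $s = 1-t$ together with $\phi_{\widetilde f}(1-s,s) = \phi_f(s,1-s)$ identifies the $Q_{B,A}$ (in the $\widetilde f$-version of (1)) with the $Q_{A,B}$ stated in (2). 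This translates the conclusion of (1) into the conclusion of (2).

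For (3), both $\phi_f(1,0) = \phi_f(0,1) = \infty$, so the analogue of \eqref{F-8.1} acquires two infinite boundary terms, and the argument of (1) applied symmetrically at $t = 0$ and $t = 1$ shows that $\cH_0$ sits inside the closed subspace $\ker((E_{R_{A,B}}(\{0\}) + E_{R_{A,B}}(\{1\}))T_{A,B})$, which equals $\cH$ iff $\ker R_{A,B} = \ker S_{A,B} = \{0\}$; under this, density of $\cH_0$ reduces to density of $\cD(Q_{A,B}^{1/2}T_{A,B})$ with $Q_{A,B}$ now defined over $(0,1)$. The hard part is the equivalence $K = \cH \Leftrightarrow \ker S_{A,B} = \{0\}$ underlying (1): the nontrivial direction uses that if a nonzero $\eta \in \ker S_{A,B}$ existed, then $(A+B)^{1/2}\eta$ would be a nonzero element of $T_{A,B}^*(\ran E_{R_{A,B}}(\{1\})) \subseteq K^\perp$, contradicting $K = \cH$. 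Once this is in hand, the remainder is bookkeeping with the monotone convergence theorem and the quadratic-form machinery recalled in Section \ref{Sec-2}.
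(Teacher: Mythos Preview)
Your proposal is correct and follows essentially the same approach as the paper: item (1) is exactly the formalization of the discussion preceding \eqref{F-8.2}, item (2) is obtained via the transpose identity \eqref{F-7.3} (which the paper explicitly mentions as the route to (2)), and item (3) follows by the symmetric argument at both endpoints. One minor simplification: for the equivalence $K=\cH \Leftrightarrow \ker S_{A,B}=\{0\}$ you can argue more directly, as the paper does, that $E_{R_{A,B}}(\{1\})T_{A,B}=0$ iff $E_{R_{A,B}}(\{1\})=0$ simply because $T_{A,B}$ has dense range in $\cH_{A,B}$.
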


\begin{remark}\label{R-8.2}\rm
In particular, when $\cH$ is finite-dimensional, the situation in Proposition \ref{P-8.1} is much
simpler. Note that any densely-defined operator is bounded and
$\ker A=\ker R_{A,B}\oplus\cH_{A,B}^\perp$ in this case. Hence, for instance, item (1) simply
says that if $f'(\infty)=\infty>f(0^+)$, then $\phi_f(A,B)$ is bounded if and only if
$\ker A\supseteq\ker B$ (or $(A,B)\in(B(\cH)_+\times B(\cH)_+)_\le$); (2) and (3) are similar.
\end{remark}

Concerning the question on the boundedness of $\phi_f(A,B)$, one can state, similarly to
Proposition \ref{P-8.1}(1) for example, that if $f'(\infty)=\infty>f(0^+)$, then $\phi_f(A,B)$ is
bounded if and only if $\ker S_{A,B}=\{0\}$ and $Q_{A,B}^{1/2}T_{A,B}$ is bounded. But this
seems just a restatement of $\phi_f(A,B)$ being bounded, so a more intrinsic condition is desirable.
Although the problem seems difficult for general $f\in\OC(0,\infty)$, the special case of $f(t)=t^2$
is tractable as discussed below. This function is an extreme case of operator convex power
functions $t^p$ ($p\in[-1,0]\cup[1,2]$).

As for the function $t^2$, we begin by setting
\begin{align}\label{F-8.3}
g^{(n)}(t):={n(t-1)^2\over t+n}
=nt+1-(1+n)^2\,{t\over t+n},\qquad t\in(0,\infty),
\end{align}
for each $n\in\bN$. Obviously, $g^{(n)}(t)\nearrow(t-1)^2$ for all $t\in(0,\infty)$.  For any
$A,B\in B(\cH)_+$ note that
\begin{equation}\label{F-8.4}
\begin{aligned}
\phi_{g^{(n)}}(A,B)&=nA+B-(1+n)^2\phi_{t/(t+n)}(A,B) \\
&=nA+B-{(1+n)^2\over n}(A:nB),
\end{aligned}
\end{equation}
where $A:B$ denote the \emph{parallel sum} of $A,B\in B(\cH)_+$ (see \cite{AD,An1}). Moreover,
recall a well-known formula
\begin{align}\label{F-8.5}
A-(A:B)=A(A+B)^{-1}A,\qquad A,B\in B(\cH)_+,
\end{align}
which is more precisely understood as
\[
A-(A:B)=\lim_{\eps\searrow0}A(A+B+\eps I)^{-1}A\quad\mbox{in SOT}.
\]
(This formula is easy to see. Also, the original definition of parallel sum \cite{AD} is
$A:B=A(A+B)^{-1}B$ for matrices, where $(A+B)^{-1}$ is the generalized inverse; see also
\cite[p.~103]{Bh2}.)

\begin{proposition}\label{P-8.3}
For every $A,B\in B(\cH)_+$ and $\rho\in B(\cH)_*^+$, we have
\begin{align}\label{F-8.6}
\phi_{t^2}(A,B)(\rho)=\lim_{n\to\infty}n\rho(A-(A:nB))
=\lim_{\eps\searrow0}\rho(A(\eps A+B)^{-1}A)\ \ \mbox{increasingly}.
\end{align}
\end{proposition}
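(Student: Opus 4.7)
The plan is to apply Lemma \ref{L-7.9} to $f(t)=t^2$ and then translate the approximating expressions into the parallel-sum form of the proposition. The Taylor expansion $t^2 = 1 + 2(t-1) + (t-1)^2$ realizes the integral representation \eqref{F-7.7} with $a=1$, $b=2$, $c=1$, $d=0$ and $\mu=0$. The approximants \eqref{F-7.9} then reduce to $f_n(t) = 1 + 2(t-1) + g^{(n)}(t)$ with $g^{(n)}$ as in \eqref{F-8.3}, and Lemma \ref{L-7.9} gives
\[
\phi_{t^2}(A,B)(\rho) = \lim_{n\to\infty}\rho(\phi_{f_n}(A,B))\quad\mbox{increasingly}.
\]

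Next, since $f_n(t) = (2t-1) + g^{(n)}(t)$ we have $\phi_{f_n}(A,B) = 2A - B + \phi_{g^{(n)}}(A,B)$, and combining with \eqref{F-8.4} and the identity $(n+1)^2/n = n + 2 + 1/n$ I would regroup to obtain
\[
\phi_{f_n}(A,B) = (n+2)\bigl[A - (A:nB)\bigr] - \tfrac1n (A:nB).
\]
Using \eqref{F-8.5} in the SOT-limit sense, $A - (A:nB) = \lim_{\delta\searrow0}A(A + nB + \delta I)^{-1}A$; scaling by $n$ and setting $\eps=1/n$, a direct calculation shows
\[
n\rho(A - (A:nB)) = \lim_{\delta\searrow0}\rho\bigl(A(\eps A + B + \eps\delta I)^{-1}A\bigr) = \rho(A(\eps A + B)^{-1}A),
\]
the last expression being understood as the monotone limit over $\delta$. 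Because $\eps A + B + \delta I$ is increasing in $\eps$, its inverse is decreasing in $\eps$; thus both the sequence $n\rho(A-(A:nB))$ and the family $\rho(A(\eps A + B)^{-1}A)$ are increasing as $n\nearrow\infty$ (equivalently $\eps\searrow0$), and their limits coincide.

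To conclude, I would pass to the limit in the decomposition $\rho(\phi_{f_n}(A,B)) = n\rho(A-(A:nB)) + 2\rho(A-(A:nB)) - (1/n)\rho(A:nB)$. The term $(1/n)\rho(A:nB)\to0$ since $\rho(A:nB)\le\rho(A)<\infty$, while $\rho(A-(A:nB))$ is decreasing in $n$ (as $A:nB$ is operator-increasing) and converges to some $c\ge0$. Set $L := \lim_n n\rho(A-(A:nB))\in[0,\infty]$. Then $\phi_{t^2}(A,B)(\rho) = L + 2c$. The main obstacle is the spurious term $2c$, which I would resolve by a dichotomy: if $c=0$ the identification $\phi_{t^2}(A,B)(\rho)=L$ is immediate, whereas if $c>0$ then $\rho(A-(A:nB))\ge c$ forces $n\rho(A-(A:nB))\ge nc\to\infty$, so $L=\infty$ and still $L+2c=L=\phi_{t^2}(A,B)(\rho)$. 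In either case \eqref{F-8.6} holds with the asserted increasing convergence.
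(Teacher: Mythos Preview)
Your proof is correct and follows essentially the same route as the paper: both use the integral representation \eqref{F-7.7} of $t^2$ (with $a=1$, $b=2$, $c=1$, $d=0$, $\mu=0$), invoke the approximants $g^{(n)}$ and \eqref{F-8.4}, and finish via \eqref{F-8.5}. The only cosmetic difference is in handling the limit of $(n+2)\rho(A-(A:nB))$: the paper factors this as $\tfrac{n+2}{n}\cdot n\rho(A-(A:nB))$ and notes $\tfrac{n+2}{n}\to1$, while you split it additively as $n\rho(A-(A:nB))+2\rho(A-(A:nB))$ and dispose of the extra term by your dichotomy on $c$; both arguments are valid, the paper's being slightly more direct.
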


\begin{proof}
Let $A,B\in B(\cH)_+$ and $\rho\in B(\cH)_*^+$. Since
$\phi_{g^{(n)}}(A,B)\nearrow \phi_{(t-1)^2}(A,B)$, it follows from \eqref{F-8.4} that
\begin{align*}
\phi_{t^2}(A,B)(\rho)&=\phi_{(t-1)^2}(A,B)(\rho)+\phi_{2t-1}(A,B)(\rho) \\
&=\lim_{n\to\infty}\rho\biggl(nA+B-{(1+n)^2\over n}(A:nB)\biggr)+\rho(2A-B) \\
&=\lim_{n\to\infty}\rho\biggl({n+2\over n}\,n(A-(A:nB))-{1\over n}(A:nB)\biggr) \\
&=\lim_{n\to\infty}n\rho(A-(A:nB)).
\end{align*}
The above last equality is immediate since ${1\over n}(A:nB)\le{1\over n}A\to0$. Furthermore,
by \eqref{F-8.5} we find that
\[
n(A-(A:nB))=nA(A+nB)^{-1}A=A(n^{-1}A+B)^{-1}A,
\]
showing the second equality and the limits being increasing.
\end{proof}

Here we recall Ando's work \cite{An2} on Lebesgue decomposition of positive operators. For
$B,X\in B(\cH)_+$ it is said that $X$ is \emph{$B$-absolutely continuous} if there is a sequence
$\{X_n\}$ in $B(\cH)_+$ such that $X_n\nearrow X$ and $X_n\le\lambda_n B$ for some
$\lambda_n\ge0$. Also, $X$ is said to be \emph{$B$-singular} if $Y\in B(\cH)_+$ satisfies
$0\le Y\le B$ and $0\le Y\le X$, then $Y=0$. For every $A,B\in B(\cH)_+$ Ando \cite{An2}
introduced a \emph{$B$-absolutely continuous part} $[B]A$ of $A$ by
\[
[B]A:=\lim_{n\to\infty}(A:nB)\quad\mbox{increasingly},
\]
which is the maximum of all $B$-absolutely continuous $X\in B(\cH)_+$ with $X\le A$. Then
$A-[B]A$ is $B$-singular and we have a \emph{$B$-Lebesgue decomposition} \cite{An2}
\[
A=[B]A+(A-[B]A).
\]

\begin{proposition}\label{P-8.4}
For every $A,B\in B(\cH)_+$ we have
\begin{align}\label{F-8.7}
A-[B]A=T_{A,B}^*E_{R_{A,B}}(\{1\})T_{A,B}.
\end{align}
Hence $\ker S_{A,B}=\{0\}$ if and only if $A$ is $B$-absolutely continuous.
\end{proposition}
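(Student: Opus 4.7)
The strategy is to compute $A-(A:nB)$ explicitly via the PW-functional calculus of \S\ref{Sec-4}, pass to the limit $n\to\infty$ using the spectral theorem on $R_{A,B}$, and then invoke Ando's increasing representation $[B]A=\lim_n(A:nB)$. The parallel sum is, by Remark \ref{R-4.8}, the PW-functional calculus of the homogeneous function $\psi(x,y)=xy/(x+y)$, and the PW calculus is compatible with rescaling one argument (a direct consequence of the uniqueness in Theorem \ref{T-4.2}), so $A:nB$ is the PW-functional calculus of $\psi_n(x,y):=\psi(x,ny)=nxy/(x+ny)$. Abbreviating $T=T_{A,B}$ and $R=R_{A,B}$, the realization \eqref{F-4.5} together with Lemma \ref{L-4.3}(3) gives
\begin{align*}
A:nB \,=\, T^*\psi_n(R,I-R)\,T \,=\, T^*\,nR(I-R)\bigl(R+n(I-R)\bigr)^{-1}T,
\end{align*}
where $R+n(I-R)=nI_{\cH_{A,B}}-(n-1)R\ge I_{\cH_{A,B}}$ for $n\ge 1$, so the inverse is bounded. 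Subtracting from $A=T^*RT$ and simplifying yields
\begin{align*}
A-(A:nB) \,=\, T^*g_n(R)\,T,\qquad g_n(r):=\frac{r^2}{r+n(1-r)}\ (r\in[0,1]).
\end{align*}

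A direct computation $\partial g_n/\partial n=-r^2(1-r)/(r+n(1-r))^2\le 0$ shows that $g_n$ is non-increasing in $n$ for each $r$, with $g_n(1)=1$ for all $n$ and $g_n(r)\searrow 0$ for every $r\in[0,1)$. Hence $g_n\searrow\chi_{\{1\}}$ pointwise on $[0,1]$, and the spectral theorem (bounded monotone convergence) yields $g_n(R)\searrow E_{R_{A,B}}(\{1\})$ in SOT. Sandwiching by the bounded operators $T^*$ and $T$ preserves SOT limits, so $A-(A:nB)\to T_{A,B}^*E_{R_{A,B}}(\{1\})T_{A,B}$ in SOT. On the other hand, $A:nB\nearrow[B]A$ in SOT by Ando's definition, whence $A-(A:nB)\searrow A-[B]A$ in SOT. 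Comparing the two limits proves \eqref{F-8.7}.

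For the second assertion, $A$ is $B$-absolutely continuous iff $A=[B]A$, i.e., iff $T_{A,B}^*E_{R_{A,B}}(\{1\})T_{A,B}=0$. Rewriting this as $\bigl(E_{R_{A,B}}(\{1\})T_{A,B}\bigr)^*\bigl(E_{R_{A,B}}(\{1\})T_{A,B}\bigr)$, vanishing is equivalent to $E_{R_{A,B}}(\{1\})T_{A,B}=0$; since $\ran T_{A,B}$ is dense in $\cH_{A,B}$, this reduces to $E_{R_{A,B}}(\{1\})=0$, i.e., $\ker(I_{\cH_{A,B}}-R_{A,B})=\ker S_{A,B}=\{0\}$. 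The only subtle point in the argument is the identification of $A:nB$ as the PW-functional calculus of $\psi_n$ (equivalently, the compatibility of the PW-functional calculus with rescaling of one argument); once this is in place the rest is routine spectral theory and monotone convergence.
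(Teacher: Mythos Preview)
Your proof is correct and follows essentially the same route as the paper's: both compute $A-(A:nB)=T_{A,B}^*g_n(R_{A,B})T_{A,B}$ with $g_n(r)=r^2/(r+n(1-r))$ and then let $n\to\infty$ via monotone convergence. The only cosmetic difference is that the paper reaches this formula by applying operator homogeneity (Definition \ref{D-4.1}(2)) directly to parallel sum, writing $A:nB=T_{A,B}^*(R_{A,B}:nS_{A,B})T_{A,B}$ and then using \eqref{F-8.5}, whereas you identify $(A,B)\mapsto A:nB$ as the PW-functional calculus of $\psi_n(x,y)=nxy/(x+ny)$; both are equivalent once one notes that $(A,B)\mapsto A:nB$ is itself an operator connection with representing function $h_n(t)=nt/(1+nt)$, so Remark \ref{R-4.8} applies.
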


\begin{proof}
We observe that
\begin{align*}
A-(A:nB)&=T_{A,B}^*(R_{A,B}-(R_{A,B}:n(I_{\cH_{A,B}}-R_{A,B})))T_{A,B} \\
&=T_{A,B}^*\biggl({R_{A,B}^2\over n(I_{\cH_{A,B}}-R_{A,B})+R_{A,B}}\biggr)T_{A.B} \\
&=T_{A,B}^*\biggl(\int_0^1{t^2\over n(1-t)+t}\,dE_{R_{A,B}}(t)\biggr)T_{A,B},
\end{align*}
where the first equality is due to operator homogeneity \cite{Fu2} (see also Definition
\ref{D-4.1}(2)) applied to parallel sum. Letting $n\to\infty$ gives \eqref{F-8.7}. From \eqref{F-8.7}
and the argument above Proposition \ref{P-8.1}, it follows that $\ker S_{A,B}=\{0\}$ if and only if
$A-[B]A=0$, that is, $A$ is $B$-absolutely continuous.
\end{proof}

Proposition \ref{P-8.4} shows that the (maximal) absolutely continuous part $[B]A$ is expressed
as
\[
[B]A=A-T_{A,B}^*E_{R_{A,B}}(\{1\})T_{A,B}
=T_{A,B}^* R_{A,B}E_{R_{A,B}}([0,1))T_{A,B},
\]
which is somewhat similar to the formula given in \cite{Ko1}. 

By Propositions \ref{P-8.1} and \ref{P-8.4}  we have the following necessary condition for
$\phi_f(A,B)$ to be bounded. 

\begin{corollary}\label{C-8.5}
Let $f\in\OC(0,\infty)$ and $A,B\in B(\cH)_+$. Assume that $\phi_f(A,B)$ is bounded. Then
\begin{itemize}
\item[\rm(1)] $A$ is $B$-absolutely continuous if $f'(\infty)=\infty>f(0^+)$,
\item[\rm(2)] $B$ is $A$-absolutely continuous if $f'(\infty)<\infty=f(0^+)$,
\item[\rm(3)] $A,B$ are mutually absolutely continuous if $f'(\infty)=f(0^+)=\infty$.
\end{itemize}
\end{corollary}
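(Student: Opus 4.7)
The plan is to observe that the corollary is essentially a direct combination of Propositions \ref{P-8.1} and \ref{P-8.4}. Since $\phi_f(A,B)$ is assumed bounded, it is a bounded self-adjoint operator defined on all of $\cH$ and in particular has a dense domain in the sense of Definition \ref{D-2.3}. Thus in each of the three regimes for the pair $(f'(\infty), f(0^+))$, the corresponding part of Proposition \ref{P-8.1} furnishes kernel conditions on $R_{A,B}$ and/or $S_{A,B}$, and Proposition \ref{P-8.4} translates these into absolute continuity statements.

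For case (1), $f'(\infty) = \infty > f(0^+)$, Proposition \ref{P-8.1}(1) yields $\ker S_{A,B} = \{0\}$, which by Proposition \ref{P-8.4} is precisely the statement that $A$ is $B$-absolutely continuous.

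Case (2) requires a short symmetry check. Since $A + B = B + A$ we have $T_{A,B} = T_{B,A}$ in the notation \eqref{F-4.2}; by the uniqueness part of Lemma \ref{L-4.3}(1), the pair $(R_{B,A}, S_{B,A})$ equals $(S_{A,B}, R_{A,B})$. Proposition \ref{P-8.1}(2) gives $\ker R_{A,B} = \{0\}$, which is the same as $\ker S_{B,A} = \{0\}$, and Proposition \ref{P-8.4} applied to the pair $(B,A)$ converts this into the $A$-absolute continuity of $B$.

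For case (3), Proposition \ref{P-8.1}(3) produces both $\ker R_{A,B} = \{0\}$ and $\ker S_{A,B} = \{0\}$, and the two preceding translations give mutual absolute continuity. There is really no obstacle in this argument; the only points to be careful about are the trivial reduction from boundedness to dense domain, and the symmetry $R_{B,A} = S_{A,B}$, both of which are immediate from the definitions.
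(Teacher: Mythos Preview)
Your proof is correct and matches the paper's own treatment: the paper does not spell out a proof but simply states that Corollary~\ref{C-8.5} follows from Propositions~\ref{P-8.1} and~\ref{P-8.4}, which is exactly the combination you carry out. Your explicit handling of the symmetry $R_{B,A}=S_{A,B}$ for case~(2) is a helpful detail that the paper leaves implicit.
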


The next theorem gives characterizations for $\phi_{t^2}(A,B)$ to have a dense domain and to be
bounded. The same descriptions hold for $\phi_{t^{-1}}(A,B)$ with the roles of $A,B$ exchanged.

\begin{theorem}\label{T-8.6}
For every $A,B\in B(\cH)_+$ set $Q_{A,B}:=\int_{[0,1)}t^2/(1-t)\,dE_{R_{A,B}}(t)$. Then the
following hold:
\begin{itemize}
\item[\rm(1)] The essential part $\cH_0$ of $\phi_{t^2}(A,B)$ is
\begin{align}\label{F-8.8}
\cH_0=\overline{\ker(A-[B]A)\cap\cD(Q_{A,B}^{1/2}T_{A,B})}\subseteq\ker(A-[B]A).
\end{align}
\item[\rm(2)] $\phi_{t^2}(A,B)$ has a dense domain if and only if $A$ is $B$-absolutely
continuous and $\cD(Q_{A,B}^{1/2}T_{A,B})$ is dense in $\cH$.
\item[\rm(3)] $\phi_{t^2}(A,B)$ is bounded if and only if $A^2\le\lambda B$ for some
$\lambda>0$ (which is strictly weaker than $(A,B)\in(B(\cH)_+\times B(\cH)_+)_\le$).
In this case, the operator norm of $\phi_{t^2}(A,B)$ is
\[
\|\phi_{t^2}(A,B)\|=\min\{\lambda\ge0:A^2\le\lambda B\}.
\]
\end{itemize}
\end{theorem}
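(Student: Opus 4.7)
The function $f(t)=t^2$ is operator convex on $(0,\infty)$ with $f'(\infty)=\infty$ and $f(0^+)=0<\infty$, so we are in case (1) of Proposition \ref{P-8.1}. Thus part (2) will be essentially immediate once (1) is established: Proposition \ref{P-8.1}(1) says $\phi_{t^2}(A,B)$ has a dense domain iff $\ker S_{A,B}=\{0\}$ and $\cD(Q_{A,B}^{1/2}T_{A,B})$ is dense, and Proposition \ref{P-8.4} identifies $\ker S_{A,B}=\{0\}$ with $A$ being $B$-absolutely continuous.

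For part (1), my strategy is to combine \eqref{F-8.2} in the derivation preceding Proposition \ref{P-8.1} with the Ando-type formula \eqref{F-8.7}. Writing $E:=E_{R_{A,B}}(\{1\})$, \eqref{F-8.2} reads $\cH_0=\overline{\ker(ET_{A,B})\cap\cD(Q_{A,B}^{1/2}T_{A,B})}\subseteq\ker(ET_{A,B})$. The point is the identity $\ker(ET_{A,B})=\ker(T_{A,B}^*ET_{A,B})$: one inclusion is trivial, and for the other, if $T_{A,B}^*ET_{A,B}\xi=0$ then $\|ET_{A,B}\xi\|^2=\langle ET_{A,B}\xi,T_{A,B}\xi\rangle=\langle T_{A,B}^*ET_{A,B}\xi,\xi\rangle=0$ since $E$ is a projection. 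By \eqref{F-8.7}, $T_{A,B}^*ET_{A,B}=A-[B]A$, and \eqref{F-8.8} follows. Part (2) is then clear because $\cH_0=\cH$ forces $A-[B]A=0$ (so $A$ is $B$-absolutely continuous) and $\cD(Q_{A,B}^{1/2}T_{A,B})$ dense, while the converse is immediate from \eqref{F-8.8}.

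For part (3), the key tool is the second equality in \eqref{F-8.6}: $\phi_{t^2}(A,B)(\rho)=\sup_{\eps>0}\rho(A(\eps A+B)^{-1}A)$ with $A(\eps A+B)^{-1}A:=\lim_{\delta\searrow0}A(\eps A+B+\delta I)^{-1}A$ in SOT (also increasing). Hence $\phi_{t^2}(A,B)\le\lambda I$ in $\widehat{B(\cH)}_+$ if and only if $A(\eps A+B+\delta I)^{-1}A\le\lambda I$ for all $\eps,\delta>0$. The core algebraic step is the equivalence
\[
A(\eps A+B+\delta I)^{-1}A\le\lambda I\iff A^2\le\lambda(\eps A+B+\delta I),
\]
which follows by setting $C:=(\eps A+B+\delta I)^{1/2}$, invertible, and using $AC^{-2}A\le\lambda I\iff\|C^{-1}A\|^2\le\lambda\iff C^{-1}A^2C^{-1}\le\lambda I$. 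For the $(\Leftarrow)$ direction of (3), assuming $A^2\le\lambda B$ one gets $A^2\le\lambda(\eps A+B+\delta I)$ and hence $A(\eps A+B+\delta I)^{-1}A\le\lambda I$ for all $\eps,\delta>0$, so $\phi_{t^2}(A,B)\le\lambda I$. For $(\Rightarrow)$, if $\phi_{t^2}(A,B)\le\lambda I$, the above equivalence gives $A^2\le\lambda(\eps A+B+\delta I)$, and letting $\eps,\delta\searrow0$ (quadratic-form-wise) yields $A^2\le\lambda B$. Together this shows $\|\phi_{t^2}(A,B)\|=\min\{\lambda\ge0:A^2\le\lambda B\}$ (the minimum being attained by lower semicontinuity of the order).

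The last sentence of (3) claims that $A^2\le\lambda B$ is strictly weaker than $(A,B)\in(B(\cH)_+\times B(\cH)_+)_\le$. For one direction, $A\le\alpha B$ gives $A^2=A^{1/2}AA^{1/2}\le\alpha A^{1/2}BA^{1/2}\le\alpha\|B\|A\le\alpha^2\|B\|B$. For strictness, take mutually orthogonal rank-one projections $P_n$ and set $A:=\sum 2^{-n/2}P_n$, $B:=\sum 2^{-n}P_n$: then $A^2=B$ but $A\le\alpha B$ would force $2^{n/2}\le\alpha$ for all $n$. The main conceptual obstacle is not technical manipulation but rather recognizing that the natural quantity controlling the boundedness of $\phi_{t^2}(A,B)$ is the factorization condition $A^2\le\lambda B$ (equivalently, $\ran A\subseteq\ran B^{1/2}$ with bounded Douglas factor), which is genuinely weaker than $A\le\alpha B$ but still strong enough to pull the limit through.
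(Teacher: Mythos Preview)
Your proof is correct and follows essentially the same route as the paper: part (1) by combining \eqref{F-8.2} with \eqref{F-8.7} (you spell out the identity $\ker(E_{R_{A,B}}(\{1\})T_{A,B})=\ker(T_{A,B}^*E_{R_{A,B}}(\{1\})T_{A,B})$, which the paper leaves implicit), part (2) via Proposition \ref{P-8.1}(1) and Proposition \ref{P-8.4}, and part (3) via the expression \eqref{F-8.6} and the equivalence $A(\eps A+B+\delta I)^{-1}A\le\lambda I\iff A^2\le\lambda(\eps A+B+\delta I)$. Your additional justification of the ``strictly weaker'' claim and the explicit $\|C^{-1}A\|=\|AC^{-1}\|$ step are details the paper omits but are consistent with its argument.
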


\begin{proof}
(1) immediately follows from \eqref{F-8.2} and \eqref{F-8.7}.

(2)\enspace
By Proposition \ref{P-8.4} this is a restatement of Proposition \ref{P-8.1}(1) for $f(t)=t^2$.

(3)\enspace
From \eqref{F-8.6}, for any $\lambda\ge0$ we find that $\phi_{t^2}(A,B)$ is bounded with
$\|\phi_{t^2}(A,B)\|\le\lambda$ if and only if $A(\eps A+B+\delta I)^{-1}A\le\lambda I$ holds for
all $\eps,\delta>0$. Since $A(\eps A+B+\delta I)^{-1}A\le\lambda I$ if and only if
$A^2\le\lambda(\eps A+B+\delta I)$, the condition is equivalent to $A^2\le\lambda B$. Hence
the result follows.
\end{proof}

\begin{example}\label{E-8.7}\rm
Consider two projections $P,Q\in B(\cH)$. Since $P:(nQ)={n\over1+n}(P\wedge Q)$ (see the
proof of \cite[Theorem 3.7]{KA}), we have, for every $\rho\in B(\cH)_*^+$,
\begin{align*}
\phi_{t^2}(P,Q)(\rho)&=\lim_{n\to\infty}n\rho\Bigl(P-{n\over1+n}(P\wedge Q)\Bigr)
\quad\mbox{(by \eqref{F-8.6})}\\
&=\lim_{n\to\infty}n\rho\Bigl(P-P\wedge Q+{1\over 1+n}(P\wedge Q)\Bigr) \\
&=\rho(P\wedge Q)+\infty\cdot\rho(P-P\wedge Q)
\end{align*}
and 
\[
P-[Q]P=\lim_{n\to\infty}\Bigl(P-{n\over1+n}(P\wedge Q)\Bigr)=P-P\wedge Q.
\]
Hence the essential part of $\phi_{t^2}(P,Q)$ is $(P-P\wedge Q)^\perp\cH=\ker(P-[Q]P)$.
\end{example}

\begin{remark}\label{R-8.8}\rm
The essential part of $\phi_{t^2}(A,B)$ is equal to $\ker({A-[B]A})$, for example, in the
finite-dimensional case (see Remark \ref{R-8.2}) and in the two projection case (Example
\ref{E-8.7}). It is also easy to verify that this is the case when $A,B$ commute. However, this is
not true in general. Here, we exemplify that the presence of $\cD(Q_{A,B}^{1/2}T_{A,B})$ in
\eqref{F-8.8} can make the essential part even trivial while $A$ is $B$-absolutely continuous.
Let $T$ be any non-singular bounded positive operator, and $Q$ be any non-singular positive
self-adjoint operator with the spectral decomposition $Q=\int_0^\infty\lambda\,dF_\lambda$. With
a strictly increasing function $w:[0,1)\to[0,\infty)$ given by
\[
w(t)={t^2\over1-t}\quad(0\le t<1),\qquad
w^{-1}(\lambda)={-\lambda+\sqrt{\lambda^2+4\lambda}\over2}\quad(0\le\lambda<\infty),
\]
we define
\begin{align}\label{F-8.9}
R:=\int_0^\infty w^{-1}(\lambda)\,dF_\lambda=\int_{[0,1)}t\,dE_t,
\end{align}
where $E_t:=F_{w(t)}$. Then $0\le R\le I$, and we further define
\begin{align}\label{F-8.10}
A:=TRT,\qquad B:=T(I-R)T.
\end{align}
Then we have $T=(A+B)^{1/2}=T_{A,B}$ (where $\cH_{A,B}=\cH$), $R=R_{A,B}$,
$E=E_{R_{A,B}}$ and
\[
Q_{A,B}=\int_{[0,1)}w(t)\,dE_t=\int_0^\infty\lambda\,dF_\lambda=Q,
\]
so that $T$ and $Q$ are realized as $T_{A,B}$ and $Q_{A,B}$, respectively, in Theorem
\ref{T-8.6} (for $A,B$ defined by \eqref{F-8.10}). Furthermore, since $E_{R_{A,B}}(\{1\})=0$ for
the spectral measure of $R=R_{A,B}$ thanks to \eqref{F-8.9}, note by \eqref{F-8.7} that
$A=[B]A$ in this case. From a classical result of von Neumann (whose readable account is
found in \cite{FW}), there are non-singular positive self-adjoint operators $K,L$ with bounded
inverses such that $\cD(K)\cap\cD(L)=\{0\}$. Now consider the above construction with $T:=K^{-1}$
and $Q:=L^2$. Then
\begin{align*}
\cD(Q_{A,B}^{1/2}(A+B)^{1/2})&=\cD(LK^{-1})=\{\xi\in\cH:K^{-1}\xi\in\cD(L)\} \\
&=\{K\eta:\eta\in\cD(K)\cap\cD(L)\}=\{0\},
\end{align*}
which implies by Theorem \ref{T-8.6}(1) that the essential part of $\phi_{t^2}(A,B)$ is $\{0\}$.
Thus, we arrive at the extremely pathological situation that $\phi_{t^2}(A,B)$ is identically
$\infty$ and $A$ is $B$-absolutely continuous.
\end{remark}

Based on the integral expression given in \eqref{F-7.7} and Theorem \ref{T-8.6}(3), we can
show the boundedness of $\phi_f(A,B)$ in a more general situation.

\begin{proposition}\label{P-8.9}
Let $f\in\OC(0,\infty)$ and $A,B\in B(\cH)_+$.
\begin{itemize}
\item[\rm(1)] Assume that $f(0^+)<\infty$. If $A^2\le\lambda B$ for some $\lambda>0$, then
$\phi_f(A,B)$ is bounded. In addition, assume that $c>0$ in \eqref{F-7.7} (hence
$f'(\infty)=\infty$). Then $\phi_f(A,B)$ is bounded if and only if $A^2\le\lambda B$ for some
$\lambda>0$.
\item[\rm(2)] Assume that $f'(\infty)<\infty$. If $B^2\le\lambda A$ for some $\lambda>0$, then
$\phi_f(A,B)$ is bounded. In addition, assume that $d>0$ in \eqref{F-7.7} (hence
$f(0^+)=\infty$). Then $\phi_f(A,B)$ is bounded if and only if $B^2\le\lambda A$ for some
$\lambda>0$.
\item[\rm(3)] Let $f\in\OC(0,\infty)$ be arbitrary. If $A^2\le\lambda B$ and $B^2\le\lambda A$ for
some $\lambda>0$, then $\phi_f(A,B)$ is bounded. Moreover, assume that $c>0$ and $d>0$ in
\eqref{F-7.7} (hence $f'(\infty)=f(0^+)=\infty$). Then $\phi_f(A,B)$ is bounded if and only if
$A^2\le\lambda B$ and $B^2\le\lambda A$ for some $\lambda>0$.
\end{itemize}
\end{proposition}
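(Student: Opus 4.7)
The strategy is to combine the integral representation \eqref{F-7.7} with the monotonicity of the PW-calculus in the scalar function: if $f\le g$ pointwise on $(0,\infty)$, then $\phi_f(t,1-t)\le\phi_g(t,1-t)$ on $[0,1]$, and formula \eqref{F-4.5} together with Lemma \ref{L-2.4} yields $\phi_f(A,B)\le\phi_g(A,B)$ in $\widehat{B(\cH)}_\lb$. Recall from \eqref{F-7.8} that $f(0^+)<\infty$ is equivalent to $d=0$ with $\int_{(0,\infty)}\lambda^{-1}\,d\mu(\lambda)<\infty$, and $f'(\infty)<\infty$ is equivalent to $c=0$ with $\mu((0,\infty))<\infty$. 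Further, since $\widetilde{(t^{-1})}(t)=t^2$, relation \eqref{F-7.3} gives $\phi_{t^{-1}}(A,B)=\phi_{t^2}(B,A)$, so by Theorem \ref{T-8.6}(3) applied to the pair $(B,A)$, the operator $\phi_{t^{-1}}(A,B)$ is bounded if and only if $B^2\le\lambda A$ for some $\lambda>0$.

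\textbf{Main scalar estimate.} Using the elementary pointwise bounds
\[
\frac{(t-1)^2}{t+\lambda}\le\frac{(t-1)^2}{t}\ \ (\lambda\ge0),\qquad
\frac{(t-1)^2}{t+\lambda}\le\frac{(t-1)^2}{\lambda}\ \ (\lambda>0),
\]
together with the observation that $\int(1+\lambda)^{-1}\,d\mu<\infty$ forces both $\mu((0,1])<\infty$ and $\int_{(1,\infty)}\lambda^{-1}\,d\mu<\infty$, a splitting of the integral in \eqref{F-7.7} over $(0,1]$ and $(1,\infty)$ yields constants $D_0,\dots,D_3\ge0$ with
\[
a+b(t-1)\le f(t)\le D_0+D_1\,t+D_2\,t^2+D_3/t\qquad(t>0).
\]
In case (1), the hypothesis $d=0$ and $\int\lambda^{-1}d\mu<\infty$ allow the global bound $(t-1)^2/(t+\lambda)\le(t-1)^2/\lambda$ throughout, so $D_3=0$; in case (2), the hypothesis $c=0$ and $\mu((0,\infty))<\infty$ allow the global bound $(t-1)^2/(t+\lambda)\le(t-1)^2/t$, so $D_2=0$; in case (3), both $D_2$ and $D_3$ may be positive.

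\textbf{Sufficiency and necessity.} The above scalar inequality lifts via monotonicity to
\[
(a-b)B+bA\le\phi_f(A,B)\le D_0 B+D_1 A+D_2\,\phi_{t^2}(A,B)+D_3\,\phi_{t^{-1}}(A,B)
\]
in $\widehat{B(\cH)}_\lb$. Theorem \ref{T-8.6}(3) and the transpose identity show that the last two terms are bounded precisely under $A^2\le\lambda B$ and $B^2\le\lambda A$ respectively; hence in each of the cases (1), (2), (3), the upper bound is a bounded self-adjoint operator, so $\phi_f(A,B)$ is bounded. For necessity, under $c>0$ (resp.\ $d>0$) the non-negativity of the remaining terms in \eqref{F-7.7} gives
\[
f(t)\ge a+b(t-1)+c(t-1)^2\quad(\text{resp.}\ \ge a+b(t-1)+d(t-1)^2/t);
\]
using $\phi_{(t-1)^2}(A,B)=\phi_{t^2}(A,B)-2A+B$ and the transposed version for $(t-1)^2/t$, this yields $c\,\phi_{t^2}(A,B)\le\phi_f(A,B)+(\text{bounded linear})$ (resp.\ $d\,\phi_{t^2}(B,A)\le\phi_f(A,B)+\cdots$). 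Since $\phi_{t^2}$ takes values in $\widehat{B(\cH)}_+$, boundedness of $\phi_f(A,B)$ forces boundedness of $\phi_{t^2}(A,B)$ (resp.\ $\phi_{t^2}(B,A)$), and Theorem \ref{T-8.6}(3) yields the claimed quadratic operator inequality. The main technical obstacle is the scalar bound in case (3): since neither $\int\lambda^{-1}d\mu$ nor $\mu((0,\infty))$ need be finite, no single pointwise bound on $(t-1)^2/(t+\lambda)$ is integrable against $d\mu$; the range-splitting at $\lambda=1$, exploiting the integrability of $(1+\lambda)^{-1}$ from both sides, is what simultaneously controls small and large $\lambda$.
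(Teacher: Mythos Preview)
Your proof is correct and follows essentially the same route as the paper. Both arguments rest on the integral representation \eqref{F-7.7}, pointwise monotonicity of the PW-calculus in the scalar function, the elementary kernel bounds $(t-1)^2/(t+\lambda)\le(t-1)^2/\lambda$ and $(t-1)^2/(t+\lambda)\le(t-1)^2/t$, and Theorem \ref{T-8.6}(3) for $\phi_{t^2}$; the paper organizes the argument by splitting $f=f_1+f_2$ along the integration range $[1,\infty)$ versus $(0,1)$ (so that $f_1(0^+)<\infty$ and $f_2'(\infty)<\infty$ automatically), while you produce a single global upper bound $D_0+D_1t+D_2t^2+D_3/t$, but the underlying estimates and the $\lambda=1$ split are identical.
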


\begin{proof}
We divide expression \eqref{F-7.7} in two parts as
\begin{align*}
f_1(t)&:=c(t-1)^2+\int_{[1,\infty)}{(t-1)^2\over t+\lambda}\,d\mu(\lambda), \\
f_2(t)&:=a+b(t-1)+d\,{(t-1)^2\over t}+\int_{(0,1)}{(t-1)^2\over t+\lambda}\,d\mu(\lambda),
\qquad t\in(0,\infty).
\end{align*}
Of course, we have $\phi_f(A,B)=\phi_{f_1}(A,B)+\phi_{f_2}(A,B)$. Note that $f_1(0^+)<\infty$
and $f_2'(\infty)<\infty$.

(1)\enspace
By \eqref{F-7.8} the assumption forces $d=0$ and
$\int_{(0,\infty)}\lambda^{-1}\,d\mu(\lambda)<\infty$. Since $f_2(0^+)<\infty$ as well as
$f_2'(\infty)<\infty$ in this case, $\phi_{f_2}(A,B)$ is bounded so that the question reduces to the
boundedness of $\phi_{f_1}(A,B)$. Note that
\[
{f_1(t)\over(t+1)^2}\le c+\int_{[1,\infty)}{1\over t+\lambda}\,d\mu(\lambda)\le k,
\qquad t\in(0,\infty),
\]
where $k:=c+\int_{[1,\infty)}\lambda^{-1}\,d\mu(\lambda)<\infty$. Hence one finds that
\[
\phi_{f_1}(A,B)\le k\phi_{(t+1)^2}(A,B)=k(\phi_{t^2}(A,B)+2A+B).
\]
Therefore, the first assertion holds by Theorem \ref{T-8.6}(3). Moreover, assume $c>0$; then
$f_1(t)\ge c(t-1)^2$ for all $t\in(0,\infty)$. If $\phi_f(A,B)$ is bounded, then so is $\phi_{f_1}(A,B)$
and hence so is $\phi_{(t-1)^2}(A,B)$. This means that $\phi_{t^2}(A,B)$ is bounded, and hence
Theorem \ref{T-8.6}(3) implies that $A^2\le\lambda B$ for some $\lambda>0$.

(2) is seen by applying item (1) to $\widetilde f$ and noting that $c,d$ are exchanged for
$\widetilde f$.

(3)\enspace
The proof is easy by applying items (1) and (2) to $f_1$ and $f_2$, respectively, given at the
beginning of the proof. The details are omitted here.
\end{proof}

\begin{remark}\label{R-8.10}\rm
A naive criterion for $\phi_f(A,B)$ to be bounded is given as follows. When $f\in\OC_0(0,\infty)$,
Theorem \ref{T-7.7} enables us to see that $\phi_f(A,B)$ is bounded if and only if there is a
$\lambda>0$ such that $\phi_f(A_\eps,B_\eps)\le\lambda I$ for all sufficiently small $\eps>0$.
This criterion can be extended to any $f\in\OC(0,\infty)$ by taking $f_0\in\OC_0(0,\infty)$ as in
the proof of Theorem \ref{T-7.5}. Thus, for any $f\in\OC(0,\infty)$ we notice that $\phi_f(A,B)$ is
bounded if and only if there is a $\lambda>0$ such that
$f(B_\eps^{-1/2}A_\eps B_\eps^{-1/2})\le\lambda B_\eps^{-1}$ for all sufficiently small $\eps>0$.
For instance, when $f(t)=t^2$, the last condition is rewritten as
$A_\eps B_\eps^{-1}A_\eps\le\lambda I$ or equivalently $A_\eps^2\le\lambda B_\eps$ (for all
small $\eps>0$), from which one can give an alternative proof of Theorem \ref{T-8.6}(3).
\end{remark}

As for $f(t)=t^\alpha$ with $1<\alpha\le2$, we here collect sufficient or necessary
conditions for $\phi_{t^\alpha}(A,B)$ being bounded as follows.

\begin{corollary}\label{C-8.11}
Let $1<\alpha\le2$. For a pair $(A,B)$ in $B(\cH)_+$ consider the following conditions:
\begin{itemize}
\item[\rm(a)] $A^2\le\lambda B$ for some $\lambda>0$,
\item[\rm(b)] $\phi_{t^\alpha}(A,B)$ ($=\phi_{t^{1-\alpha}}(B,A)$) is bounded,
\item[\rm(c)] $A^\alpha\le\lambda B^{\alpha-1}$ for some $\lambda>0$,
\item[\rm(d)] there is a $\lambda>0$ such that $\<A\xi,\xi\>^\alpha\le\lambda\<B\xi,\xi\>^{\alpha-1}$
for all $\xi\in\cH$, $\|\xi\|=1$,
\item[\rm(e)] $A\le\lambda B^{(\alpha-1)/\alpha}$ for some $\lambda>0$.
\end{itemize}
Then we have (a)$\implies$(b)$\implies$(d)$\implies$(e) and (a)$\implies$(c)$\implies$(d).
\end{corollary}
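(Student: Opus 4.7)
The five implications decompose into four routine steps and the harder core (d)$\Rightarrow$(e). For (a)$\Rightarrow$(b), Proposition \ref{P-8.9}(1) applies directly, since $f(t)=t^\alpha$ has $f(0^+)=0<\infty$. For (b)$\Rightarrow$(d), instantiate \eqref{F-7.17} at $\rho=\omega_\xi$ with $\|\xi\|=1$: the identification $\phi_{t^\alpha}(x,y)=x^\alpha y^{1-\alpha}$ on $(0,\infty)^2$, together with $\phi_{t^\alpha}(x,0)=+\infty$ for $x>0$, forces $\langle A\xi,\xi\rangle=0$ whenever $\langle B\xi,\xi\rangle=0$, and otherwise rearranges to (d) with $\lambda=\|\phi_{t^\alpha}(A,B)\|$. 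For (a)$\Rightarrow$(c), apply L\"owner--Heinz to $t^{\alpha/2}$ (operator monotone since $\alpha/2\in(1/2,1]$) to promote $A^2\le\lambda B$ to $A^\alpha\le\lambda^{\alpha/2}B^{\alpha/2}$, then use $B^{\alpha/2}=B^{1-\alpha/2}B^{\alpha-1}\le\|B\|^{1-\alpha/2}B^{\alpha-1}$ (valid since $1-\alpha/2\in[0,1/2]$). For (c)$\Rightarrow$(d), sandwich (c) between two Jensen inequalities at unit $\xi$: $\langle A\xi,\xi\rangle^\alpha\le\langle A^\alpha\xi,\xi\rangle$ (convex $t^\alpha$, $\alpha>1$) and $\langle B^{\alpha-1}\xi,\xi\rangle\le\langle B\xi,\xi\rangle^{\alpha-1}$ (concave $t^{\alpha-1}$, $\alpha-1\in(0,1]$).

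For the main obstacle (d)$\Rightarrow$(e), I plan first to reduce to invertible $B$ by passing to $B+\eps I$: since $\alpha-1>0$, condition (d) is preserved with the same $\lambda$, and norm-continuity of $t\mapsto t^\beta$ on $[0,\|B\|+1]$ (with $\beta:=(\alpha-1)/\alpha$) justifies passing $\eps\searrow 0$ at the end. For invertible $B$, set $X:=B^{-\beta/2}AB^{-\beta/2}$, so (e) is equivalent to $X$ being bounded. Substituting $\xi=B^{-\beta/2}\eta/\|B^{-\beta/2}\eta\|$ in (d) and using $1-\beta=1/\alpha$ yields
\[
\langle X\eta,\eta\rangle \le \lambda^{1/\alpha}\,\langle B^{-\beta}\eta,\eta\rangle^{1/\alpha}\,\langle B^{1-\beta}\eta,\eta\rangle^{(\alpha-1)/\alpha}
\]
for every $\eta$. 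The scalar weighted Young's inequality upgrades this to $X\le\lambda^{1/\alpha}\bigl[(s/\alpha)B^{-\beta}+((\alpha-1)/\alpha)s^{-1/(\alpha-1)}B^{1-\beta}\bigr]$ for every $s>0$; at each spectral value $b$ of $B$, the scalar minimum $g_s(b)=1$ is achieved at $s_b=b^\beta$ (AM--GM equality), and the operator substitution $s=B^\beta$ (commuting with $B^{-\beta}$ and $B^{1-\beta}$) formally collapses the bracket to $I$ via functional calculus, giving $X\le\lambda^{1/\alpha}I$, i.e., (e) with constant $\lambda^{1/\alpha}$.

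The delicate point is justifying the operator-valued substitution $s=B^\beta$: the scalar-$s$ family of operator inequalities does not by itself yield the operator-valued conclusion, since in general $\inf_s\langle g_s(B)\eta,\eta\rangle$ strictly exceeds $\langle\inf_s g_s(B)\eta,\eta\rangle=\|\eta\|^2$ unless $\eta$ is an eigenvector of $B$. A rigorous route likely requires a finer spectral analysis: diagonalize $B$, apply the scalar optimum $s_b=b^\beta$ blockwise to bound the diagonal entries of $X$ in the eigenbasis of $B$, and control the off-diagonal couplings via Cauchy--Schwarz tied to the diagonal data, with spectral approximation passing from the finite-dimensional to the general case. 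An alternative worth trying is to reformulate (e) as $\|A^{1/2}B^{-\beta/2}\|^2\le\mu$ and seek a direct norm bound from the derived scalar-$s$ family.
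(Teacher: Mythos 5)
Your four routine implications are fine and essentially coincide with the paper's own proof: (a)$\implies$(b) is exactly the appeal to Proposition \ref{P-8.9}(1), (b)$\implies$(d) is the instantiation of \eqref{F-7.17} at $\rho=\omega_\xi$ (including the degenerate case $\<B\xi,\xi\>=0$), (c)$\implies$(d) is the same pair of Jensen inequalities, and your (a)$\implies$(c) (L\"owner--Heinz with exponent $\alpha/2$ followed by $B^{\alpha/2}\le\|B\|^{1-\alpha/2}B^{\alpha-1}$) is a valid minor variant of the paper's splitting $A^\alpha\le\|A\|^{2-\alpha}A^{2(\alpha-1)}\le\|A\|^{2-\alpha}\lambda^{\alpha-1}B^{\alpha-1}$.

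The genuine gap is (d)$\implies$(e): your argument is not completed --- you acknowledge that the operator-valued substitution $s=B^\beta$ after Young's inequality is unjustified, and that is precisely the missing step --- and the whole apparatus you set up (reduction to invertible $B$, the congruence $X=B^{-\beta/2}AB^{-\beta/2}$, the scalar-parameter family and its optimization) is unnecessary. What you overlook is that (e) is itself a quadratic-form inequality, so it can be verified one unit vector at a time, and the very tool you used in (c)$\implies$(d) closes it: taking $\alpha$-th roots in (d) gives $\<A\xi,\xi\>\le\lambda^{1/\alpha}\<B\xi,\xi\>^{(\alpha-1)/\alpha}$ for every unit $\xi$, and since $t\mapsto t^{(\alpha-1)/\alpha}$ is concave (here $(\alpha-1)/\alpha\in(0,1/2]$), Jensen's inequality for the probability measure $d\<E_B(t)\xi,\xi\>$ yields $\<B\xi,\xi\>^{(\alpha-1)/\alpha}\le\<B^{(\alpha-1)/\alpha}\xi,\xi\>$. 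Hence $\<A\xi,\xi\>\le\lambda^{1/\alpha}\<B^{(\alpha-1)/\alpha}\xi,\xi\>$ for all unit $\xi$, i.e.\ $A\le\lambda^{1/\alpha}B^{(\alpha-1)/\alpha}$, which is (e). This two-line argument is exactly the paper's proof; your concern about passing from a scalar-parameter family of operator inequalities to an operator substitution, while legitimate in general, simply does not arise once the inequality is tested vectorwise.
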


\begin{proof}
We have (a)$\implies$(b) by Proposition \ref{P-8.9}(1) and (b)$\implies$(d) by \eqref{F-7.17} in
Example \ref{E-7.12}. If (d) holds and $\|\xi\|=1$, then
\[
\<A\xi,\xi\>\le\lambda^{1/\alpha}\<B\xi,\xi\>^{(\alpha-1)/\alpha}
\le\lambda^{1/\alpha}\<B^{(\alpha-1)/\alpha}\xi,\xi\>.
\]
Hence (d)$\implies$(e) holds. Since (a) gives
$A^\alpha\le\|A\|^{2-\alpha}A^{2(\alpha-1)}\le\|A\|^{2-\alpha}\lambda^{\alpha-1}B^{\alpha-1}$,
we have (a)$\implies$(c). If (c) holds and $\|\xi\|=1$, then
\[
\<A\xi,\xi\>^\alpha\le\<A^\alpha\xi,\xi\>\le\lambda\<B^{\alpha-1}\xi,\xi\>
\le\lambda\<B\xi,\xi\>^{\alpha-1}.
\]
Hence (c)$\implies$(d) holds.
\end{proof}

\begin{example}\label{E-8.12}\rm
The function $t\log t$ ($t>0$) in $\OC(0,\infty)$ with its transpose $-\log t$ plays a significant role
in (quantum) information theory. Similarly to \cite[(2.2)]{FS}, for every $A,B\in B(\cH)_+$ and
$\rho\in B(\cH)_*^+$, we have
\[
\phi_{t\log t}(A,B)(\rho)=\lim_{\alpha\searrow0}\rho\Bigl({A-A\#_\alpha B\over\alpha}\Bigr)
\ \ \mbox{increasingly}.
\]
Indeed, since $(1-t^\alpha)/\alpha\nearrow-\log t$ ($t>0$) as $\alpha\searrow0$, it follows from
the monotone convergence theorem that
$\rho(\phi_{(1-t^\alpha)/\alpha}(B,A))\nearrow\phi_{-\log t}(B,A)(\rho)$ as $\alpha\searrow0$.
Therefore, when it is bounded, $\phi_{t\log t}(A,B)$ is the minus sign of the \emph{relative operator
entropy} $S(A\,|B)$ studied in \cite{FK,FS}. In the case $f(t)=t\log t$, Theorem \ref{T-7.2}(2),
Proposition \ref{P-8.9}(1) and Corollary \ref{C-8.5}(1) read as follows.
\begin{itemize}
\item For $A,B_1,B_2\in B(\cH)_+$.  
$B_1\le B_2$\,$\implies$\,$\phi_{t\log t}(A,B_1)\ge\phi_{t\log t}(A,B_2)$.
\item If $A^2\le\lambda B$ for some $\lambda>0$, then $\phi_{t\log t}(A,B)$ is bounded.
\item If $\phi_{t\log t}(A,B)$ is bounded, then $A$ is $B$-absolutely continuous. 
\end{itemize}
These improve the corresponding facts given in \cite[\S2]{FS}. Here we emphasize that the
PW-functional calculus $\phi_{t\log t}(A,B)=\phi_{-\log t}(B,A)$ extends a definition of $-S(A\,|B)$
to all $A,B\in B(\cH)_+$, whose value is though admitted to an element of $\widehat{B(\cH)}_\lb$.
This extension is conceptually natural because the original entropy quantity can be $\infty$,
and it gives a better understanding of $S(A\,|B)$ beyond the discussions in \cite[\S2]{FS}.
\end{example}

In the rest of the section we further discuss the question of boundedness of $\phi_f(A,B)$ in
connection with AH (Ando--Hiai) inequalities. An operator perspective $\phi_f$ is said to satisfy
an \emph{AH inequality} if we have
\begin{align}\label{F-8.11}
\phi_f(A,B)\le I\,\implies\,\phi_f(A^p,B^p)\le I
\end{align}
for all $(A,B)$ in (a certain subset of) $B(\cH)_+\times B(\cH)_+$ and for either all $p\ge1$
or all $p\in(0,1]$. Inequalities of this type were first shown in \cite{AH} for the weighted
geometric means and further studied in, e.g., \cite{HSW,Wa}. A positive $\bR$-valued function
$f$ on $(0,\infty)$ is said to be \emph{power monotone increasing} (\emph{pmi} for short) if
$f(t^p)\ge f(t)^p$ for all $t>0$ and $p\ge1$. The next proposition is a slight extension of
an AH-inequality in \cite{HSW}.

\begin{proposition}\label{P-8.13}
Let $f$ be a pmi positive function on $(0,\infty)$. Assume that either $f\in\OC(0,\infty)$ with
$f(0^+)=0$ or $f$ is operator monotone decreasing on $(0,\infty)$. Then for any
$A,B\in B(\cH)_+$, \eqref{F-8.11} holds for all $p\in(0,1]$ or equivalently,
\[
\|\phi_f(A^p,B^p)\|\le\|\phi_f(A,B)\|^p,\qquad0<p\le1,
\]
where the operator norm $\|\phi_f(A,B)\|$ is understood to be $\infty$ if $\phi_f(A,B)$ is
unbounded. Consequently, if $\phi_f(A,B)$ is bounded, then so is $\phi_f(A^p,B^p)$ for all
$p\in(0,1]$.
\end{proposition}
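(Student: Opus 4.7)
The plan is to prove the implication form of \eqref{F-8.11}; the two formulations are equivalent by positive homogeneity of $\phi_f$ and $\phi_f \ge 0$ (from $f \ge 0$), via the scaling $(A,B) \mapsto (c^{-1}A, c^{-1}B)$ with $c := \|\phi_f(A,B)\|$. Label the two hypotheses as (a) $f \in \OC(0,\infty)$ with $f(0^+) = 0$ and (b) $f$ operator monotone decreasing. Using \eqref{F-7.3}, $\phi_f(A,B) = \phi_{\widetilde f}(B,A)$ with $\widetilde f(t) := tf(1/t)$; positivity and pmi are preserved under transposition (direct check with $s = 1/t$, since pmi of $f$ reads $f(s^p) \ge f(s)^p$). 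In case (a), the classical fact that $g(t) := f(t)/t$ is operator monotone increasing on $(0,\infty)$ whenever $f \in \OC$ with $f(0^+) = 0$ and $f \ge 0$ (verifiable from \eqref{F-7.7}, which forces $d = 0$) yields $\widetilde f(t) = g(1/t)$ as the composition of the operator monotone decreasing $t \mapsto 1/t$ with the operator monotone increasing $g$, hence $\widetilde f$ is operator monotone decreasing. Thus case (a) for $f$ at $(A, B)$ reduces to case (b) for $\widetilde f$ at $(B, A)$, and it suffices to handle case (b).

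In case (b), first reduce to invertible $A, B$. With $A_\eps := A + \eps I$ and $B_\eps := B + \eps I$, Theorem \ref{T-7.7} gives pointwise convergence $\phi_f(A_\eps, B_\eps)(\rho) \to \phi_f(A, B)(\rho)$ as $\eps \searrow 0$, and joint subadditivity (Theorem \ref{T-7.2}(1)) together with Lemma \ref{L-4.6}(2) bounds $\phi_f(A_\eps, B_\eps) \le \phi_f(A, B) + f(1)\eps I$, so $\limsup_{\eps \searrow 0} \|\phi_f(A_\eps, B_\eps)\| \le \|\phi_f(A, B)\|$. Since $(A_\eps)^p \to A^p$ and $(B_\eps)^p \to B^p$ in operator norm for $p \in (0,1]$ (continuity of $t \mapsto t^p$), joint lower semicontinuity (Theorem \ref{T-7.5}) lets me pass to the limit in the invertible-case inequality $\phi_f((A_\eps)^p, (B_\eps)^p) \le \|\phi_f(A_\eps, B_\eps)\|^p I$ to obtain $\phi_f(A^p, B^p) \le \|\phi_f(A, B)\|^p I$, as desired.

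Now suppose $A, B \in B(\cH)_{++}$. By Theorem \ref{T-4.7}, $\phi_f(A, B) = B^{1/2} f(T) B^{1/2}$ and $\phi_f(A^p, B^p) = B^{p/2} f(T_p) B^{p/2}$ with $T := B^{-1/2} A B^{-1/2}$ and $T_p := B^{-p/2} A^p B^{-p/2}$. The hypothesis $\phi_f(A, B) \le I$ is equivalent to $f(T) \le B^{-1}$; applying the operator monotone decreasing $f^{-1}$ reverses this to $T \ge f^{-1}(B^{-1})$, i.e., $A \ge H(B)$ with $H(B) := B \cdot f^{-1}(B^{-1})$ (commuting functional calculus on $B$). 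By L\"owner--Heinz, $A^p \ge H(B)^p$ for $p \in (0, 1]$. The pmi hypothesis $f(u^p) \le f(u)^p$ for $p \in (0,1]$, upon applying the decreasing $f^{-1}$, becomes the scalar inequality $f^{-1}(s)^p \ge f^{-1}(s^p)$; via functional calculus in $B$ this lifts to $H(B)^p = B^p f^{-1}(B^{-1})^p \ge B^p f^{-1}(B^{-p}) = H(B^p)$. Combining, $A^p \ge H(B^p)$, which upon conjugation by $B^{-p/2}$ and applying the decreasing $f$ yields $f(T_p) \le B^{-p}$, i.e., $\phi_f(A^p, B^p) \le I$. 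The hardest step is the classical operator monotonicity of $f(t)/t$ invoked in the case-(a) reduction; the rest is careful bookkeeping of how pmi transfers through $f^{-1}$ and L\"owner--Heinz.
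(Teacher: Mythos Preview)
Your argument has a genuine gap in the invertible case (Step~3). You write that from $f(T)\le B^{-1}$ one obtains $T\ge f^{-1}(B^{-1})$ ``by applying the operator monotone decreasing $f^{-1}$''. But $f$ being operator monotone decreasing does \emph{not} imply that $f^{-1}$ is operator monotone decreasing: the inverse of an operator monotone function is in general not operator monotone. A concrete instance within your hypotheses is $f(t)=t^{-1/2}$, which is positive, pmi (with equality $f(t^p)=f(t)^p$), and operator monotone decreasing on $(0,\infty)$; yet $f^{-1}(s)=s^{-2}$ is not operator monotone decreasing, since $t\mapsto t^2$ is not operator monotone. Indeed, for this $f$ your implication reads ``$T^{1/2}\ge B \Rightarrow T\ge B^2$'', i.e., ``$X\le Y \Rightarrow X^2\le Y^2$'' with $X=B$, $Y=T^{1/2}$, which fails already for $2\times 2$ matrices (e.g., $X=\begin{pmatrix}1&1\\1&1\end{pmatrix}+\delta I$, $Y=\begin{pmatrix}2&1\\1&1\end{pmatrix}+\delta I$ for small $\delta>0$). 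There is also a secondary domain issue: $f^{-1}(B^{-1})$ need not be defined, since $\sigma(B^{-1})$ need not lie in the range $(f(\infty^-),f(0^+))$ of $f$.

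By contrast, your reduction of case~(a) to case~(b) and your passage from invertible to general $A,B$ are both fine and mirror the paper's approach. The paper, however, does not attempt a direct proof of the invertible case; it invokes the already-established Ando--Hiai inequality for invertible operators from \cite[Corollary~3.8 or Proposition~6.10]{HSW}, whose proofs use different tools (log-majorization / Furuta-type arguments) rather than inverting $f$. The content of Proposition~\ref{P-8.13} is thus the extension from $B(\cH)_{++}$ to $B(\cH)_+$ via Theorems~\ref{T-7.2}(1) and~\ref{T-7.5}, which you handle correctly; the missing ingredient is a valid proof of (or reference for) the invertible case.
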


\begin{proof}
Let $A,B\in B(\cH)_+$ and assume that $\phi_f(A,B)\le I$. For any $\eps>0$, Theorem \ref{T-7.2}(1)
gives
\[
\phi_f(A_\eps,B_\eps)\le\phi_f(A,B)+\phi_f(\eps I,\eps I)\le(1+\eps f(1))I,
\]
so that
\[
\phi_f\biggl({A_\eps\over1+\eps f(1)},{B_\eps\over1+\eps f(1)}\biggr)\le I
\]
thanks to the scalar homogeneity of $\phi_f(A,B)$. By \cite[Corollary 3.8 or Proposition 6.10]{HSW}
we have, for every $p\in(0,1]$,
\[
\phi_f\biggl({A_\eps^p\over(1+\eps f(1))^p},
{B_\eps^p\over(1+\eps f(1))^p}\biggr)\le I
\]
and hence $\phi_f(A_\eps^p,B_\eps^p)\le(1+\eps f(1))^pI$. Since $A_\eps^p\to A^p$ and
$B_\eps^p\to B^p$ in SOT (even in the operator norm) as $\eps\searrow0$, Theorem \ref{T-7.5}
implies that $\phi_f(A^p,B^p)\le I$. Hence the first assertion follows and the remaining are immediate.
\end{proof}

In particular, when $f(t)=t^2$ (or $f(t)=t^{-1}$), Proposition \ref{P-8.13} is an immediate
consequence of Theorem \ref{T-8.6}(3) since $A^2\le\lambda B\implies A^{2p}\le\lambda^p B^p$
for $0<p\le1$.

%%%%%%%%%%%%%%% Section 9 %%%%%%%%%%%%%%%
\section{Integral expressions and variational expressions}\label{Sec-9}

Integral expression is an important ingredient of theory of operator means and connections in
\cite{PW1,KA}. The integral expression for operator connections $\sigma$ in \cite{KA} is
\begin{align}\label{F-9.1}
A\sigma B=aA+bB+\int_{(0,\infty)}{1+\lambda\over\lambda}((\lambda A):B)\,d\mu(\lambda),
\qquad A,B\in B(\cH)_+,
\end{align}
where $a,b\ge0$ and $\mu$ is a finite positive measure on $(0,\infty)$. The expression is based
on the integral representation of operator monotone functions on $[0,\infty)$ (see, e.g., \cite{Bh,Hi}).
Furthermore, variational expressions for various functional calculi have played an important
role in topics related to this paper; see \cite{AT,PW1,PW2,Ku,Do,Ko3} and so on.

In the first half of this section, for $f\in\OC(0,\infty)$ we discuss integral expressions of
$\phi_f(A,B)$ in a similar fashion to \eqref{F-9.1}. The first result is based on the integral
representation \eqref{F-7.7} of general $f\in\OC(0,\infty)$.

\begin{theorem}\label{T-9.1}
Let $f\in\OC(0,\infty)$ be given in the representation \eqref{F-7.7}. Let $a_0:=b-2c+d$ and
$b_0:=a-b+c-2d$. Then for every $A,B\in B(\cH)_+$ and $\rho\in B(\cH)_*^+$, we have
\begin{equation}\label{F-9.2}
\begin{aligned}
\phi_f(A,B)(\rho)
&=a_0\rho(A)+b_0\rho(B)+c\phi_{t^2}(A,B)(\rho)+d\phi_{t^2}(B,A)(\rho) \\
&\qquad+\int_{(0,\infty)}\Bigl[\rho(A)+{1\over\lambda}\,\rho(B)
-\Bigl({1+\lambda\over\lambda}\Bigr)^2\rho(A:(\lambda B))
\Bigr]\,d\mu(\lambda).
\end{aligned}
\end{equation}
\end{theorem}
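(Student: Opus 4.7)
My plan is to exploit the additivity of $\phi$ in its function argument together with the increasing approximation $f_n \nearrow f$ from Lemma \ref{L-7.9}. The strategy is to compute $\phi_{f_n}(A,B)$ explicitly as a finite linear combination plus a $\mu$-integral restricted to $[1/n,n]$, then pass to the limit via monotone convergence on each piece, recognizing the resulting $c$- and $d$-terms as encoded perspectives of $(t-1)^2$ and $(t-1)^2/t$ respectively.

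First I would record the elementary building-block identities. A direct check on commuting scalars, extended via Theorem \ref{T-4.7} (or operator homogeneity, Definition \ref{D-4.1}(2)), gives the identity $(t-1)^2/(t+\lambda) = t + 1/\lambda - ((1+\lambda)^2/\lambda)\cdot t/(t+\lambda)$ together with $\lambda\,\phi_{t/(t+\lambda)}(A,B) = A:(\lambda B)$, yielding
\[
\phi_{(t-1)^2/(t+\lambda)}(A,B) = A + \tfrac{1}{\lambda}B - \Bigl(\tfrac{1+\lambda}{\lambda}\Bigr)^{\!2}\, A{:}(\lambda B) \ \in\ B(\cH)_+.
\]
Similarly, expanding $(t-1)^2 = t^2 - 2t + 1$ and using additivity of $\phi$ in the function argument for bounded summands yields $\phi_{(t-1)^2}(A,B) = \phi_{t^2}(A,B) - 2A + B$ in $\widehat{B(\cH)}_\lb$. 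Since the transpose of $(t-1)^2$ is $\widetilde{(\cdot-1)^2}(t) = t(1/t - 1)^2 = (t-1)^2/t$, equation \eqref{F-7.3} immediately gives $\phi_{(t-1)^2/t}(A,B) = \phi_{(t-1)^2}(B,A) = \phi_{t^2}(B,A) - 2B + A$.

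Second, I would write $f_n$ (from \eqref{F-7.9}) as the sum of the constant $a$, the linear term $b(t-1)$, the two boundary terms $nc\,(t-1)^2/(t+n)$ and $d\,(t-1)^2/(t+1/n)$, and the $\mu|_{[1/n,n]}$-integral of $(t-1)^2/(t+\lambda)$; then claim that $\phi$ distributes over this decomposition. For finite sums this is immediate from the realization \eqref{F-4.5} and Lemma \ref{L-2.5}; for the $d\mu$-integral on $[1/n,n]$, the finiteness of $\mu$ on that compact set combined with the nonnegativity and local boundedness of $(t-1)^2/(t+\lambda)$ lets one swap orders via Fubini applied to the spectral integral $\int_{[0,1]} f_n(\cdot)\,d\rho(E_{R_{A,B}})$. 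The outcome is
\[
\phi_{f_n}(A,B) = aB + b(A-B) + nc\,\phi_{(t-1)^2/(t+n)}(A,B) + d\,\phi_{(t-1)^2/(t+1/n)}(A,B) + \int_{[1/n,n]} \phi_{(t-1)^2/(t+\lambda)}(A,B)\,d\mu(\lambda).
\]

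Third, I would take $n\to\infty$ and apply $\rho$. By Lemma \ref{L-7.9} the left side converges increasingly to $\phi_f(A,B)(\rho)$. On the right, the pointwise monotone convergences $n(t-1)^2/(t+n) \nearrow (t-1)^2$ and $(t-1)^2/(t+1/n) \nearrow (t-1)^2/t$, combined with monotone convergence applied to the spectral integrals underlying $\phi_{(\cdot)}(A,B)(\rho)$, give $nc\,\phi_{(t-1)^2/(t+n)}(A,B)(\rho) \nearrow c\,\phi_{(t-1)^2}(A,B)(\rho)$ and similarly for the $d$-term. Since $\phi_{(t-1)^2/(t+\lambda)}(A,B)(\rho) \geq 0$, scalar monotone convergence extends the integration region from $[1/n,n]$ to $(0,\infty)$. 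Substituting the identities from step one and collecting the coefficients of $\rho(A)$ and $\rho(B)$ outside the integral gives $b - 2c + d = a_0$ and $a - b + c - 2d = b_0$, matching \eqref{F-9.2} exactly.

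The main obstacle is step two: justifying the interchange of the PW-functional calculus with the $d\mu(\lambda)$-integral. Everything else is either a routine algebraic identity or a straightforward application of monotone convergence, but the commutation of $\phi$ with an integral of a parametrized family of functions is not asserted in any prior lemma and must be proved directly using \eqref{F-4.5} together with Fubini on the joint spectral measure.
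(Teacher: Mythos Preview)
Your proposal is correct and follows essentially the same route as the paper's own proof: both decompose $f_n$ into its linear part, the boundary terms $nc\,g_n$ and $d\,g_{1/n}$, and the $\mu|_{[1/n,n]}$-integral of $g_\lambda$, then pass to the limit via Lemma~\ref{L-7.9} and monotone convergence on each piece, using the transpose identity \eqref{F-7.3} for the $d$-term. The Fubini step you flag as the main obstacle is indeed used (implicitly) by the paper at equation~\eqref{F-9.5}; your observation that it follows from \eqref{F-4.5} and boundedness of $g_\lambda(t,1-t)$ on $[0,1]\times[1/n,n]$ is exactly the right justification and is more explicit than what the paper writes.
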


\begin{proof}
For each $\lambda\in(0,\infty)$ we set
\begin{align}\label{F-9.3}
g_\lambda(t):={(t-1)^2\over t+\lambda}
=t+{1\over\lambda}-\Bigl({1+\lambda\over\lambda}\Bigr)^2{\lambda t\over t+\lambda},
\qquad t\in(0,\infty).
\end{align}
We notice that
\begin{align}\label{F-9.4}
\phi_{g_\lambda}(A,B)=A+{1\over\lambda}\,B
-\Bigl({1+\lambda\over\lambda}\Bigr)^2(A:(\lambda B))
\ \ (\in B(\cH)_+),\quad A,B\in B(\cH)_+.
\end{align}
For each $n\in\bN$ we define
\[
f_n(t):=a+b(t-1)+cng_n(t)+dg_{1/n}(t)+\int_{[1/n,n]}g_\lambda(t)\,d\mu(\lambda),
\qquad t\in(0,\infty),
\]
which is the same as $f_n$ given in \eqref{F-7.9}. For every $A,B\in B(\cH)_+$ and
$\rho\in B(\cH)_*^+$, it follows from Lemma \ref{L-7.9} that
\begin{equation}\label{F-9.5}
\begin{aligned}
&\phi_f(A,B)(\rho) \\
&\quad=\lim_{n\to\infty}\rho(\phi_{f_n}(A,B)) \\
&\quad=b\rho(A)+(a-b)\rho(B) \\
&\qquad+\lim_{n\to\infty}\biggl[c\rho(\phi_{ng_n}(A,B))+d\rho(\phi_{g_{1/n}}(A,B))
+\int_{[1/n,n]}\rho(\phi_{g_\lambda}(A,B))\,d\mu(\lambda)\biggr].
\end{aligned}
\end{equation}
Since $ng_n(t)\nearrow(t-1)^2$ and $g_{1/n}(t)\nearrow(t-1)^2/t$ for all $t\in(0,\infty)$, by the
monotone convergence theorem we have
\begin{align*}
\lim_{n\to\infty}\rho(\phi_{ng_n}(A,B))
&=(\phi_{(t-1)^2}(A,B))(\rho)=\phi_{t^2}(A,B)(\rho)-2\rho(A)+\rho(B), \\
\lim_{n\to\infty}\rho(\phi_{g_{1/n}}(A,B))&=(\phi_{(t-1)^2/t}(A,B))(\rho)
=(\phi_{(t-1)^2}(B,A))(\rho)\quad\mbox{(by \eqref{F-7.3})} \\
&=\phi_{t^2}(B,A)-2\rho(B)+\rho(A),
\end{align*}
and
\begin{align*}
&\lim_{n\to\infty}\int_{[1/n,n]}\rho(\phi_{g_\lambda}(A,B))\,d\mu(\lambda) \\
&\qquad=\int_{(0,\infty)}\rho(\phi_{g_\lambda}(A,B))\,d\mu(\lambda) \\
&\qquad=\int_{(0,\infty)}\Bigl[\rho(A)+{1\over\lambda}\,\rho(B)
-\Bigl({1+\lambda\over\lambda}\Bigr)^2\rho(A:(\lambda B))\Bigr]\,d\mu(\lambda).
\end{align*}
Hence the asserted expression follows by combining \eqref{F-9.5} and these (increasing)
convergences.
\end{proof}

We have obtained a handy description of $\phi_{t^2}(A,B)$ in the preceding section. Apart from
two $\phi_{t^2}$-terms, the main term of the integral expression \eqref{F-9.2} is (minus) parallel
sum with a particular parametrization, though not so simple as \eqref{F-9.1}.

Assume that $f(0^+)<\infty$ in Theorem \ref{T-9.1}. Then $d=0$ and
$\int_{(0,\infty)}\lambda^{-1}\,d\mu(\lambda)<\infty$ thanks to \eqref{F-7.8}. Thus we can pull
$\bigl(\int_{(0,\infty)}\lambda^{-1}\,d\mu(\lambda)\bigr)\rho(B)$ out of the integral in \eqref{F-9.2}.
Since $b_0+\int_{(0,\infty)}\lambda^{-1}\,d\mu(\lambda)=f(0^+)$ by \eqref{F-7.8}, we can
rewrite \eqref{F-9.2} as
\begin{equation}\label{F-9.6}
\begin{aligned}
\phi_f(A,B)(\rho)&=a_0\rho(A)+f(0^+)\rho(B)+c\phi_{t^2}(A,B)(\rho) \\
&\qquad+\int_{(0,\infty)}
\Bigl[\rho(A)-\Bigl({1+\lambda\over\lambda}\Bigr)^2\rho(A:(\lambda B))
\Bigr]\,d\mu(\lambda).
\end{aligned}
\end{equation}

For any $f\in\OC(0,\infty)$ we define
\[
f'(0^+):=\lim_{t\searrow0}f'(t),
\]
whose limit exists in $[-\infty,\infty)$ by the numerical convexity of $f$ . Obviously, $f'(0^+)>-\infty$
implies $f(0^+)<\infty$. When $f'(0^+)>-\infty$, it is known that $f$ admits, besides expression
\eqref{F-7.7}, an integral expression
\begin{align}\label{F-9.7}
f(t)=f(0^+)+f'(0^+)t+ct^2+\int_{(0,\infty)}{t^2\over t+\lambda}\,d\nu(\lambda),
\qquad t\in(0,\infty),
\end{align}
where $c\ge0$ and $\nu$ is a positive measure on $(0,\infty)$ with
$\int_{(0,\infty)}(1+\lambda)^{-1}\,d\nu(\lambda)<\infty$. Indeed, in this case, $h(t):=(f(t)-f(0^+))/t$
with $h(0):=f'(0^+)$ is a non-negative operator monotone function on $[0,\infty)$ (see
\cite[Theorem 2.4]{HP}), so that \eqref{F-9.7} immediately follows from a familiar integral
expression of $h$. Since
\[
{t^2\over t+\lambda}=t-{\lambda t\over t+\lambda},\qquad t\in(0,\infty),
\]
we note as \eqref{F-9.4} that
\[
\phi_{t^2/(t+\lambda)}(A,B)=A-(A:(\lambda B)),\qquad A,B\in B(\cH)_+.
\]
Hence the next proposition can be shown, based on \eqref{F-9.7}, similarly to Theorem \ref{T-9.1},
whose proof is omitted here.

\begin{proposition}\label{P-9.2}
Let $f\in\OC(0,\infty)$ with $f'(0^+)>-\infty$ so that $f$ has expression \eqref{F-9.7}. Then
for every $A,B\in B(\cH)_+$ and $\rho\in B(\cH)_*^+$, we have
\begin{equation}\label{F-9.8}
\begin{aligned}
\phi_f(A,B)(\rho)&=f'(0^+)\rho(A)+f(0^+)\rho(B)+c\phi_{t^2}(A,B)(\rho) \\
&\qquad+\int_{(0,\infty)}[\rho(A)-\rho(A:(\lambda B))]\,d\nu(\lambda).
\end{aligned}
\end{equation}
\end{proposition}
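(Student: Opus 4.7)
The plan is to mimic the proof of Theorem \ref{T-9.1}, but with the integral representation \eqref{F-9.7} in place of \eqref{F-7.7}. For each $\lambda>0$, set $h_\lambda(t):=t^2/(t+\lambda)=t-\lambda t/(t+\lambda)$; since $\lambda t/(t+\lambda)$ is the representing function of the (rescaled) parallel sum, a direct computation of the perspective gives $\phi_{\lambda t/(t+\lambda)}(A,B)=A:(\lambda B)$ (cf.~Remark \ref{R-4.8} and \eqref{F-8.5}), hence $\phi_{h_\lambda}(A,B)=A-A:(\lambda B)\in B(\cH)_+$. Then I introduce the truncated operator convex functions
\[
f_n(t):=f(0^+)+f'(0^+)t+ct^2+\int_{[1/n,n]}h_\lambda(t)\,d\nu(\lambda)\in\OC(0,\infty),
\]
which satisfy $f_n(t)\nearrow f(t)$ pointwise on $(0,\infty)$.

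Next I will compute $\rho(\phi_{f_n}(A,B))$ using the explicit spectral realization
\[
\phi_g(A,B)(\rho)=\int_{[0,1]}\phi_g(r,1-r)\,d\mu_\rho(r),
\qquad\mu_\rho(\Lambda):=\rho\bigl(T_{A,B}^*E_{R_{A,B}}(\Lambda)T_{A,B}\bigr),
\]
derived from \eqref{F-4.5}--\eqref{F-4.6} together with Lemma \ref{L-4.3}(3), where $\mu_\rho$ is a finite positive Borel measure on $[0,1]$. The affine pieces of $f_n$ are handled via Lemma \ref{L-4.6}, yielding $\phi_{f(0^+)}(A,B)=f(0^+)B$ and $\phi_{f'(0^+)t}(A,B)=f'(0^+)A$; the $ct^2$ piece contributes $c\phi_{t^2}(A,B)(\rho)$; and Tonelli's theorem on $[0,1]\times[1/n,n]$ (applicable by non-negativity of the integrand) exchanges the $\mu_\rho$- and $\nu$-integrals in the $h_\lambda$ piece, producing $\int_{[1/n,n]}[\rho(A)-\rho(A:(\lambda B))]\,d\nu(\lambda)$. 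Combining these gives
\begin{align*}
\rho(\phi_{f_n}(A,B))
&=f'(0^+)\rho(A)+f(0^+)\rho(B)+c\phi_{t^2}(A,B)(\rho)\\
&\quad+\int_{[1/n,n]}\bigl[\rho(A)-\rho(A:(\lambda B))\bigr]\,d\nu(\lambda).
\end{align*}

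Finally, I let $n\to\infty$. On the left, the pointwise monotone convergence $\phi_{f_n}(r,1-r)\nearrow\phi_f(r,1-r)$ on $[0,1]$, together with the monotone convergence theorem applied to $\mu_\rho$, yields $\rho(\phi_{f_n}(A,B))\nearrow\phi_f(A,B)(\rho)$. On the right, monotone convergence against $\nu$ (applicable because $\rho(A)-\rho(A:(\lambda B))\ge0$) extends the integral from $[1/n,n]$ to all of $(0,\infty)$. These together produce \eqref{F-9.8}.

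The main technical subtlety is the consistency of the boundary value at $r=1$ (equivalently at $t=\infty$): a formal differentiation of \eqref{F-9.7} gives $f'(\infty)=f'(0^+)+c\cdot\infty+\int_{(0,\infty)}d\nu(\lambda)$ under the usual extended-arithmetic conventions, and this identity is precisely what ensures that the potentially infinite contributions of $c\phi_{t^2}(A,B)(\rho)$ and of the $\nu$-integral, when weighted by the $\mu_\rho(\{1\})$-atom, match the value $f'(\infty)\mu_\rho(\{1\})$ appearing on the left-hand side. Once this accounting is verified, the remaining manipulations are routine.
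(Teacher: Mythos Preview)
Your proof is correct and follows essentially the same approach the paper indicates (the paper omits the proof, saying it is ``similarly to Theorem \ref{T-9.1}'' based on \eqref{F-9.7}). The only minor variation is that you keep the $ct^2$ term intact in your truncations $f_n$ rather than replacing it by a bounded approximant such as $cnt^2/(t+n)$ (the analogue of $cng_n$ in the proof of Theorem \ref{T-9.1}); this is harmless since you work directly with the spectral integral and the monotone convergence theorem, and it actually saves one limiting step.
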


The following formula of $\phi_f(P,Q)$ for two projections $P,Q$ is similar to \cite[Theorem 3.7]{KA}
for operator connections.

\begin{proposition}[Two projections]\label{P-9.3}
For every projections $P,Q\in B(\cH)$ we have
\begin{align}\label{F-9.9}
\phi_f(P,Q)=f(1)(P\wedge Q)+f'(\infty)(P-P\wedge Q)+f(0^+)(Q-P\wedge Q).
\end{align}
\end{proposition}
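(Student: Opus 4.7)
The plan is to apply the integral expression \eqref{F-9.2} of Theorem \ref{T-9.1} directly with $A = P$, $B = Q$ and show that the resulting expression collapses to a linear combination of $P \wedge Q$, $P - P \wedge Q$, $Q - P \wedge Q$ with coefficients $f(1)$, $f'(\infty)$, $f(0^+)$, respectively.

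The computation rests on two closed-form identities for projections. First, the identity $P : (nQ) = \frac{n}{1+n}(P \wedge Q)$ recalled in Example \ref{E-8.7} (from the proof of \cite[Theorem 3.7]{KA}) extends to $P : (\lambda Q) = \frac{\lambda}{1+\lambda}(P \wedge Q)$ for every $\lambda > 0$ (by the same argument, or by monotonicity in $\lambda$ combined with $P : nQ \nearrow P \wedge Q$). Consequently, $\bigl(\frac{1+\lambda}{\lambda}\bigr)^2 \rho(P : \lambda Q) = \frac{1+\lambda}{\lambda}\, \rho(P \wedge Q)$. Second, Example \ref{E-8.7} gives $\phi_{t^2}(P, Q)(\rho) = \rho(P \wedge Q) + \infty \cdot \rho(P - P \wedge Q)$, and by the $P \leftrightarrow Q$ symmetry $\phi_{t^2}(Q, P)(\rho) = \rho(P \wedge Q) + \infty \cdot \rho(Q - P \wedge Q)$.

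Substituting these into \eqref{F-9.2}, the integrand simplifies to $\rho(P - P \wedge Q) + \lambda^{-1} \rho(Q - P \wedge Q)$ once one splits $\rho(P) = \rho(P \wedge Q) + \rho(P - P \wedge Q)$ and likewise for $\rho(Q)$. Collecting the three kinds of terms yields
\[
\phi_f(P, Q)(\rho) = K_1\, \rho(P \wedge Q) + K_2\, \rho(P - P \wedge Q) + K_3\, \rho(Q - P \wedge Q),
\]
where $K_1 = a_0 + b_0 + c + d$, $K_2 = a_0 + c \cdot \infty + \mu((0, \infty))$, and $K_3 = b_0 + d \cdot \infty + \int_{(0,\infty)} \lambda^{-1}\, d\mu(\lambda)$. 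From $a_0 = b - 2c + d$ and $b_0 = a - b + c - 2d$ one computes $K_1 = a = f(1)$ (using \eqref{F-7.7} at $t = 1$). Comparing $K_2$ with the formula $f'(\infty) = b + c \cdot \infty + d + \mu((0, \infty))$ in \eqref{F-7.8}: when $c > 0$ both equal $\infty$, and when $c = 0$ both equal $b + d + \mu((0, \infty))$; hence $K_2 = f'(\infty)$. The identification $K_3 = f(0^+)$ is entirely analogous via the second identity in \eqref{F-7.8}.

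The only delicate point is this bookkeeping of $\infty$-terms in $K_2$ and $K_3$: the finite corrections $-2c$, $-2d$ arising from the $-2\rho(A)$, $-2\rho(B)$ adjustments in $\phi_{(t-1)^2}$ and $\phi_{(t-1)^2/t}$ are absorbed harmlessly into the $c \cdot \infty$, $d \cdot \infty$ factors when those factors are nonzero, and vanish otherwise. This lets the finite and infinite cases of $f'(\infty)$, $f(0^+)$ be handled uniformly within the $\widehat{B(\cH)}_\lb$ framework.
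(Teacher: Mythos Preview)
Your proof is correct and follows essentially the same approach as the paper: both substitute the projection identities $P:(\lambda Q)=\frac{\lambda}{1+\lambda}(P\wedge Q)$ and the $\phi_{t^2}$ formulas from Example \ref{E-8.7} into the integral expression \eqref{F-9.2}, collect terms, and identify the three coefficients with $f(1)$, $f'(\infty)$, $f(0^+)$ via \eqref{F-7.8}, handling the absorption of the finite corrections $-2c$, $-2d$ into the $c\cdot\infty$, $d\cdot\infty$ terms just as you describe.
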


\begin{proof}
Example \ref{E-8.7} says that
\[
\phi_{t^2}(P,Q)=P\wedge Q+\infty\cdot(P-P\wedge Q),\quad
\phi_{t^2}(Q,P)=P\wedge Q+\infty\cdot(Q-P\wedge Q),
\]
and furthermore we have
\begin{align*}
P+{1\over\lambda}\,Q-\Bigl({1+\lambda\over\lambda}\Bigr)^2(P:(\lambda Q))
&=P+{1\over\lambda}\,Q-{1+\lambda\over\lambda}(P\wedge Q) \\
&=(P-P\wedge Q)+{1\over\lambda}(Q-P\wedge Q).
\end{align*}
Inserting these into expression \eqref{F-9.2} yields
\begin{align*}
\phi_f(P,Q)&=a_0P+b_0Q+(c+d)(P\wedge Q)
+\biggl(c\cdot\infty+\int_{(0,\infty)}d\mu(\lambda)\biggr)(P-P\wedge Q) \\
&\qquad+\biggl(d\cdot\infty+\int_{(0,\infty)}{1\over\lambda}\,d\mu(\lambda)\biggr)
(Q-P\wedge Q) \\
&=(a_0+b_0+c+d)(P\wedge Q)
+\biggl(a_0+c\cdot\infty+\int_{(0,\infty)}d\mu(\lambda)\biggr)(P-P\wedge Q) \\
&\qquad+\biggl(b_0+d\cdot\infty+\int_{(0,\infty)}{1\over\lambda}\,d\mu(\lambda)\biggr)
(Q-P\wedge Q),
\end{align*}
which is \eqref{F-9.9} because of $a_0+b_0+c+d=a=f(1)$ as well as
\[
a_0+c\cdot\infty+\int_{(0,\infty)}d\mu(\lambda)=f'(\infty),\qquad
b_0+d\cdot\infty+\int_{(0,\infty)}{1\over\lambda}\,d\mu(\lambda)=f(0^+)
\]
thanks to \eqref{F-7.8}.
\end{proof}

In the second half of the section, we are concerned with variational expressions of $\phi_f(A,B)$.
For any $f\in\OC(0,\infty)$ let $\alpha_n$, $\beta_n$, $\nu_n$ and $h_n$ ($n\in\bN$) be defined
by \eqref{F-7.11}--\eqref{F-7.14}. For every $A,B\in B(\cH)_+$ and $\rho\in B(\cH)_*^+$, by
Lemma \ref{L-7.9} we write
\begin{equation}\label{F-9.10}
\begin{aligned}
\phi_f(A,B)(\rho)
&=\sup_n\bigl[\alpha_n\rho(A)+\beta_n\rho(B)-\rho(B\sigma_{h_n}A)\bigr] \\
&=\sup_n\Bigl[\alpha_n(\rho(A)+\beta_n\rho(B)
-\int_{[1/n,n]}{1+\lambda\over\lambda}\,\rho(A:(\lambda B))\,d\nu_n(\lambda)\Bigr]
\end{aligned}
\end{equation}
thanks to \eqref{F-9.1}.

In the discussions below, for each $\xi\in\cH$ and an interval $J\subseteq(0,\infty)$, we will use
the notation $\cP(\xi;J,\cH)$ to denote the set of all pairs $(\eta(\cdot),\zeta(\cdot))$ of piecewise
constant functions on $J$ with finitely many values in $\cH$ such that $\eta(t)+\zeta(t)=\xi$ for all
$t\in J$.

The following variational expressions in Theorem \ref{T-9.4}, Proposition \ref{P-9.6} and Example
\ref{E-9.7} are certainly related to Pusz and Woronowicz's ones in \cite{PW1}, \cite[\S2]{PW2}
(also \cite[\S4]{Do}), and more directly related to \cite{Ko3} and \cite[\S III]{Hi1}; see Remark
\ref{R-9.8} below for more specific discussion.

\begin{theorem}\label{T-9.4}
Let $f\in\OC(0,\infty)$, and for each $n\in\bN$ let $\alpha_n$, $\beta_n$ and $\nu_n$ be as stated
above. Then for every $A,B\in B(\cH)_+$ and $\xi\in\cH$, we have
\begin{equation}\label{F-9.11}
\begin{aligned}
\phi_f(A,B)(\omega_\xi)
&=\sup_n\sup_{\eta(\cdot),\zeta(\cdot)}
\Bigl[\alpha_n\<A\xi,\xi\>+\beta_n\<B\xi,\xi\> \\
&\qquad\quad-\int_{[1/n,n]}{1+t\over t}(\<A\eta(t),\eta(t)\>
+t\<B(\zeta(t),\zeta(t)\>)\,d\nu_n(t)\Bigr],
\end{aligned}
\end{equation}
where the second supremum is taken over all pairs $(\eta(\cdot),\zeta(\cdot))$ in
$\cP(\xi;[1/n,n],\cH)$.
\end{theorem}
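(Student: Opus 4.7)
The plan starts from the expression \eqref{F-9.10} already derived via Lemma \ref{L-7.9}, namely
\begin{equation*}
\phi_f(A,B)(\omega_\xi)=\sup_n\Bigl[\alpha_n\<A\xi,\xi\>+\beta_n\<B\xi,\xi\>-\int_{[1/n,n]}\frac{1+t}{t}\<(A:(tB))\xi,\xi\>\,d\nu_n(t)\Bigr].
\end{equation*}
I would then invoke the classical Anderson--Trapp variational formula
\begin{equation*}
\<(A:(tB))\xi,\xi\>=\inf\bigl\{\<A\eta,\eta\>+t\<B\zeta,\zeta\>:\eta,\zeta\in\cH,\ \eta+\zeta=\xi\bigr\},\qquad t\ge0,
\end{equation*}
and reduce the entire statement to the following exchange identity, for each fixed $n\in\bN$:
\begin{equation*}
\int_{[1/n,n]}\frac{1+t}{t}\<(A:(tB))\xi,\xi\>\,d\nu_n(t)=\inf_{(\eta(\cdot),\zeta(\cdot))\in\cP(\xi;[1/n,n],\cH)}\int_{[1/n,n]}\frac{1+t}{t}\bigl(\<A\eta(t),\eta(t)\>+t\<B\zeta(t),\zeta(t)\>\bigr)d\nu_n(t).
\end{equation*}
Negating this infimum to a supremum inside the outer $\sup_n$ immediately yields \eqref{F-9.11}.

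The inequality $\le$ is trivial from the pointwise Anderson--Trapp bound applied under the integral. For $\ge$, set $F(t):=\<(A:(tB))\xi,\xi\>$ and $h(t):=(1+t)/t$. Since parallel sum is jointly norm-continuous, $F$ is continuous (in fact concave and monotone increasing) on $[0,\infty)$, so both $F$ and $h$ are uniformly continuous and bounded on the compact set $[1/n,n]$; let $F_{\max}:=\sup_{[1/n,n]}F$. Fix $\eps>0$, choose $\delta>0$ with $|F(s)-F(t)|<\eps$ whenever $|s-t|<\delta$, and partition $[1/n,n]$ into finitely many Borel pieces $J_1,\dots,J_k$ of diameter $<\delta$, making $\{1/n\}$ and $\{n\}$ into their own pieces so as to absorb the possible atoms of $\nu_n$ coming from \eqref{F-7.13}. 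Pick $t_i\in J_i$ and, by Anderson--Trapp, a decomposition $\eta_i+\zeta_i=\xi$ with $\<A\eta_i,\eta_i\>+t_i\<B\zeta_i,\zeta_i\>\le F(t_i)+\eps$. Declaring $\eta(t):=\eta_i$ and $\zeta(t):=\zeta_i$ on $J_i$ gives a candidate in $\cP(\xi;[1/n,n],\cH)$.

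The crucial observation, and the main obstacle, is that the approximate optimisers in Anderson--Trapp are a priori of unbounded norm, so the piecewise-constant approximation can only succeed after a uniform estimate. That estimate is available here: from $t_i\<B\zeta_i,\zeta_i\>\le\<A\eta_i,\eta_i\>+t_i\<B\zeta_i,\zeta_i\>\le F(t_i)+\eps$ and $t_i\ge1/n$, one gets $\<B\zeta_i,\zeta_i\>\le n(F_{\max}+\eps)$ uniformly in $i$. Hence on each $J_i$,
\begin{equation*}
\<A\eta_i,\eta_i\>+t\<B\zeta_i,\zeta_i\>\le F(t_i)+\eps+|t-t_i|\cdot n(F_{\max}+\eps)\le F(t)+2\eps+n(F_{\max}+\eps)\delta,
\end{equation*}
so that, writing $C:=\int_{[1/n,n]}h(t)\,d\nu_n(t)<\infty$,
\begin{equation*}
\int_{[1/n,n]}h(t)\bigl(\<A\eta(t),\eta(t)\>+t\<B\zeta(t),\zeta(t)\>\bigr)d\nu_n(t)\le\int_{[1/n,n]}h(t)F(t)\,d\nu_n(t)+C\bigl(2\eps+n(F_{\max}+\eps)\delta\bigr).
\end{equation*}
Sending first $\delta\searrow0$ and then $\eps\searrow0$ closes the exchange identity, and combining with the opening reduction proves \eqref{F-9.11}.
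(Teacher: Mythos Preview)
Your proof is correct and shares the paper's architecture: both start from \eqref{F-9.10}, reduce to the same exchange identity for each fixed $n$, and obtain the easy inequality from the pointwise Anderson--Trapp formula. The only difference is in closing the reverse inequality. You partition $[1/n,n]$ into small pieces, pick near-optimal $(\eta_i,\zeta_i)$ on each, and then invoke the uniform bound $\<B\zeta_i,\zeta_i\>\le n(F_{\max}+\eps)$ (coming from $t_i\ge 1/n$) to control the drift $|t-t_i|\,\<B\zeta_i,\zeta_i\>$; this bound is exactly what makes your piecewise estimate close. The paper instead observes that for each $s$ and near-optimal $(\eta,\zeta)$ at $s$, the strict inequality $\<A\eta,\eta\>+t\<B\zeta,\zeta\><F(t)+\delta$ is an open condition in $t$ (both sides being continuous---$F$ by concavity, as you yourself note), hence persists on a neighbourhood of $s$; a finite subcover of $[1/n,n]$ then produces the piecewise-constant candidate with error at most $\delta\int_{[1/n,n]}h\,d\nu_n$. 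Thus the ``main obstacle'' you isolate is genuine for your particular route but can be sidestepped altogether: the open-condition argument needs no control whatsoever on the size of the approximate optimisers. Your version is a bit longer but fully quantitative; the paper's is shorter and avoids the uniform estimate.
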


\begin{proof}
In view of \eqref{F-9.10} (for $\rho=\omega_\xi$), it suffices to prove that, for each fixed $n\in\bN$,
\begin{align*}
&\int_{[1/n,n]}{1+t\over t}\<(A:(tB))\xi,\xi\>\,d\nu_n(t) \\
&\qquad=\inf_{\eta(\cdot),\zeta(\cdot)}\int_{[1/n,n]}{1+t\over t}
(\<A\eta(t),\eta(t)\>+t\<B\zeta(t),\zeta(t)\>)\,d\nu_n(t).
\end{align*}
Denote the above left-hand and the right-hand sides by $L_n(A,B,\xi)$ and $R_n(A,B,\xi)$,
respectively. In the following, we will crucially use the well-known variational formula for parallel
sum due to \cite[Theorem 9]{AT}, saying that
\[
\<(A:(tB))\xi,\xi\>=\inf\bigl\{\<A\eta,\eta\>+t\<B\zeta,\zeta\>:
\eta,\zeta\in\cH,\,\eta+\zeta=\xi\bigr\}.
\]
Hence it is clear that $L_n(A,B,\xi)\le R_n(A,B,\xi)$. (At this point, we remark that the discussion
below overlaps with Pusz and Woronowicz's method of variational expressions based on
essentially the same formula \cite[p.\ 161, Lemma]{PW1} as above.)

Conversely, for any $\delta>0$ and $s\in[1/n,n]$, one can choose $\eta,\zeta\in\cH$ (depending
on $\delta$ and $s$) with $\eta+\zeta=\xi$ such that
\begin{align}\label{F-9.12}
\<A\eta,\eta\>+t\<B\zeta,\zeta\><\<(A:(tB))\xi,\xi\>+\delta
\end{align}
holds for $t=s$. Here we notice that $\<(A:(tB))\xi,\xi\>$ is upper semicontinuous in $t>0$,
because $\<(A:(tB))\xi,\xi\>=\inf_{\eps>0}\<(A_\eps:(tB_\eps))\xi,\xi\>$ and
$t>0\mapsto A_\eps:(tB_\eps)$ is continuous (in the operator norm).  Consequently, \eqref{F-9.12}
holds for $t$ in an interval $(s-\delta_s,s+\delta_s)$. Choosing a finite open covering of
$[1/n,n]$ from $\{(s-\delta_s,s+\delta_s)\}_{s\in[1/n,n]}$, one can easily define a pair
$(\eta(\cdot),\zeta(\cdot))$ as stated in the theorem such that
\[
\<A\eta(t),\eta(t)\>+t\<B\zeta(t),\zeta(t)\><\<(A:(tB))\xi,\xi\>+\delta
\quad\mbox{for all $t\in[1/n,n]$}.
\]
This implies that
\begin{align*}
R_n(A,B,\xi)&\le\int_{[1/n,n]}{1+t\over t}[\<(A:(tB))\xi,\xi\>+\delta]\,d\nu_n(t) \\
&=L_n(A,B,\xi)+\delta\int_{[1/n,n]}{1+t\over t}\,d\nu_n(t).
\end{align*}
Since $\delta>0$ is arbitrary, $R_n(A,B,\xi)\le L_n(A,B,\xi)$ follows.
\end{proof}

\begin{remark}\label{R-9.5}\rm
In view of a remark after \eqref{F-2.5}, note that $\phi_f(A,B)$ is uniquely determined by
expression \eqref{F-9.11}. Furthermore, we have a variational expression of $\phi_f(A,B)$
directly coupled with $\rho\in B(\cH)_*^+$ as follows:
\begin{equation}\label{F-9.13}
\begin{aligned}
\phi_f(A,B)(\rho)
&=\sup_n\sup_{X(\cdot),Y(\cdot)}\Bigl[\alpha_n\rho(A)+\beta_n\rho(B) \\
&\qquad-\int_{[1/n,n]}{1+t\over t}(\Tr(X(t)X(t)^*A)
+t\Tr(Y(t)Y(t)^*B))\,d\nu_n(t)\Bigr],
\end{aligned}
\end{equation}
where the second supremum is taken over all pairs $(X(\cdot),Y(\cdot))$ of piecewise constant
functions on $[1/n,n]$ with finitely many values in $\cC_2(\cH)$ (the Hilbert--Schmidt class) such
that $X(t)+Y(t)=\rho^{1/2}$ (the square-root of the density operator of $\rho$) for all $t\in[1/n,n]$.
In fact, note that $B(\cH)$ is standardly represented on the Hilbert space $\cC_2(\cH)$ with the
inner product $\<X,Y\>=\Tr(Y^*X)$ by left multiplication $\pi(A)X=AX$ for $A\in B(\cH)$,
$X\in\cC_2(\cH)$. Hence, expression \eqref{F-9.13} follows from \eqref{F-9.10} in the same way
as \eqref{F-9.11}. The remark here is also available for all variational expressions given in the
rest of the section.
\end{remark}

A point of the variational expressions in \eqref{F-9.11} and \eqref{F-9.13} is that the function of
$(A,B)\in B(\cH)_+\times B(\cH)_+$ inside the bracket $[\cdots]$ is jointly linear and sequentially
continuous in the weak operator topology. Based on this fact, important properties of $\phi_f(A,B)$
such as Theorems \ref{T-7.2}(1), \ref{T-7.5} and Proposition \ref{P-7.10} are shown in a
straightforward manner. Furthermore, the assumption $A_n\to A$ and $B_n\to B$ in SOT in
Theorem \ref{T-7.5} and Remark \ref{R-7.6} can be relaxed into $A_n\to A$ and $B_n\to B$ in the
weak operator topology.

The above variational expressions are presented with the cut-off interval $[1/n,n]$. But it is also
possible to provide variational expressions without cut-off, based on the integral expression
\eqref{F-9.2} or \eqref {F-9.6} or \eqref{F-9.8}. First, by Proposition \ref{P-8.3} we note that the
term $\phi_{t^2}(A,B)$ enjoys the variational expression
\[
\phi_{t^2}(A,B)(\omega_\xi)
=\sup_n\sup_{\eta,\zeta}n(\<A\xi,\xi\>-\<A\eta,\eta\>-n\<B\zeta,\zeta\>),
\]
where the second supremum is taken over $\eta,\zeta\in\cH$ with $\eta+\zeta=\xi$. So we may
concentrate our consideration to the main integral term of those expressions. As for the main
integral term of \eqref{F-9.2} we show the following:

\begin{proposition}\label{P-9.6}
In the situation of Theorem \ref{T-9.1}, for every $A,B\in B(\cH)_+$ and $\xi\in\cH$, we have
\begin{equation}\label{F-9.14}
\begin{aligned}
&\int_{(0,\infty)}\Bigl[\<A\xi,\xi\>+{1\over t}\<B\xi,\xi\>
-\Bigl({1+t\over t}\Bigr)^2\<(A:(tB))\xi,\xi\>\Bigr]\,d\mu(t) \\
&\qquad=\sup_{\eta(\cdot),\zeta(\cdot)}
\int_{(0,\infty)}\Bigl[\<A\xi,\xi\>+{1\over t}\<B\xi,\xi\> \\
&\hskip3.5cm-\Bigl({1+t\over t}\Bigr)^2(\<A\eta(t),\eta(t)\>+t\<B\zeta(t),\zeta(t)\>)
\Bigr]\,d\mu(t),
\end{aligned}
\end{equation}
where the supremum is taken over all pairs $(\eta(\cdot),\zeta(\cdot))$ in $\cP(\xi;(0,\infty),\cH)$
such that $\eta(t)=0$ for all sufficiently small $t>0$ and $\zeta(t)=0$ for all sufficiently large $t>0$.
\end{proposition}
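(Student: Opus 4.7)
Write $L_t$ for the integrand on the LHS and $R_t(\eta,\zeta)$ for that on the RHS (for fixed $(\eta,\zeta)$ with $\eta+\zeta=\xi$). By \eqref{F-9.4}, $L_t = \<\phi_{g_t}(A,B)\xi,\xi\> \ge 0$, so the LHS integral is well defined in $[0,\infty]$. The direction RHS $\le$ LHS is immediate from the Ando--Takagi variational formula used in the proof of Theorem \ref{T-9.4}: pointwise, $\<A\eta(t),\eta(t)\> + t\<B\zeta(t),\zeta(t)\> \ge \<(A:(tB))\xi,\xi\>$, whence $R_t(\eta(t),\zeta(t)) \le L_t$ for every admissible choice.

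For the reverse inequality, I would fix $n\in\bN$ and decompose $(0,\infty)=(0,1/n)\cup[1/n,n]\cup(n,\infty)$. On the middle interval, $d\mu|_{[1/n,n]}$ is a finite measure and $t\mapsto\<(A:(tB))\xi,\xi\>$ is upper semicontinuous (as already noted in the proof of Theorem \ref{T-9.4}, via $\<(A:(tB))\xi,\xi\>=\inf_{\eps>0}\<(A_\eps:(tB_\eps))\xi,\xi\>$); the identical finite-covering construction from that proof then produces, for any $\delta>0$, a piecewise constant pair $(\eta_0(\cdot),\zeta_0(\cdot))$ on $[1/n,n]$ with $\eta_0+\zeta_0\equiv\xi$ and
\[
\int_{[1/n,n]} R_t(\eta_0,\zeta_0)\,d\mu(t) \ge \int_{[1/n,n]} L_t\,d\mu(t) - \delta.
\]
I would extend to $(0,\infty)$ by setting $(\eta(t),\zeta(t)):=(0,\xi)$ on $(0,1/n)$ and $(\xi,0)$ on $(n,\infty)$, thereby meeting the vanishing constraints of the proposition while keeping the pair admissible. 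Direct computation gives $R_t(0,\xi)=\<A\xi,\xi\>-(2+t)\<B\xi,\xi\>$ (uniformly bounded on $(0,1/n)$) and $R_t(\xi,0)=-((2t+1)\<A\xi,\xi\>-t\<B\xi,\xi\>)/t^2 = O(1/t)$ on $(n,\infty)$.

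The growth condition $\int(1+\lambda)^{-1}d\mu<\infty$ forces $\mu((0,1))<\infty$ and $\int_{(n,\infty)}(1+t)^{-1}d\mu\to 0$, so both outside contributions $\int_{(0,1/n)}R_t\,d\mu$ and $\int_{(n,\infty)}R_t\,d\mu$ tend to $0$ as $n\to\infty$. When $\int L_t\,d\mu<\infty$, the corresponding outside LHS contributions vanish as well, and letting $n\to\infty$ then $\delta\to0$ yields $\int R_t\,d\mu\ge\int L_t\,d\mu$; when $\int L_t\,d\mu=\infty$, the middle-interval estimate makes $\int_{[1/n,n]}R_t\,d\mu$ arbitrarily large while the outside integrals stay uniformly bounded, so the supremum is $\infty$. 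The one delicate point will be to verify that Theorem \ref{T-9.4}'s finite-covering construction, originally stated for the specific measure $d\nu_n$, uses only finiteness of the measure on the bounded subinterval together with upper semicontinuity of the parallel-sum form, hence transfers to $d\mu|_{[1/n,n]}$ without modification; everything else is a routine cut-off argument.
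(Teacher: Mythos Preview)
Your proposal is correct and follows essentially the same approach as the paper's own proof: both truncate to a compact interval $[r,r^{-1}]$ (your $[1/n,n]$), use the upper-semicontinuity/finite-covering argument from Theorem~\ref{T-9.4} on that interval, extend by the boundary choices $(0,\xi)$ and $(\xi,0)$, and observe that the boundary integrands are exactly the paper's $k_1(t)$ and $k_2(t)$ whose integrals vanish in the limit thanks to $\int(1+t)^{-1}\,d\mu<\infty$. The only cosmetic difference is that the paper phrases the limit via ``for any $\alpha<L$ choose $r$ so that $\dots$'' rather than your case split on whether $L$ is finite; both handle the $L=\infty$ case equally well.
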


\begin{proof}
First, we confirm that the function inside the bracket on the right-hand side of \eqref{F-9.14} is
$\mu$-integrable for any $(\eta(\cdot),\zeta(\cdot))$ stated above. For such a pair
$(\eta(\cdot),\zeta(\cdot))$ we choose an $r\in(0,1)$ such that $\eta(t)=0$ for all $t\in(0,r)$ and
$\zeta(t)=0$ for all $t\in(r^{-1},\infty)$. Then the function in question is equal to $k_1(t)$ on $(0,r)$
and to $k_2(t)$ on $(r^{-1},\infty)$, given by
\[
k_1(t):=\<A\xi,\xi\>-(2+t)\<B\xi,\xi\>,\qquad
k_2(t):=-{1+2t\over t^2}\<A\xi,\xi\>+{1\over t}\<B\xi,\xi\>.
\]
In view of $\int_{(0,\infty)}(1+t)^{-1}\,d\mu(t)<\infty$, the functions $k_1(\cdot)$ and $k_2(\cdot)$
are $\mu$-integrable on $(0,r)$ and $(r^{-1},\infty)$, respectively. Also, the function in question is
clearly $\mu$-integrable on $[r,r^{-1}]$, so the desired $\mu$-integrability is verified.

Denote the left-hand and the right-hand sides of \eqref{F-9.14} by $L(A,B,\xi)$ and
$R(A,B,\xi)$. As in the proof of Theorem \ref{T-9.4}, $L(A,B,\xi)\ge R(A,B,\xi)$ is immediately
seen. For any $\alpha<L(A,B,\xi)$, by the Lebesgue convergence and the monotone convergence
theorems, we note that
\begin{align*}
&\int_{(0,r)}k_1(t)\,d\mu(t)+\int_{[r,r^{-1}]}\Bigl[\<A\xi,\xi\>+{1\over t}\<B\xi,\xi\>
-\Bigl({1+t\over t}\Bigr)^2\<(A:(tB))\xi,\xi\>\Bigr]\,d\mu(t) \\
&\hskip3cm+\int_{(r^{-1},\infty)}k_2(t)\,d\mu(t)
\end{align*}
converges to $L(A,B,\xi)$ as $r\searrow0$. Hence one can choose an $r\in(0,1)$ such that
\begin{align*}
&\int_{[r,r^{-1}]}\Bigl[\<A\xi,\xi\>+{1\over t}\<B\xi,\xi\>
-\Bigl({1+t\over t}\Bigr)^2\<(A:(tB))\xi,\xi\>\Bigr]\,d\mu(t) \\
&\qquad>\beta:=\alpha-\int_{(0,r)}k_1(t)\,d\mu(t)-\int_{(r^{-1},\infty)}k_2(t)\,d\mu(t),
\end{align*}
that is,
\[
\int_{[r,r^{-1}]}\Bigl({1+t\over t}\Bigr)^2\<(A:(tB))\xi,\xi\>\,d\mu(t)
<\int_{[r,r^{-1}]}\Bigl(\<A\xi,\xi\>+{1\over t}\<B\xi,\xi\>\Bigr)\,d\mu(t)-\beta.
\]
Now, in a similar way to the proof of Theorem \ref{T-9.4}, one can find a pair
$(\eta(\cdot),\zeta(\cdot))$ in $\cP(\xi;[r,r^{-1}],\cH)$ such that
\begin{align*}
&\int_{[r,r^{-1}]}\Bigl({1+t\over t}\Bigr)^2(\<A\eta(t),\eta(t)\>+t\<B\zeta(t),\zeta(t)\>)\,d\mu(t) \\
&\qquad<\int_{[r,r^{-1}]}\Bigl(\<A\xi,\xi\>+{1\over t}\<B\xi,\xi\>\Bigr)\,d\mu(t)-\beta.
\end{align*}
Extending $\eta(t),\zeta(t)$ to $(0,\infty)$ as $\eta(t)=0$ for all $t\in(0,r)$ and $\zeta(t)=0$ for all
$t\in(r^{-1},\infty)$, we have
\[
R(A,B,\xi)\ge\int_{(0,r)}k_1(t)\,d\mu(t)+\beta+\int_{(r^{-1},\infty)}k_2(t)\,d\mu(t)=\alpha,
\]
so that $R(A,B,\xi)\ge L(A,B,\xi)$ follows by letting $\alpha\nearrow L(A,B,\xi)$.
\end{proof}

In the situation where $f(0^+)<\infty$ or $f'(0^+)>-\infty$, we have a variational expression for the
integral term in \eqref{F-9.6} or \eqref{F-9.8} in a similar manner to Proposition \ref{P-9.6}.
Instead of stating these versions, let us give typical examples in the following:

\begin{example}\label{E-9.7}\rm
(1)\enspace
Consider $t^\alpha$ ($t>0$) where $1<\alpha<2$, whose familiar expression
\[
t^\alpha={\sin((\alpha-1)\pi)\over\pi}\int_{(0,\infty)}
\Bigl({t\over\lambda}-{t\over t+\lambda}\Bigr)\lambda^{\alpha-1}\,d\lambda,
\qquad t\in(0,\infty),
\]
is a special case of \eqref{F-9.7}. The corresponding integral expression (see \eqref{F-9.8})
and the variational expression are given for $A,B\in B(\cH)_+$ and $\xi\in\cH$ as follows:
\begin{align*}
&\phi_{t^\alpha}(A,B)(\omega_\xi)\ (=\phi_{t^{1-\alpha}}(B,A)(\omega_\xi)) \\
&\qquad={\sin((\alpha-1)\pi)\over\pi}\int_{(0,\infty)}[\<A\xi,\xi\>-\<(A:(\lambda B))\xi,\xi\>]
\lambda^{\alpha-2}\,d\lambda \\
&\qquad={\sin((\alpha-1)\pi)\over\pi}\sup_{\eta(\cdot),\zeta(\cdot)}\int_{(0,\infty)}
[\<A\xi,\xi\>-\<A\eta(\lambda),\eta(\lambda)\>
-\lambda\<B\zeta(\lambda),\zeta(\lambda)\>]\lambda^{\alpha-2}\,d\lambda,
\end{align*}
where the supremum is taken over all pairs $(\eta(\cdot),\zeta(\cdot))$ in $\cP(\xi;(0,\infty),\cH)$
such that $\zeta(\lambda)=0$ for all sufficiently large $\lambda>0$. (The last condition guarantees
that the integral against the supremum is well defined.)

(2)\enspace
Next, consider $t\log t$ ($t>0$), whose expression of the form \eqref{F-7.7} is
\[
t\log t=t-1+\int_{(0,\infty)}{(t-1)^2\over t+\lambda}\cdot{\lambda\over(1+\lambda)^2}\,d\lambda.
\]
But a better-known formula is
\begin{align}\label{F-9.15}
t\log t=\int_{(0,\infty)}\Bigl({t\over1+\lambda}-{t\over t+\lambda}\Bigr)\,d\lambda.
\end{align}
Using either expression (though the latter is more convenient), we find the integral expression
and the variational expression of $\phi_{t\log t}(A,B)$ as follows:
\begin{equation}\label{F-9.16}
\begin{aligned}
&\phi_{t\log t}(A,B)(\omega_\xi)\ (=\phi_{-\log t}(B,A)(\omega_\xi)) \\
&\quad=\int_{(0,\infty)}\Bigl[{1\over1+\lambda}\<A\xi,\xi\>
-{1\over\lambda}\<(A:(\lambda B))\xi,\xi\>\Bigr]\,d\lambda \\
&\quad=\sup_{\eta(\cdot),\zeta(\cdot)}\int_{(0,\infty)}\Bigl[{1\over1+\lambda}\<A\xi,\xi\>
-{1\over\lambda}\<A\eta(\lambda),\eta(\lambda)\>
-\<B\zeta(\lambda),\zeta(\lambda)\>\Bigr]\,d\lambda,
\end{aligned}
\end{equation}
where the supremum is taken in the same way as in Proposition \ref{P-9.6}.
\end{example}

\begin{remark}\label{R-9.8}\rm
Pusz and Woronowicz \cite{PW2} provided a variational expression for the PW-functional calculus
associated with $\psi(x,y):=x\log(x/y)$ ($x\ge0$, $y>0$), $\infty$ ($x>0$, $y=0$) and $0$ ($x=y=0$)
by making use of an integral formula
\begin{align}\label{F-9.17}
\psi(x,y)=-\int_{(0,1)}{x(y-x)\over x+(y-x)s}\,ds.
\end{align}
Along the same lines, a variational expression of relative entropy was obtained in \cite{PW2} and
\cite[\S4]{Do} by applying \eqref{F-9.17} to the PW-functional calculus for two positive quadratic
forms suitably induced from two positive linear functionals (on a $C^*$-algebra). A different method
to variational expression was developed in \cite{Ko3}  for relative entropy of normal positive
functionals on a von Neumann algebra (whose method was extended in \cite{Hi1} to more general
$f$-divergences). Here, note that the formula \eqref{F-9.17} is essentially the same as
\eqref{F-9.15}; in fact, a change of variable $\lambda=s/(1-s)$ ($0<s<1$) transforms \eqref{F-9.15}
into \eqref{F-9.17} for $y=1$. Nevertheless, it does not seem possible to directly transform
expression \eqref{F-9.16} for $\psi(A,B)=\phi_{t\log t}(A,B)$ into that given in \cite{PW2}.
\end{remark}

%%%%%%%%%%%%%%%%%% Section 10 %%%%%%%%%%%%%%%%
\section{Axiomatization}\label{Sec-10}

Kubo and Ando \cite{KA} formulated operator connections
$\sigma:B(\cH)_+\times B(\cH)_+\to B(\cH)_+$ with the following postulates:
\begin{itemize}
\item[(I)] (\emph{Joint operator monotonicity})\enspace
$A_1\le A_2$ and $B_1\le B_2$ imply $A_1\sigma B_1\le A_2\sigma B_2$ for all
all $A_i,B_i\in B(\cH)_+$.
\item[(II)] (\emph{Transformer inequality})\enspace $C(A\sigma B)C\le(CAC)\sigma(CBC)$ for
$A,B,C\in B(\cH)_+$.
\item[(III)] (\emph{Upper continuity})\enspace If $A_n,B_n\in B(\cH)_+$ ($n\in\bN$),
$A_n\searrow A$ and $B_n\searrow B$, then $A_n\sigma B_n\searrow A\sigma B$.
\end{itemize} 
(Moreover, operator means $\sigma$ have to satisfy $I\sigma I=I$ in addition.) One of their
achievements establishes an order isomorphism between the operator connections $\sigma$ and
the non-negative operator monotone functions $h$ on $[0,\infty)$ in such a way that
$A\sigma B = A^{1/2}h(A^{-1/2}BA^{-1/2})A^{1/2}$ (if $A\in B(\cH)_{++}$). In this way, they
gave an axiomatic formulation of operator connections (and means).

In this section we consider axiomatic formulations of Kubo and Ando's type for the PW-functional
calculus and extended operator convex perspectives. The first theorem is an axiomatization of
the general PW-functional calculus though in the bounded situation. As before, we write
$A_\eps := A+\eps I$ for $A \in B(\cH)^+$ and $\eps > 0$ in the following.

\begin{theorem}\label{T-10.1}
An operation $\Phi$ giving, for each Hilbert space $\cH$, a mapping 
\[
 \Phi_\cH : B(\cH)_+ \times B(\cH)_+ \,\longrightarrow\, B(\cH)_\sa
\]
satisfies
\begin{itemize}
\item[\rm(a)] $\Phi$ enjoys operator homogeneity (Definition \ref{D-4.1}(2)),
\item[\rm(b)] for each $\cH$, $\Phi_\cH$ is well behaved with respect to direct sums
(in the sense of Proposition \ref{P-4.5}),
\item[\rm(c)] for each $\cH$, if $(A_n,B_n) \to (A,B)$ in SOT as $n\to\infty$ and if
$A_n + B_n \geq \eps I$ for all $n$ with some $\eps > 0$, then $\Phi_\cH(A_n,B_n) \to \Phi_\cH(A,B)$
in SOT as $n\to\infty$,
\item[\rm(d)] $\Phi_\cH(A_{\eps},B_{\eps}) \to \Phi_\cH(A,B)$ in SOT as
$\eps\searrow0$ for any $(A,B) \in B(\cH)_+\times B(\cH)_+$ and any $\cH$,
\end{itemize}
if and only if there exists a (unique) $\mathbb{R}$-valued homogeneous and continuous
function $\phi$ on $[0,\infty)^2$ such that $\Phi$ coincides with the PW-functional calculus
associated with $\phi$.
\end{theorem}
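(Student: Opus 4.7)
My plan splits into the easier ``if'' direction (the PW-functional calculus satisfies (a)--(d)) and the substantial ``only if'' direction (any such $\Phi$ arises from an $\bR$-valued continuous homogeneous $\phi$). For ``if'', axiom (a) is Definition \ref{D-4.1}(2); (b) is the bounded case of Proposition \ref{P-4.5}, applicable since continuous $\phi$ is bounded on every compact subset of $[0,\infty)^2$; (d) follows from Theorem \ref{T-6.1} applied to the decreasing sequences $A_\eps\searrow A$, $B_\eps\searrow B$; and (c) follows from the formula $\phi(A,B)=T_{A,B}^*f(R_{A,B})T_{A,B}$ with $f(t):=\phi(t,1-t)$, using that on $\{X\ge\eps I\}$ the maps $X\mapsto X^{\pm1/2}$ are SOT-continuous (so $R_{A_n,B_n}=(A_n+B_n)^{-1/2}A_n(A_n+B_n)^{-1/2}\to R_{A,B}$ in SOT), combined with SOT-continuity of the continuous functional calculus of $f$ on $[0,1]$ (the uniform norm-boundedness needed comes for free from Banach--Steinhaus).

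For ``only if'', I would first define the candidate representing function
\[
\phi(x,y):=\Phi_\bC(x,y),\qquad(x,y)\in[0,\infty)^2,
\]
identifying scalars $x,y\ge0$ with operators on $\cH=\bC$. Operator homogeneity (a) applied with $C=\sqrt{\lambda}\,I_\bC$ ($\lambda>0$) gives $\phi(\lambda x,\lambda y)=\lambda\phi(x,y)$, and (a) with $A=B=0$, $C=0$ yields $\phi(0,0)=0$, so $\phi$ is homogeneous. Continuity on $[0,\infty)^2\setminus\{(0,0)\}$ is immediate from (c) in $\cH=\bC$ (any scalar sequence $(x_n,y_n)\to(x,y)$ with $x+y>0$ eventually satisfies $x_n+y_n\ge\eps>0$), and continuity at the origin uses the compact simplex $\{(u,v):u,v\ge0,\,u+v=1\}$: since $\phi$ is continuous and hence bounded there by some $M$, homogeneity gives $|\phi(x,y)|\le M(x+y)\to 0=\phi(0,0)$. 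Thus $\phi$ is $\bR$-valued, homogeneous, and continuous on $[0,\infty)^2$.

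The remaining task is to verify that $\Phi$ satisfies postulate (1) of Definition \ref{D-4.1}, after which Theorem \ref{T-4.2} forces $\Phi$ to equal the PW-calculus associated with $\phi$. First, decomposing $\cH$ along any orthonormal basis and applying (b) together with the defining identity above yields $\Phi_\cH(xI_\cH,yI_\cH)=\phi(x,y)I_\cH$. Next, for commuting $A,B\in B(\cH)_+$ with $A+B\ge\eps I$, approximate $(A,B)$ by simple commuting pairs $(A_n,B_n)=\bigl(\sum_ix_i^nP_i^n,\sum_iy_i^nP_i^n\bigr)$ obtained from refining the joint spectral decomposition, so that $A_n\to A$, $B_n\to B$ in operator norm and $A_n+B_n\ge(\eps/2)I$ for large $n$. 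Axiom (b) applied to the orthogonal decomposition $\cH=\bigoplus_iP_i^n\cH$ together with the scalar identity gives $\Phi_\cH(A_n,B_n)=\sum_i\phi(x_i^n,y_i^n)P_i^n$, and uniform continuity of $\phi$ on the compact joint spectrum makes these Riemann-like sums converge in norm to $\int\phi\,dE_{(A,B)}$; meanwhile (c) yields $\Phi_\cH(A_n,B_n)\to\Phi_\cH(A,B)$ in SOT, establishing the desired identity for this class. For arbitrary commuting $A,B$, apply this identity to $(A_\eps,B_\eps)$ and let $\eps\searrow0$: (d) handles the left-hand side, while $\int\phi(x+\eps,y+\eps)\,dE_{(A,B)}\to\int\phi\,dE_{(A,B)}$ in norm by uniform continuity. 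Together with (a), this confirms both postulates of Definition \ref{D-4.1}, whence uniqueness in Theorem \ref{T-4.2} identifies $\Phi_\cH(A,B)$ with the PW-calculus of $\phi$.

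The main obstacle I anticipate is coordinating the two continuity axioms (c) and (d) in the commuting case---(c) needs the uniform lower bound that is only available after $\eps$-regularization, and (d) then bridges the regularized identity to the original pair---but since all passages are in the norm topology on compact spectra, the dominated-convergence matching on the spectral side is routine.
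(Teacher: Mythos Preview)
Your proposal is correct and follows essentially the same route as the paper's proof: both directions match closely, including the definition $\phi(x,y):=\Phi_\bC(x,y)$, the continuity argument via (c) on $\cH=\bC$ plus homogeneity at the origin, the reduction to simple commuting pairs via (b), the approximation under the $\eps$-lower bound via (c), and the passage to general commuting pairs via (d), after which operator homogeneity and Theorem~\ref{T-4.2} finish the job. The only cosmetic difference is that you first establish $\Phi_\cH(xI,yI)=\phi(x,y)I_\cH$ by decomposing along an orthonormal basis, whereas the paper works directly on each block $P_n\cH$ using isometries $V_i:\bC\to\cH$ together with (a); both arguments are equivalent.
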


\begin{proof}
Let us first show that the PW-functional calculus associated with a function $\phi$ as above
satisfies (a)--(d). Item (a) is in Definition \ref{D-4.1} and (b) is contained in Proposition \ref{P-4.5}.
Item (c) is similar to \cite[Proposition 8]{HU} and can be confirmed as follows. Since
$A_n + B_n \geq \eps I$ for all $n$ as well as $A+B \geq \eps I$, one observes that
$T_{A_n,B_n}\to T_{A,B}$ and $R_{A_n,B_n}\to R_{A,B}$ in SOT as $n \to \infty$.
Consequently, if $\phi(x,y)$ is $\bR$-valued and continuous on $[0,\infty)^2$, then one has,
by Lemma \ref{L-4.3}(3),
\begin{align*}
\phi(A_n,B_n) &= (A_n +B_n)^{1/2}\phi(R_{A_n,B_n},1-R_{A_n,B_n})(A_n+B_n)^{1/2} \\
&\longrightarrow\, (A+B)^{1/2}\phi(R_{A,B},1-R_{A,B})(A+B)^{1/2} = \phi(A,B)
\end{align*}
in SOT as $n\to\infty$. For item (d), see Corollary \ref{C-6.2} (or \cite[Theorem 6]{HU}). 

Next we prove the converse direction. Assume that $\Phi$ satisfies (a)--(d). Define an
$\bR$-valued function $\phi$ on $[0,\infty)^2$ by $\phi(x,y):=\Phi_\bC(x,y)$ for $x,y\in[0,\infty)$,
which is homogeneous by condition (a). Then from (c) (for $\cH=\bC$) it is clear that $\phi$ is
continuous on $[0,\infty)^2\setminus\{(0,0)\}$. From homogeneity this implies that $\phi$ is
continuous at $(0,0)$ too. Now let $A = \sum_{n=1}^N \alpha_n P_n$ and
$B = \sum_{n=1}^N \beta_n P_n$ with orthogonal projections $P_n$ where
$\sum_nP_n=I_\cH$. Item (b) implies that
\[
\Phi_\mathcal{H}(A,B) = \sum_{i=1}^N \phi_{P_n\mathcal{H}}(\alpha_n P_n,\beta_n P_n).  
\]
For each $n$ fixed, choose an orthonormal basis $\{\xi_i\}_{i\in I}$ of $P_n\mathcal{H}$, and
consider isometries $V_i : \bC \to \cH$ sending any scalar $\lambda$ to $\lambda\xi_i$. Then
we have
\begin{align*}
\Phi_{P_n\cH}(\alpha_n P_n,\beta_n P_n) 
&=\sum_{i\in I} \Phi_{\mathbb{C}\xi_i}(\alpha_n V_i^* V_i,\beta_n V_i^* V_i)
\quad\mbox{(by (b))} \\
&=\sum_{i\in I}\Phi_\mathbb{C}(\alpha_n,\beta_n)V_i^* V_i
\quad\mbox{(by (a))}\\
&=\phi(\alpha_n,\beta_n)P_n. 
\end{align*}
Therefore, we see that $\Phi_\mathcal{H}(A,B) = \phi(A,B)$ holds for any pair $(A,B)$ specified
as above. Using a standard approximation procedure with condition (c), we have
$\Phi_\cH(A_\eps,B_\eps) = \phi(A_\eps,B_\eps)$ for any commuting pair
$(A,B) \in B(\cH)_+\times B(\cH)_+$ and $\eps>0$. Item (d) guarantees that the same identity
holds even when $\eps=0$. Thanks to the operator homogeneity of $\Phi$ (in (a)) and of the
PW-functional calculus associated with $\phi$, we obtain the desired conclusion.
\end{proof}

\begin{remark}\label{R-10.2}\rm
In view of Theorem \ref{T-6.1}, items (c) and (d) together can be replaced in Theorem \ref{T-10.1}
with the following single condition:
\begin{itemize}
\item if $A,B,A_n,B_n\in B(\cH)_+$, $A_n\searrow A$ and $B_n\searrow B$, then
$\Phi(A_n,B_n)\to\Phi(A,B)$ in SOT.
\end{itemize}
To confirm this modification, we only need to show that $\phi(x,y):=\Phi_\bC(x,y)$ is continuous
on $[0,\infty)^2$. But this can easily be seen from
\[ 
\phi(x,y) = 
\begin{cases} 
xy\phi(1/y,1/x) & (x,y>0), \\
x\phi(1,y/x) & (x>0), \\
y\phi(x/y,1) & (y>0)
\end{cases}
\] 
by homogeneity. 
\end{remark}

We emphasize that the postulate of `extending usual functional calculus' in Definition \ref{D-4.1}(1)
is implemented by three items (b)--(d) in the above theorem. In this way, the $\bR$-valued
continuous PW-functional calculus can be axiomatized like Kubo and Ando's operator connections,
though operator homogeneity is much stronger than the transformer inequality as a postulate.
From this viewpoint, Theorem\ref{T-10.1} might be understood as a result on non-commutative
functions.

Throughout the rest of the section, we assume that $\cH$ is a (fixed) infinite-dimensional Hilbert
space whenever otherwise stated. Below we will discuss axiomatic characterizations of
Kubo and Ando's type for extended operator perspectives $\phi_f$ for $f\in\OC(0,\infty)$.
For a map $\Phi:B(\cH)_+\times B(\cH)_+\to\widehat{B(\cH)}_\lb$ we consider the following
conditions:
\begin{itemize}
\item[(i)] (\emph{Joint operator convexity})\enspace
$\Phi(A_1+A_2,B_1+B_2)\le\Phi(A_1,B_1)+\Phi(A_2,B_2)$ for all $A_i,B_i\in B(\cH)_+$.
\item[(ii)] (\emph{Transformer inequality})\enspace
$\Phi(CAC,CBC)\le C\Phi(A,B)C$ for all $A,B,C\in B(\cH)_+$.
\item[(iii)] (\emph{Specialized upper continuity})\enspace
$\lim_{\eps\searrow0}\Phi(A_\eps,B_\eps)(\rho)=\Phi(A,B)(\rho)$ for all $A,B\in B(\cH)_+$
and $\rho\in B(\cH)_*^+$.
\end{itemize}

Items (i)--(iii) may be regarded as the counterparts of the above (I)--(III). If $\Phi$ satisfies
(i) and (ii), then letting $C=0$ in (ii) implies that $\Phi(0,0)\le0$. Since (i) gives
$\Phi(0,0)\le2\Phi(0,0)$, we have
\begin{align}\label{F-10.1}
\Phi(0,0)=0.
\end{align}
Note that if $C\in B(\cH)_{++}$ then the transformer inequality in (ii) becomes equality
automatically. In particular, (ii) implies the scalar homogeneity
\begin{align}\label{F-10.2}
\Phi(\alpha A,\alpha B)=\alpha\Phi(A,B),\qquad\alpha\ge0,
\end{align}
where the case $\alpha=0$ follows from \eqref{F-10.1}. Item (i) is subadditivity to be precise, but
with the homogeneity \eqref{F-10.2} this is equivalent to the genuine joint operator convexity.

To prove our axiomatization theorem, we need some more technical conditions as follows:
\begin{itemize}
\item[(iv)] (\emph{Special boundedness})\enspace $\Phi(tI,I)\in B(\cH)_\sa$ for all $t\in(0,\infty)$.
\item[(v)] (\emph{Local upper continuity})\enspace If $X_n\in B(\cH)_+$ ($n\in\bN$),
$X_1\ge X_2\ge\cdots$ and $\|X_n\|\to0$, then
$\lim_{n\to\infty}\Phi(I+X_n,I)(\rho)=\Phi(I,I)(\rho)$ for all $\rho\in B(\cH)_*^+$.
\end{itemize}
Item (iv) is reasonable for our purpose, and we need (v) to obtain an additional continuity property
that cannot be covered by (iii), being not so strong as (III).

We are now in a position to state the main theorem of the section.

\begin{theorem}\label{T-10.3}
A map $\Phi:B(\cH)_+\times B(\cH)_+\to\widehat{B(\cH)}_\lb$ satisfies conditions  {\rm(i)--(v)} if
and only if there exists a (unique) $f\in\OC(0,\infty)$ such that $\Phi(A,B)=\phi_f(A,B)$ for all
$A,B\in B(\cH)_+$, where $\phi_f$ is given in Definitions \ref{D-4.1} and \ref{D-7.1}.
\end{theorem}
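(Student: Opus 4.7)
For the necessity direction (\emph{if}), given $f \in \OC(0,\infty)$, I would verify (i)--(v) for $\Phi = \phi_f$ directly from earlier results: (i) is the joint-subadditivity form of Theorem \ref{T-7.2}(1); (ii) specializes Theorem \ref{T-4.9}(ii) to positive $C \in B(\cH)_+$; (iii) is the case $X_n = \eps_n I$ of Theorem \ref{T-7.7}; (iv) is Definition \ref{D-4.1}(1), which gives $\phi_f(tI, I) = f(t) I \in B(\cH)_\sa$; and (v) holds because $I + X_n \to I$ in operator norm keeps all spectra in a fixed compact subset of $(0,\infty)$, so $\phi_f(I+X_n, I) = f(I+X_n) \to f(I)$ by norm continuity of the continuous functional calculus. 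Uniqueness of $f$ is forced by $f(t) I = \Phi(tI, I)$.

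For sufficiency, assume $\Phi$ satisfies (i)--(v). Applying (ii) to both $C$ and $C^{-1}$ for $C \in B(\cH)_{++}$ yields the transformer \emph{equality} $\Phi(CAC, CBC) = C\Phi(A,B)C$, which specializes with $C = \sqrt{\alpha}\, I$ to scalar homogeneity $\Phi(\alpha A, \alpha B) = \alpha \Phi(A,B)$ for $\alpha > 0$. The key scalar identification is $S_t := \Phi(tI, I) = f(t) I$: by (iv) $S_t \in B(\cH)_\sa$, and the transformer equality gives $\Phi(tD, D) = D^{1/2} S_t D^{1/2}$ for invertible positive $D$. For any projection $P$ and $\eps \in (0,1)$, I take $D_1 = P + \eps(I-P)$ and $D_2 = \eps P + (I-P)$ so that $D_1 + D_2 = (1+\eps) I$; applying (i) to the pairs $(tD_j, D_j)$ and expanding in the $P \oplus (I-P)$ block structure yields
\[
(1-\sqrt{\eps})^2 \bigl[P S_t (I-P) + (I-P) S_t P\bigr] \le 0.
\]
A self-adjoint operator of this off-diagonal block form that is $\le 0$ must vanish (by phase-rotating the vector coordinates in $\langle M(e^{i\theta} v_1 + v_2), e^{i\theta} v_1 + v_2\rangle$), so $S_t$ commutes with every projection $P$; taking $P$ rank-one (available since $\dim \cH = \infty$) forces $S_t = f(t) I$.

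For $X = \sum_{i=1}^N t_i P_i$ with $t_i > 0$ and orthogonal $P_i$ summing to $I$, I run the same $D_1, D_2$ argument now at the quadratic-form level for $T := \Phi(X, I) \in \widehat{B(\cH)}_\lb$ via Proposition \ref{P-2.2}, obtaining that $T$ commutes (in the form sense) with each $P_i$. Combining this with (i) (giving $T \le \sum_i \Phi(t_i P_i, P_i)$), (ii) with $C = P_i$ (giving $\Phi(t_i P_i, P_i) \le P_i T P_i$), and the identity $\Phi(tP, P) = f(t) P$ for any nonzero projection $P$ (forced by $\Phi(tI, I) = f(t) P + f(t)(I-P)$ together with subadditivity and the transformer estimate $\Phi(tP, P) \le P f(t) I P$), yields $\Phi(X, I) = f(X)$ for every $X$ with finite positive spectrum. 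Applying the same form-level commutation argument to arbitrary invertible positive $X$ shows $\Phi(X, I)$ is affiliated with the abelian von Neumann algebra $\{X\}''$, and approximation by finite-spectral $X_n \to X$ in norm combined with (iii) yields $\Phi(X, I) = f(X)$ on all of $B(\cH)_{(0,\infty)}$. Then the transformer equality gives $\Phi(A, B) = B^{1/2} f(B^{-1/2} A B^{-1/2}) B^{1/2} = \phi_f(A, B)$ for all $A, B \in B(\cH)_{++}$, and (iii) with Theorem \ref{T-7.7} extends the identity to all $A, B \in B(\cH)_+$. Operator convexity $f \in \OC(0, \infty)$ follows from restricting the joint operator convexity coming from (i) to $B_1 = B_2 = I$, together with Theorem \ref{T-3.7}.

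The main obstacle will be the form-level block-diagonal argument for $T = \Phi(X, I)$ when $T$ is a priori only in $\widehat{B(\cH)}_\lb$: the whole analysis must be carried out through the sesquilinear form $Q_T$ on $\cD(q_T) \times \cD(q_T)$, and one has to verify that the phase-rotation and cross-term manipulations transport correctly from the bounded scalar step, with the form domain rich enough to support the decomposition $v = v_1 + v_2$ with $v_j$ in the appropriate block. The passage from finite-spectral to general invertible positive $X$ is likewise subtle because (iii) provides only the specific $(A_\eps, B_\eps)$ limit and (v) only local continuity near $(I, I)$; bridging these with the operator convexity of $\Phi(\cdot, I)$ and the continuous functional calculus of $f$ requires careful approximation within the $\widehat{B(\cH)}_\lb$ framework.
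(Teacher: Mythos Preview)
Your block trick with $D_1 = P + \eps(I-P)$ and $D_2 = \eps P + (I-P)$ is a clean alternative to the paper's route (which goes through the compression identity $P\Phi(A,B)P = P\Phi(AP,BP)P$ and the block inequality \eqref{F-10.4}); both yield that $\Phi(tI,I)$ commutes with every projection, hence equals $f(t)I$. Your treatment of finite-spectral $X=\sum_i t_iP_i$ is also fine, and in fact the form-level worry there is unnecessary: subadditivity alone gives $\Phi(X,I)\le\sum_i\Phi(t_iP_i,P_i)=\sum_i f(t_i)P_i$, so $\Phi(X,I)$ is bounded \emph{before} you run the block argument, and the bounded-operator version of your off-diagonal computation then gives commutativity with each $P_i$ and hence $\Phi(X,I)=f(X)$.

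The genuine gap is the passage to general $X\in B(\cH)_{++}$. In the body of your argument you invoke (iii) for the approximation $X_n\to X$ in norm, but (iii) only supplies the very specific limit $\Phi(A_\eps,B_\eps)\to\Phi(A,B)$ with the \emph{same} $\eps I$ added to both slots; it says nothing about $\Phi(X_n,I)\to\Phi(X,I)$ with the second slot fixed. Nor can you first run the block argument to get affiliation of $\Phi(X,I)$ with $\{X\}''$: at this stage you do not know $\Phi(X,I)$ is bounded (no finite-spectral majorant is available via subadditivity), and the form-level version would require $\cD(q_T)$ to be stable under the spectral projections of $X$, which you have not shown. You acknowledge this difficulty in your final paragraph, but you do not resolve it.

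This is exactly where the paper uses (v), and the mechanism is not a direct continuity statement but a subadditive sandwich. Choose finite-spectral $A_n\nearrow A$ with $X_n:=A-A_n\le n^{-1}I$ and a small $\delta>0$ with $A_n\ge\delta I$; split $A=(A_n-\delta I)+(X_n+\delta I)$ and $I=(1-\delta)I+\delta I$, then (i) and scalar homogeneity give
\[
\Phi(A,I)(\rho)\ \le\ (1-\delta)\,\rho\Bigl(f\Bigl(\tfrac{A_n-\delta I}{1-\delta}\Bigr)\Bigr)
\;+\;\delta\,\Phi\bigl(I+\delta^{-1}X_n,\,I\bigr)(\rho).
\]
Now (v) applies to the second term (since $\|\delta^{-1}X_n\|\to0$), and letting $n\to\infty$ then $\delta\searrow0$ yields $\Phi(A,I)\le f(A)$; a symmetric argument with finite-spectral $A_n'\searrow A$ gives the reverse inequality. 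The point is that (v) is not a mild regularity hypothesis but the device that converts subadditivity into the needed one-variable continuity; without this step, neither boundedness of $\Phi(X,I)$ nor the identification $\Phi(X,I)=f(X)$ for general $X\in B(\cH)_{++}$ can be obtained from (i)--(iv) alone.
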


\begin{proof}
As for the ``if\,'' part, when $\Phi=\phi_f$ with $f$ as stated above, conditions (i)--(iii) are
guaranteed by Theorems \ref{T-4.9} (also \ref{T-7.2}(1)) and \ref{T-7.7}, and conditions (iv), (v)
are obviously satisfied as well.

To prove the ``only if\,'' part, assume that $\Phi$ satisfies (i)--(v). First let us show that if a projection
$P\in B(\cH)$ commutes with $A,B\in B(\cH)_+$, then
\begin{align}\label{F-10.3}
P\Phi(A,B)P=P\Phi(AP,BP)P.
\end{align}
From (i) and (ii) one has
\begin{equation}\label{F-10.4}
\begin{aligned}
\Phi(A,B)&=\Phi(AP+AP^\perp,BP+BP^\perp) \\
&\le\Phi(AP,BP)+\Phi(AP^\perp,BP^\perp) \\
&=\Phi(PAP,PBP)+\Phi(P^\perp AP^\perp,P^\perp BP^\perp) \\
&\le P\Phi(A,B)P+P^\perp\Phi(A,B)P^\perp.
\end{aligned}
\end{equation}
Multiplying $P$ from both sides of the first inequality above gives
\[
P\Phi(A,B)P\le P\Phi(AP,BP)P+P\Phi(AP^\perp,BP^\perp)P.
\]
Moreover, since $\Phi(AP^\perp,BP^\perp)\le P^\perp\Phi(A,B)P^\perp$ by (ii), one has
$P\Phi(AP^\perp,BP^\perp)P\le0$ so that
\[
P\Phi(A,B)P\le P\Phi(AP,BP)P\le P\Phi(A,B)P.
\]
Therefore, \eqref{F-10.3} follows.

Now assume that $\Phi(A,B)\in B(\cH)_\sa$; so $\Phi(AP,BP)\in B(\cH)_\sa$ as well. For any
projection $P$ commuting with $A,B$, we write
$\Phi(A,B)=\begin{bmatrix}X&Z\\Z^*&Y\end{bmatrix}$ with $X\in PB(\cH)P$ and
$Y\in P^\perp B(\cH)P^\perp$. Then \eqref{F-10.4} means that
$\begin{bmatrix}X&Z\\Z^*&Y\end{bmatrix}\le\begin{bmatrix}X&0\\0&Y\end{bmatrix}$, that is,
$\begin{bmatrix}0&-Z\\-Z^*&0\end{bmatrix}\ge0$, which implies that $Z=0$. Hence $P$
commutes with $\Phi(A,B)$ in this case. Similarly $P$ commutes with $\Phi(AP,BP)$, so that
\eqref{F-10.3} is written as
\begin{align}\label{F-10.5}
\Phi(A,B)P=\Phi(AP,BP)P.
\end{align}

In particular, when $(A,B)=(tI,I)$ for $t\in(0,\infty)$, from (iv) and the above argument it follows
that $\Phi(tI,I)$ commutes with all projections in $B(\cH)$. Therefore, for each $t\in(0,\infty)$,
$\Phi(tI,I)$ must be written as $\Phi(tI,I)=f(t)I$ for some $f(t)\in\bR$. Now, consider
$A\in B(\cH)_{++}$ whose spectral decomposition is of the form $A=\sum_{i=1}^n\lambda_iP_i$
where $\lambda_i>0$ and $\sum_{i=1}^nP_i=I$. By (i) and (ii) we have
\[
\Phi(A,I)\le\sum_{i=1}^n\Phi(\lambda_iP_i,P_i)\le\sum_{i=1}^nP_i\Phi(\lambda_iI,I)P_i,
\]
so that that $\Phi(A,I)\in B(\cH)_\sa$. Hence we find that $\Phi(A,I)$ commutes with
all $P_i$ and
\begin{equation}\label{F-10.6}
\begin{aligned}
\Phi(A,I)&=\sum_{i=1}^n\Phi(A,I)P_i=\sum_{i=1}^n\Phi(AP_i,P_i)P_i
\quad\mbox{(by \eqref{F-10.5})} \\
&=\sum_{i=1}^n\Phi(\lambda_iP_i,P_i)P_i=\sum_{i=1}^n\Phi(\lambda_iI,I)P_i
\quad\mbox{(by \eqref{F-10.5})} \\
&=\sum_{i=1}^nf(\lambda_i)P_i=f(A).
\end{aligned}
\end{equation}

By (i) we notice that the $\bR$-valued function $f$ given above is convex on $(0,\infty)$ and
hence continuous on $(0,\infty)$, so one can define continuous functional calculus $f(A)$
for all $A\in B(\cH)_{++}$. Next we show that $\Phi(A,I)=f(A)$ holds for all $A\in B(\cH)_{++}$.
For any such $A$, by approximating the spectral decomposition of $A$, one can choose an
increasing sequence $A_n$ and a decreasing sequence $A_n'$ in $B(\cH)_{++}$ such that
\[
A_n=\sum_{i=1}^{m_n}\lambda_{n,i}P_{n,i},\quad
A_n'=\sum_{i=1}^{m_n}\lambda_{n,i}'P_{n,i}\quad
\mbox{where}\ \ \sum_{i=1}^{m_n}P_{n,i}=I,
\]
\[
A_n\le A\le A_n',\quad X_n:=A-A_n\le n^{-1}I,\quad X_n':=A_n'-A\le n^{-1}I.
\]
Let $\rho\in B(\cH)_*^+$ be arbitrary and choose a $\delta\in(0,1)$ such that $A_n \geq \delta I$
for all $n$. Using (i), \eqref{F-10.2} and \eqref{F-10.6}, we have
\begin{align*}
\Phi(A,I)(\rho)&=\Phi(A_n+X_n,I)(\rho) \\
&=\Phi(A_n-\delta I+X_n+\delta I,(1-\delta)I+\delta I)(\rho) \\
&\le\Phi(A_n-\delta I,(1-\delta)I)(\rho)+\Phi(X_n+\delta I,\delta I)(\rho) \\
&=(1-\delta)\Phi\Bigl({A_n-\delta I\over1-\delta},I\Bigr)(\rho)
+\delta\Phi(I+\delta^{-1}X_n,I)(\rho) \\
&=(1-\delta)\rho\Bigl(f\Bigl({A_n-\delta I\over1-\delta}\Bigr)\Bigr)
+\delta\Phi(I+\delta^{-1}X_n,I)(\rho).
\end{align*}
Note that $X_1\ge X_2\ge\cdots$, $\|\delta^{-1}X_n\|\le\delta^{-1}n^{-1}\to0$ and
\[
\Big\|{A_n-\delta I\over1-\delta}-{A-\delta I\over1-\delta}\Big\|={\|A_n-A\|\over1-\delta}
\,\longrightarrow\,0
\]
as $n\to\infty$ with $\delta$ fixed. Hence, by (v) we find that
\[
\Phi(A,I)(\rho)\le(1-\delta)\rho\Bigl(f\Bigl({A-\delta I\over1-\delta}\Bigr)\Bigr)
+\delta\rho(\Phi(I,I)).
\]
Letting $\delta\searrow0$ gives $\Phi(A,I)(\rho)\le\rho(f(A))$ for all $\rho\in B(\cH)_*^+$, so that
$\Phi(A,I)\le f(A)$. On the other hand, we have
\begin{align*}
(1+\delta)\rho\Bigl(f\Bigl({A_n'+\delta I\over1+\delta}\Bigr)\Bigr)
&=(1+\delta)\rho\Bigl(\Phi\Bigl({A_n'+\delta I\over1+\delta},I\Bigr)\Bigr) \\
&=\rho(\Phi(A_n'+\delta I,(1+\delta)I)) \\
&=\rho(\Phi(A+X_n'+\delta I,I+\delta I)) \\
&\le\Phi(A,I)(\rho)+\Phi(X_n'+\delta I,\delta I)(\rho) \\
&=\Phi(A,I)(\rho)+\delta\Phi(I+\delta^{-1}X_n',I)(\rho).
\end{align*}
Since $X_1'\ge X_2'\ge\cdots$, $\|\delta^{-1}X_n'\|\to0$ as $n\to\infty$, we find similarly to the
above argument that
\[
(1+\delta)\rho\Bigl(f\Bigl({A+\delta I\over1+\delta}\Bigr)\Bigr)
\le\Phi(A,I)(\rho)+\delta\rho(\Phi(I,I)).
\]
Letting $\delta\searrow0$ gives $f(A)\le\Phi(A,I)$. Hence $\Phi(A,I)=f(A)$ has been
shown for all $A\in B(\cH)_{++}$. Furthermore, this implies by (i) that $f$ belongs to
$\OC(0,\infty)$.

For any $A,B\in B(\cH)_{++}$, by (ii) and the case proved above, we have
\[
\Phi(A,B)=B^{1/2}\Phi(B^{-1/2}AB^{-1/2},I)B^{1/2}=B^{1/2}f(B^{-1/2}AB^{-1/2})B^{1/2}
=\phi_f(A,B).
\]
Finally, let $A,B\in B(\cH)_+$ be arbitrary. For every $\rho\in B(\cH)_*^+$ we use (iii) to
see that
\[
\Phi(A,B)(\rho)=\lim_{\eps\searrow0}\Phi(A_\eps,B_\eps)(\rho)
=\lim_{\eps\searrow0}\phi_f(A_\eps,B_\eps)(\rho)=\phi_f(A,B)(\rho),
\]
where the last equality is due to Theorem \ref{T-7.7}. Hence $\Phi=\phi_f$ has been shown.
The uniqueness of $f$ is clear from $\Phi(tI,I)=f(t)I$, $t>0$.
\end{proof}

\begin{remark}\label{R-10.4}\rm
(1)\enspace
Theorem \ref{T-10.3} holds also when condition (v) is replaced with the following (vi) (with
keeping conditions (i)--(iv)):
\begin{itemize}
\item[(vi)] (\emph{Local boundedness})\enspace For any $\rho\in B(\cH)_*^+$,
$X\mapsto\Phi(I+X,I)(\rho)$ is bounded above on some open ball
$U_\eps:=\{X\in B(\cH)_\sa:\|X\|<\eps\}$, i.e., $\sup_{X\in U_\eps}\Phi(I+X,I)(\rho)<\infty$
for some $\eps\in(0,1)$.
\end{itemize}
Indeed, it is clear that (vi) holds when $\Phi=\phi_f$ with $f\in\OC(0,\infty)$. Conversely, assume
that $\Phi$ satisfies (i)--(iv) and (vi). For any $\rho\in B(\cH)_*^+$ it follows from (i) and (vi) that
$X\mapsto\Phi(I+X,I)(\rho)$ is convex and bounded above on some open ball $U_\eps$. This
implies (see, e.g., \cite[Proposition I.2.5]{ET}) that $X\mapsto\Phi(I+X,I)(\rho)$ is continuous at
$X=0$ in the operator norm. Hence the above proof of Theorem \ref{T-10.3} can be carried out by
using (vi) in place of (v).

(2)\enspace
Assume that $\cH$ is finite-dimensional with $n=\dim\cH$. 
We can carry out the above proof of Theorem \ref{T-10.3} without the approximation procedure
in the paragraph after \eqref{F-10.6} (hence without condition (v)).  However, in this case,
we can only conclude that $f$ is \emph{$n$-convex} on $(0,\infty)$ (i.e., the operator inequality
as in \eqref{F-3.2} holds for $n\times n$ positive definite matrices $A,B$), instead of
$f\in\OC(0,\infty)$. Conversely, when $f$ is only $n$-convex on $(0,\infty)$, it does not seem
possible to show basic properties (for instance, (ii) and (iii) above) of $\phi_f$.
\end{remark}

Theorem \ref{T-6.3} in particular says that if $f\in\OC(0,\infty)$ with $f'(\infty)<\infty$
(resp., $f(0^+)<\infty$), then $\phi_f(A,B)$ is bounded for all
$(A,B)\in(B(\cH)_+\times B(\cH)_+)_\ge$ (resp., $(A,B)\in(B(\cH)_+\times B(\cH)_+)_\le$). The
same holds true when $f$ is an operator concave function on $(0,\infty)$ with $f'(\infty)>-\infty$
(resp., $f(0^+)>-\infty$). The following proposition is a modification of Theorem \ref{T-10.3} to the
restricted domain case.

\begin{proposition}\label{P-10.5}
Let $\cB:=(B(\cH)_+\times B(\cH)_+)_\le$ (resp., $\cB:=(B(\cH)_+\times B(\cH)_+)_\ge$). A map
$\Phi:\cB\to B(\cH)_\sa$ satisfies
\begin{itemize}
\item[\rm(i$'$)] $\phi(A_1+A_2,B_1+B_2)\le\Phi(A_1,B_1)+\Phi(A_2,B_2)$ for all $(A_i,B_i)\in\cB$,
\item[\rm(ii$'$)] $\Phi(CAC,CBC)\le C\Phi(A,B)C$ for all $(A,B)\in\cB$ and $C\in B(\cH)_+$,
\item[\rm(iii$'$)] $\Phi(A_\eps,B_\eps)\to\Phi(A,B)$ in SOT as $\eps\searrow0$ for all
$(A,B)\in\cB$,
\item[\rm(iv$'$)] if $X_n\in B(\cH)_+$, $X_1\ge X_2\ge\cdots$ and $\|X_n\|\to0$, then
$\Phi(X_n,I)\to\Phi(0,I)$ (resp., $\Phi(I,X_n)\to\Phi(I,0)$) in SOT,
\end{itemize}
if and only if there exists a (unique) $f\in\OC(0,\infty)$ with $f(0^+)<\infty$ (resp., $f'(\infty)<\infty$)
such that $\Phi(A,B)=\phi_f(A,B)$ for all $(A,B)\in\cB$.
\end{proposition}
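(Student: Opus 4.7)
The plan is to parallel the proof of Theorem \ref{T-10.3}, with modifications tailored to the restricted domain $\cB$ and to the one-sided boundedness on $f$. By Remark \ref{R-5.4} it suffices to treat $\cB=(B(\cH)_+\times B(\cH)_+)_\le$. For the ``if\,'' direction, I would verify the four axioms for $\Phi=\phi_f$ when $f\in\OC(0,\infty)$ satisfies $f(0^+)<\infty$: (i$'$) and (ii$'$) are the restricted version of Theorem \ref{T-4.9}, (iii$'$) is Theorem \ref{T-7.7}, and (iv$'$) follows from Theorem \ref{T-6.3} once Proposition \ref{P-5.3} (in its restricted form) ensures that $\phi_f$ is bounded on $\cB_\le$ precisely because of $f(0^+)<\infty$.

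For the converse, I would first replicate the preliminary steps of Theorem \ref{T-10.3}: (ii$'$) with $C=0$ combined with (i$'$) forces $\Phi(0,0)=0$, and $C=\alpha^{1/2}I$ (invertible, so (ii$'$) becomes an equality) gives scalar homogeneity. The block-decomposition argument with projections commuting with $A$ transfers verbatim, since $(AP,P),(AP^\perp,P^\perp)\in\cB_\le$ via $AP\le\|A\|P$; this yields $\Phi(A,I)=f(A)$ (continuous functional calculus) whenever $A\in B(\cH)_+$ has finite spectrum, for the function $f:[0,\infty)\to\bR$ defined by $\Phi(tI,I)=f(t)I$. Applying (i$'$) on scalar pairs shows $f$ is numerically convex on $(0,\infty)$, and (iv$'$) applied to $X_n=\eps_n I$ with $\eps_n\searrow0$ gives $f(\eps_n)\to f(0)$, so $f(0^+)=f(0)<\infty$ and $f$ extends continuously to $[0,\infty)$; operator convexity $f\in\OC(0,\infty)$ will be an a posteriori consequence of the identity $\Phi(A,I)=f(A)$ combined with (i$'$) on operator-valued $A_i$.

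The substantive step is to upgrade $\Phi(A,I)=f(A)$ from finite-spectrum $A$ to arbitrary $A\in B(\cH)_+$. The upper bound is routine: approximating $A$ in norm by $A_n\nearrow A$ of finite spectrum commuting with $A$ and writing $Y_n:=A-A_n\searrow0$, for each $\delta\in(0,1)$ conditions (i$'$) and scalar homogeneity yield
\begin{equation*}
\Phi(A,I)\le(1-\delta)f\bigl(A_n/(1-\delta)\bigr)+\delta\,\Phi(Y_n/\delta,I),
\end{equation*}
and (iv$'$), continuity of $f$, and $\delta\searrow0$ produce $\Phi(A,I)\le f(A)$. The lower bound is the main obstacle I anticipate: Theorem \ref{T-10.3} relies crucially on its ``local upper continuity at $I$'' axiom (v), whereas here (iv$'$) provides continuity only at $0$. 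My way around is to take $A_n'\searrow A$ with finite spectrum, $A_n'\ge A$, $Y_n':=A_n'-A$, and rearrange (i$'$) into a subtractive form, using the equality case of (ii$'$) for the invertible $C=(1+\delta)^{-1/2}I$, to obtain
\begin{equation*}
\Phi(A,I)\ge(1+\delta)f\bigl(A_n'/(1+\delta)\bigr)-\delta\,\Phi(Y_n'/\delta,I)\qquad(A\in B(\cH)_{++}).
\end{equation*}
The subtraction is legitimate precisely because $\Phi$ takes values in the bounded part $B(\cH)_\sa$; this is indispensable and explains why the proposition restricts the range of $\Phi$ in this way. Sending $n\to\infty$ via (iv$'$) and then $\delta\searrow0$ will give $\Phi(A,I)\ge f(A)$ on $B(\cH)_{++}$. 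Extension to $A\in B(\cH)_+$ with $0\in\sigma(A)$ follows from (iii$'$) together with the $\eps$-regularization $(A+\eps I)/(1+\eps)\in B(\cH)_{++}$ and the invertible case of (ii$'$), and the general pair $(A,B)\in\cB_\le$ reduces to the $B=I$ case by (iii$'$), (ii$'$) with $C=B_\eps^{-1/2}$, and Theorem \ref{T-7.7} on the $\phi_f$ side.
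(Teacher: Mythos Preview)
Your proposal is correct and follows essentially the same route as the paper: the paper also replaces the ``continuity at $I$'' axiom (v) of Theorem~\ref{T-10.3} by (iv$'$) (continuity at $0$) and handles both bounds via the splittings $I=(1\mp\delta)I\pm\delta I$, so that your ``subtractive form'' for the lower bound is just the paper's inequality $(1+\delta)f(A_n'/(1+\delta))\le\Phi(A,I)+\delta\Phi(\delta^{-1}X_n',I)$ rearranged. The only minor difference is in the ``if'' direction, where the paper cites Theorem~\ref{T-6.3} for (iii$'$) and the direct identity $\phi_f(X,I)=f(X)$ for (iv$'$), which is more immediate than your appeal to Theorem~\ref{T-7.7} and Proposition~\ref{P-5.3}.
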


\begin{proof}
We may prove the case $\cB:=(B(\cH)_+\times B(\cH)_+)_\le$, which implies the other case by
considering $\widetilde\Phi(A,B):=\Phi(B,A)$ and the transpose $\widetilde f$. For the ``if\,'' part,
items (i$'$) and (ii$'$) are the restrictions of those in Theorem \ref{T-10.3}, (iii$'$) is contained in
Theorem \ref{T-6.3} (also \cite[Theorem 6.2]{HSW}), and (iv$'$) is obvious since
$\phi_f(X,I)=f(X)$ (where $f$ is extended to $[0,\infty)$ with $f(0)=f(0^+)$) for all $X\in B(\cH)_+$.

For the ``only if\,'' part, the proof is similar to that of Theorem \ref{T-10.3} under restricting $(A,B)$
to $\cB$. The only place where we need to modify is the paragraph after \eqref{F-10.6}; so only this
part will be explained below. Let $A_n$, $A_n'$, $X_n$ and $X_n'$ be chosen as before. For
any $\delta\in(0,1)$ fixed, we have
\begin{align*}
\Phi(A,I)&=\Phi(A_n'+X_n',I)\le\Phi(A_n',(1-\delta)I)+\Phi(X_n',\delta I) \\
&=(1-\delta)f\Bigl({A_n'\over1-\delta}\Bigr)+\delta\Phi(\delta^{-1}X_n',I)
\,\longrightarrow\,(1-\delta)f\Bigl({A\over1-\delta}\Bigr)+\delta\Phi(0,I)
\end{align*}
in SOT as $n\to\infty$, so that
\[
\Phi(A,I)\le(1-\delta)f\Bigl({A\over1-\delta}\Bigr)+\delta\Phi(0,I).
\]
Letting $\delta\searrow0$ gives $\Phi(A,I)\le f(A)$. On the other hand, we have
\begin{align*}
(1+\delta)f\Bigl({A_n\over1+\delta}\Bigr)
&=(1+\delta)\Phi\Bigl({A_n\over1+\delta},I\Bigr)=\Phi(A+X_n,(1+\delta)I) \\
&\le\Phi(A,I)+\Phi(X_n,\delta I)=\Phi(A,I)+\delta\Phi(\delta^{-1}X_n,I).
\end{align*}
Letting $n\to\infty$ and then $\delta\searrow0$ gives $f(A)\le\Phi(A,I)$. Hence $\Phi(A,I)=f(A)$
follows, so that we have $\Phi=\phi_f$ for some $f\in\OC(0,\infty)$ as before. Finally, the
additional condition $f(0^+)<\infty$ is obvious since $\phi_f(0,I)=f(0^+)I$ is bounded.
\end{proof}

\begin{remark}\label{R-10.6}\rm
(1)\enspace
Convergence in SOT in (iii$'$) and (iv$'$) can be replaced with convergence in weak operator
topology. Also, condition (iv) in Theorem \ref{T-10.3} is available in Proposition \ref{P-10.5}
in place of (iv$'$).

(2)\enspace
In view of Theorem \ref{T-6.3}, items (iii$'$) and (iv$'$) together can be replaced with the
following stronger condition:
\begin{itemize}
\item if $A,B,A_n,B_n\in B(\cH)_+$, $A_n\le\alpha B_n$ (resp., $A_n\ge\alpha B_n$) for all $n$
with some $\alpha>0$ (independent of $n$), $A_n\searrow A$ and $B_n\searrow B$, then
$\Phi(A_n,B_n)\to\Phi(A,B)$ in SOT.
\end{itemize}

(3)\enspace
The operator concavity version of Proposition \ref{P-10.5} holds too, where the inequality signs in
(i$'$) and (ii$'$) are reversed and $f\in\OC(0,\infty)$ is replaced with an operator concave function
$f$ on $(0,\infty)$. This variant is immediately seen by taking $-\Phi$ and $-f$ in
Proposition \ref{P-10.5}.
\end{remark}

The following variant of Kubo and Ando's axiomatic characterization of operator connections is
worth giving. This is seen from Remark \ref{R-10.6}(2) and (3) because a non-negative function on
$(0,\infty)$ is operator monotone if and only if it is operator concave. In (III$'$) we assume a
simple convergence $A_n\sigma B_n\to A\sigma B$ in SOT (not necessarily decreasing as in (III)),
while decreasing convergence holds in (III$'$) as a consequence.

\begin{corollary}\label{C-10.7}
A map $\sigma:B(\cH)_+\times B(\cH)_+\to B(\cH)_+$ is an operator connection if and
only if $\sigma$ satisfies the following conditions:
\begin{itemize}
\item[\rm(I$'$)] \emph{(Joint operator concavity)}\enspace
$(A_1+A_2)\sigma(B_1+B_2)\ge(A_1\sigma B_1)+(A_2\sigma B_2)$ for all $A_i,B_i\in B(\cH)_+$.
\item[\rm(II$'$)] \emph{(Transformer inequality)}\enspace
$C(A\sigma B)C\le(CAC)\sigma(CBC)$ for all $A,B,C\in B(\cH)_+$.
\item[\rm(III$'$)] \emph{(Upper continuity)}\enspace
If $A_n,B_n\in B(\cH)_+$, $A_n\searrow A$ and $B_n\searrow B$, then
$A_n\sigma B_n\to A\sigma B$ in SOT.
\end{itemize}
\end{corollary}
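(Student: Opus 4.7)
The plan is to reduce the corollary to Remark \ref{R-10.6}(2) and (3), after aligning $\sigma$ with the PW-functional calculus via the dictionary $A\sigma B=\phi_h(B,A)$. For the ``only if'' direction: if $\sigma$ is the Kubo--Ando operator connection with representing function $h\ge 0$ operator monotone on $[0,\infty)$, then $h$ is operator concave, so (I$'$) is the well-known joint concavity of an operator connection; (II$'$) is literally Kubo--Ando's transformer axiom; and (III$'$) is weaker than Kubo--Ando's (III), since monotone bounded convergence in $B(\cH)_+$ coincides with SOT convergence.

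For the ``if'' direction, I would set $\Psi(X,Y):=Y\sigma X$. Conditions (I$'$)--(III$'$) on $\sigma$ then read for $\Psi$ as joint superadditivity, the reverse transformer inequality $\Psi(CXC,CYC)\ge C\Psi(X,Y)C$ for every $C\in B(\cH)_+$, and $\Psi(X_n,Y_n)\to\Psi(X,Y)$ in SOT whenever $X_n\searrow X$ and $Y_n\searrow Y$. Restricting $\Psi$ to $\cB:=(B(\cH)_+\times B(\cH)_+)_\le$, I would apply the operator concavity variant of Proposition \ref{P-10.5} (Remark \ref{R-10.6}(3)), with (iii$'$) and (iv$'$) of that proposition replaced by the uniform decreasing SOT convergence of Remark \ref{R-10.6}(2); this uniform condition is supplied for free by (III$'$), which imposes no domination hypothesis on the approximating sequences. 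The conclusion is a (unique) operator concave function $h$ on $(0,\infty)$ with $h(0^+)>-\infty$ such that $\Psi=\phi_h$ on $\cB$. Since $\Psi\ge 0$ forces $h\ge 0$ throughout, setting $h(0):=h(0^+)$ gives a non-negative operator monotone function on $[0,\infty)$.

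To extend to all of $B(\cH)_+\times B(\cH)_+$, observe that for any $X,Y\in B(\cH)_+$ the pairs $(X+\eps I,Y+\eps I)$ lie in $\cB$, so $\Psi(X+\eps I,Y+\eps I)=\phi_h(X+\eps I,Y+\eps I)$. Letting $\eps\searrow 0$, (III$'$) forces $\Psi(X+\eps I,Y+\eps I)\to\Psi(X,Y)$ in SOT, while the Kubo--Ando upper continuity for $\sigma_h$ (equivalently, Theorem \ref{T-7.7} applied to the operator convex function $-h$) forces $\phi_h(X+\eps I,Y+\eps I)\to\phi_h(X,Y)$ in SOT. Hence $\Psi=\phi_h$ identically, i.e., $A\sigma B=\phi_h(B,A)$ is the Kubo--Ando connection with representing function $h$. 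Since the substantive analytic work has been packaged into Section \ref{Sec-10}, the only real obstacle is the bookkeeping of the variable swap $\Psi(X,Y)=Y\sigma X$ and the verification that (III$'$), being free of any uniform-domination clause on the approximating sequences, is automatically strong enough to supply the continuity hypothesis demanded in Remark \ref{R-10.6}(2).
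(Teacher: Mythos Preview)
Your proposal is correct and follows essentially the same route as the paper, which derives the corollary from Remark~\ref{R-10.6}(2) and (3) together with the fact that a non-negative function on $(0,\infty)$ is operator monotone if and only if it is operator concave. You have supplied the natural bookkeeping that the paper leaves implicit: the variable swap $\Psi(X,Y)=Y\sigma X$ to align with the conventions of \S\ref{Sec-7}, and the extension from $\cB$ to all of $B(\cH)_+\times B(\cH)_+$ via $(X_\eps,Y_\eps)\in\cB$ and (III$'$).
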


\subsection*{Acknowledgments}
The work of F. Hiai and Y. Ueda was supported in part by JSPS KAKENHI Grant Numbers
JP17K05266 and JP18H01122, respectively.

\addcontentsline{toc}{section}{References}


\begin{thebibliography}{99}

\bibitem{AMY}
J. Agler, J. E. McCarthy and N. J. Young, {\it Operator Analysis: Hilbert Space Methods in
Complex Analysis}, Cambridge Tracts in Mathematics, Cambridge University Press,
Cambridge, 2020.

\bibitem{AD}
W. N. Anderson, Jr. and R. J. Duffin, Series and parallel addition of matrices,
{\it J. Math. Anal. Appl.} {\bf 26} (1969), 576--594.

\bibitem{AT}
W. N. Anderson, Jr. and G. E. Trapp, Shorted operators. II,
{\it SIAM J. Appl. Math.} {\bf 28} (1975), 60--71.

\bibitem{An2}
T. Ando, Lebesgue-type decomposition of positive operators,
{\it Acta Sci. Math. (Szeged)} {\bf 38} (1976), 253--260.

\bibitem{An0}
T. Ando, {\it Topics on Operator Inequalities}, Lecture notes (mimeographed),
Hokkaido Univ., Sapporo, 1978.

\bibitem{An1}
T. Ando, Concavity of certain maps on positive definite matrices and applications to
Hadamard products, {\it Linear Algebra Appl.} {\bf 26} (1979), 203--241.

\bibitem{AH}
T. Ando and F. Hiai, Operator log-convex functions and operator means,
{\it Math. Ann.} {\bf 350} (2011), 611--630.

\bibitem{Bh}
R. Bhatia, {\it Matrix Analysis}, Graduate Texts in Mathematics, 169. Springer-Verlag,
New York, 1997.

\bibitem{Bh2}
R. Bhatia, {\it Positive Definite Matrices}, Princeton Univ. Press, Princeton, 2007.

\bibitem{Do}
M. J. Donald, On the relative entropy. 
{\it Comm. Math. Phys.} {\bf 105} (1986), 13--34. 

\bibitem{ENG}
A. Ebadian, I. Nikoufar, and M. E. Gordji, Perspectives of matrix convex functions,
{\it Proc. Natl. Acad. Sci. USA} {\bf 108} (2011), 7313--7314.

\bibitem{Ef}
E. G. Effros, A matrix convexity approach to some celebrated quantum inequalities,
{\it Proc. Natl. Acad. Sci. USA} {\bf 106} (2009), 1006--1008.

\bibitem{EH}
E. Effros and F. Hansen, Non-commutative perspectives, {\it Ann. Funct. Anal.} {\bf 5}
(2014), 74--79.

\bibitem{ET}
I. Ekeland and R. T\'emam, {\it Convex Analysis and Variational Problems},
Corrected reprint of the 1976 English edition, Classics in Applied Mathematics, 28,
Society for Industrial and Applied Mathematics (SIAM), Philadelphia, PA, 1999.

\bibitem{FW}
P. Fillmore and J. Williams, On operator ranges,
{\it Adv. Math.} {\bf 7} (1971), 254--281.

\bibitem{FHR}
U. Franz, F. Hiai and \'E. Ricard, Higher order extension of L\"owner's theory: operator
$k$-tone functions, {\it Trans. Amer. Math. Soc.} {\bf 366} (2014), 3043--3074.

\bibitem{Fu1}
J. I. Fujii, Operator-concave functions and means of positive linear functionals.
{\it Math. Japon.} {\bf 25} (1980), 453--461.

\bibitem{Fu2}
J. I. Fujii, On Izumino's view of operator means, {\it Math. Japon.} {\bf 33} (1988),
671--675.

\bibitem{FK}
J. I. Fujii and E. Kamei, Relative operator entropy in noncommutative information theory,
{\it Math. Japon.} {\bf 34} (1989), 341--348.

\bibitem{FS}
J. I. Fujii and Y. Seo, The relative operator entropy and the Karcher mean,
{\it Linear Algebra Appl.} {\bf 542} (2018), 4--34. 

\bibitem{Ha}
U. Haagerup, Operator valued weights in von Neumann algebras, I,
{\it J. Funct. Anal.} {\bf 32} (1979), 175--206.

\bibitem{HP}
F. Hansen and G. K. Pedersen, Jensen's inequality for operators and L\"owner's theorem,
{\it Math. Ann.} {\bf 258} (1982), 229--241.

\bibitem{HU}
K. Hatano and Y. Ueda, Pusz--Woronowicz's functional calculus revisited,
Preprint (2020), arXiv:2012.13072 [math.FA].

\bibitem{Hi}
F. Hiai, Matrix analysis: matrix monotone functions, matrix means, and majorization,
{\it Interdiscip. Inform. Sci.} {\bf 16} (2010), 139--248.

\bibitem{Hi1}
F. Hiai, Quantum $f$-divergences in von Neumann algebras. I. Standard $f$-divergences,
{\it J. Math. Phys.} {\bf 59} (2018), 102202, 27 pp.

\bibitem{Hi2}
F. Hiai, Quantum $f$-divergences in von Neumann algebras. II. Maximal $f$-divergences,
{\it J. Math. Phys.} {\bf 60} (2019), 012203, 30 pp.

\bibitem{Hi3}
F. Hiai, {\it Quantum $f$-Divergences in von Neumann Algebras. Reversibility of Quantum
Operations}, Mathematical Physics Studies, Springer-Verlag, Singapore, 2021.

\bibitem{HK}
F. Hiai and H. Kosaki, Connections of unbounded operators and some related topics:
von Neumann algebra case, {\it Internat. J. Math.} {\bf 32} (2021), 2150024, 88 pp.

\bibitem{HL}
F. Hiai and Y. Lim, Operator means of probability measures,
{\it Adv. Math.} {\bf 365} (2020), 107038, 40 pp.

\bibitem{HM}
F. Hiai and M. Mosonyi, Different quantum $f$-divergences and the reversibility of
quantum operations, {\it Rev. Math. Phys.} {\bf 29} (2017), 1750023, 80 pp.

\bibitem{HMPB}
F. Hiai, M. Mosonyi, D. Petz and C. B\'eny, Quantum $f$-divergences and error
correction, {\it Rev. Math. Phys.} {\bf 23} (2011) 691--747;
Erratum: Quantum $f$-divergences and error correction, {\bf 29} (2017), 1792001.

\bibitem{HSW}
F. Hiai, Y. Seo and S. Wada, Ando--Hiai-type inequalities for operator means and operator
perspectives, {\it Internat. J. Math.} {\bf 31} (2020), 2050007, 44 pp.

%\bibitem{Kad}
%R. V. Kadison, Strong continuity of operator functions,
%{\it Pacific J. Math.} {\bf 26} (1968), 121--129.
%
\bibitem{KV}
D. S. Kaliuzhnyi-Verbovetskyi and V. Vinnikov, {\it Foundations of Free Noncommutative
Function Theory}, Mathematical Surveys and Monographs, 199, American Mathematical Society,
Providence, RI, 2014.

%\bibitem{Kato}
%T. Kato, Trotter's product formula for an arbitrary pair of self-adjoint contraction
%semigroups, In: {\it Topics in functional analysis (essays dedicated to
%M. G. Kre\u{\i}n on the occasion of his 70th birthday)},
%Adv. in Math. Suppl. Stud., 3, Academic Press, New York-London, 1978, pp. 185--195.
%
\bibitem{Ka}
T. Kato, {\it Perturbation Theory for Linear Operators}, Reprint of the 1980 edition,
Classics in Mathematics, Springer-Verlag, Berlin, 1995.

\bibitem{Ko1}
H. Kosaki, Remarks on Lebesgue-type decomposition of positive operators,
{\it J. Operator Theory} {\bf 11} (1984), 137--143.

\bibitem{Ko3}
H. Kosaki, Relative entropy of states: a variational expression,
{\it J. Operator Theory} {\bf 16} (1986), 335--348.

%\bibitem{Ko2}
%H. Kosaki, On intersections of domains of unbounded positive operators,
%{\it Kyushu J. Math.} {\bf 60} (2006), 3--25.
%
\bibitem{Ko4}
H. Kosaki, Parallel sum of unbounded positive operators,
{\it Kyushu J. Math.} {\bf 71} (2017), 387--405.

%\bibitem{Kr}
%F. Kraus, \"Uber konvexe Matrixfunktionen, {\it Math. Z.} {\bf 41} (1936), 18--42.
%
\bibitem{Ku}
F. Kubo, Conditional expectations and operations derived from network connections,
{\it J. Math. Anal. Appl.} {\bf 80} (1981), 477--489.

\bibitem{KA}
F. Kubo and T. Ando, Means of positive linear operators, {\it Math. Ann.} {\bf 246} (1980),
205--224.

\bibitem{LR}
A. Lesniewski and M. B. Ruskai, Monotone Riemannian metrics and relative entropy on
noncommutative probability spaces, {\it J. Math. Phys.} {\bf 40} (1999), 5702--5724.

\bibitem{Lo}
K. L\"owner, \"{U}ber monotone Matrixfunctionen, {\it Math. Z.} {\bf 38} (1934), 177--216.

\bibitem{Ped}
G. K. Pedersen, {\it Analysis Now},
Graduate Texts in Mathematics, 118, Springer-Verlag, New York, 1989.

\bibitem{Pe1}
D. Petz, Quasi-entropies for states of a von Neumann algebra,
{\it Publ. Res. Inst. Math. Sci.} {\bf 21} (1985), 787--800.

\bibitem{Pe2}
D. Petz, Sufficient subalgebras and the relative entropy of states of a von Neumann algebra,
{\it Comm. Math. Phys.} {\bf 105} (1986), 123--131.

\bibitem{PW1}
W. Pusz and S. L. Woronowicz, Functional calculus for sesquilinear forms and the
purification map, {\it Rep. Math. Phys.} {\bf 5} (1975), 159--170.

\bibitem{PW2}
W. Pusz and S. L. Woronowicz, Form convex functions and the WYDL and other inequalities,
{\it Lett. Math. Phys.} {\bf 2} (1978), 505--512.

\bibitem{Sch}
K. Schm\"udgen, {\it Unbounded Self-adjoint Operators on Hilbert Space}, Graduate Texts in
Mathematics, 265, Springer-Verlag, Dordrecht, 2012.

\bibitem{St}
S. Str\u atil\u a, {\it Modular Theory in Operator Algebras},
Editura Academiei and Abacus Press, Tunbridge Wells, 1981.

\bibitem{Uh}
A. Uhlmann, Relative entropy and the Wigner--Yanase--Dyson--Lieb concavity in an
interpolation theory, {\it Comm. Math. Phys.} {\bf 54} (1977), 21--32.

\bibitem{Wa}
S. Wada, When does Ando--Hiai inequality hold?,
{\it Linear Algebra Appl.} {\bf 540} (2018), 234--243.

\end{thebibliography}
\end{document}